\numberwithin{equation}{section}
\theoremstyle{plain}
\newtheorem{Thm}{Theorem}[section]
\newtheorem{Lem}[Thm]{Lemma}
\newtheorem{Cor}[Thm]{Corollary}
\newtheorem{Prop}[Thm]{Proposition}
\theoremstyle{definition}
\newtheorem{Def}[Thm]{Definition}
\newtheorem{Rem}[Thm]{Remark}
\newtheorem{Claim}[Thm]{Claim}
\newcommand\bfC{\mathbf{C}}
\newcommand{\connect}{\xleftrightarrow}
\title{An alternative approach for the mean-field behaviour of spread-out Bernoulli percolation in dimensions $d>6$}
\begin{document}

\author{Hugo Duminil-Copin\footnote{Institut des Hautes \'Etudes Scientifiques, \url{duminil@ihes.fr}}\ \footnotemark[2]\footnote{Université de Genève, \url{hugo.duminil@unige.ch}} , Romain Panis\footnotemark[3]\footnote{Institut Camille Jordan, \url{panis@math.univ-lyon1.fr}}}
\maketitle
{\em We dedicate this article to Geoffrey Grimmett on the occasion of his seventieth birthday. Geoffrey's contributions to percolation theory have been profoundly influential for our generation of probabilists. His renowned manuscript was the bedside book for the authors during their PhD theses. 
}

\begin{abstract}
This article proposes a new way of deriving mean-field exponents for sufficiently spread-out Bernoulli percolation in dimensions $d>6$. We obtain up-to-constant estimates for the full-space and half-space two-point functions in the critical and near-critical regimes. In a companion paper, we apply a similar analysis to the study of the weakly self-avoiding walk model in dimensions $d>4$ \cite{DumPan24WSAW}.
\end{abstract}

\section{Introduction}

\subsection{Motivation}

Grasping the (near-)critical behaviour of lattice models is one of the key challenges in statistical mechanics. A possible approach involves determining the models' \emph{critical exponents}. Performing this task is typically very difficult as it involves the unique characteristics of the models and the geometry of the graphs on which they are constructed.

A significant observation was made for models defined on the hypercubic lattice $\mathbb{Z}^d$: beyond the \emph{upper-critical dimension} $d_c$, the influence of geometry disappears, and the critical exponents simplify, matching those found on a Cayley tree (or \emph{Bethe lattice}) or on the complete graph. The regime $d>d_c$ forms the \emph{mean-field} regime of a model. 

Prominent techniques such as the \emph{lace expansion} \cite{BrydgesSpencerSAW} and the \emph{rigourous renormalisation group method} \cite{BauerschmidtBrydgesSlade2014Phi4fourdim,BauerschmidtBrydgesSlade2015WSAW4D,BauerschmidtBrydgesSlade2015WSAW4DLogCorrections,BauerschmidtBrydgesSladeBOOKRG2019} have been developed to analyse the mean-field regime. These approaches have a predominantly \emph{perturbative} nature, which necessitates to identify  a small parameter within the model. It has been established, using lace expansion, that in several contexts \cite{HaraSlade1990Perco,Sakai2007LaceExpIsing,HaraDecayOfCorrelationsInVariousModels2008,Sakai2015Phi4,FitznervdHofstad2017Perco-d>10,Sakai2022correctboundsIsing} this small parameter can be taken to be $\tfrac{1}{d-d_c}$, meaning that mean-field behaviour was recovered in these setups in dimensions $d\gg d_c$.

 In the example of the nearest-neighbour (meaning that bonds are pairs of vertices separated by unit Euclidean distance) Bernoulli percolation, mean-field behaviour was established in dimensions $d>10$ \cite{HaraSlade1990Perco,HaraDecayOfCorrelationsInVariousModels2008,FitznervdHofstad2017Perco-d>10}. This leaves a gap to fill in order to reach the expected upper critical dimension of the model $d_c=6$. 
It is however possible to provide rigorous arguments to identify $d_c$ by introducing an additional perturbative parameter in the model. In the \emph{spread-out} Bernoulli percolation model, the bonds are pairs of vertices separated by distance between $1$ and $L$, where $L$ is taken to be sufficiently large. According to the \emph{universality} conjecture, the critical exponents of these two models should match. This makes spread-out Bernoulli percolation a natural testing ground to develop the analysis of the mean-field regime of Bernoulli percolation.

 Lace expansion was successfully applied to study various spread-out models in statistical mechanics, including Bernoulli percolation \cite{HaraSlade1990Perco,HaraSladevdHofstad2003PercoSO}, lattice trees and animals \cite{HaraSlade1990LatticeTrees}, the Ising model \cite{Sakai2007LaceExpIsing,Sakai2022correctboundsIsing}, and even some long-range versions of the aforementioned examples \cite{ChenSakaiLongRange2015,ChenSakaiLongRange2019}. Much more information on the lace expansion approach can be found in \cite{SladeSaintFlourLaceExpansion2006}.

In this paper, we provide an alternative argument to obtain mean-field bounds on the two-point function of sufficiently spread-out Bernoulli percolation in dimensions $d>6$. This technique extends to a number of other (spread-out) models after relevant modifications. In  a companion paper \cite{DumPan24WSAW}, we provide a treatment of the weakly self-avoiding walk model. However, the strategy developed there does not apply \emph{mutatis mutandis} to the setup of spread-out Bernoulli percolation, and the presence of long finite range interactions requires additional care.
\paragraph{Notations.} Consider the hypercubic lattice $\mathbb Z^d$.
Set  ${\bf e}_j$ to be the unit vector with $j$-th coordinate equal to 1. Write $x_j$ for the $j$-th coordinate of $x$ and denote its $\ell^\infty$ norm by $|x|:=\max\{|x_j|:1\le j\le d\}$. Set $\Lambda_n:=\{x\in \mathbb Z^d:|x|\le n\}$ and for $x\in \mathbb Z^d$, $\Lambda_n(x):=\Lambda_n+x$. Also, set $\mathbb H_n:=-n{\bf e}_1+\mathbb H$, where $\mathbb H:=\mathbb Z_+\times\mathbb Z^{d-1}=\{0,1,\ldots\}\times \mathbb Z^{d-1}$. For a set $S$ of vertices, introduce $S^*:=S\setminus\{0\}$. Let $\partial S$ be the boundary of the set $S$ given by the vertices in $S$ with one neighbour outside $S$. Finally, define the set of \emph{generalised blocks} of $\mathbb Z^d$, 
\begin{equation}
\mathcal B:=\Big\{\Big(\prod_{i=1}^d[a_i, b_i]\Big)\cap \mathbb Z^d
\text{ such that }-\infty\le a_i\le 0\le b_i\le\infty\text{ for }1\le i\le d\Big\}.
\end{equation}
If $\Lambda\subset \mathbb Z^d$ and $x,y\in \Lambda$, we denote by $\{x\connect{\Lambda\:}y\}$ the event that $x$ and $y$ are connected by a path of open edges in $\Lambda$.
If $A$ and $B$ are two percolation events, we write $A\circ B$ for the event of \emph{disjoint} occurrence of $A$ and $B$, i.e.~the event that there exist two disjoint sets $I$ and $J$ of edges such that the configuration restricted to $I$ (resp.~$J$) is sufficient to decide that $A$ (resp.~$B$) occurs. 

\subsection{Definitions and statement of the results}
 Let $L\geq 1$. Since $L$ is fixed for the whole article, we omit it from the notations. We consider the Bernoulli percolation measure $\mathbb P_\beta$ such that for every $u,v\in \mathbb Z^d$,
 \begin{equation}
 	p_{uv,\beta}:=\mathbb P_\beta[uv\text{ is open}]=1-\exp(-\beta J_{uv})=1-\mathbb P_\beta[uv\text{ is closed}],
 \end{equation} 
 where $J_{uv}=c_L\mathds{1}_{1\leq |u-v|\leq L}$, and $c_L$ is a normalization constant which guarantees that $|J|:=\sum_{x\in \mathbb Z^d}J_{0,x}=1$ (i.e. $c_L=|\Lambda_L^*|^{-1}$). We also let $p_\beta:=1-e^{-\beta c_L}$. Much more general choices can be made for $J$ (see e.g. \cite{HaraSlade1990Perco,vdHofstafdSlade2002generalisedinductive,HaraSladevdHofstad2003PercoSO}) but we restrict our attention to the above for simplicity.
 
  We are interested in the model's two-point function which is, for $\Lambda \subset \mathbb Z^d$, the probability
 $\mathbb P_\beta[x\connect{\Lambda\:}y]$.
When $\Lambda=\mathbb Z^d$, we simply write $\mathbb P_\beta[x\connect{}y]=\mathbb P_\beta[x\connect{\mathbb Z^d\:}y]$.
 It is well known that Bernoulli percolation undergoes a phase transition for the existence of an infinite cluster at some parameter $\beta_c\in (0,\infty)$. Moreover, for $\beta<\beta_c$, $\mathbb P_\beta[x\connect{}y]$ decays exponentially fast in $|x-y|$, see \cite{AizenmanNewmanTreeGraphInequalities1984,Mensikov1986coincidence,AizenmanBarsky1987sharpnessPerco,DuminilTassionNewProofSharpness2016}. It is convenient to measure the rate of exponential decay in terms of the the \emph{sharp length} $L_\beta$ defined below (see also \cite{DuminilTassionNewProofSharpness2016,PanisTriviality2023,DuminilPanis2024newLB} for a study of this quantity in the context of the Ising model). For $\beta\geq 0$ and $S\subset \mathbb Z^d$, let
\begin{equation}
	\varphi_\beta(S):=\sum_{\substack{y\in S\\ z\notin S}}\mathbb P_\beta[0\connect{S\:}y]p_{yz,\beta}.
\end{equation}
The sharp length $L_\beta$ is defined as follows:
\begin{equation}\label{eq: def L beta}
    L_\beta:=\inf\{k\ge1:\varphi_\beta(\Lambda_k)\le1/e^2\}.
\end{equation}
Exponential decay of the two-point function yields that $L_\beta$ is finite for $\beta<\beta_c$. It is infinite for $\beta=\beta_c$; see \cite{SimonInequalityIsing1980}.

We now turn to our first main result, which provides uniform upper bounds on the critical full-space and half-space two-point functions.

\begin{Thm}\label{thm:mainperco} Let $d>6$.
There exist $C,L_0>0$ such that for every $L\geq L_0$ and every $\beta\le\beta_c$ for which $L_\beta\geq L$,
\begin{align}
\label{eq:bound full plane perco}
\mathbb P_\beta[0\connect{}x]&\le \frac{C}{L^d}\left(\frac{L}{L\vee |x|}\right)^{d-2}\exp(-|x|/L_\beta) &\forall x\in (\mathbb Z^d)^*,\\
\label{eq:bound half plane perco} \mathbb P_\beta[0\connect{\mathbb H\:}x]&\le \frac{C}{L^d}\left(\frac{L}{L\vee |x_1|}\right)^{d-1}\exp(-|x_1|/L_\beta) &\forall x\in \mathbb H^*.
\end{align} 
\end{Thm}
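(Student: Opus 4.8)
The plan is to establish the two bounds simultaneously through a bootstrap (or continuity) argument on a well-chosen family of ``mean-field'' quantities, following the general philosophy that, above the upper critical dimension, the two-point function is governed by the random walk Green's function. Concretely, I would introduce the random walk step distribution $D(x):=J_{0,x}$ and its Green's function $G_\beta:=\sum_{n\geq 0}(\beta D)^{*n}$, which is known to satisfy, for $d>6$ and $L$ large, the ``infrared'' bounds $G_\beta(x)\asymp L^{-d}(L/(L\vee|x|))^{d-2}$ up to the correlation length, and the analogous half-space estimate with exponent $d-1$. I would then define $f_1(\beta)$ and $f_2(\beta)$ as the smallest constants such that $\mathbb P_\beta[0\connect{}x]\le f_1(\beta)\,\Theta(x)$ and $\mathbb P_\beta[0\connect{\mathbb H\:}x]\le f_2(\beta)\,\Theta_{\mathbb H}(x)$ for all relevant $x$, where $\Theta,\Theta_{\mathbb H}$ are the right-hand sides of \eqref{eq:bound full plane perco}--\eqref{eq:bound half plane perco} with the constant $C$ removed, and the exponential factors keyed to $L_\beta$. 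The goal becomes: show $f_1,f_2$ are bounded by an absolute constant $C$ uniformly over the admissible range of $\beta$.

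The key steps, in order, would be: (i) a \emph{deterministic input}: verify the claimed estimates on the random walk Green's function (full-space and half-space), including the near-critical exponential correction at scale $L_\beta$ — this is a Fourier-analytic / local CLT computation for the spread-out step distribution. (ii) A \emph{combinatorial/probabilistic expansion}: derive a ``one-step'' or tree-graph-type inequality expressing $\mathbb P_\beta[0\connect{}x]$ in terms of a direct term $\beta(D*\text{(two-point function)})(x)$ plus correction terms built from triangle-like diagrams $\sum_{u,v}\tau(u)\tau(v-u)\tau(x-v)$ and their variants; the BK inequality and the finite range of $J$ are the tools here. (iii) \emph{Bounding the correction diagrams}: using the a priori bound $\tau\le f_1\Theta$, show each correction diagram is at most $(\text{small})\cdot f_1\cdot\Theta(x)$, where ``small'' means $O(\beta_c^{-1}L^{-(d-6)})$ or an $L$-uniform constant times the open triangle, which can be made small by taking $L$ large (this is where $d>6$ enters — the triangle condition). (iv) \emph{Closing the bootstrap}: combining (i)--(iii) yields $f_1\le C_0 + \epsilon(L) f_1$ and similarly for $f_2$ (the half-space argument additionally requires a reflection/gluing step to compare half-space connections to full-space ones, plus control of the excursion away from the boundary hyperplane), hence $f_1,f_2\le 2C_0$ for $L$ large; a continuity argument in $\beta$ (the functions $f_i$ are, say, right-continuous and satisfy the crude bound $f_i<\infty$ for $\beta<\beta_c$, with the correct value at $\beta$ small) upgrades this to all admissible $\beta$.

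The main obstacle I anticipate is step (iii)--(iv) in the \emph{half-space} setting: unlike the full-space case, the half-space two-point function is not translation invariant, so one cannot simply Fourier transform, and the diagrammatic estimates must be performed with the spatial inhomogeneity kept explicit — in particular one needs the half-space random walk Green's function bound with the \emph{correct} exponent $d-1$ (reflecting that a walk constrained to a half-space pays an extra factor $|x_1|/L$ of ``entropic repulsion''), and one must show the percolation correction diagrams, restricted to $\mathbb H$, inherit this improved decay rather than only the full-space rate. Relatedly, incorporating the near-critical exponential factor $\exp(-|x|/L_\beta)$ uniformly — rather than just treating $\beta=\beta_c$ — requires that the sharp length $L_\beta$ interact well with the diagrammatic sums; one typically needs a submultiplicativity or regularity property of $\varphi_\beta$ and a comparison between $L_\beta$ and the random walk correlation length, so that the exponential weight can be ``factored through'' convolutions (e.g.\ $\Theta*\Theta\lesssim \Theta$ with the weights). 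A secondary technical point is ensuring the hypothesis $L_\beta\ge L$ is exactly what is needed to absorb boundary/short-distance effects, i.e.\ that below scale $L$ the bound is trivial and above scale $L$ the random walk asymptotics have kicked in.
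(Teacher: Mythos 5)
Your plan correctly identifies the broad shape of the argument — a bootstrap/continuity argument in $\beta$, driven by random-walk Green's function comparisons and by making the finite-triangle error small using $d>6$ and large $L$ — but the mechanism that closes the bootstrap is missing, and in fact the paper takes a materially different route at the key junctures. You write the one-step inequality as ``$\mathbb P_\beta[0\connect{}x]\approx\beta(D*\tau)(x)+\text{corrections}$'' and then say the corrections can be bounded by $(\text{small})\cdot f_1\cdot\Theta(x)$; but a one-directional Simon--Lieb/BK upper bound has \emph{no} correction terms, so to separate a main term from an error one needs a matching \emph{lower} bound. The paper's central new ingredient is exactly such a \emph{reversed} Simon--Lieb inequality (Lemma~\ref{Lem: SL lower bound}) whose error amplitude $E_\beta(S,\Lambda,o,x)$ is then bounded (Proposition~\ref{lem: error amplitude perco}) by $O(L^{-d})$-type diagrams. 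This is what yields the quantitative statement $\varphi_\beta(B)\le 1+K/L^d$ for all generalised blocks $B$ (Proposition~\ref{prop:bound phi perco}), which is the engine of the bootstrap; it has no analogue in your sketch. Nothing in your step (iii), as written, would produce a strict improvement of the a priori constant.

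A second structural gap: the paper does not bootstrap a single pointwise constant $f_1$ for the full-space two-point function. It bootstraps \emph{two} distinct half-space quantities simultaneously, an $\ell^1$ bound $\psi_\beta(\mathbb H_n)\le\mathbf C/L$ and an $\ell^\infty$ bound $\mathbb P_\beta[0\connect{\mathbb H}x]\le\mathbf C L^{-d}(L/(L\vee |x_1|))^{d-1}$, and the full-space bound is \emph{deduced} from the half-space one (Lemma~\ref{lem: full plane 2pt function out of halfplane one perco}), not the other way around. The reason is precisely the difficulty you flag for the half-space: the improved $\ell^1$ bound (which follows fairly directly from $\varphi_\beta(B)\le 1+K/L^d$ plus Lemma~\ref{lem: small values 1 perco}) must be upgraded to a pointwise bound, and this requires a genuine \emph{regularity/gradient estimate} on $\mathbb P_\beta[\,\cdot\connect{\mathbb H_n}x]$ (Proposition~\ref{prop:regularity perco}), proved by iterating the reversed SL along a coupled random walk and using a Harnack-type inequality. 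This step — turning an averaged bound into a supremum bound — is absent from your outline, and without it no amount of triangle-condition smallness will give the $\ell^\infty$ improvement you need to close the bootstrap. Finally, you invoke Fourier/local-CLT ``infrared'' bounds; the paper deliberately works in $x$-space (the half-space two-point function is not translation invariant, so Fourier is awkward) and uses random-walk couplings with Brownian motion (Appendix~\ref{appendix:rw}) in their place. Your framework is closer in spirit to the lace-expansion bootstrap of Slade/Hara--Slade, which the introduction of this paper is explicitly proposing an alternative to.
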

Using the lace expansion, it is possible to obtain exact asymptotics of the critical full-space two-point function \cite{HaraSladevdHofstad2003PercoSO,LiuSlade2023Spreadout}: if $d>6$, there exist $D,L_0>0$ such that if $L\geq L_0$,
\begin{equation}\label{eq:intro asymptotic}
	\mathbb P_{\beta_c}[0\connect{}x]=\frac{D}{L^2|x|_2^{d-2}}(1+o(1)),
\end{equation}
where $|\cdot|_2$ is the Euclidean norm on $\mathbb R^d$ and $o(1)$ tends to $0$ as $|x|_2$ tends to infinity. However, the best uniform (in $x$) upper bound that this method yield is the following: if $d>6$, for every $\epsilon>0$, there exists $C>0$ such that for all $L$ large enough and $\beta\leq \beta_c$,
\begin{equation}
	\mathbb P_\beta[0\connect{}x]\leq \frac{C}{L^{2-\epsilon}|x|^{d-2}} \qquad \forall x\in(\mathbb Z^d)^*.
\end{equation}
Hence, Theorem \ref{thm:mainperco} is an improvement of the bound obtained using the lace expansion. Half-space up-to-constants critical estimates have been obtained in \cite{ChatterjeeHanson2020Halfspace} using the upper and lower bounds on $\mathbb P_{\beta_c}[0\connect{}x]$ induced by \eqref{eq:intro asymptotic}. 
 Near-critical estimates have also been derived in \cite{ChatterjeeHansonSosoe2023subcritical,HutchcroftMichtaSladePercolationTorusPlateau2023}. In these two papers, $L_\beta$ is replaced by $(\beta_c-\beta)^{-1/2}$, but we will in fact prove that $L_\beta\asymp (\beta_c-\beta)^{-1/2}$ (where $\asymp$ means that the ratio of the quantities is bounded away from $0$ and $\infty$ by two constants that are independent of $\beta$), see Corollary \ref{coro: perco susc corr length}.

Let us also mention that in the long-range setup, sharp estimates on the full-space two-point function have recently been derived using non-perturbative arguments, see \cite{hutchcroft2022sharp,hutchcroft2024pointwise}.

A direct consequence of Theorem \ref{thm:mainperco} is the finiteness at criticality of the so-called \emph{triangle diagram}, which plays a central role in the study of the mean-field regime of Bernoulli percolation, see \cite{AizenmanNewmanTreeGraphInequalities1984,HaraSlade1990Perco,BarskyAizenmanCriticalExponentPercoUnderTriangle1991,SladeSaintFlourLaceExpansion2006}. In particular, its finiteness implies that various critical exponents exist and take their mean-field value.
\begin{Cor}[Finiteness of the triangle diagram]\label{cor: finiteness triangle} Let $d>6$. There exists $L_0=L_0(d)>0$ such that for every $L\geq L_0$,
\begin{equation}
	\nabla(\beta_c):=\sum_{x,y\in \mathbb Z^d}\mathbb P_{\beta_c}[0\connect{}x]\mathbb P_{\beta_c}[x\connect{}y]\mathbb P_{\beta_c}[y\connect{}0]<\infty.
\end{equation}
\end{Cor}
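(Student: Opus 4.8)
The plan is to deduce Corollary \ref{cor: finiteness triangle} directly from the pointwise bound \eqref{eq:bound full plane perco} of Theorem \ref{thm:mainperco} by a routine summation. First I would observe that the hypothesis $L_\beta\geq L$ is automatically satisfied at $\beta=\beta_c$, since $L_{\beta_c}=+\infty$ by the discussion following \eqref{eq: def L beta}. Therefore, for $L\geq L_0$ with $L_0$ as in Theorem \ref{thm:mainperco}, we have the uniform bound
\begin{equation*}
\mathbb P_{\beta_c}[0\connect{}x]\leq \frac{C}{L^d}\Big(\frac{L}{L\vee|x|}\Big)^{d-2}=\frac{C}{L^2(L\vee|x|)^{d-2}}\qquad\forall x\in(\mathbb Z^d)^*,
\end{equation*}
and trivially $\mathbb P_{\beta_c}[0\connect{}0]=1$. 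Writing $G(x):=\mathbb P_{\beta_c}[0\connect{}x]$ and using translation invariance, the triangle diagram equals $\sum_{x,y}G(x)G(y-x)G(y)=(G*G*G)(0)$, so it suffices to show $\sum_x G(x)(G*G)(x)<\infty$.

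The key step is then to control the convolution $(G*G)(x)$ using the bound above. Splitting the sum defining $(G*G)(x)=\sum_z G(z)G(x-z)$ according to whether $|z|\leq |x|/2$ or $|x-z|\leq|x|/2$ (one of the two must hold), and using $G(z)\leq C L^{-2}(L\vee|z|)^{-(d-2)}$, a standard dyadic estimate gives, for $d>6$,
\begin{equation*}
(G*G)(x)\leq \frac{C'}{L^4}\cdot\frac{1}{(L\vee|x|)^{d-4}},
\end{equation*}
where the exponent $d-4$ arises because $\sum_{|z|\leq N}(L\vee|z|)^{-(d-2)}\asymp L^{-(d-2)}\cdot L^d + (\text{convergent tail}) \asymp L^{2}$ for the bulk contribution near the origin while the "far" contribution of order $|x|$ from the other factor produces the $(L\vee|x|)^{-(d-4)}$ decay (here one uses $d-2>d/2$, i.e. $d>4$, to ensure the sum over the annulus $|z|\asymp|x|$ of $G(z)$ alone is dominated by its largest term region, and $d>6$ is not yet needed). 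Then
\begin{equation*}
\sum_{x\in\mathbb Z^d}G(x)(G*G)(x)\leq \frac{C''}{L^6}\sum_{x\in\mathbb Z^d}\frac{1}{(L\vee|x|)^{2(d-3)}},
\end{equation*}
and the last sum is finite precisely when $2(d-3)>d$, i.e. $d>6$, which is our hypothesis; its value is $\asymp L^{-(d-6)}\cdot L^d=L^{6-2(d-3)+d}$... more simply $\sum_x (L\vee|x|)^{-2(d-3)}\asymp L^d\cdot L^{-2(d-3)}+\text{(convergent)}\asymp L^{6-d}$, so $\nabla(\beta_c)\leq C L^{6-d}\cdot L^{-6}\cdot L^{6}$ is finite (and in fact small for large $L$, though we only need finiteness here).

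The only mild obstacle is bookkeeping in the convolution estimate: one must carefully handle the region near the origin (where the $L\vee|\cdot|$ truncation matters and the summand is essentially constant equal to $L^{-(d-2)}$ over $\Lambda_L$, contributing a factor $\asymp L^2$) separately from the polynomial-decay regime, and verify that the exponential factors $\exp(-|x|/L_\beta)$, which at $\beta_c$ are all equal to $1$, cause no issue. Since everything is summable for $d>6$, no delicate cancellation is required; the argument is a direct Newtonian-potential-type convolution bound, entirely analogous to the classical deduction of triangle finiteness from the infrared bound, and I would present it compactly.
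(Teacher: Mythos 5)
Your approach is exactly the one the paper intends. The corollary is stated as a ``direct consequence'' of Theorem~\ref{thm:mainperco} with no separate proof given, and your argument is the natural one: plug $G(x)=\mathbb P_{\beta_c}[0\connect{}x]$ into the triangle diagram, use the pointwise bound~\eqref{eq:bound full plane perco} at $\beta_c$ (where $L_{\beta_c}=\infty$ and the exponential factor is $1$), and bound the resulting triple sum via a convolution estimate. The convolution estimate you invoke is exactly Proposition~\ref{prop: convolution estimate} in Appendix~\ref{appendix:convolution}, so the method matches the paper's.

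One bookkeeping slip: your stated bound $(G*G)(x)\leq C'L^{-4}(L\vee|x|)^{-(d-4)}$ cannot hold at $x=0$, since $(G*G)(0)\geq G(0)^2=1$ whereas your right-hand side there is of order $L^{-d}$. The correct form, as in Proposition~\ref{prop: convolution estimate}, is
\begin{equation*}
(G*G)(x)\leq A\Big(\mathds{1}_{x=0}+\mathds{1}_{x\neq 0}\,\frac{1}{L^4}\Big(\frac{1}{L\vee|x|}\Big)^{d-4}\Big),
\end{equation*}
and after also accounting for the $G(0)=1$ term in the outer sum one gets $\nabla(\beta_c)\geq G(0)^3=1$, not $O(L^{-d})$. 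Thus your parenthetical claim that $\nabla(\beta_c)$ is ``small for large $L$'' is wrong. This does not affect the finiteness assertion of the corollary, since the diagonal contributions are $O(1)$ and the off-diagonal part is bounded exactly as you computed; only the secondary quantitative remark needs to be dropped.
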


The second main result of this paper provides lower bounds matching the upper bound of Theorem \ref{thm:mainperco}. We introduce, 
\begin{equation}
	\beta_0:=\inf\{\beta \geq 0 : \varphi_\beta(\{0\})=1\}.
\end{equation}
\begin{Thm}\label{thm:main2perco} Let $d>6$. There exist $c,L_0$ such that for every $L\geq L_0$, every $\beta_0\leq\beta\leq \beta_c$, and every $x\in \mathbb Z^d\setminus \{0\}$ with $|x|\leq cL_\beta$,
	\begin{align}
\label{eq:lowerbound full plane perco}
\mathbb P_\beta[0\connect{}x]&\ge \frac{c}{L^d}\left(\frac{L}{L\vee |x|}\right)^{d-2}, &\\
\label{eq:lowerbound half plane perco} \mathbb P_\beta[0\connect{\mathbb H\:}x]&\ge \frac{c}{L^d}\left(\frac{L}{L\vee |x_1|}\right)^{d-1}, &\text{provided that }x_1=|x|.
\end{align} 
\end{Thm}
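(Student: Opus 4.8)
\textbf{Proof proposal for Theorem \ref{thm:main2perco}.}

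The plan is to obtain the matching lower bounds by combining the upper bounds of Theorem \ref{thm:mainperco} with a second-moment / regeneration argument. First, I would establish a \emph{global} lower bound on the susceptibility-type quantity. Since $\beta\geq \beta_0$ forces $\varphi_\beta(\{0\})\geq 1$, a BK-type or Simon–Lieb-type inequality run in the reverse direction shows that the truncated two-point function sums have order at least $L_\beta^2$ on scales below $L_\beta$; more precisely, $\sum_{|x|\le c L_\beta}\mathbb P_\beta[0\connect{}x]\gtrsim L_\beta^2/L^d$ up to constants. The key point is to then localise this mass: using the upper bound \eqref{eq:bound full plane perco}, the contribution to this sum coming from any dyadic shell $\{|x|\sim 2^k\}$ with $2^k\le c L_\beta$ is at most $C(2^k/L)^2\cdot L^{-d}\cdot(\text{number of shells})^{-1}$-type bounds do not immediately suffice, so instead I would argue that the mass cannot concentrate on small scales (again by the upper bound: the ball of radius $r$ carries mass at most $Cr^2/L^d$), hence there is a definite fraction of the mass on each scale $r\le cL_\beta$, which already gives $\sum_{|x|\sim r}\mathbb P_\beta[0\connect{}x]\gtrsim r^2/L^d$.

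The second step upgrades this shell estimate to a pointwise bound. This is where a \emph{regeneration} or \emph{gluing} argument enters. Given the shell estimate and a suitable a priori diagrammatic bound (finiteness of a truncated triangle below scale $L_\beta$, which follows from Theorem \ref{thm:mainperco} exactly as in Corollary \ref{cor: finiteness triangle}), I would show that $\mathbb P_\beta[0\connect{}x]$ cannot be much smaller than its average over the shell $|x|\sim r$: if it were, a tree-graph / BK surgery decomposing a connection $0\connect{}y$ for $y$ in a slightly larger shell through a ``typical'' point $x$ would force $\sum_y\mathbb P_\beta[0\connect{}y]$ to be too small. Equivalently, one runs the standard mean-field lower-bound machinery (as in Aizenman–Newman and Barsky–Aizenman): the finite triangle condition and a lower bound on the bubble/first moment give $\mathbb P_\beta[0\connect{}x]\gtrsim (|J*J|^{*k})(x)$-type bounds for $k\sim |x|/L$ steps, which for $|x|\le cL_\beta$ evaluates (by a local central limit heuristic for the $L$-spread-out step distribution, uniformly for $L\ge L_0$) to $c L^{-d}(L/|x|)^{d-2}$.

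For the half-space bound \eqref{eq:lowerbound half plane perco}, the strategy is the same but one restricts all connections to $\mathbb H$ and uses the half-space upper bound \eqref{eq:bound half plane perco} together with a reflection/FKG argument to produce the analogous shell estimate $\sum_{y\in\mathbb H,\,y_1\sim r}\mathbb P_\beta[0\connect{\mathbb H\:}y]\gtrsim r/L^{d-1}$; the gluing step then proceeds along the $\mathbf{e}_1$-direction, where the walk representation has an extra factor $r$ of entropic cost, producing the $(L/|x_1|)^{d-1}$ power. The restriction $x_1=|x|$ ensures the relevant point lies ``deep'' in the half-space so that the local limit estimate applies without boundary distortion.

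\textbf{Main obstacle.} The delicate point is the gluing/regeneration step: turning an averaged (summed over a shell) lower bound into a genuinely pointwise one, uniformly in $L\ge L_0$ and in $\beta\le\beta_c$, without already knowing sharp asymptotics. One must control the error terms in the random-walk (Ornstein–Zernike-type) representation — in particular the spread-out step distribution's local limit behaviour on scales between $L$ and $L_\beta$ — and ensure the diagrammatic corrections coming from self-intersections are a genuine $o(1)$ correction rather than merely bounded; this is precisely where $d>6$ and large $L$ are used, and where the bulk of the technical work will lie.
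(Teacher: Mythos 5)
Your high-level plan---derive an averaged (shell) lower bound, then upgrade it to a pointwise one---does match the skeleton of the paper's proof, but both of the two crucial steps as you present them have genuine gaps.

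\textbf{The shell estimate does not follow from the argument you sketch.} You argue: total mass below scale $L_\beta$ is large, and the ball of radius $r$ carries mass at most of order $r^2/L^2$, hence ``there is a definite fraction of the mass on each scale.'' This is a non~sequitur. The upper bound \eqref{eq:bound full plane perco} makes the shell mass at scale $r$ satisfy $\sum_{|x|\sim r}\mathbb P_\beta[0\connect{}x]\lesssim r^2/L^2$; these shell masses grow geometrically in $r$ and their sum is dominated by the top scale. Nothing prevents the entire susceptibility from being carried by the shells near $r\sim L_\beta$, with the shell mass at a fixed small scale $r\ll L_\beta$ being much smaller than $r^2/L^2$. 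A global mass lower bound together with a shell-by-shell upper bound cannot by itself give a shell-by-shell lower bound. The paper avoids this entirely: it works with the \emph{half-space boundary} quantity $\psi_\beta(\mathbb H_n)$ at fixed scale $n$, and Lemma~\ref{lem: petit lemme} extracts the lower bound $\psi_\beta(\mathbb H_n)\geq c/L$ for $n\leq L_\beta-1$ directly from $\varphi_\beta(\Lambda_n)\geq e^{-2}$ (the definition of $L_\beta$), combined with a regularity estimate to compare $\psi_\beta(\mathbb H_{n-\ell})$ to $\psi_\beta(\mathbb H_n)$; no global susceptibility information is used.

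\textbf{The gluing/regeneration step is not of Aizenman--Newman type.} You propose upgrading with tree-graph surgery and the finite triangle condition, citing Aizenman--Newman and Barsky--Aizenman. That machinery gives bounds on $\chi(\beta)$ and related summed quantities; it does not produce pointwise lower bounds on $\mathbb P_\beta[0\connect{}x]$ of the form $c L^{-d}(L/|x|)^{d-2}$, and I am not aware of any way to extract pointwise lower bounds from triangle finiteness alone. The paper's upgrade mechanism is entirely different: Proposition~\ref{prop:regularity perco 2} is a Harnack-type estimate proved by running the reversed Simon--Lieb inequality (Lemma~\ref{Lem: SL lower bound}) with $S$ being translates of mesoscopic boxes, thereby representing the two-point function as a harmonic-like function of a coarse-grained random walk; this walk is then coupled to a Brownian motion via Zaitsev's KMT-type theorem (Appendix~\ref{appendix:uniform rw estimates}), and the true Harnack inequality for Brownian motion is transferred back. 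The error terms are controlled not by finiteness of the triangle, but by the quantitative bound $E_\beta(B)\leq K/L^d$ from Proposition~\ref{lem: error amplitude perco}---it is essential that this error is \emph{small} in $L$, not merely bounded. Your closing remarks about ``controlling error terms in the random-walk representation'' point in the right direction, but without the Harnack/coupling idea (or a concrete substitute) this step has no worked-out path to completion; and the ``local central limit heuristic for the $L$-spread-out step distribution, uniformly for $L\geq L_0$'' would itself be a nontrivial ingredient that your proposal assumes rather than supplies.

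Finally, note that the paper derives the full-space lower bound \eqref{eq:lowerbound full plane perco} from the half-space one by a further application of the reversed Simon--Lieb inequality in a layered decomposition over $\mathbb H_k\subset\mathbb H_{k+1}$ with a lengthy explicit bound on the accumulated error terms (Lemma~\ref{lem:lower below full-space perco}), rather than by a direct full-space gluing argument.
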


We now describe how to recover the mean-field behaviour of the \emph{susceptibility} defined by
\begin{equation}
	\chi(\beta):=\sum_{x\in \mathbb Z^d}\mathbb P_\beta[0\connect{}x],
\end{equation}
and the \emph{correlation length} $\xi_\beta$ defined by
\begin{equation}\label{eq:def corr length paper perco}
	\xi_\beta^{-1}:=\lim_{n\rightarrow \infty}-\frac{1}{n}\log \mathbb P_\beta[0\connect{}n\mathbf{e}_1].
\end{equation}

\begin{Cor}\label{coro: perco susc corr length} Let $d>6$. There exist $c,C,L_0$ such that for every $L\geq L_0$ and every $\beta_0\leq \beta<\beta_c$ for which $L_\beta\geq L$,
\begin{align}
	c(\beta_c-\beta)^{-1}&\leq\chi(\beta)\leq C(\beta_c-\beta)^{-1},\label{eq:suscperco}
	\\cL(\beta_c-\beta)^{-1/2}&\leq L_\beta\leq CL(\beta_c-\beta)^{-1/2}.\label{eq:lbetaperco}
\end{align}
Moreover,
\begin{equation}\label{eq:xibetaperco}
	cL\frac{(\beta_c-\beta)^{-1/2}}{\log(1/(\beta_c-\beta))}\le \xi_\beta\le CL(\beta_c-\beta)^{-1/2}.
\end{equation}
\end{Cor}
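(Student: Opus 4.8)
The plan is to derive Corollary~\ref{coro: perco susc corr length} from Theorems~\ref{thm:mainperco} and~\ref{thm:main2perco} together with standard differential-inequality technology for Bernoulli percolation. The key observation is that all three displayed statements should follow once one knows that $L_\beta\asymp (\beta_c-\beta)^{-1/2}$ (up to the logarithmic defect in the lower bound on $\xi_\beta$), so the bulk of the work is to establish \eqref{eq:lbetaperco}, after which \eqref{eq:suscperco} and \eqref{eq:xibetaperco} come almost for free by summing the two-point function bounds.

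First I would prove \eqref{eq:suscperco} as a function of $L_\beta$: summing the upper bound \eqref{eq:bound full plane perco} over $x\in\mathbb Z^d$ gives, since $d>6$ makes $\sum_x (L\vee|x|)^{-(d-2)}$ convergent at the polynomial scale and the exponential factor cuts things off at scale $L_\beta$, an estimate of the form $\chi(\beta)\le C\big(1+ (L_\beta/L)^2\big)\le C'(L_\beta/L)^2$ (using $L_\beta\ge L$); similarly, summing the lower bound \eqref{eq:lowerbound full plane perco} over the ball $|x|\le cL_\beta$ yields $\chi(\beta)\ge c(L_\beta/L)^2$. Hence $\chi(\beta)\asymp (L_\beta/L)^2$. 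It then remains to relate $\chi(\beta)$ to $(\beta_c-\beta)^{-1}$. The upper bound $\chi(\beta)\le C(\beta_c-\beta)^{-1}$ is the classical mean-field bound following from the Aizenman--Newman tree-graph inequality $\frac{d}{d\beta}\chi \le C\chi^2 \nabla$-type bound, or more directly from $\frac{d}{d\beta}\log\chi\le C$ once the triangle diagram is finite (Corollary~\ref{cor: finiteness triangle}); integrating from $\beta$ to $\beta_c$ and using $\chi(\beta_c)=\infty$ gives $\chi(\beta)\ge c(\beta_c-\beta)^{-1}$ as well, while the reverse inequality $\chi(\beta)^{-1}$ being Lipschitz and nonincreasing with nonzero derivative near $\beta_c$ gives the matching upper bound. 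Combining, $\chi(\beta)\asymp(\beta_c-\beta)^{-1}$, and together with $\chi(\beta)\asymp (L_\beta/L)^2$ this is exactly \eqref{eq:lbetaperco}.

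For \eqref{eq:xibetaperco}, the upper bound $\xi_\beta\le CL(\beta_c-\beta)^{-1/2}$ is immediate: \eqref{eq:bound full plane perco} evaluated at $x=n\mathbf e_1$ gives $\mathbb P_\beta[0\connect{}n\mathbf e_1]\le \exp(-n/L_\beta)\cdot(\text{polynomial in }n)$, so $\xi_\beta\le L_\beta\asymp L(\beta_c-\beta)^{-1/2}$ by \eqref{eq:lbetaperco}. For the lower bound, one chains the lower bound \eqref{eq:lowerbound full plane perco} along the segment $0,m\mathbf e_1,2m\mathbf e_1,\dots$ with $m\asymp L_\beta$ a fixed fraction of $L_\beta$ (so that $|x|\le cL_\beta$ is respected at each step and the polynomial factor is a constant $c/L^d\cdot(L/L_\beta)^{d-2}$): by the BK/FKG-type argument (disjoint occurrence of the connections between consecutive points), $\mathbb P_\beta[0\connect{}n\mathbf e_1]\ge \big(c (L/L_\beta)^{d-2}L^{-d}\big)^{n/m}$ roughly, whence $\xi_\beta^{-1}\le \frac{1}{m}\log\big((L_\beta/L)^{d-2}L^d/c\big)\le \frac{C\log(L_\beta/L)}{L_\beta}$, and using $L_\beta\asymp L(\beta_c-\beta)^{-1/2}$ one gets $\xi_\beta\ge cL\frac{(\beta_c-\beta)^{-1/2}}{\log(1/(\beta_c-\beta))}$. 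The logarithm is exactly the price paid for the $(L\vee|x|)^{-(d-2)}$ polynomial decay in the two-point function lower bound.

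The main obstacle I anticipate is establishing the \emph{upper} bound $\chi(\beta)\le C(\beta_c-\beta)^{-1}$ rigorously from the finiteness of the triangle, i.e. controlling $\frac{d}{d\beta}\chi(\beta)$ from above by $C\chi(\beta)^2$. This is the standard consequence of Aizenman--Newman-type differential inequalities, but it requires that the triangle diagram be uniformly bounded on $[\beta_0,\beta_c)$, not merely finite at $\beta_c$; monotonicity of $\mathbb P_\beta[0\connect{}x]$ in $\beta$ and Corollary~\ref{cor: finiteness triangle} give exactly that, so the argument closes, but care is needed to phrase the differential inequality for $\beta<\beta_c$ and integrate up to $\beta_c$ using $\lim_{\beta\uparrow\beta_c}\chi(\beta)=+\infty$ (itself a consequence of continuity of the percolation probability / sharpness). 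A secondary technical point is verifying that the constant $c$ in Theorem~\ref{thm:main2perco} can be taken so that a \emph{fixed} fraction $m$ of $L_\beta$ is both $\le cL_\beta$ and large enough that the per-step polynomial factor does not accumulate to overwhelm the exponential rate; this is where the requirement $L_\beta\ge L$ and $\beta\ge\beta_0$ is used.
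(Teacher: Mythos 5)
Your derivation of \eqref{eq:suscperco} and \eqref{eq:lbetaperco} matches the paper exactly: combine $\chi(\beta)\asymp(L_\beta/L)^2$ (from summing the two-point function estimates of Theorems~\ref{thm:mainperco} and~\ref{thm:main2perco}) with the classical Aizenman--Newman consequence of $\nabla(\beta_c)<\infty$ that $\chi(\beta)\asymp(\beta_c-\beta)^{-1}$. For \eqref{eq:xibetaperco} you take a genuinely different route. The paper gets $\xi_\beta\le L_\beta$ directly from \eqref{eq:bound full plane perco} and the definition of $\xi_\beta$, and for the lower bound it invokes \cite[Proposition~6.47]{GrimmettPercolation1999} to write $\varphi_\beta(\Lambda_n)\le C_1 n^{d-1}e^{-n/\xi_\beta}$, which combined with $\varphi_\beta(\Lambda_{L_\beta-1})\ge e^{-2}$ gives $L_\beta\le C_2\xi_\beta\log\xi_\beta$ and hence the logarithmically degraded lower bound on $\xi_\beta$. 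You instead chain the \emph{lower} bound \eqref{eq:lowerbound full plane perco} along a sequence of points spaced $m\asymp L_\beta$ apart, giving $\mathbb P_\beta[0\connect{}n\mathbf e_1]\ge (cL^{-2}m^{-(d-2)})^{n/m}$ and hence $\xi_\beta\ge cL_\beta/\log(L_\beta/L)$; this also yields the correct answer. Both routes work and have the same logarithmic loss; the paper's uses only Theorem~\ref{thm:mainperco}, while yours additionally uses Theorem~\ref{thm:main2perco}. One small inaccuracy in your write-up: for the chaining lower bound you should invoke FKG (or simply transitivity of connectivity plus positive correlations), not BK/disjoint occurrence, since BK bounds probabilities of disjoint occurrences from \emph{above} and points the wrong way for a lower bound; disjointness of the connecting paths is not needed here.
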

\begin{proof} Let $d>6$. Let $L_0$ be given by Corollary \ref{cor: finiteness triangle}. It is classical (see \cite{AizenmanNewmanTreeGraphInequalities1984}) that $\nabla(\beta_c)<\infty$ implies the existence of $c,C>0$ such that, for $\beta<\beta_c$,
\begin{equation}
	c(\beta_c-\beta)^{-1}\leq \chi(\beta)\leq C(\beta_c-\beta)^{-1}.
\end{equation}
Combining Theorems \ref{thm:mainperco} and \ref{thm:main2perco}, we additionally obtain that $\chi(\beta)\asymp (L_\beta/L)^2$ when $\beta_0\leq \beta<\beta_c$ and $L_\beta\geq L$. This gives \eqref{eq:suscperco}--\eqref{eq:lbetaperco}.

It remains to observe to prove \eqref{eq:xibetaperco}. We use the same proof as in \cite[Theorem~1.6]{DuminilPanis2024newLB}: combining \eqref{eq:bound full plane perco} and \eqref{eq:def corr length paper perco} readily implies $\xi_\beta\leq L_\beta$ for $ \beta<\beta_c$; using \cite[Proposition~6.47]{GrimmettPercolation1999} implies that $\varphi_\beta(\Lambda_n)\leq C_1 n^{d-1}e^{-n/\xi_\beta}$ (where $C_1=C_1(d)>0$) for every $\beta<\beta_c$, and every $n\geq 1$. This gives that $L_\beta\leq C_2 \xi_\beta \log \xi_\beta$ (where $C_2=C_2(d)>0$) and the result.
\end{proof}

\subsection{Strategy of the proof of Theorem \ref{thm:mainperco}}\label{section: strategy}
Recall the van den Berg--Kesten (BK) inequality (see \cite[Section~2.3]{GrimmettPercolation1999}) stating that for two increasing events (i.e.~events that are stable by the action of opening edges) $A$ and $B$,
\begin{equation}\label{eq:BK ineq}
\mathbb P_\beta[A\circ B]\le \mathbb P_\beta[A]\mathbb P_\beta[B].\tag{BK}
\end{equation}

The strategy of the proof is similar to that in \cite{DumPan24WSAW}, though fundamental new difficulties arise due to the presence of non-nearest neighbour edges. The starting point is the following inequality, which is a classical consequence of the BK inequality. 

\begin{Lem}\label{Lem: SL upper bound} Let $d\geq 2$, $\beta>0$, $o\in S\subset \Lambda$, and $x\in \Lambda$. Then,
\begin{equation}\label{eq:SLperco}
	\mathbb P_{\beta}[o\connect{\Lambda\:}x]\le \mathbb P_{\beta}[o\connect{S\:}x]+ \sum_{\substack{y\in S\\ z\in \Lambda\setminus S}}\mathbb P_{\beta}[o\connect{S\:}y]p_{yz,\beta}\mathbb P_{\beta}[z\connect{\Lambda\:}x].
\end{equation}
\end{Lem}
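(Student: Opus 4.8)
The plan is to establish a single inclusion of events and then invoke \eqref{eq:BK ineq} term by term. Note that the three events $\{o\connect{S\:}y\}$, $\{yz\text{ is open}\}$ and $\{z\connect{\Lambda\:}x\}$ occurring below are all increasing, so \eqref{eq:BK ineq}, in its form for finitely many increasing events, bounds the probability of their disjoint occurrence by the product of their individual probabilities.

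\textbf{Step 1 (combinatorial core).} I would first prove that
\[
\{o\connect{\Lambda\:}x\}\ \subseteq\ \{o\connect{S\:}x\}\ \cup \bigcup_{\substack{y\in S\\ z\in\Lambda\setminus S}}\Big(\{o\connect{S\:}y\}\circ\{yz\text{ is open}\}\circ\{z\connect{\Lambda\:}x\}\Big).
\]
Given a configuration in $\{o\connect{\Lambda\:}x\}\setminus\{o\connect{S\:}x\}$, pick a \emph{self-avoiding} open path $\gamma=(v_0=o,v_1,\dots,v_n=x)$ in $\Lambda$. Since $o\in S$ but $o$ is not connected to $x$ within $S$, the vertices $v_0,\dots,v_n$ cannot all belong to $S$ (otherwise every edge of $\gamma$ would have both endpoints in $S$ and $\{o\connect{S\:}x\}$ would occur). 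Let $k:=\max\{j\ge0:v_i\in S\text{ for all }i\le j\}$; then $0\le k<n$, one has $v_0,\dots,v_k\in S$, and $z:=v_{k+1}\in\Lambda\setminus S$, while $y:=v_k\in S$. The edges $\{v_iv_{i+1}:0\le i\le k-1\}$ lie in $S$ and witness $\{o\connect{S\:}y\}$; the edge $yz$ is open; and $\{v_iv_{i+1}:k+1\le i\le n-1\}$ witnesses $\{z\connect{\Lambda\:}x\}$. Because $\gamma$ is self-avoiding, these three edge-sets are pairwise disjoint, which yields the claimed inclusion.

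\textbf{Step 2 (conclusion).} Taking probabilities in the inclusion of Step 1, applying the union bound over the (at most countable) set of admissible pairs $(y,z)$, and then \eqref{eq:BK ineq} to each summand, I would obtain
\[
\mathbb P_\beta[o\connect{\Lambda\:}x]\ \le\ \mathbb P_\beta[o\connect{S\:}x]+\sum_{\substack{y\in S\\ z\in\Lambda\setminus S}}\mathbb P_\beta[o\connect{S\:}y]\,p_{yz,\beta}\,\mathbb P_\beta[z\connect{\Lambda\:}x],
\]
which is \eqref{eq:SLperco}; when $S=\Lambda$ the sum is empty and the inequality is trivial.

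\textbf{Main obstacle.} The argument is short, and the only genuinely delicate point is the disjointness of the three witnessing edge-sets together with the fact that the exit edge $yz$ really has its far endpoint \emph{outside} $S$. A general open path from $o$ to $x$ may leave and re-enter $S$ many times, so "the first edge of $\gamma$ leaving $S$" is not well behaved for an arbitrary path; choosing $\gamma$ self-avoiding and taking $k$ to be the last index before $\gamma$ first exits $S$ resolves both points at once.
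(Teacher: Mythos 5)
Your proposal is correct and follows exactly the paper's route: both reduce the lemma to the event inclusion
\[
\{o\connect{\Lambda\:}x\}\setminus\{o\connect{S\:}x\}\subset\bigcup_{y\in S,\,z\in\Lambda\setminus S}\{o\connect{S\:}y\}\circ\{yz\text{ open}\}\circ\{z\connect{\Lambda\:}x\},
\]
obtained by tracking a self-avoiding open path until its first exit from $S$, and then conclude with the BK inequality and a union bound. The paper compresses this into the single phrase ``considering an open self-avoiding path from $o$ to $x$''; your Step~1 simply spells out that compression, and your remark on why self-avoidance matters (disjointness of the three witnessing edge sets and the well-definedness of the first exit edge) is exactly the point being elided.
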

A similar statement in the context of the Ising model is often referred to as the Simon--Lieb inequality, see \cite{SimonInequalityIsing1980,LiebImprovementSimonInequality}. For completeness, we include the proof immediately.
\begin{proof} Considering an open self-avoiding path from $o$ to $x$ gives
\begin{equation}
	\{o\connect{\Lambda\:}x\}\setminus \{o\connect{S\:}x\}\subset \bigcup_{\substack{y\in S\\ z\in \Lambda\setminus S}}\{o\connect{S\:}y\}\circ\{yz\textup{ is open}\}\circ \{z\connect{\Lambda\:}x\},
\end{equation}
which implies the result by applying \eqref{eq:BK ineq}.
\end{proof}

For $\beta>0$ and $k\geq 0$, define
\begin{equation}
	\psi_\beta(\mathbb H_k):=\sum_{x\in (\partial \mathbb H_k)^*}\mathbb P_{\beta}[0\connect{\mathbb H_k\:}x].
\end{equation}
We will use a bootstrap argument (following the original idea from \cite{Slade1987Diffusion}) and prove that an \emph{a priori} estimate on the half-space two-point function can be improved for sufficiently large $L$.
The idea will be to observe that the Simon--Lieb type inequality \eqref{eq:SLperco} can be coupled with a reversed version of the inequality--- see Lemma~\ref{Lem: SL lower bound} below--- to provide a good control on $\psi_\beta(\mathbb H_k)$, which can be interpreted as an $\ell^1$ estimate on the half-space two-point function at distance $k$. The point-wise, or $\ell^\infty$, half-space bound will follow from a \emph{regularity} estimate which allows to compare two-point functions for points that are close. Finally, we will deduce the full-space estimate from the half-space one (see Lemma \ref{lem: full plane 2pt function out of halfplane one perco}). The improvement of the a priori estimates will be permitted by classical random walk computations--- gathered in Appendix \ref{appendix:rw}--- involving the spread-out random walk defined as follows.
\begin{Def}\label{def: the random walk}\label{def: random walk J}
Define the random walk $(X_k^x)_{k\geq 0}$ started at $x\in \mathbb Z^d$ and of law $\mathbb P^{\rm RW}_{x}$ given by the step distribution:
\begin{equation}\label{eq:def random walk J}
	\mathbb P^{\rm RW}_{x}[X_1^x=y]:=J_{xy}.
\end{equation}
\end{Def}

To implement the scheme described in the previous paragraph, we introduce the following parameter $\beta^*$.

\begin{Def} Fix $d\geq 2$, $\bfC>1$, and $L\geq 1$. We define $\beta^*=\beta^*(\bfC,L)$ to be the largest real number in  $[0,2\wedge\beta_c]$ such that for every $\beta<\beta^*$, 
\begin{align}
\psi_\beta(\mathbb H_n)&< \frac{\bfC}{L} &\forall n\ge0,\label{eq:H_beta perco}\tag{$\ell^1_\beta$}
\\
 \mathbb P_{\beta}[0\connect{\mathbb H\:}x]&<\frac{\bfC}{L^d}\left(\frac{L}{L\vee |x_1|}\right)^{d-1} &\forall x\in\mathbb H^*.\label{eq:H_beta-' perco}\tag{$\ell^\infty_\beta$}
\end{align}
\end{Def}
It might be surprising to require $\beta^*\leq 2$. However, we will see that for $d>6$, one has $\beta_c= 1+O(L^{-d})$ as $L$ goes to infinity, so this bound is a posteriori harmless. Let us mention that the lace expansion allows to obtain an estimate of the form $\beta_c=1+C_1L^{-d}+O(L^{-(d+1)})$ for an explicit $C_1>0$, see \cite{vdHofstadSakai2005CriticalPoints}.

It could also be, a priori, that $\beta^*(\bfC,L)=0$. However, we will see in the lemma below that if $\bfC$ exceeds a large enough constant one may compare $\beta^*(\bfC,L)$ to $\beta_0$ defined by 
\begin{equation}\label{eq: def beta_0}
	\beta_0:=\min\{\beta\geq 0: \varphi_{\beta}(\{0\})=1\},
\end{equation}
and obtain that $\beta^*(\bfC,L)\ge1$.

\begin{Lem}\label{lem: intro lem about betastar} Let $d>2$. The following properties hold: 
\begin{enumerate}
	\item[$(i)$] for every $L\geq 1$, one has $1\leq \beta_0\leq 2$, and if $\beta \geq \beta_0$,
	\begin{equation}\label{eq: full volume times p}
	|\Lambda_L^*|p_\beta\geq 1;
\end{equation}
	\item[$(ii)$] for every $\bfC>0$, every $L\geq 1$, and every $\beta\leq \beta^*(\bfC,L)\vee \beta_0$,
	\begin{equation}\label{eq: bound volume times p}
	|\Lambda_L|p_{\beta}\leq 4;
\end{equation}
	\item[$(iii)$] there exists $\bfC_{\rm RW}\geq 64e^2$ such that, for every $\bfC>\mathbf C_{\rm RW}$, and every $L\geq 1$, $\beta^*(\bfC,L)\geq \beta_0$.
\end{enumerate}
 \end{Lem}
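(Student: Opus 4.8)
The plan is to handle $(i)$ and $(ii)$ by a direct computation and $(iii)$ by a random‑walk domination of the two‑point function that is valid below $\beta_0$.

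\emph{Parts $(i)$ and $(ii)$.} Write $N:=|\Lambda_L^*|$, so that $c_L=1/N$ and $p_\beta=1-e^{-\beta/N}$. Since $\mathbb P_\beta[0\connect{\{0\}\:}0]=1$ and $p_{0z,\beta}=p_\beta\mathds 1_{z\in\Lambda_L^*}$, the definition of $\varphi$ gives the identity $\varphi_\beta(\{0\})=|\Lambda_L^*|p_\beta=N(1-e^{-\beta/N})$, which is continuous, strictly increasing in $\beta$, and vanishes at $0$. Hence $\beta_0$ is the unique root of $N(1-e^{-\beta_0/N})=1$, i.e.\ $\beta_0=N\log\frac{N}{N-1}$. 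Using $1-t\le e^{-t}$ with $t=1/N$ gives $\log\frac{N}{N-1}\ge1/N$, so $\beta_0\ge1$; using $\log(1+t)\le t$ with $t=\tfrac1{N-1}$ gives $\beta_0\le\frac{N}{N-1}\le 2$ (as $N\ge 2$ for $d>2$, $L\ge 1$). Monotonicity of $\beta\mapsto\varphi_\beta(\{0\})$ then yields $|\Lambda_L^*|p_\beta=\varphi_\beta(\{0\})\ge\varphi_{\beta_0}(\{0\})=1$ for $\beta\ge\beta_0$, which is $(i)$. For $(ii)$, since $\beta^*(\bfC,L)\le 2$ by definition and $\beta_0\le 2$ by $(i)$, any $\beta\le\beta^*(\bfC,L)\vee\beta_0$ satisfies $\beta\le 2$, hence $|\Lambda_L|p_\beta=(N+1)(1-e^{-\beta/N})\le(N+1)\cdot\frac2N\le 4$.

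\emph{Part $(iii)$: reduction and the random‑walk bound.} It suffices to find a constant $\bfC_{\rm RW}$ such that for every $\bfC>\bfC_{\rm RW}$ and every $L\ge 1$ the estimates \eqref{eq:H_beta perco} and \eqref{eq:H_beta-' perco} hold for all $\beta<\beta_0$: this forces $\beta^*(\bfC,L)\ge\beta_0$ once we know $\beta_0\le 2\wedge\beta_c$, and indeed $\beta_0\le 2$ is $(i)$, while $\beta_0\le\beta_c$ follows from sharpness of the phase transition \cite{DuminilTassionNewProofSharpness2016} since $\varphi_\beta(\{0\})<1$ for $\beta<\beta_0$. Now fix $\beta<\beta_0$ and set $\lambda:=|\Lambda_L^*|p_\beta=\varphi_\beta(\{0\})<1$. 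The engine is the bound, valid for every $\Lambda\ni 0$ and $x\in\Lambda$,
\[
\mathbb P_\beta[0\connect{\Lambda\:}x]\ \le\ \sum_{k\ge 0}\lambda^k\,\mathbb P^{\rm RW}_0\big[X^0_k=x,\ X^0_0,\dots,X^0_k\in\Lambda\big]\ \le\ G^\Lambda(x):=\sum_{k\ge 0}\mathbb P^{\rm RW}_0\big[X^0_k=x,\ X^0_0,\dots,X^0_k\in\Lambda\big],
\]
with $X^0$ the walk of Definition~\ref{def: the random walk} (the last step uses $\lambda<1$, and only $x\ne 0$ is needed). To prove the first inequality, decompose $\{0\connect{\Lambda\:}x\}$ along an open self‑avoiding path and apply a union bound over self‑avoiding walks $\gamma$ from $0$ to $x$ in $\Lambda$: the edges of $\gamma$ are distinct, so $\mathbb P_\beta[\gamma\text{ open}]=p_\beta^{|\gamma|}$, and the number of length‑$k$ self‑avoiding (a fortiori, arbitrary) walks from $0$ to $x$ staying in $\Lambda$ equals $|\Lambda_L^*|^k\,\mathbb P^{\rm RW}_0[X^0_k=x,\,X^0_0,\dots,X^0_k\in\Lambda]$.

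\emph{Part $(iii)$: conclusion.} It remains to feed this into two random‑walk estimates, which I would establish in Appendix~\ref{appendix:rw} for all $L\ge 1$. Taking $\Lambda=\mathbb H$ one needs $G^{\mathbb H}(x)\le\frac{C_1(d)}{L^d}\big(\frac{L}{L\vee|x_1|}\big)^{d-1}$ for all $x\in\mathbb H^*$, which gives \eqref{eq:H_beta-' perco} as soon as $\bfC>C_1(d)$. Taking $\Lambda=\mathbb H_n$ one needs $\sum_{x\in(\partial\mathbb H_n)^*}G^{\mathbb H_n}(x)\le C_2(d)/L$ for all $n\ge 0$, which gives \eqref{eq:H_beta perco} as soon as $\bfC>C_2(d)$; the point is that the half‑space Green's function restricted to the boundary of $\mathbb H_n$ decays fast enough in the directions parallel to that boundary that its boundary‑sum is of order $1/L$ uniformly in $n$ (this is precisely why $\psi_\beta(\mathbb H_n)$ is of order $1/L$ rather than of order $1$). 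Setting $\bfC_{\rm RW}:=\max\{C_1(d),C_2(d),64e^2\}$ finishes the proof, the term $64e^2$ being included only for later use.

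\emph{Main obstacle.} The difficulty is concentrated in the two half‑space random‑walk estimates above. The full‑space bound $G^{\mathbb Z^d}(x)\asymp L^{-d}\big(L/(L\vee|x|)\big)^{d-2}$ is a routine local‑CLT computation, but extracting the correct extra decay of $G^{\mathbb H}$ and $G^{\mathbb H_n}$ near and on the boundary requires a reflection‑type argument that is delicate for the spread‑out walk: steps of length up to $L$ let the walk jump over the boundary, so one cannot reflect exactly and must control overshoots on a boundary layer of thickness $\asymp L$ (this is the source of the ``additional care'' mentioned in the introduction). I would handle it either by a quantitative reflection/coupling argument on this $O(L)$‑layer or by Fourier analysis with explicit error bounds, isolating in both cases the factor $\asymp(1+|x_1|)/(L\vee|x|)$ responsible for upgrading the exponent from $d-2$ to $d-1$.
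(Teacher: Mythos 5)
Parts $(i)$ and $(ii)$ are correct and match the paper up to cosmetic differences (you solve $\varphi_{\beta_0}(\{0\})=1$ explicitly for $\beta_0=N\log\frac{N}{N-1}$, whereas the paper uses $1-e^{-x}\le x$ and $1-e^{-x}\ge x-x^2/2$ directly; the conclusions are the same). For $(iii)$, your derivation of the $\ell^\infty$ bound \eqref{eq:H_beta-' perco} is the same as the paper's: iterate the Simon--Lieb inequality to dominate the two-point function by the half-space random walk Green function and invoke Proposition~\ref{prop: rw estimates}.

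Where you genuinely diverge --- and where the gap sits --- is in the $\ell^1$ bound \eqref{eq:H_beta perco}. You reduce it to the random-walk estimate $\sum_{x\in(\partial\mathbb H_n)^*}G^{\mathbb H_n}(x)\le C_2(d)/L$, uniformly in $n$, and you defer its proof to a new appendix argument. This estimate is plausible (it is an overshoot statement for the spread-out walk), but it does not appear anywhere in the paper; Proposition~\ref{prop: rw estimates} only controls the pointwise value of the half-space Green function through the distance $|x_1|$ to the starting hyperplane, not its decay in the directions parallel to the boundary $\partial\mathbb H_n$, so one cannot simply sum the bound of Proposition~\ref{prop: rw estimates} over $x\in\partial\mathbb H_n$ (that sum would diverge). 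So as written, the $\ell^1$ step is an IOU whose discharge is, as you candidly note, the ``main obstacle,'' and it would require a reflection/overshoot argument of roughly the same difficulty as the results you are trying to prove. The paper avoids this entirely by a percolation (rather than random-walk) argument: it observes the trivial bound $\varphi^{\rm RW}(\mathbb H_k)=\mathbb P^{\rm RW}_0[\tau_k<\infty]\le 1$, notes that the iterated Simon--Lieb bound with $p_{yz,\beta}\le\beta J_{yz}$ gives $\varphi_{\beta_0}(\mathbb H_k)\le\beta_0\,\varphi^{\rm RW}(\mathbb H_k)\le\beta_0$, and then converts this $\varphi$-bound into a $\psi$-bound of size $O(1/L)$ via Lemma~\ref{lem: small values 1 perco} (a combinatorial sprinkling argument, valid at $\beta_0$ because it only uses $1\le|\Lambda_L|p\le 4$ from parts $(i)$--$(ii)$). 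That lemma is where the paper pays for the factor $1/L$, and it is proved without any new random-walk input. Your route would be cleaner in some ways, since it stays entirely on the random-walk side, but it trades the paper's Lemma~\ref{lem: small values 1 perco} for an unproven spread-out half-space Green-function estimate; until that estimate is established, part $(iii)$ is incomplete.
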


\begin{proof}Let us first prove $(i)$. Using that $|J|=1$ and the inequality $1-e^{-x}\leq x$ for every $x\geq 0$, we get
\begin{equation}
1=\varphi_{\beta_0}(\{0\})= \sum_{x\in \mathbb Z^d}(1-e^{-\beta_0J_{0x}})\leq \beta_0\sum_{x\in \mathbb Z^d}J_{0x}=\beta_0|J|=\beta_0.
\end{equation}
Moreover, one has that for $x\geq 0$, $1-e^{-x}\geq x-\tfrac{x^2}{2}$. Hence, 
\begin{equation}\varphi_{2}(\{0\})\geq |\Lambda_L^*|(2c_L-2c_L^2)=2-2c_L>1,
\end{equation} 
which implies that $\beta_0\leq 2$. The second part of the statement follows from the observation that, if $\beta\geq \beta_0$, $|\Lambda_L^*|p_\beta=\varphi_\beta(\{0\})\geq \varphi_{\beta_0}(\{0\})= 1$.

We now turn to the proof of $(ii)$. Note that by definition, ${\beta^*(\bfC,L)\leq 2}$. Hence, using $(i)$ we obtain that $\beta^*(\bfC,L)\vee \beta_0\leq 2$. Thus, one has $|\Lambda_L^*|p_\beta=c_L^{-1}p_\beta\leq \beta\leq 2$ when $\beta\leq \beta^*(\bfC,L)\vee \beta_0$. As a result,
\begin{equation}
	|\Lambda_L|p_\beta=\frac{|\Lambda_L|}{|\Lambda_L^*|}|\Lambda_L^*|p_\beta\leq  4,
\end{equation}
which concludes the proof.

Finally, we prove $(iii)$. Iterating \eqref{eq:SLperco} with $S$ a singleton and $\Lambda=\mathbb H$ gives for $x\neq 0$,
\begin{equation}\label{eq: iterated sl for beta_0}
		\mathbb P_{\beta_0}[0\connect{\mathbb H\:}x]\leq \mathbb E_0^{\rm RW}\Big[\sum_{\ell<\tau}\mathds{1}_{X_\ell=x}\Big],
\end{equation}
where $\tau$ is the exit time of $\mathbb H$, that is $\tau:=\inf\{\ell\geq 1: X_\ell\notin \mathbb H\}$ (and more generally, $\tau_k$ is the exit time of $\mathbb H_k$). 

Using a classical random walk estimate proved in Proposition \ref{prop: rw estimates}, we obtain the existence of $C_{\rm RW}=C_{\rm RW}(d)>0$ such that: for every $L\geq 1$ and every $x\in \mathbb H^*$,
\begin{equation}\label{eq: bound green function half space}
	\mathbb E_0^{\rm RW}\Big[\sum_{\ell<\tau}\mathds{1}_{X_\ell=x}\Big]\leq \frac{C_{\rm RW}}{L^d}\left(\frac{L}{L\vee |x_1|}\right)^{d-1}.
\end{equation}

The two previously displayed equations imply that 
\begin{equation}
\mathbb P_{\beta_0}[0\connect{\mathbb H\:}x]\leq\frac{C_{\rm RW}}{L^d}\left(\frac{L}{L\vee |x_1|}\right)^{d-1}.
\end{equation}
Now, let $k\geq 0$. Markov's property gives that
\begin{equation}
	\varphi^{\rm RW}(\mathbb H_k):=\sum_{\substack{y\in \mathbb H_k\\z\notin \mathbb H_k}}\mathbb E_0^{\rm RW}\Big[\sum_{\ell<\tau_k}\mathds{1}_{X_\ell=y}\Big]J_{yz}=\mathbb P_0^{\rm RW}[\tau_k<\infty]\leq 1.
\end{equation}
Since $p_{\beta,xy}\leq \beta J_{xy}$, we have by \eqref{eq: iterated sl for beta_0} that $\beta_0\varphi^{\rm RW}(\mathbb H_k)\geq \varphi_{\beta_0}(\mathbb H_k)$. It is possible to compare $\varphi_{\beta_0}(\mathbb H_k)$ to $\psi_{\beta_0}(\mathbb H_k)$ to obtain that $\varphi_{\beta_0}(\mathbb H_k)\tfrac{32e^2}{L}\geq \psi_{\beta_0}(\mathbb H_k)$. The proof of this technical result is postponed to Lemma \ref{lem: small values 1 perco}. Collecting all the above and using $(i)$, we obtain
\begin{equation}\label{eq: bound psi rw}
\psi_{\beta_0}(\mathbb H_k)\leq \beta_0\frac{32e^2}{L}\leq\frac{64e^2}{L}\leq\frac{\bfC_{\rm RW}}{L}	
\end{equation}
if we set $\bfC_{\rm RW}:=C_{\rm RW}\vee 64e^2$. Hence, from \eqref{eq: iterated sl for beta_0}--\eqref{eq: bound psi rw} and the monotonicity of the two-point function in $\beta$, we have obtained that for every $\bfC>\bfC_{\rm RW}$ and every $L\geq 1$, one has $\beta^*(\bfC,L)\geq \beta_0$.

\end{proof}

\paragraph{Organisation.} In Lemma \ref{lem: full plane 2pt function out of halfplane one perco}, we prove that we can estimate the full-space two-point function at $\beta$ under the hypothesis that $\beta\leq \beta^*$. Thus, our main goal is to show that $\beta^*$ is in fact equal to $\beta_c$ provided that $\bfC$ and $L$ are large enough. The proof goes in three steps:
\begin{itemize}
\item First, we show that we can improve the $\ell^1$ bound. We do so by relating $\psi_\beta(\mathbb H_{n})$ to $\varphi_\beta(\mathbb H_n)$ (see Lemma \ref{lem: small values 1 perco}) and then obtaining a bound on $\varphi_\beta(\mathbb H_n)$ when $\beta<\beta^*$ that becomes better and better when $L$ is chosen larger and larger. 
\item Second, we control the gradient of the two-point function (see Proposition \ref{prop:regularity perco}) and use this estimate to turn the improved $\ell^1$ bound into an improved $\ell^\infty$ bound.
\item Third, we show that these improvements imply that $\beta^*$ cannot be strictly smaller than $\beta_c$ (we will also derive that $\beta_c<2$), since otherwise the improved estimates would remain true for $\beta$ slightly larger than $\beta^*$, which would contradict the definition of $\beta^*$. 
\end{itemize}
\paragraph{Organisation of the paper.} The three steps are implemented in Section~\ref{sec:3}.  Before this, we describe the reversed Simon--Lieb type inequality in 
Section~\ref{sec:2}, and explain how one may bound an ``error'' term appearing in this reversed inequality. The proof of the lower bounds of Theorem \ref{thm:main2perco} builds on the results of Section \ref{sec:3}, and is performed in Section \ref{sec:lowerboundsperco}. Again, it follows the strategy used in \cite{DumPan24WSAW}, and relies on a key Harnack-type estimate.

\paragraph{Acknowledgements.} Early discussions with Vincent Tassion have been fundamental to the success of this project. We are tremendously thankful to him for these interactions. We warmly thank Gordon Slade for stimulating discussions and for useful comments. We also thank Émile Avérous, Dmitrii Krachun, and an anonymous referee for many useful comments.
 This project has received funding from the Swiss National Science Foundation, the NCCR SwissMAP, and the European Research Council (ERC) under the European Union’s Horizon 2020 research and innovation programme (grant agreement No. 757296). HDC acknowledges the support from the Simons collaboration on localization of waves. 

\section{A reversed Simon--Lieb type inequality}\label{sec:2}

\subsection{The inequality}

A crucial role will be played by the following inequality, which can be viewed as a \emph{reversed} Simon--Lieb inequality. 

\begin{Lem}\label{Lem: SL lower bound} Let $d\geq 2$, $0<\beta<\beta_c$, $o\in S\subset \Lambda$, and $x\in \Lambda$. Then,
\begin{equation}\label{eq:reversed SL perco}
\mathbb P_{\beta}[o\connect{\Lambda\:}x]\ge \mathbb P_{\beta}[o\connect{S\:}x]+\sum_{\substack{y\in S\\ z\in \Lambda\setminus S}}\mathbb P_{\beta}[o\connect{S\:}y]p_{yz,\beta} \mathbb P_{\beta}[z\connect{\Lambda\:}x]-E_\beta(S,\Lambda,o,x),
\end{equation}
where
\begin{align}\label{eq:error first term}
&E_\beta (S,\Lambda,o,x)
:=\sum_{u,v\in S}\sum_{\substack{y \in S\\ z\in\Lambda\setminus S}}\mathbb P_{\beta}[o\connect{S\:}u]\mathbb P_{\beta}[u\connect{S\:}y]p_{yz,\beta}\mathbb P_{\beta}[z\connect{\Lambda\:}v]\mathbb P_{\beta}[u\connect{S\:}v]\mathbb P_\beta[v\connect{\Lambda\:}x]
\\\label{eq:error second term}
&+\sum_{\substack{u\in S\\ v\in \Lambda}}\sum_{\substack{y,s\in S\\ z,t\in \Lambda\setminus S\\ yz\neq st}}\mathbb P_\beta[o\connect{S\:}u]\mathbb P_\beta[u\connect{S\:}y]\mathbb P_\beta[u\connect{S\:}s]p_{yz,\beta}p_{st,\beta}\mathbb P_\beta[z\connect{\Lambda\:}v]\mathbb P_\beta[t\connect{\Lambda\:}v]\mathbb P_\beta[v\connect{\Lambda\:}x].
\end{align}
\end{Lem}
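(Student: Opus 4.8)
The plan is to derive \eqref{eq:reversed SL perco} from the following first-moment identity: the event $\{o\connect{\Lambda\:}x\}$ splits according to whether $o\connect{S\:}x$ holds, and if it does not, one conditions on the cluster $\mathbf{C}_S(o)$ of $o$ inside $S$. On the event $\{o\connect{\Lambda\:}x\}\setminus\{o\connect{S\:}x\}$, every open path from $o$ to $x$ must leave $\mathbf{C}_S(o)$ through some edge $yz$ with $y\in\mathbf{C}_S(o)$ and $z\in\Lambda\setminus S$. Summing over the first such exit edge and using independence of the status of edges outside $S$ and of edges touching $S$ from above, one gets
\begin{equation}
\mathbb{P}_\beta[o\connect{\Lambda\:}x]=\mathbb{P}_\beta[o\connect{S\:}x]+\sum_{\substack{y\in S\\ z\in\Lambda\setminus S}}\mathbb{E}_\beta\Big[\mathds{1}_{y\in \mathbf{C}_S(o)}\,p_{yz,\beta}\,\mathbb{P}_\beta\big[z\connect{\Lambda\:}x\text{ off }\mathbf{C}_S(o)\big]\Big],
\end{equation}
where ``off $\mathbf{C}_S(o)$'' means the connection uses no edge with an endpoint in $\mathbf{C}_S(o)$. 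The leading term on the right is exactly $\mathbb{P}_\beta[o\connect{S\:}y]$ times $p_{yz,\beta}$ times $\mathbb{P}_\beta[z\connect{\Lambda\:}x]$; the reversed inequality then reduces to showing that the difference between the true (restricted) connection probability and the unrestricted one, summed as above, is bounded by $E_\beta(S,\Lambda,o,x)$.

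Next I would bound this difference. Writing $\{z\connect{\Lambda\:}x\}\setminus\{z\connect{\Lambda\:}x\text{ off }\mathbf{C}_S(o)\}$, one sees any such configuration has an open path from $z$ to $x$ that \emph{does} touch $\mathbf{C}_S(o)$, say at a vertex $v$; combined with the forced connections $o\connect{S\:}u$, $u\connect{S\:}y$ (where $u$ is the touching point realising $y\in\mathbf{C}_S(o)$), one extracts, via the BK inequality and a careful tree-graph-type decomposition, a collection of disjointly occurring connections whose probability is at most the first sum \eqref{eq:error first term}. The role of the second error term \eqref{eq:error second term} is to account for the other place the argument loses independence: the random cluster $\mathbf{C}_S(o)$ may be explored through \emph{two} distinct boundary edges $yz\neq st$, and the naive factorisation over a single exit edge double-counts such events; re-summing over ordered pairs of exit edges and applying BK to the resulting five connections (with a common branch point $u$ on $\mathbf{C}_S(o)$ and a common meeting point $v$ of the two external arms) yields the bound \eqref{eq:error second term}. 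The standard tool here is conditioning on $\mathbf{C}_S(o)=A$ for a fixed set $A$, using that edges not touching $A$ are independent of this event, and then applying \eqref{eq:BK ineq} on the complementary randomness.

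Concretely, the key steps in order are: (1) establish the exact identity above by conditioning on $\mathbf{C}_S(o)$; (2) replace $\mathbb{P}_\beta[z\connect{\Lambda\:}x\text{ off }\mathbf{C}_S(o)]$ by $\mathbb{P}_\beta[z\connect{\Lambda\:}x]$ at the cost of subtracting a nonnegative error, and describe that error as the probability that $z\connect{\Lambda\:}x$ but every such connection meets $\mathbf{C}_S(o)$; (3) on this error event, use BK together with the structure of $\mathbf{C}_S(o)$ (namely $o\connect{S\:}u\connect{S\:}y$ and $u\connect{S\:}v$ for suitable $u,v$) to produce the five-fold disjoint-occurrence bound matching \eqref{eq:error first term}; (4) separately handle the over-counting coming from multiple exit edges of $\mathbf{C}_S(o)$, producing \eqref{eq:error second term}; (5) sum everything and invoke \eqref{eq:BK ineq} to conclude. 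I expect step (3)--(4), i.e.\ correctly bookkeeping which connections are forced to occur disjointly once one conditions on the explored cluster and then meets it again externally, to be the main obstacle: one must be careful that the vertex $u$ where the external arm re-enters $\mathbf{C}_S(o)$ and the branch vertex $v$ are chosen so that the six connection events genuinely occur on disjoint edge sets, and that the case of two exit edges is neither missed nor double-counted. The random-walk and regularity inputs play no role here; this is a purely combinatorial BK-type argument, and the proof should be a few careful lines of cluster-conditioning once the event inclusions are set up correctly.
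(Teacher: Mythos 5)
Your step (1) is the gap: the claimed ``first-moment identity'' is not an identity. With
\begin{equation*}
\mathcal N:=\sum_{\substack{y\in S\\z\in \Lambda \setminus S}}\mathds{1}\{o\connect{S\:}y,\; yz \text{ open},\; z \connect{\Lambda\:}x\text{ off }\mathbf{C}_S(o)\},
\end{equation*}
one has $\{\mathcal N\geq 1\}=\{o\connect{\Lambda\:}x\}\setminus\{o\connect{S\:}x\}$, and the cluster-conditioning and edge-disjoint independence you describe show that your right-hand side equals $\mathbb P_\beta[o\connect{S\:}x]+\mathbb E_\beta[\mathcal N]$. Since several exit edges $(y,z)$ of $\mathbf{C}_S(o)$ can simultaneously carry a connection to $x$, in general $\mathbb E_\beta[\mathcal N]>\mathbb P_\beta[\mathcal N\ge 1]$, so your displayed equation is really the Simon--Lieb \emph{upper} bound $\mathbb P_\beta[o\connect{\Lambda\:}x]\le\mathbb P_\beta[o\connect{S\:}x]+\mathbb E_\beta[\mathcal N]$; starting from it cannot yield a lower bound on $\mathbb P_\beta[o\connect{\Lambda\:}x]$. (Summing only over a ``first'' exit edge would restore equality, but the inner event then carries an avoidance constraint that destroys the factorisation you subsequently appeal to, and does not match the formula you wrote.)

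What makes the reversed inequality possible is a second-moment step. Since $\mathds 1_{t\ge 1}\ge 2t-t^2$ for $t\in\mathbb N$,
\begin{equation*}
\mathbb P_\beta[o\connect{\Lambda\:}x]-\mathbb P_\beta[o\connect{S\:}x]=\mathbb P_\beta[\mathcal N\ge1]\ge \mathbb E_\beta[\mathcal N]-\bigl(\mathbb E_\beta[\mathcal N^2]-\mathbb E_\beta[\mathcal N]\bigr).
\end{equation*}
The two error terms then have clean origins: \eqref{eq:error first term} comes from lower-bounding $\mathbb E_\beta[\mathcal N]$ by replacing the restricted probability $\mathbb P_\beta[z\connect{C^c\:}x]$ with $\mathbb P_\beta[z\connect{\Lambda\:}x]$ and controlling the difference via the cluster-averaging inequality $\mathbb P_\beta[z\connect{\Lambda\:}x]-\mathbb P_\beta[z\connect{C^c\:}x]\le \sum_{v\in C}\mathbb P_\beta[\mathcal A(z,v)]\mathbb P_\beta[v\connect{\Lambda\:}x]$ together with \eqref{eq:BK ineq} and the tree bound \eqref{eq:tree bound} -- this is the correct implementation of your steps (2)--(3). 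The term \eqref{eq:error second term} is an upper bound on $\mathbb E_\beta[\mathcal N^2]-\mathbb E_\beta[\mathcal N]$, the sum over ordered pairs of \emph{distinct} exit edges $yz\neq st$, obtained from BK and two applications of the tree bound; this is the precise form of the over-counting you gesture at in step (4). Without the $2t-t^2$ inequality (or an equivalent truncated inclusion--exclusion for $\mathds 1_{t\ge 1}$), your outline never produces a lower bound.
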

\begin{figure}[htb]
	\begin{center}
	\includegraphics{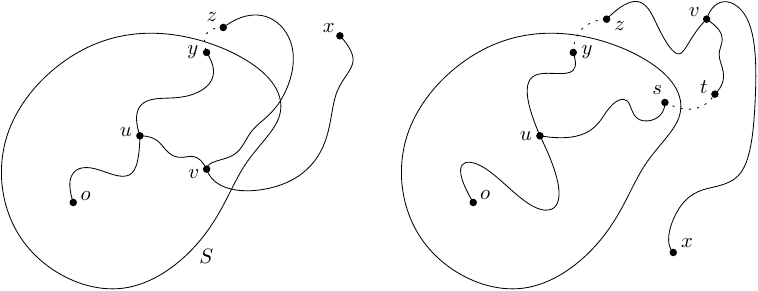}
	\caption{An illustration of the different terms contributing to $E_\beta(S,\Lambda,o,x)$. The dotted lines represent the open edges leaving $S$. From left to right, we illustrate \eqref{eq:error first term} and \eqref{eq:error second term}.}
	\label{fig: error}
	\end{center}
\end{figure}
Before diving into the proof of this lemma, let us recall an elementary yet important \emph{tree-bound inequality} from \cite[Proposition~4.1]{AizenmanNewmanTreeGraphInequalities1984}: for every set $S\subset \mathbb Z^d$, and every $o,a,b\in S$,
\begin{equation}\label{eq:tree bound}
\mathbb P[o\connect{S\:}a,b]\leq\sum_{u\in S}\mathbb P[o\connect{S\:}u]\mathbb P[u\connect{S\:}a]\mathbb P[u\connect{S\:}b].
\end{equation}
This bound directly follows from the BK inequality and the observation that for $o$ to be connected to both $a$ and $b$ in $S$, there must exist $u\in S$ such that the following event occurs
\begin{equation}
\{o\connect{S\:}u\}\circ\{u\connect{S\:}a\}\circ\{u\connect{S\:}b\}.
\end{equation}
\begin{proof} We fix $\beta$ and drop it from notation (e.g.~we write $p_{xy}$ instead of $p_{xy,\beta}$). Let
\begin{equation}
	\mathcal N:=\sum_{\substack{y\in S\\z\in \Lambda \setminus S}}\mathds{1}\{o\connect{S}y, \: yz \textup{ is open}, \: z \connect{\mathcal C^c\:}x\},
\end{equation}
where $\mathcal C$ is the (vertex) cluster of $o$ in $S$. Clearly,
$
	\{\mathcal N\geq 1\}= \{o\connect{\Lambda\:}x\}\setminus \{o\connect{S\:}x\}.
$

Since for $t\in \mathbb N$, $\mathds{1}_{t\geq 1}\ge 2t-t^2$, we find
\begin{equation}\label{eq:proofSL1}
\mathbb P[o\connect{\Lambda\:}x]-\mathbb P[o\connect{S\:}x]=\mathbb P[\mathcal N\geq 1]\geq 2\mathbb E[\mathcal N]-\mathbb E[\mathcal N^2]=\mathbb E[\mathcal N]-(\mathbb E[\mathcal N^2]-\mathbb E[\mathcal N]).
\end{equation}
\paragraph{Lower bound on $\mathbb E[\mathcal N]$.} Write
\begin{equation}\label{eq: sl proof first moment n}
	\mathbb E[\mathcal N]=\sum_{\substack{y\in S\\z\in \Lambda \setminus S}}\sum_{C \ni y}\mathbb P[\mathcal C= C]p_{yz}\mathbb P[z \connect{C^c\:}x].
\end{equation}
Using \cite[Proposition~5.2]{AizenmanNewmanTreeGraphInequalities1984}, we find that
\begin{equation}\label{eq:ah2}
	\mathbb P[z \connect{\Lambda\:}x]-\mathbb P[z\connect{C^c\:}x]\leq \sum_{v\in C}\mathbb P[\mathcal A(z,v)]\mathbb P[v\connect{\Lambda\:} x],
\end{equation}
where $\mathcal A(z,v)$ is the event that $z$ and $v$ are connected by a path whose only element in $C$ is the last vertex. Combining \eqref{eq: sl proof first moment n} and \eqref{eq:ah2} with the fact that $C\subset S$ yields
\begin{equation}
	\sum_{\substack{y\in S\\z\in \Lambda \setminus S}}\mathbb P[o\connect{S\:}y]p_{yz}\mathbb P[z \connect{\Lambda\:}x]-\mathbb E[\mathcal N]\leq \sum_{\substack{y\in S\\ z\in \Lambda\setminus S}}\sum_{v\in S}\mathbb P[\{o\connect{S\:}y, v\}\circ \{z \connect{\Lambda\:}v\}]p_{yz}\mathbb P[v\connect{\Lambda\:} x].
\end{equation}
Using \eqref{eq:BK ineq} and \eqref{eq:tree bound} yields,
\begin{align}
	\mathbb P[\{o\connect{S\:}y, v\}\circ \{z \connect{\Lambda\:}v\}]&\leq \mathbb P[o\connect{S\:}y,v]\mathbb P[z\connect{\Lambda\:}v]\notag\\&\leq \sum_{u\in S}\mathbb P[o\connect{S\:}u]\mathbb P[u\connect{S\:}v]\mathbb P[u\connect{S\:}y]\mathbb P[z\connect{\Lambda\:}v].
\end{align}
Altogether, we obtained,
\begin{align}
	\mathbb E[\mathcal N]&\geq \sum_{\substack{y\in S\\ z\in \Lambda\setminus S}}\mathbb P[o\connect{S\:}u]p_{yz} \mathbb P[z\connect{\Lambda\:}x]\notag
	\\&-\sum_{u,v\in S}\sum_{\substack{y\in S\\ z\in \Lambda\setminus S}}\mathbb P[o\connect{S\:}u]\mathbb P[u\connect{S\:}y]\mathbb P[u\connect{S\:}v] p_{yz}\mathbb P[z\connect{\Lambda\:}v]\mathbb P[v\connect{\Lambda\:}x] .\label{eq:proofSL2}
\end{align}
\paragraph{Upper bound on $\mathbb E[\mathcal N^2]-\mathbb E[\mathcal N]$.} Notice that,
\begin{equation}
	\mathbb E[\mathcal N^2]=\mathbb E[\mathcal N]+\sum_{\substack{y,s\in S\\ z,t\in \Lambda \setminus S\\ yz\neq st}}\mathbb P[o\connect{S}y, \: yz \textup{ is open}, \: z \connect{\mathcal C^c\:}x, o\connect{S\:}s, \: st \textup{ is open}, \: t \connect{\mathcal C^c\:}x].
\end{equation}
Using the BK inequality once again,
\begin{align}\notag
	\mathbb E[\mathcal N^2]-\mathbb E[\mathcal N]&\leq \sum_{\substack{y,s\in S\\ z,t\in \Lambda \setminus S\\ yz\neq st}}\mathbb P[o\connect{S\:}y,s]p_{yz}p_{st}\mathbb P[z,t\connect{\Lambda\:}x]\\
	&\leq \sum_{\substack{u\in S\\v\in \Lambda}}\sum_{\substack{y,s\in S\\ z,t\in \Lambda \setminus S\\ yz\neq st}}\mathbb P[o\connect{S\:}u]\mathbb P[u\connect{S\:}y]\mathbb P[u\connect{S\:}s]p_{yz}p_{st}\mathbb P[z\connect{\Lambda\:}v]\mathbb P[t\connect{\Lambda\:}v]\mathbb P[v\connect{\Lambda\:}x]\label{eq:proofSL6}
	\end{align}
	where in the second line we used \eqref{eq:tree bound} twice.
\paragraph{Conclusion.} The proof follows by combining \eqref{eq:proofSL1}, \eqref{eq:proofSL2}, and \eqref{eq:proofSL6}.	
\end{proof}
%


\subsection{The error amplitude: definition and bound}

For a set $S\ni 0$, introduce the {\em error amplitude} \begin{align}
	E_\beta(S)&:=\sum_{\substack{u,v,y\in S\\z\notin S}}\mathbb P_{\beta}[0\connect{S\:}u]\mathbb P_{\beta}[u\connect{S\:}y]p_{yz,\beta}\mathbb P_{\beta}[z\connect{\:}v]\mathbb P_{\beta}[u\connect{\:}v]\label{eq: definition E(o;u,v)}\\
&+\sum_{\substack{u,y,s\in S\\ v\in \mathbb Z^d\\ z,t\notin S\\ yz\neq st}}\mathbb P_\beta[0\connect{S\:}u]\mathbb P_\beta[u\connect{S\:}y]\mathbb P_\beta[u\connect{S\:}s]p_{yz,\beta}p_{st,\beta}\mathbb P_\beta[z\connect{\:}v]\mathbb P_\beta[t\connect{\:}v]. \notag
\end{align}
The main goal of this section is to prove the following bound, which provides a control of the error amplitude in terms of $\bfC$ and $L$ when $\beta<\beta^*(\bfC,L)$.
It is the place where the assumption that $d>6$ plays a fundamental role.
\begin{Prop}[Control of the error amplitude]\label{lem: error amplitude perco} Fix $d>6$, $\bfC>1$, and $L\geq 1$. There exists $K=K(\bfC,d)>1$ such that for every $\beta<\beta^*(\bfC,L)$ and every $B\in \mathcal B$,
\begin{equation}
	E_\beta(B)\leq \frac{K}{L^d}.
\end{equation}
\end{Prop}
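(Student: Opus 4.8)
The plan is to bound each of the two sums defining $E_\beta(B)$ by reducing every factor to one of two controlled inputs: the half-space/full-space pointwise bounds coming from $(\ell^\infty_\beta)$ (together with its full-space consequence, obtained via Lemma~\ref{lem: full plane 2pt function out of halfplane one perco}), and the $\ell^1$-type smallness $\psi_\beta(\mathbb H_n)<\mathbf C/L$ from $(\ell^1_\beta)$; we also use $|\Lambda_L|p_\beta\le 4$ from Lemma~\ref{lem: intro lem about betastar}$(ii)$ and the standard diagrammatic summation bounds coming from random-walk estimates in the appendix. The key heuristic is dimensional: a full-space two-point function behaves like $L^{-d}(L/|x|)^{d-2}$, i.e. like $L^{-2}$ times the Green's function of the spread-out walk, so each ``bubble'' $\sum_z \mathbb P_\beta[a\connect{}z]\mathbb P_\beta[z\connect{}b]$ contributes a factor of order $L^{-2}\cdot(L^{-2})\cdot(\text{convergent sum})$ — wait, more precisely, one convolution of two two-point functions is summable when $d>4$ and yields something of size $O(L^{-d})(L/|a-b|)^{d-4}$, and the ``triangle'' (three of them around a loop) is summable precisely when $d>6$, giving a finite constant times $L^{-d}$ — no, times a dimensionless constant. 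This is exactly where $d>6$ is used.

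Concretely, for the first term in \eqref{eq: definition E(o;u,v)}: I would first sum over $z\notin B$ with $y\in B$ fixed, using $p_{yz,\beta}\le \beta J_{yz}$ and $\sum_z J_{yz}\mathbb P_\beta[z\connect{}v]\le \text{(two-point function with a shift)}$, folding the $p_{yz}$ into a single extra step; the point is that $\sum_{y\in B, z\notin B}\mathbb P_\beta[u\connect{B\:}y]p_{yz,\beta}$ is controlled by $\varphi_\beta(B)$-type quantities, hence by $\psi_\beta$ up to the comparison of Lemma~\ref{lem: small values 1 perco}, delivering a factor $O(\mathbf C/L)$. What remains is $\sum_{u,v}\mathbb P_\beta[0\connect{B\:}u]\,\Big(\text{stuff}(u)\Big)\mathbb P_\beta[u\connect{}v]\mathbb P_\beta[(\cdot)\connect{}v]$, which after the $z$-sum is a closed diagram with two free vertices $u,v$ joined by two independent two-point-function paths plus the path from $0$ to $u$ — i.e. essentially $\sum_u \mathbb P_\beta[0\connect{}u]\cdot B_\beta(u)$ where $B_\beta(u):=\sum_v \mathbb P_\beta[u\connect{}v]^2$-type bubble; using the pointwise bound, the bubble $B_\beta(u)$ is $O(L^{-d})$ uniformly (here $d>4$ suffices) and the remaining $\sum_u \mathbb P_\beta[0\connect{}u]$ is $O((L_\beta/L)^2)$ which is NOT uniformly bounded — so instead I must keep one more summability: the full configuration $0\to u$, $u\to v$ (twice via the $z$-excursion and directly), $v$ free, together with the gained $\mathbf C/L$ from the $(y,z)$-sum, forms a genuine triangle-type diagram $\sum_{u,v}\mathbb P[0\to u]\mathbb P[u\to v]\mathbb P[u\to v]$ (roughly) which is summable when $d>6$ and bounded by $C\cdot L^{-d}$. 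Pulling out the $\mathbf C/L$ factor then gives $O(\mathbf C L^{-d-1})\le O(L^{-d})$, absorbing the $\mathbf C$-dependence into $K$.

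For the second term in \eqref{eq: definition E(o;u,v)}, which involves two distinct open edges $yz\ne st$ leaving $B$ and three diverging arms meeting at $u$, the same philosophy applies but the diagram is larger: after summing $z$ against $\mathbb P_\beta[z\connect{}v]$ and $t$ against $\mathbb P_\beta[t\connect{}v]$, one is left with $\sum_{u\in B, v}\mathbb P_\beta[0\connect{B\:}u]\mathbb P_\beta[u\connect{B\:}y]\mathbb P_\beta[u\connect{B\:}s]\cdot(\dots)$, and one can extract \emph{two} factors of the $(\ell^1_\beta)$-smallness (one for each edge leaving $B$), leaving a bounded diagram times $O((\mathbf C/L)^2)$. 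The main obstacle, and the place demanding care, is organising these sums so that the $B$-restricted connections are genuinely replaced by full-space ones where needed (monotonicity in $\Lambda$ makes this safe) while simultaneously keeping the restricted connections through $\partial B$ to harvest the $\psi_\beta$-smallness — i.e. one cannot naively bound everything by full-space two-point functions and hope for convergence, because $\sum_u \mathbb P_\beta[0\connect{}u]$ alone diverges as $L_\beta\to\infty$; the gain from $(\ell^1_\beta)$ is essential and must be book-kept correctly against the correct vertex. Technically this amounts to: (i) invoke Lemma~\ref{lem: small values 1 perco} to pass from $\psi_\beta$ to $\varphi_\beta$-style sums over $\partial B$; (ii) apply the random-walk convolution estimates of Appendix~\ref{appendix:rw} to bound bubbles/triangles of two-point functions (using the pointwise input $(\ell^\infty_\beta)$, upgraded to full space via Lemma~\ref{lem: full plane 2pt function out of halfplane one perco}); (iii) check that over all $B\in\mathcal B$ the constants are uniform, which holds because the diagrammatic bounds only used translation-invariant two-point-function estimates and the restriction to $B$ only ever helped. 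I expect step (ii)'s triangle estimate — verifying the $d>6$ convergence with the right $L$-power — to be the crux, and the careful placement of the two $(\ell^1_\beta)$ gains in the second term to be the most error-prone bookkeeping.
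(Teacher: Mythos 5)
Your proposal correctly identifies the two ingredients ($\ell^\infty$/full-space pointwise bounds and $\ell^1$-smallness) and correctly flags the danger that $\sum_u\mathbb P_\beta[0\connect{}u]=\chi(\beta)$ diverges near $\beta_c$, but the mechanism you invoke to cancel this divergence has two real errors, and without them the argument does not close.

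First, the factor $O(\bfC/L)$ you claim to extract from $\sum_{y\in B,\,z\notin B}\mathbb P_\beta[u\connect{B\:}y]p_{yz,\beta}$ does not exist. That sum is (a shift of) $\varphi_\beta(B)$, which by Proposition \ref{prop:bound phi perco} is of order \emph{one}, not $O(1/L)$. You appeal to ``$\varphi_\beta(B)$-type quantities, hence $\psi_\beta$ up to Lemma \ref{lem: small values 1 perco}'', but that lemma gives $\psi_\beta\le C L^{-1}\varphi_\beta$ --- one extracts $\psi_\beta$ \emph{from} $\varphi_\beta$, not the reverse --- so it cannot be used to turn $\varphi_\beta(B)=O(1)$ into $O(\bfC/L)$. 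In the paper this sum is bounded in \eqref{eq:boundamplitudeError2} by an absolute constant $4(4d\bfC^2+1)$ with no $L$-gain. Second, the ``triangle'' $\sum_{u,v}\mathbb P_\beta[0\connect{}u]\mathbb P_\beta[u\connect{}v]^2$ you reduce to is an \emph{open} triangle: it equals $\sum_u\mathbb P_\beta[0\connect{}u]\cdot B_\beta(u)$ with $B_\beta(u)=O(L^{-d})$, hence $\chi(\beta)\cdot O(L^{-d})$, which is exactly the divergence you set out to kill. It is not summable in $d>6$; only the \emph{closed} triangle $\sum_{u,v}\mathbb P[0\connect{}u]\mathbb P[u\connect{}v]\mathbb P[v\connect{}0]$ is.

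The actual gain in the paper comes from a different source, which your proposal never introduces: decompose the $u$-sum by boundary shells $\partial^\ell B$. Lemma~\ref{lem: distance k from half-space} (which is the real incarnation of the $\ell^1$-smallness you want to use) gives $\sum_{u\in\partial^\ell B}\mathbb P_\beta[0\connect{B\:}u]\le\mathds 1_{0\in\partial^\ell B}+O((\ell\vee L)/L^2)$ --- a factor growing linearly in $\ell$, not a flat $O(1/L)$. Simultaneously, the bubble $\sum_v\mathbb P_\beta[u\connect{}v]\mathbb P_\beta[z\connect{}v]$ is estimated not uniformly by $O(L^{-d})$, but with the key observation that $|u-z|\ge\ell+1$ (because $u$ is at depth $\ell$ inside $B$ and $z$ just outside), so Proposition~\ref{prop: convolution estimate} gives $\le C_1 L^{-4}(L\vee\ell)^{-(d-4)}$. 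Multiplying these two $\ell$-dependent factors and summing gives $L^{-6}\sum_\ell(L\vee\ell)^{5-d}\lesssim L^{-d}$, and convergence of this sum is precisely where $d>6$ enters. This interplay between shell depth $\ell$, the linear growth of the shell's $\ell^1$-mass, and the $\ell$-decay of the bubble is the heart of the estimate and is absent from your write-up; you would have to add it before the bookkeeping for $\mathcal E(2)$ (with its two outgoing edges and the $z=t$ versus $z\ne t$ cases) can even be started.
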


As a warm-up, we derive some elementary computations obtained under the assumption that $\beta<\beta^*$. Our first estimate is a bound on the full-space two-point function.

\begin{Lem}[Bound on the full-space two-point function]\label{lem: full plane 2pt function out of halfplane one perco} Fix $d>2$, $\bfC>1$, and $L\geq 1$. For every $\beta<\beta^*(\bfC,L)$,
\begin{equation}\label{eq: full plane estimate from half plane perco}
\mathbb P_{\beta}[0\connect{}x]\le \frac{3\bfC^2}{L^d}\left(\frac{L}{L\vee|x|}\right)^{d-2} \qquad\forall x\in (\mathbb Z^d)^*.
\end{equation}
\end{Lem}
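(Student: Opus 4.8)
The plan is to deduce the full-space bound from the half-space bound \eqref{eq:H_beta-' perco} encoded in the definition of $\beta^*$, by summing over the "last exit" from a suitably chosen half-space. Concretely, fix $x\in(\mathbb Z^d)^*$ and choose a half-space $\mathbb H'$ — a rotation/translation of $\mathbb H$ — such that $0\in\mathbb H'$ and the first coordinate (in the direction normal to $\partial\mathbb H'$) of $x$ is comparable to $|x|$; one can always pick a coordinate direction $j$ with $|x_j|\geq |x|/\sqrt d$ (in fact $|x_j|=|x|$ for the $\ell^\infty$ norm, so one even gets $|x_j|=|x|$), and set $\mathbb H'$ to be the half-space $\{y:y_j\ge 0\}$ shifted if necessary so that both $0$ and $x$ lie in it with $x$ at depth $|x|$. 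Then apply Lemma~\ref{Lem: SL upper bound} with $S=\mathbb H'$ and $\Lambda=\mathbb Z^d$: since $\{0\connect{\mathbb H'\:}x\}$ already has small probability and the correction term is a convolution of the half-space two-point function against $p_{yz,\beta}$ against the full-space two-point function, we get
\begin{equation}
\mathbb P_\beta[0\connect{}x]\le \mathbb P_\beta[0\connect{\mathbb H'\:}x]+\sum_{\substack{y\in\mathbb H'\\ z\notin\mathbb H'}}\mathbb P_\beta[0\connect{\mathbb H'\:}y]\,p_{yz,\beta}\,\mathbb P_\beta[z\connect{}x].
\end{equation}

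The first term is bounded directly by \eqref{eq:H_beta-' perco} with the distance-to-boundary being $|x|$, giving $\tfrac{\bfC}{L^d}(L/(L\vee|x|))^{d-1}\le \tfrac{\bfC}{L^d}(L/(L\vee|x|))^{d-2}$. For the second term, the sum over $z\notin\mathbb H'$ at depth $O(L)$ past the boundary keeps $\mathbb P_\beta[z\connect{}x]$ of the same order as $\mathbb P_\beta[0\connect{}x]$-type quantity but now based at distance $\asymp|x|$ from $x$ (since $z$ is near $\partial\mathbb H'$, which passes near $0$), so morally this is a bootstrap on the full-space two-point function itself. To avoid circularity I would instead iterate: bound $\mathbb P_\beta[z\connect{}x]$ by a further application of the half-space estimate relative to a half-space through $z$, or — more cleanly — use that $\sum_{y,z}\mathbb P_\beta[0\connect{\mathbb H'\:}y]p_{yz,\beta}\le \psi$-type quantity controlled by \eqref{eq:H_beta perco}, i.e.\ $\le\bfC/L$, while the factor $\mathbb P_\beta[z\connect{}x]$ is maximized (over the relevant $z$'s, which all lie within $O(L)$ of the hyperplane $\partial\mathbb H'$) by something of order $\tfrac{\bfC}{L^d}(L/(L\vee|x|))^{d-2}$ via the half-space bound applied from the $z$ side. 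Combining, the correction is at most $\tfrac{\bfC}{L}\cdot\tfrac{C'\bfC}{L^d}(L/(L\vee|x|))^{d-2}$, which is negligible (indeed $\le \tfrac{\bfC^2}{L^d}(L/(L\vee|x|))^{d-2}$ for $L$ large), and adding the two contributions gives the constant $3\bfC^2$ with room to spare. Actually for a clean statement valid for all $L\ge 1$ one should be slightly more careful with the constants: split the $z$-sum according to whether $z$ is within distance $|x|/2$ of $x$ or not, use \eqref{eq:H_beta-' perco} for the far part and a direct $\ell^1$ bound \eqref{eq:H_beta perco} for the near part.

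The main obstacle I anticipate is making the second step non-circular and getting the constant down to $3\bfC^2$ rather than something like $C\bfC^2$ with an unspecified $C$: the convolution $\sum_{y,z}\mathbb P_\beta[0\connect{\mathbb H'\:}y]p_{yz,\beta}\mathbb P_\beta[z\connect{}x]$ must be estimated by decomposing on the location of $z$ relative to $x$, using \eqref{eq:H_beta perco} to control the mass $\sum_y\mathbb P_\beta[0\connect{\mathbb H'\:}y]p_{yz,\beta}$ near a given $z$ (this is bounded by $\bfC/L$ times the "local volume factor" $p_\beta|\Lambda_L|\le 4$ from Lemma~\ref{lem: intro lem about betastar}$(ii)$) and \eqref{eq:H_beta-' perco} applied to $\mathbb P_\beta[z\connect{}x]$ with an appropriately chosen half-space through $z$ reaching $x$ at depth $\asymp|x|$. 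Keeping track of the geometry of which half-space to use for each $z$, and the passage from the half-space power $d-1$ to the full-space power $d-2$ (which is exactly where the extra factor $L/(L\vee|x|)$ must be "absorbed" into the summation gain), is the delicate bookkeeping; everything else is a routine application of \eqref{eq:SLperco}.
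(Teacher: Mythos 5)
Your proposal has a genuine gap. Applying Lemma~\ref{Lem: SL upper bound} with $S=\mathbb H'$ and $\Lambda=\mathbb Z^d$ produces a correction term involving $\mathbb P_\beta[z\connect{}x]$ for $z\notin\mathbb H'$, which is a \emph{full-space} two-point function — exactly the quantity you are trying to bound. You flag the circularity, but neither of your proposed fixes works. First, you cannot bound $\mathbb P_\beta[z\connect{}x]$ by ``the half-space bound applied from the $z$ side'': restricting a connection to a half-space only makes the probability \emph{smaller}, so $(\ell^\infty_\beta)$ gives a lower bound on $\mathbb P_\beta[z\connect{}x]$, not an upper one. Second, the quantity $\sum_{y\in\mathbb H',\,z\notin\mathbb H'}\mathbb P_\beta[0\connect{\mathbb H'\:}y]p_{yz,\beta}$ is $\varphi_\beta(\mathbb H')$, not a $\psi$-type sum; Proposition~\ref{prop:bound phi perco} shows it is of order $1$, not $\bfC/L$, so there is no $1/L$ gain to be had there. (The half-space $\ell^1$ estimate $(\ell^1_\beta)$ controls $\psi_\beta(\mathbb H_n)=\sum_{x\in(\partial\mathbb H_n)^*}\mathbb P_\beta[0\connect{\mathbb H_n\:}x]$, which is a sum over \emph{boundary points only}, a much thinner set than your $(y,z)$-sum.)

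The paper avoids this circularity by using a different decomposition. Instead of decomposing on the first open edge leaving a fixed half-space (the Simon--Lieb structure, which is intrinsically one-sided), it decomposes an open self-avoiding path from $0$ to $x$ according to the \emph{first left-most point} $z$ visited: the first point on the path achieving the minimal first coordinate, say $z\in\partial\mathbb H_n$. Crucially, this is a global property of the path, and it forces \emph{both} segments — from $0$ to $z$, and from $z$ to $x$ — to lie in the half-space $\mathbb H_n$. The BK inequality then gives
\begin{equation*}
\mathbb P_\beta[0\connect{}x]\le \mathbb P_\beta[0\connect{\mathbb H\:}x]+\sum_{n\ge 1}\sum_{z\in\partial\mathbb H_n}\mathbb P_\beta[0\connect{\mathbb H_n\:}z]\,\mathbb P_\beta[z\connect{\mathbb H_n\:}x],
\end{equation*}
a product of two \emph{half-space} probabilities. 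Now $\sum_{z\in\partial\mathbb H_n}\mathbb P_\beta[0\connect{\mathbb H_n\:}z]=\psi_\beta(\mathbb H_n)\le\bfC/L$ by $(\ell^1_\beta)$, and $\mathbb P_\beta[z\connect{\mathbb H_n\:}x]$ is controlled by $(\ell^\infty_\beta)$ with depth $|x|+n$; summing over $n$ gives the passage from the half-space exponent $d-1$ to the full-space exponent $d-2$ and the sharp constant $3\bfC^2$. The missing idea in your proposal is precisely this ``first extremal point'' decomposition, which constrains the path on both sides of $z$ rather than on one side only.
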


The proof consists in decomposing an open self-avoiding path from $0$ to $x$ with respect to the first vertex on it which minimises the first coordinate, and then use the half-space estimates provided by the assumption $\beta<\beta^*$.

\begin{proof} Without loss of generality, assume that $x_1=|x|\geq 1$.
If the connection to $x$ is not included in $\mathbb H$, decompose an open self-avoiding path connecting $0$ to $x$ according to the first left-most point $z$ on it (see Figure \ref{fig:perco1}) to get
\begin{align}
    \mathbb P_{\beta}[0\connect{}x]& \stackrel{\eqref{eq:BK ineq}}\le \mathbb P_{\beta}[0\connect{\mathbb H\:}x]+\sum_{n\ge 1}\sum_{\substack{z\in \partial\mathbb H_{n}}}\mathbb P_{\beta}[0\connect{\mathbb H_n\:}z] \mathbb P_{\beta}[z\connect{\mathbb H_n\:}x]\notag\\
    &\stackrel{\eqref{eq:H_beta-' perco}}\le \frac{\bfC}{L^d}\left(\frac{L}{|x|\vee L}\right)^{d-1}+ \sum_{n\ge 1}\psi_\beta(\mathbb H_n)\frac{\bfC}{L^d}\left(\frac{L}{L\vee (|x|+n)}\right)^{d-1}\notag
    \\&\stackrel{\phantom{\eqref{eq:H_beta-' perco}}}= \frac{\bfC}{L^d}\left(\frac{L}{|x|\vee L}\right)^{d-1}+\sum_{n=1}^{L-|x|}\psi_\beta(\mathbb H_n)\frac{\bfC}{L^d}+\sum_{n\geq (L-|x|+1)\vee 1}\psi_\beta(\mathbb H_n)\frac{\bfC}{L}\frac{1}{(|x|+n)^{d-1}}.\label{eq:belowbeta*implies full space 1}
\end{align}
If $|x|\leq L$, \eqref{eq:belowbeta*implies full space 1} and the assumption that $\beta< \beta^*(\bfC,L)$ imply that
\begin{equation}
	\mathbb P_\beta[0\connect{}x]\leq \frac{\bfC}{L^d}+\frac{\bfC^2}{L^d}+\frac{\bfC^2}{L^2}\frac{1}{(d-2)L^{d-2}}\leq \frac{3\bfC^2}{L^d},
\end{equation}
    where we used that $\sum_{n\geq \alpha}\tfrac{1}{(n+1)^{d-1}}\leq \tfrac{1}{(d-2)\alpha^{d-2}}$ (for $\alpha\geq 1$) with $\alpha=L$, and the hypothesis that $\bfC>1$. Now, when $|x|>L$, \eqref{eq:belowbeta*implies full space 1} and the preceding observation with $\alpha=|x|$ yield
    \begin{equation}
    	\mathbb P_\beta[0\connect{}x]\leq \frac{\bfC}{L|x|^{d-1}}+\frac{\bfC^2}{L^2}\sum_{k\geq |x|}\frac{1}{(k+1)^{d-1}}\leq \frac{\bfC}{L^2|x|^{d-2}}+\frac{\bfC^2}{(d-2)|x|^{d-2}}\leq \frac{2\bfC^2}{L^2|x|^{d-2}},
    \end{equation}
    where we used once more the assumption that $\beta< \beta^*(\bfC,L)$ and the fact that $\bfC>1$.
    The proof follows readily.	
\end{proof}

\begin{figure}[!htb]
	\begin{center}
		\includegraphics{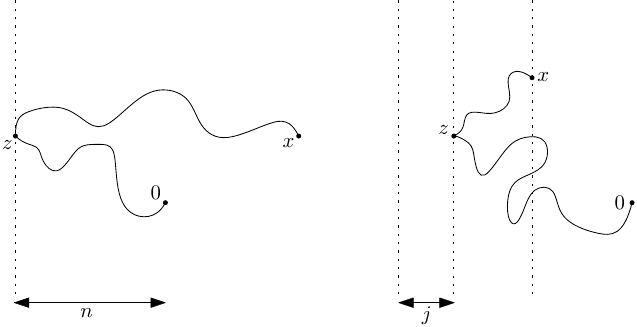}
		\put(-48,140){$\partial\mathbb H_{n-k}$}
        \put(-112,140){$\partial\mathbb H_{n}$}
        \put(-297,140){$\partial\mathbb H_{n}$}
		\caption{On the left, an illustration of the decomposition of a self-avoiding path connecting $0$ to $x$ used in the proof of \eqref{eq: full plane estimate from half plane perco}. On the right, a similar decomposition used in the proof of \eqref{eq: half plane at distance $k$ from half space perco} in the case $x_1<0$.}
		\label{fig:perco1}
	\end{center}
\end{figure}

For a generalised block $B\in \mathcal B$ and $k\geq 0$, we define the {\em $k$-boundary} of $B$ to be
\begin{equation}
	\partial^k B:=\{x\in B : \mathrm{d}(x,\partial B)=k\},
\end{equation}
where $\mathrm{d}(x,\partial B):=\inf\{|x-b|:b\in \partial B\}$. In particular, $\partial^0 B=\partial B$. 
We now turn to $\ell^\infty$ and $\ell^1$ estimates for points living on the $k$-boundary of a generalised block $B$.

\begin{Lem}[$\ell^\infty$ and $\ell^1$ bounds at distance $k$ from the boundary]\label{lem: distance k from half-space} Fix $d>2$, $\bfC>1$, and $L\geq 1$. Fix $B=(\prod_{i=1}^d[a_i,b_i])\cap \mathbb Z^d\in \mathcal B$, with $a_i\leq 0 \leq b_i$. For every $\beta<\beta^*(\bfC,L)$, 
\begin{align}    \label{eq: half plane at distance $k$ from half space perco}
    \sup_{\substack{x\in (\partial^k B)^*:\\ x_i\in\{a_i+k,b_i-k\}}} \mathbb P_{\beta}[0\connect{B\:}x]&\le \frac{\bfC^2(k+L)}{L^{d+1}}\left(\frac{L}{L\vee |x_i|}\right)^{d-1}  \quad& \forall 1\leq i\leq d,  \forall k\geq 0,    \\ \label{eq: sum over half plane at distance k from half plane perco}
    \sum_{x\in (\partial^k B)*}\mathbb P_{\beta}[0\connect{B\:}x]&\le \frac{2d\bfC^2(k+L)}{L^2}\quad&\forall k\geq 0.
\end{align}
\end{Lem}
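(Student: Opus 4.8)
The plan is to prove both bounds of Lemma~\ref{lem: distance k from half-space} by iterating the Simon--Lieb inequality \eqref{eq:SLperco} exactly $k$ times, peeling off one ``layer'' of the generalised block $B$ at each step. Concretely, for a block $B$ with $\mathrm{d}(x,\partial B)=k$ one writes $B=B^{(0)}\supset B^{(1)}\supset \cdots \supset B^{(k)}$, where $B^{(j)}:=(\prod_i [a_i+j, b_i-j])\cap \mathbb Z^d\in \mathcal B$ is the block shrunk inward by $j$ on every side (still a generalised block, since the conditions $-\infty\le a_i+j\le 0\le b_i-j\le\infty$ are preserved as long as $j\le\min_i(-a_i)\wedge\min_i b_i$, which holds whenever $\partial^k B$ is nonempty). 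Applying \eqref{eq:SLperco} with $S=B^{(1)}$ and $\Lambda=B=B^{(0)}$ gives
\[
\mathbb P_\beta[0\connect{B^{(0)}\:}x]\le \mathbb P_\beta[0\connect{B^{(1)}\:}x]+\sum_{\substack{y\in B^{(1)}\\ z\in B^{(0)}\setminus B^{(1)}}}\mathbb P_\beta[0\connect{B^{(1)}\:}y]\,p_{yz,\beta}\,\mathbb P_\beta[z\connect{B^{(0)}\:}x].
\]
Since $x\in\partial^k B\subset B^{(1)}$ we may keep iterating: after $j$ steps, $0$ is connected to $x$ inside $B^{(j)}$ and we pay an error term at each step. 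The key observation is that each error sum factorises: $\sum_{y\in B^{(1)}}\mathbb P_\beta[0\connect{B^{(1)}\:}y]p_{yz,\beta}$ is at most $\varphi_\beta(B^{(1)})$ summed appropriately, but more simply one bounds $\sum_{y,z}\mathbb P_\beta[0\connect{B^{(1)}\:}y]p_{yz,\beta}\le \varphi_\beta(\mathbb H)$ or, better, uses that $B^{(1)}\subset$ a half-space in the relevant direction so that $\psi_\beta$-type control via \eqref{eq:H_beta perco} applies; and $\mathbb P_\beta[z\connect{B^{(0)}\:}x]\le\mathbb P_\beta[z\connect{}x]$ which is controlled by Lemma~\ref{lem: full plane 2pt function out of halfplane one perco}. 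The upshot should be a recursion of the shape $f_j(x)\le f_{j+1}(x)+(\text{const})\cdot\tfrac{\bfC}{L}\cdot\sup_z\mathbb P_\beta[z\connect{}x]$, telescoping over $j=0,\ldots,k-1$ to produce a factor $(k+L)$ after the base case $f_k(x)=\mathbb P_\beta[0\connect{B^{(k)}\:}x]$ is estimated by the half-space bound \eqref{eq:H_beta-' perco}, since $x\in\partial B^{(k)}$ lies on the boundary and hence the two-point function in $B^{(k)}$ reduces to a half-space-type quantity in the coordinate direction achieving the minimum distance.

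For the $\ell^\infty$ bound \eqref{eq: half plane at distance $k$ from half space perco}, I would be careful to track which coordinate $i$ realises $x_i\in\{a_i+k,b_i-k\}$: after peeling $k$ layers, $x$ sits on the face $\{x_i=a_i+k\}$ (or the opposite face) of $B^{(k)}$, so connections within $B^{(k)}$ starting from $0$ and reaching $x$ are constrained to a half-space $\{w: w_i\ge a_i+k\}$ (translated), whence \eqref{eq:H_beta-' perco} gives the factor $(L/(L\vee|x_i|))^{d-1}$. The error terms contribute $\le k$ times something of order $\tfrac{\bfC^2}{L^{d+1}}(L/(L\vee|x_i|))^{d-1}$ — here one must check that the full-space factor $\mathbb P_\beta[z\connect{}x]\le \tfrac{3\bfC^2}{L^d}(L/(L\vee|x|))^{d-2}$ from Lemma~\ref{lem: full plane 2pt function out of halfplane one perco}, when multiplied by $\tfrac{\bfC}{L}$ from the $\psi_\beta$ sum and summed, still fits under the target $(L/(L\vee|x_i|))^{d-1}$ profile; this works because $z$ is within distance $k$ of the peeled face, so $|z|$ and $|z_i|$ are comparable to $|x_i|$ up to $k\le |x_i|$ (using $|x|\asymp|x_i|$ on that face when $x$ is extremal in coordinate $i$, though strictly one should argue $|x|\ge |x_i|$ and the bound only improves). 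Combining the base term $\tfrac{\bfC}{L^d}(L/(L\vee|x_i|))^{d-1}$ with $k$ error terms each $\lesssim \tfrac{\bfC^2}{L^{d+1}}(L/(L\vee|x_i|))^{d-1}$ and one more of the same order from the ``$L$'' in $(k+L)$ yields the claimed $\tfrac{\bfC^2(k+L)}{L^{d+1}}(L/(L\vee|x_i|))^{d-1}$ after absorbing constants into $\bfC^2$ (using $\bfC>1$).

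For the $\ell^1$ bound \eqref{eq: sum over half plane at distance k from half plane perco}, I would sum \eqref{eq: half plane at distance $k$ from half space perco} over $x\in(\partial^k B)^*$. The set $\partial^k B$ is contained in the union over the $2d$ faces of $B^{(k)}$; on each face with normal direction $i$, the points have $x_i$ fixed (equal to $a_i+k$ or $b_i-k$, hence $|x_i|$ ranges over at most all of $\mathbb Z$ via the other coordinates? no — $|x_i|$ is \emph{fixed} on a face, it is the \emph{other} coordinates that vary). Wait: on the face $\{x_i=a_i+k\}$, $|x_i|=|a_i+k|$ is a single number, and the remaining $d-1$ coordinates are free within their ranges; so I instead reorganise, summing over the value $m:=|x_i|$ of the distinguished coordinate — but $m$ is determined by the face. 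The cleaner route: bound $\sum_{x\in\partial^k B}\mathbb P_\beta[0\connect{B\:}x]$ directly by iterating \eqref{eq:SLperco} at the $\ell^1$ level, i.e.\ summing over $x$ first. Define $g_j:=\sum_{x\in\partial^k B}\mathbb P_\beta[0\connect{B^{(j)}\:}x]$; then $g_j\le g_{j+1}+\big(\sum_{y,z}\mathbb P_\beta[0\connect{B^{(j+1)}\:}y]p_{yz,\beta}\big)\cdot\sup_z\sum_{x\in\partial^k B}\mathbb P_\beta[z\connect{}x]$. The first factor is $\le\varphi_\beta(B^{(j+1)})$ which, since $B^{(j+1)}$ sits inside a half-space, is $\lesssim \psi_\beta(\mathbb H)\lesssim \bfC/L$ (modulo the $\varphi$-vs-$\psi$ comparison of Lemma~\ref{lem: small values 1 perco}), and the second is $\le\chi(\beta)\lesssim(\bfC/L)^2\cdot$? — no, we do not yet have $\chi$; instead bound $\sum_{x\in\partial^k B}\mathbb P_\beta[z\connect{}x]$ by restricting to a slab of width $O(1)$ in one direction, giving $\lesssim \bfC/L$ via a one-dimensional sum of the full-space profile. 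Telescoping $k$ steps plus the base case $g_k=\psi_\beta(\text{translate of }\mathbb H)<\bfC/L$ gives $g_0\lesssim \bfC/L+k\cdot(\bfC/L)^2\lesssim (k+L)\bfC^2/L^2$; the factor $2d$ accounts for splitting $\partial^k B$ over its $2d$ faces. The main obstacle, I expect, is the bookkeeping in the $\ell^\infty$ error terms: verifying that each of the $k$ intermediate error contributions genuinely carries the \emph{improved} profile $(L/(L\vee|x_i|))^{d-1}$ rather than the weaker full-space profile $(L/(L\vee|x|))^{d-2}$ — this hinges on exploiting that after peeling, the surviving path from the error-exit point $z$ to $x$ is confined to a half-space aligned with coordinate $i$, so one should apply \eqref{eq:H_beta-' perco} (not Lemma~\ref{lem: full plane 2pt function out of halfplane one perco}) to the $z\connect{}x$ factor as well, keeping everything inside nested half-spaces throughout the iteration rather than passing to full space.
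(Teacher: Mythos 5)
The paper does not iterate Simon--Lieb on a sequence of shrinking blocks. It first replaces $B$ by a single half-space: taking the coordinate $i$ in which $x$ sits at distance $k$ (WLOG $i=1$, $x_1=a_1+k$, $n=-a_1$), it uses $\mathbb P_\beta[0\connect{B\:}x]\le\mathbb P_\beta[0\connect{\mathbb H_n\:}x]$, and then decomposes a self-avoiding open path from $0$ to $x$ in $\mathbb H_n$ by its \emph{first left-most point} $z\in\partial\mathbb H_{n-j}$ (a BK-type decomposition, not Simon--Lieb). This yields \eqref{eq: bound distance k 1} whose error terms are products $\mathbb P_\beta[z\connect{\mathbb H_{n-j}\:}0]\,\mathbb P_\beta[z\connect{\mathbb H_{n-j}\:}x]$, and by translation invariance the sum $\sum_{z\in\partial\mathbb H_{n-j}}\mathbb P_\beta[z\connect{\mathbb H_{n-j}\:}x]$ is exactly $\psi_\beta(\mathbb H_{k-j})$. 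Both the $\ell^\infty$ and $\ell^1$ statements are then obtained by plugging in the a priori bounds \eqref{eq:H_beta perco} and \eqref{eq:H_beta-' perco}.

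Your $\ell^1$ argument has a genuine gap. You need $\sup_z\sum_{x\in\partial^kB}\mathbb P_\beta[z\connect{}x]\lesssim \bfC/L$, but this is false: when $\partial^kB$ has an unbounded face (e.g.\ $B=\mathbb H_n$), the sum actually \emph{diverges}, because the full-space profile $\tfrac{\bfC^2}{L^2}(L\vee|x|)^{-(d-2)}$ from Lemma~\ref{lem: full plane 2pt function out of halfplane one perco} is not summable over a $(d-1)$-dimensional hyperplane --- each transverse shell at distance $r$ contributes $\Theta(1)$. ``Restricting to a slab and doing a one-dimensional sum'' does not repair this. The point the paper exploits is that the connection from $z$ to $x$ is confined to the half-space $\mathbb H_{n-j}$, so by translation invariance the transverse sum is literally $\psi_\beta(\mathbb H_{k-j})$, which $(\ell^1_\beta)$ bounds by $\bfC/L$: the $\ell^1$ hypothesis $(\ell^1_\beta)$ is an \emph{independent} input encoding transverse decay that neither $(\ell^\infty_\beta)$ nor the full-space Lemma~\ref{lem: full plane 2pt function out of halfplane one perco} contains, and it cannot be recovered from them. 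A secondary error in the same paragraph: $\varphi_\beta(B^{(j+1)})$ is a priori $\Theta(1)$ (that it is $\le 1+K/L^d$ is Proposition~\ref{prop:bound phi perco}, established \emph{after} this lemma), not $\lesssim\bfC/L$; you appear to conflate $\varphi_\beta$ with $\psi_\beta$. The factor $\bfC/L$ per layer must come from the $\psi_\beta$-sum over $z$, not from $\varphi_\beta$.

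Two further technical problems with the block-peeling scheme. First, $B^{(j)}$ need not lie in $\mathcal B$, and worse, $0$ need not lie in $B^{(j)}$ at all: if $a_1=0$ and $b_1=\infty$ then $0\notin B^{(1)}$, so Lemma~\ref{Lem: SL upper bound} (which requires $o\in S$) cannot be applied with $S=B^{(1)}$ and the iteration stalls after zero steps even though $\partial^kB\ne\emptyset$. Second, for the $\ell^\infty$ estimate you correctly diagnose that the error term must carry the half-space profile $(L/(L\vee|x_1|))^{d-1}$ rather than the full-space profile, but the shrinking-block structure makes this awkward because the exit point $z\in B^{(j)}\setminus B^{(j+1)}$ may sit on a face of $B$ in a coordinate different from $i$, whence $\mathbb P_\beta[z\connect{B^{(j)}\:}x]$ is not naturally a half-space two-point function in the direction of $x_i$. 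The paper's move of passing to $\mathbb H_n$ \emph{before} decomposing removes all of these difficulties in one stroke.
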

The proof consists in a simple decomposition with respect to the first vertex on the open path connecting $0$ to $x$ which minimises the first coordinate.
\begin{proof} We prove each point separately.
\paragraph{Proof of \eqref{eq: half plane at distance $k$ from half space perco}.} Without loss of generality, assume that $i=1$ and that $x\in (\partial^k B)^*$ is such that $x_1=a_1+k$. Let $n=-a_1$ so that $x_1=k-n$. Notice that 
\begin{equation}
	\mathbb P_\beta[0\connect{B\:}x]\leq \mathbb P_\beta[0\connect{\mathbb H_n\:}x].
\end{equation}
We distinguish two cases: $x_1<0$ and $x_1\geq 0$. We begin with the former case, i.e. $k<n$. To bound $\mathbb P_{\beta}[0\connect{\mathbb H_n\:}x]$, decompose an open self-avoiding path connecting $0$ to $x$ according to its first left-most point $z$ (see Figure \ref{fig:perco1}) to get
  \begin{align}\label{eq: bound distance k 1}
  	\mathbb P_{\beta}[0\connect{\mathbb H_n\:}x]&\stackrel{\eqref{eq:BK ineq}}\leq \mathbb P_{\beta}[0\connect{\mathbb H_{n-k}\:}x]+\sum_{j=0}^{k-1}\sum_{\substack{z\in \partial \mathbb H_{n-j}}}\mathbb P_\beta[z\connect{\mathbb H_{n-j}\:}x]\mathbb P_\beta[z\connect{\mathbb H_{n-j}\:}0]
  	\\&\stackrel{\eqref{eq:H_beta-' perco}}\leq \frac{\bfC}{L^d}\left(\frac{L}{L\vee (n-k)}\right)^{d-1}+\sum_{j=0}^{k-1}\sum_{\substack{z\in \partial \mathbb H_{n-j}}}\frac{\bfC}{L^d}\left(\frac{L}{L\vee (n-j)}\right)^{d-1}\mathbb P_\beta[z\connect{\mathbb H_{n-j}\:}x].  	\label{eq:proofdistancekbounds 1}
  \end{align}
Using translation invariance and \eqref{eq:H_beta perco}, we obtain that 
  \begin{equation}
  	\sum_{z\in \partial \mathbb H_{n-j}}\mathbb P_\beta[z\connect{\mathbb H_{n-j}\:}x]=\psi_\beta(\mathbb H_{k-j})\le \frac{\bfC}{L},
  \end{equation}
  which implies that
  \begin{align}
    	\sum_{j=0}^{k-1}\sum_{\substack{z\in \partial \mathbb H_{n-j}}}\frac{\bfC}{L^d}\left(\frac{L}{L\vee (n-j)}\right)^{d-1}\mathbb P_\beta[z\connect{\mathbb H_{n-j}\:}x]  &\leq k\frac{\bfC^2}{L^{d+1}}\max_{0\leq j\leq k-1}\left(\frac{L}{L\vee (n-j)}\right)^{d-1}\notag\\&\leq
  	\frac{\bfC^2 k}{L^{d+1}}\left(\frac{L}{L\vee (n-k)}\right)^{d-1}.\label{eq:proofdistancekbounds 2}
  \end{align}
  Plugging the previously displayed equation in \eqref{eq:proofdistancekbounds 1} and using the assumption $\bfC>1$ yields
  \begin{equation}
  	\mathbb P_\beta[0\connect{\mathbb H_n\:}x]\leq \left(\frac{\bfC}{L^d}+\frac{\bfC^2 k}{L^{d+1}}\right)\left(\frac{L}{L\vee (n-k)}\right)^{d-1}\leq\frac{\bfC^2(k+L)}{L^{d+1}}\left(\frac{L}{L\vee |x_1|}\right)^{d-1}. 
  \end{equation}
  
  We turn to the case $x_1\geq 0$, i.e.\ $k\geq n$. Again, to bound $\mathbb P_\beta[0\connect{\mathbb H_n\:}x]$, we decompose an open self-avoiding path connecting $0$ to $x$ according to its first left-most point $z$. We obtain
\begin{align}\label{eq: bound distance k 1 bis}
  	\mathbb P_{\beta}[0\connect{\mathbb H_n\:}x]&\stackrel{\eqref{eq:BK ineq}}\leq \mathbb P_{\beta}[0\connect{\mathbb H\:}x]+\sum_{j=0}^{n-1}\sum_{\substack{z\in \partial \mathbb H_{n-j}}}\mathbb P_\beta[z\connect{\mathbb H_{n-j}\:}x]\mathbb P_\beta[z\connect{\mathbb H_{n-j}\:}0]
  	\\&\stackrel{\eqref{eq:H_beta-' perco}}\leq \frac{\bfC}{L^d}\left(\frac{L}{L\vee |x_1|}\right)^{d-1}+\sum_{j=0}^{n-1}\sum_{\substack{z\in \partial \mathbb H_{n-j}}}\frac{\bfC}{L^d}\left(\frac{L}{L\vee (|x_1|+n-j)}\right)^{d-1} 	\mathbb P_\beta[z\connect{\mathbb H_{n-j}\:}0]. \label{eq:proofdistancekbounds 3}
  \end{align}
 Reasoning as in \eqref{eq:proofdistancekbounds 2}, we get
  \begin{equation}\label{eq:proofdistancekbounds 4}
  	\sum_{j=0}^{n-1}\sum_{\substack{z\in \partial \mathbb H_{n-j}}}\frac{\bfC}{L^d}\left(\frac{L}{L\vee (|x_1|+n-j)}\right)^{d-1} 	\mathbb P_\beta[z\connect{\mathbb H_{n-j}\:}0]\leq \frac{\bfC^2 n}{L^{d+1}}\left(\frac{L}{L\vee |x_1|}\right)^{d-1}.
  \end{equation}
 Combining \eqref{eq:proofdistancekbounds 3} and \eqref{eq:proofdistancekbounds 4} gives
 \begin{equation}
 	\mathbb P_\beta[0\connect{\mathbb H_n\:}x]\leq \frac{\bfC^2(n+L)}{L^{d+1}}\left(\frac{L}{L\vee |x_1|}\right)^{d-1}\leq \frac{\bfC^2(k+L)}{L^{d+1}}\left(\frac{L}{L\vee |x_1|}\right)^{d-1},
 \end{equation}
 and concludes the proof of \eqref{eq: half plane at distance $k$ from half space perco}.

 \paragraph{Proof of  \eqref{eq: sum over half plane at distance k from half plane perco}.} This time, notice that
 \begin{align}\notag
 	\sum_{x\in (\partial^k B)^*}\mathbb P_\beta[0\connect{B\:}x]&\leq \sum_{i=1}^{d}\sum_{\substack{x\in (\partial^k B)^*\\x_i\in\{a_i+k,b_i-k\}}}\mathbb P_{\beta}[0\connect{B\:}x]\\&\leq (2d)\cdot \sup_{n\geq 0}\sum_{x\in (\partial \mathbb H_{n-k})^*}\mathbb P_\beta[0\connect{\mathbb H_n\:}x],\label{eq:bound sum distance k B proof}
 \end{align}
where we stress that $n-k$ can take negative values.
 We let $n\geq 0$ and bound the sum appearing in \eqref{eq:bound sum distance k B proof}. We consider the same decomposition as in \eqref{eq: bound distance k 1}, but use \eqref{eq:H_beta perco} twice instead of \eqref{eq:H_beta perco} and \eqref{eq:H_beta-' perco}. We first look at the case $0\leq k \leq n-1$. Summing \eqref{eq: bound distance k 1} (which holds for that range of $k$) over $x\in \partial \mathbb H_{n-k}$ gives 
  \begin{align}\notag
  	 \sum_{x\in \partial\mathbb H_{n-k}}&\mathbb P_\beta [0\connect{\mathbb H_n\:}x]\\&\stackrel{\phantom{\eqref{eq:H_beta perco}}}\leq \sum_{x\in \partial\mathbb H_{n-k}}\mathbb P_\beta[0\connect{\mathbb H_{n-k}\:}x]+\sum_{j=0}^{k-1}\sum_{z\in \partial \mathbb H_{n-j}}\mathbb P_\beta[0\connect{\mathbb H_{n-j}\:}z]\sum_{x\in \partial\mathbb H_{n-k}}\mathbb P_\beta[z\connect{\mathbb H_{n-j}\:}x]\notag
  	 \\&\stackrel{\phantom{\eqref{eq:H_beta perco}}}=\psi_\beta(\mathbb H_{n-k})+\sum_{j=0}^{k-1}\psi_\beta(\mathbb H_{n-j})\psi_\beta(\mathbb H_{k-j})\notag
  	 \\&\stackrel{\eqref{eq:H_beta perco}}\leq \frac{\bfC}{L}+\frac{k\bfC^2}{L^2}\leq \frac{\bfC^2(k+L)}{L^2},
 \end{align}
 where we used translation invariance to argue that for $z\in \mathbb H_{n-j}$, $\sum_{x\in \partial\mathbb H_{n-k}}\mathbb P_\beta[z\connect{\mathbb H_{n-j}\:}x]=\psi_\beta(\mathbb H_{k-j})$.
Now, if $k\geq n$, using \eqref{eq: bound distance k 1 bis} with $k\geq n$ and summing it over $x\in (\partial \mathbb H_{n-k})^*$ (here it is important to omit 0 in the case $k=n$) yields
 \begin{equation}
 	\sum_{x\in (\partial \mathbb H_{n-k})^*}\mathbb P_\beta[0\connect{\mathbb H_n\:}x]\leq \psi_\beta(\mathbb H)+\sum_{j=0}^{n-1}\psi_\beta(\mathbb H_{n-j})\psi_\beta(\mathbb H_{k-j})\stackrel{\eqref{eq:H_beta perco}}\leq  \frac{\bfC}{L}+\frac{n\bfC^2}{L^2}\leq \frac{\bfC^2(n+L)}{L^2}.
 \end{equation}
This concludes the proof.
\end{proof}

We are now equipped to prove the main result of this section on the error amplitude. This computation is not especially hard but it is quite lengthy. Since it simply consists in plugging the previous estimates in the expression for the error, it may be skipped in a first reading.
\begin{proof}[Proof of Proposition~\textup{\ref{lem: error amplitude perco}}] 
 We fix $B=(\prod_{i=1}^d[a_i,b_i])\cap \mathbb Z^d\in \mathcal B$ and assume that $ B\neq \mathbb Z^d$ (otherwise one has $E_\beta(\mathbb Z^d)=0$ by definition).
%
%
We fix $\beta<\beta^*$ and drop it from the notations. Introduce the quantities $\mathcal E(1)$ and $\mathcal E(2)$ corresponding to the sums in  \eqref{eq: definition E(o;u,v)} on the first and second line respectively. Since $	E_\beta(B)=\mathcal E(1)+\mathcal E(2)$, it suffices to bound these two terms separately. In this proof, the constants $C_i$ only depend on $\bfC$ and $d$.

\paragraph{Bound on $\mathcal E(1)$.} We take the position of $u$ into account and write
\begin{equation}\label{eq:boundamplitudeError0}
\mathcal E(1)=
 \sum_{\ell\geq 0}\sum_{u\in \partial^\ell B}\mathbb P[0\connect{B\:}u]
\sum_{\substack{y\in B\\ z\notin B}}\mathbb P[u\connect{B\:}y] p_{yz}\sum_{v\in B}\mathbb P[u\connect{\:}v]\mathbb P[z\connect{}v].
\end{equation}
Using \eqref{eq: full plane estimate from half plane perco} and Proposition \ref{prop: convolution estimate}, there exists $C_1>0$ such that for all $\ell\ge0$ and all $u$ and $z$ contributing to \eqref{eq:boundamplitudeError0},
\begin{equation}\label{eq:boundamplitudeError1}
	\sum_{v\in \mathbb Z^d}\mathbb P[u\connect{}v]\mathbb P[z\connect{}v]\leq \frac{1}{L^4}\frac{C_1}{(L\vee\ell)^{d-4}},
\end{equation}
where we used that $|z-u|\geq \ell+1$.

Let $u\in \partial^\ell B$. Recall from \eqref{eq: bound volume times p} that for $\beta<\beta^*$, one has $|\Lambda_L^*|p\leq 4$. Thus,
\begin{align}
	\sum_{\substack{y\in B\\ z\notin B}}\mathbb P[u\connect{B\:}y] p_{yz}&\stackrel{\phantom{\eqref{eq: sum over half plane at distance k from half plane perco}}}\leq |\Lambda_L^*|p\sum_{k=0}^{L-1}\sum_{y\in \partial^k B}\mathbb P[u\connect{B\:}y]\stackrel{\eqref{eq: sum over half plane at distance k from half plane perco}}\leq 4(4d\bfC^2+1),\label{eq:boundamplitudeError2}
\end{align}
where in the last inequality we took into account the possibility of having $y=u$.

Finally, using \eqref{eq: sum over half plane at distance k from half plane perco} one more time,
\begin{equation}\label{eq:boundamplitudeError3}
	\sum_{u\in \partial^\ell B}\mathbb P[0\connect{B\:}u]\leq \mathds{1}_{0\in \partial^\ell B}+\frac{2d\bfC^2(\ell+L)}{L^2}\leq \mathds{1}_{0\in \partial^\ell B}+\frac{4d\bfC^2(L\vee \ell)}{L^2}.
\end{equation}
Plugging \eqref{eq:boundamplitudeError1}--\eqref{eq:boundamplitudeError3} in \eqref{eq:boundamplitudeError0}, we found $C_2,C_3>0$ such that,
\begin{equation}\label{eq: bound on E(1)}
	\mathcal E(1)\leq \frac{C_2}{L^d}+\frac{C_2}{L^6}\sum_{\ell\geq 0}\frac{1}{(L\vee \ell)^{d-5}}\leq \frac{C_3}{L^d},
\end{equation}
where we used that $d>6$ to justify that the sum is finite. 
\paragraph{Bound on $\mathcal E(2)$.} Again, we take the position of $u$ into account,
\begin{equation}\label{eq:boundamplitudeError3.5}
	\mathcal E(2)=\sum_{\ell\geq 0} \sum_{\substack{u\in \partial^\ell B}}\mathbb P[0\connect{B\:}u]\sum_{\substack{y,s\in B\\ z,t  \notin B\\ yz\neq st}}\mathbb P[u\connect{B\:}y]\mathbb P[u\connect{B\:}s]p_{yz}p_{st}\sum_{v\in \mathbb Z^d}\mathbb P[z\connect{\:}v]\mathbb P[t\connect{\:}v].
\end{equation}
A diagrammatic representation of $\mathcal E(2)$ is given in Figure \ref{fig: error} (once we remove the path from $v$ to $x$ in the right-most picture). From this figure we can interpret $\mathcal E(2)$ as a bound on the probability that $0$ connects to some $v\in \mathbb Z^d$ through two paths which exit $B$ and which use different ``outgoing'' open edges $yz$ and $st$. Heuristically, the requirement of finding more than one such edges is responsible for an additional cost $O(L^{-d})$, which justifies that $\mathcal E(2)$ should be small. We now turn to a proper justification of this fact.

Using \eqref{eq: full plane estimate from half plane perco} and Proposition \ref{prop: convolution estimate}, there exists $C_4>0$ such that for all $z,t\in \mathbb Z^d$,
 \begin{equation}\label{eq:boundamplitudeError4}
 	\sum_{v\in \mathbb Z^d}\mathbb P[z\connect{}v]\mathbb P[t\connect{}v]\leq C_4\Big(\mathds{1}_{z=t}+\mathds{1}_{z\ne t}\frac{1}{L^4}\frac{1}{(L\vee|z-t|)^{d-4}}\Big).
 \end{equation}
 We plug \eqref{eq:boundamplitudeError4} in \eqref{eq:boundamplitudeError3.5} and consider the two contributions coming from $z=t$ and $z\neq t$.
 
\paragraph{Case $z\ne t$.} We begin with the contribution of $z\neq t$ and further split the analysis into two subcases: $|z-t|\geq \ell+1$ and $|z-t|\leq \ell$. 

We begin with the subcase $|z-t|\geq \ell+1$. Notice that
 \begin{align}
	\frac{1}{L^4}\sum_{\ell\geq 0}\sum_{u\in \partial^\ell B}\mathbb P[0 \connect{B\:}u]&\sum_{\substack{y,s\in B\\ z,t\notin B\\ |z-t|\geq \ell+1}}\mathbb P[u \connect{B\:}y]\mathbb P[u\connect{B\:}s]\frac{p_{yz}p_{st}}{(L\vee|z-t|)^{d-4}}\notag
	\\&\stackrel{\eqref{eq:boundamplitudeError2}}\leq \frac{1}{L^4}\sum_{\ell\geq 0}\frac{[2(4d\bfC^2+1)]^2}{(L\vee\ell)^{d-4}}\sum_{u\in \partial^\ell B}\mathbb P[0\connect{B\:}u]\notag
	\\&\stackrel{\eqref{eq:boundamplitudeError3}}\leq \frac{C_5}{L^d}+\frac{C_5}{L^6}\sum_{\ell\geq 0}\frac{1}{(L\vee \ell)^{d-5}}\leq \frac{C_6}{L^d},\label{eq:boundamplitudeError6}
\end{align}
where $C_5,C_6>0$, and where we used again that $d>6$ to justify the finiteness of the sum in  \eqref{eq:boundamplitudeError6}. 

We now look at the subcase $|z-t|\leq \ell$.  
Let $u\in \partial^\ell B$ and $s\in \cup_{k=0}^{L-1}\partial^k B$ with $u\neq s$. If $\ell \leq L$, then Lemma \ref{lem: full plane 2pt function out of halfplane one perco} implies that
\begin{equation}
	\mathbb P[u\connect{B\:}s]\leq \mathbb P[u\connect{}s]\leq \frac{3\bfC^2}{L^d}.
\end{equation}
If now $\ell>L$ and $s\in \partial^kB$ (with $k\leq L-1$), \eqref{eq: half plane at distance $k$ from half space perco} with $k$ and applied to the box $B-u$ gives
\begin{equation}
	\mathbb P[u\connect{B\:}s]=\mathbb P[0\connect{B-u\:}s-u]\leq \frac{\bfC^2(k+L)}{L^{d+1}}\left(\frac{L}{(\ell-L)\vee L}\right)^{d-1}\leq \frac{2\bfC^2}{L^d}\left(\frac{L}{(\ell-L)\vee L}\right)^{d-1}.
\end{equation}
As a result, for every $\ell \geq 0$, every $u \in \partial^\ell B$, and every $s\in \cup_{k=0}^{L-1}\partial^k B$,
\begin{equation}\label{eq:boundamplitudeError7}
	\mathbb P[u\connect{B\:}s]\leq \frac{3\bfC^2}{L^d}\left(\frac{L}{(\ell-L)\vee L}\right)^{d-1}.
\end{equation}

Moreover, there exists $C_7>0$ such that for fixed $y,z$ as above,
\begin{equation}\label{eq:boundamplitudeError8}
	\sum_{\substack{ t\in \Lambda_\ell(z)\setminus \{z\}\cap B^c\\s\in B}}p_{st}\frac{1}{(L\vee|z-t|)^{d-4}}\leq C_7 L\ell^3.
\end{equation}
Indeed, note that in the above sum, one has $s \in \cup_{k=0}^{L-1}\partial^k B$ and $|s-t|\leq L$ (otherwise $p_{st}=0$). As a result, for each such $s$,
\begin{equation}
	\{t: p_{(s-z)t}\neq 0\}\cap\Lambda_\ell^*\cap (B-z)^c\subset \{t'\in \Lambda_\ell: \exists 1\leq i \leq d, \: |t'_i|\leq L\}.
\end{equation}
Hence, using again that $|\Lambda_L^*|p\leq 4$,
\begin{align}	\sum_{\substack{ t\in \Lambda_\ell(z)\setminus \{z\}\cap B^c\\s\in B}}p_{st}\frac{1}{(L\vee|z-t|)^{d-4}}&\leq 4(2d)\sum_{\substack{t\in \Lambda_\ell\setminus \{0\}\cap [-(L\wedge \ell),L\wedge \ell]\times [-\ell,\ell]^{d-1}}}\frac{1}{(L\vee|t|)^{d-4}}\notag\\
	&\leq C_8L\ell^3,
\end{align}
for some $C_8>0$. From \eqref{eq:boundamplitudeError4} and \eqref{eq:boundamplitudeError6}, we know that the case $z\neq t$ will be concluded once we bound
\begin{equation}\label{eq:boundamplitudeError8.5}
	\frac{1}{L^4}\sum_{\ell\geq 0}\sum_{u\in \partial^\ell B}\mathbb P[0 \connect{B\:}u]\sum_{\substack{y,s\in B\\ z,t\notin B\\ 1\leq|z-t|\leq \ell}}\mathbb P[u \connect{B\:}y]\mathbb P[u\connect{B\:}s]\frac{p_{yz}p_{st}}{(L\vee|z-t|)^{d-4}}.
\end{equation}
We first consider the contribution $S_1$ in this sum \eqref{eq:boundamplitudeError8.5} coming from values of $u,y,s$ in which $u\neq y$ and/or $u\neq s$. Without loss of generality we may even assume that $u\neq s$. In this case, combining \eqref{eq:boundamplitudeError7}, \eqref{eq:boundamplitudeError8}, and \eqref{eq:boundamplitudeError2} gives 
\begin{align}\notag
	S_1&\leq \frac{1}{L^4}\frac{3\bfC^2C_7}{L^d}4(4d\bfC^2+1)\sum_{\ell\geq 0}\left(\frac{L}{(\ell-L)\vee L}\right)^{d-1}L\ell^3 \sum_{u\in \partial^\ell B}\mathbb P[0\connect{B\:}u]\\&\leq \frac{C_9}{L^{d+3}}\sum_{\ell \geq 0}\left(\frac{L}{(\ell-L)\vee L}\right)^{d-1}\ell^3 \Big(\mathds{1}_{0\in \partial^\ell B}+\frac{4d\bfC^2(L\vee \ell)}{L^2}\Big),
\end{align}
where we used \eqref{eq:boundamplitudeError3} and where $C_9>0$. Notice that 
\begin{equation}\label{eq: bound contribution 0 in partial l}
	\sum_{\ell \geq 0}\left(\frac{L}{(\ell-L)\vee L}\right)^{d-1}\ell^3 \mathds{1}_{0\in \partial^\ell B}\leq \left\{
    \begin{array}{ll}
        (2L)^3 & \mbox{if } \ell\leq 2L,\\
        (2L/\ell)^{d-4}(2L)^3 & \mbox{otherwise}.
    \end{array}
\right.
\end{equation}
Combining the two previously displayed equations, we get that
\begin{equation}\label{eq:boundS1}
	S_1\leq \frac{C_{10}}{L^{d+3}}\Big( (2L)^3+ \frac{1}{L}\sum_{\ell \leq 2L}\ell^3+\frac{L^{d-1}}{L^2}\sum_{\ell\geq 2L}\frac{1}{\ell^{d-5}}\Big)\leq\frac{C_{11}}{L^d},
\end{equation}
where $C_{10},C_{11}>0$. We now turn to the contribution $S_2$ in \eqref{eq:boundamplitudeError8.5} which corresponds to $u=y=s$. Note that in that case one must have $\ell \leq L$. Using \eqref{eq:boundamplitudeError3} and the bound $|\Lambda_L^*|p\leq 4$ again,
\begin{align}\notag
	S_2&=\frac{1}{L^4}\sum_{\ell=0}^L\sum_{u\in \partial^\ell B}\mathbb P[0\connect{B\:}u]\sum_{\substack{z,t\notin B\\1\leq |z-t|\leq \ell}}\frac{p_{uz}p_{ut}}{(L\vee|z-t|)^{d-4}}\\&\leq \frac{C_{12}}{L^d}\sum_{\ell=0}^L\Big(\mathds{1}_{0\in \partial^\ell B}+\frac{4d\bfC^2}{L}\Big)(|\Lambda_L^*|p)^2\leq \frac{C_{13}}{L^d},\label{eq:boundS2}
\end{align}
where $C_{12},C_{13}>0$.

%

\paragraph{Case $z=t$.} In the case $z=t$, we must have $y\neq s$ and therefore $y\neq u$ or $s\neq u$. The two cases are symmetric. Hence, we get,
 \begin{align}
 	\sum_{\ell\geq 0}&\sum_{u\in \partial^\ell B}\mathbb P[0\connect{B\:}u]\sum_{\substack{y,s\in B\\y\neq s\\ z\notin B}}\mathbb P[u\connect{B\:}y]\mathbb P[u\connect{B\:}s]p_{yz}p_{sz}\notag
 	\\&\stackrel{\eqref{eq:boundamplitudeError7}}\leq 2|\Lambda_L^*|p\frac{3\bfC^2}{L^d}\sum_{\ell\geq 0}\left(\frac{L}{(\ell-L)\vee L}\right)^{d-1}\sum_{u\in \partial^\ell B}\mathbb P[0\connect{B\:}u]\sum_{\substack{y\in B\\z\notin B}}\mathbb P[u\connect{B\:}y]p_{yz}\notag
 	\\&\stackrel{\eqref{eq:boundamplitudeError2}}\le \frac{96\bfC^2(4d\bfC^2+1)}{L^d}\sum_{\ell\geq 0}\left(\frac{L}{(\ell-L)\vee L}\right)^{d-1}\sum_{u\in \partial^\ell B}\mathbb P[0\connect{B\:}u]\notag
 	\\&\stackrel{\eqref{eq:boundamplitudeError3}}\leq \frac{C_{14}}{L^d}\sum_{\ell\geq 0}\left(\frac{L}{(\ell-L)\vee L}\right)^{d-1}\Big(\mathds{1}_{0\in \partial^\ell B}+\frac{4d\bfC^2(L\vee \ell)}{L^2}\Big) \leq \frac{C_{15}}{L^d},\label{eq:boundamplitudeError5}
 \end{align}
 where we used that $|\Lambda_L^*|p\leq 4$ in the second inequality, where $C_{14},C_{15}>0$, and where we used a similar reasoning as in \eqref{eq: bound contribution 0 in partial l} to get
 \begin{equation}
 	\sum_{\ell\geq 0}\left(\frac{L}{(\ell-L)\vee L}\right)^{d-1}\mathds{1}_{0\in \partial^\ell B}\leq C_{16},
 \end{equation}
 for some $C_{16}>0$.

Combining \eqref{eq:boundamplitudeError6}, \eqref{eq:boundS1}, \eqref{eq:boundS2}, and \eqref{eq:boundamplitudeError5} gives $C_{17}>0$ such that 
\begin{equation}
	\mathcal E(2)\leq \frac{C_{17}}{L^d}.
\end{equation}
This concludes the proof.
\end{proof}

To conclude this section, one last estimate (whose proof is postponed to Appendix \ref{appendix:boundE}) is derived similarly as Proposition~\ref{lem: error amplitude perco}. The interest of this result may be unclear for now but will become much more explicit in Section \ref{section: proof mainperco}.
\begin{Lem}\label{lem: estimate E(x)} Fix $d>6$, $\bfC>1$, and $L\geq 1$. 
There exists $D=D(\bfC,d)>0$ such that the following holds. For every $\beta<\beta^*(\bfC,L)$, every $n> 24L$, and every $x\in \partial\mathbb H_n$,
\begin{equation}
\max_{w\in \Lambda_{n/2}}\max_{\substack{B\in \mathcal B\\B+w\subset \Lambda_{n/2}}}E_\beta(B+w,\mathbb H_n,w,x)\leq \frac{D}{L^{d}}\cdot\frac{\bfC}{Ln^{d-1}}.
\end{equation}
	
\end{Lem}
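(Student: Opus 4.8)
The strategy is to mimic the proof of Proposition~\ref{lem: error amplitude perco}, which bounds $E_\beta(B)$ by $K/L^d$, and to extract the extra factor $\tfrac{\bfC}{Ln^{d-1}}$ by tracking the ``last'' vertex $v$ all the way out to $x\in \partial\mathbb H_n$, using the half-space $\ell^\infty$ estimate \eqref{eq:H_beta-' perco} for the final leg $\mathbb P_\beta[v\connect{\mathbb H_n\:}x]$. Recall from \eqref{eq:error first term}--\eqref{eq:error second term} that $E_\beta(B+w,\mathbb H_n,w,x)$ differs from $E_\beta(B+w)$ precisely by the presence of the additional factor $\mathbb P_\beta[v\connect{\mathbb H_n\:}x]$ (and by the confinement of certain connections to $\mathbb H_n$ rather than all of $\mathbb Z^d$, which only helps via monotonicity). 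By translation invariance we may take $w=0$, so $B\subset \Lambda_{n/2}$ and all the ``internal'' vertices $u,v,y,s$ lie in $\Lambda_{n/2}$, while $z,t$ lie within distance $L$ of $B$, hence in $\Lambda_{n/2+L}\subset\Lambda_{2n/3}$ since $n>24L$.

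First I would split $E_\beta(B,\mathbb H_n,0,x)=\mathcal E(1)+\mathcal E(2)$ as before, writing each term with an explicit sum over $v\in\Lambda_{n/2}$ (for the internal vertices) and the extra factor $\mathbb P_\beta[v\connect{\mathbb H_n\:}x]$ pulled out. Since $v\in\Lambda_{n/2}$ and $x\in\partial\mathbb H_n$, one has $x_1=|x_1|\in[n/2,n]$ in the relevant coordinate when $x_1=|x|$ — more precisely $\mathrm d(v,\partial\mathbb H_n)\geq n/2$, so decomposing at the left-most point as in Lemma~\ref{lem: distance k from half-space} (or directly from \eqref{eq:H_beta-' perco} applied to the box $\mathbb H_n-v$) gives
\begin{equation}\label{eq:plan-vx}
\mathbb P_\beta[v\connect{\mathbb H_n\:}x]\leq \frac{C\bfC^2}{Ln^{d-1}}\qquad\text{for all }v\in\Lambda_{n/2},\ x\in\partial\mathbb H_n,
\end{equation}
uniformly, where I used $n>24L$ to absorb the polynomial prefactor $(k+L)$ of \eqref{eq: half plane at distance $k$ from half space perco} into a constant times $n^{-(d-1)}$ after accounting for the $L$-thick boundary layer. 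With \eqref{eq:plan-vx} in hand, the factor $\mathbb P_\beta[v\connect{\mathbb H_n\:}x]$ is bounded by a constant times $\tfrac{\bfC}{Ln^{d-1}}$ \emph{uniformly in $v$}, so I can pull it out of the $v$-sum entirely.

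What remains after pulling out \eqref{eq:plan-vx} is, in the case of $\mathcal E(1)$, a sum of the shape $\sum_{u,v,y,z}\mathbb P[0\connect{B}u]\mathbb P[u\connect{B}y]p_{yz}\mathbb P[z\connect{}v]\mathbb P[u\connect{}v]$, i.e.\ exactly the quantity $\mathcal E(1)$ of the previous proof but with the final $\mathbb P[v\connect{}x]$ deleted — and that quantity was already shown to be $O(L^{-d})$ there (the convolution bound \eqref{eq:boundamplitudeError1}, the $|\Lambda_L^*|p\le 4$ bound \eqref{eq:boundamplitudeError2}, and the $\ell^1$ bound \eqref{eq:boundamplitudeError3}, summed over $\ell\ge0$ using $d>6$). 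The same remark applies to $\mathcal E(2)$: after removing the $v$-to-$x$ leg it is the $\mathcal E(2)$ of Proposition~\ref{lem: error amplitude perco}, already bounded by $O(L^{-d})$ via the case analysis $z=t$ / $z\neq t$ with sub-cases $|z-t|\gtrless\ell$. Multiplying the two bounds gives $E_\beta(B,\mathbb H_n,0,x)\leq \tfrac{C\bfC^2}{Ln^{d-1}}\cdot\tfrac{K}{L^d}=:\tfrac{D}{L^d}\cdot\tfrac{\bfC}{Ln^{d-1}}$, as claimed, and the suprema over $w\in\Lambda_{n/2}$ and over $B$ with $B+w\subset\Lambda_{n/2}$ are harmless because every estimate used is uniform in $B\in\mathcal B$ and translation-invariant.

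The main obstacle — and the only genuinely new point relative to Proposition~\ref{lem: error amplitude perco} — is the clean, uniform estimate \eqref{eq:plan-vx}: one must check that for \emph{every} $v\in\Lambda_{n/2}$ the distance from $v$ to the bounding hyperplane of $\mathbb H_n$ is at least of order $n$, so that the $\ell^\infty$ half-space bound really produces $n^{-(d-1)}$ and not something weaker, and that the polynomial correction $(k+L)$ appearing in Lemma~\ref{lem: distance k from half-space} (here $k$ is the depth of $v$, which can be as large as $\sim n$) does not destroy the gain — it does not, since $k+L\le n$ and the exponent $d-1$ leaves room to spare once $d>6$, exactly as in the bound \eqref{eq: bound contribution 0 in partial l} used in the previous proof. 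Beyond that, the proof is a line-by-line rerun of the computation for Proposition~\ref{lem: error amplitude perco} with the extra harmless factor carried along, which is why the details are relegated to Appendix~\ref{appendix:boundE}.
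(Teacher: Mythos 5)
There is a genuine gap in your argument. You claim that after translating to $w=0$ ``all the `internal' vertices $u,v,y,s$ lie in $\Lambda_{n/2}$'', and you then pull out the factor $\mathbb P_\beta[v\connect{\mathbb H_n\:}x]$ using a uniform bound over $v\in\Lambda_{n/2}$. But looking at the defining display \eqref{eq:error first term}--\eqref{eq:error second term} of $E_\beta(S,\Lambda,o,x)$, only the first line constrains $v\in S$; in the second line the sum is over $v\in\Lambda$, which here is $v\in\mathbb H_n$. In particular $v$ can be anywhere in $\mathbb H_n$, including arbitrarily close to $x\in\partial\mathbb H_n$, in which case $\mathbb P_\beta[v\connect{\mathbb H_n\:}x]$ is of order $\bfC/L^d$ (or even $1$ if $v=x$) rather than $O(\bfC/(Ln^{d-1}))$. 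So the uniform bound \eqref{eq:plan-vx} cannot be ``pulled out of the $v$-sum entirely'' for the second contribution; your factorisation $E_\beta(B,\mathbb H_n,0,x)\le\big(\sup_v\mathbb P_\beta[v\connect{\mathbb H_n\:}x]\big)\cdot E_\beta(B)$ only yields the trivial-in-$n$ bound $K/L^d$ and loses the decay in $n$ entirely for that piece.

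What is actually needed—and what the paper does—is to split the sum over $v$ into $v\in\mathbb H_{3n/4}$ (which absorbs the first line and the ``deep'' part of the second line, and where your uniform bound on $\mathbb P_\beta[v\connect{\mathbb H_n\:}x]$ does apply, since ${\rm d}(v,\partial\mathbb H_n)\ge n/4$) and $v\in\mathbb H_n\setminus\mathbb H_{3n/4}$. In the latter regime the $v$-to-$x$ factor contributes nothing, but one observes instead that $z,t$ must lie within distance $L$ of $B\subset\Lambda_{n/2}$, hence at distance at least $n/6$ from $v$, so the decay $n^{-(d-1)}$ (and more) is extracted from the product $\mathbb P_\beta[z\connect{\mathbb H_n\:}v]\mathbb P_\beta[t\connect{\mathbb H_n\:}v]\lesssim L^{-4}n^{-(2d-4)}$ via \eqref{eq: full plane estimate from half plane perco}. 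This is a genuinely separate computation, not a ``harmless factor carried along'', and it is the additional idea your proof is missing.
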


\section{Proof of Theorem~\ref{thm:mainperco}}\label{section: proof mainperco}\label{sec:3}

In this section, we prove Theorem \ref{thm:mainperco}. As explained in Section \ref{section: strategy}, we will show that for $\bfC$ and $L$ large enough, one has $\beta^*(\bfC,L)=\beta_c$. We begin with an improvement of the $\ell^1$ bound.

\subsection{Improving the $\ell^1$ bound}

The main goal of this section is the following proposition. 

\begin{Prop}[Improvement of the $\ell^1$ bound]\label{prop: improved l^1 perco} Fix $d>6$. There exists $\bfC_1=\bfC_1(d)>0$ such that the following holds true: for every $\bfC>\bfC_1$, there exists $L_1=L_1(\bfC,d)>0$ such that for every $L\geq L_1$ and every $\beta<\beta^*(\bfC,L)$,
\begin{equation}
	\psi_\beta(\mathbb H_n)\leq \frac{64e^2}{L}\qquad \forall n\geq 0.
\end{equation}
\end{Prop}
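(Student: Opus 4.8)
The strategy is to combine the Simon--Lieb inequality (Lemma \ref{Lem: SL upper bound}) with its reversed version (Lemma \ref{Lem: SL lower bound}) to extract a good control on $\varphi_\beta(\mathbb H_n)$, and then translate this into a bound on $\psi_\beta(\mathbb H_n)$ via the comparison lemma (the analogue of Lemma \ref{lem: small values 1 perco} alluded to in the proof of Lemma \ref{lem: intro lem about betastar}$(iii)$, namely $\psi_\beta(\mathbb H_k)\leq \tfrac{32e^2}{L}\varphi_\beta(\mathbb H_k)$ or similar). First I would apply \eqref{eq:SLperco} with $S=\mathbb H_k$ and $\Lambda=\mathbb H_n$ for $k\leq n$, and separately the reversed inequality \eqref{eq:reversed SL perco} with the same choice, taking $o=0$ and then summing over the relevant target vertices $x$. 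Summing the Simon--Lieb inequality over all $x$ so that connections leave $\mathbb{H}_k$ produces, after using the definition of $\varphi_\beta$, a submultiplicativity-type relation of the form $\varphi_\beta(\mathbb H_n)\lesssim \varphi_\beta(\mathbb H_k)\bigl(1+\text{something}\bigr)$; more usefully, the reversed inequality summed the same way gives a near-matching lower bound with an error term governed by the error amplitude $E_\beta(B)$, which by Proposition \ref{lem: error amplitude perco} is at most $K/L^d$ when $\beta<\beta^*(\bfC,L)$.

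The heart of the argument is then the following: write $g(n):=\varphi_\beta(\mathbb H_n)$ (or rather $L\cdot\psi_\beta(\mathbb{H}_n)$). From the two-sided Simon--Lieb estimates one obtains, roughly, that for all $0\leq k\leq n$,
\begin{equation}
g(n)\leq g(k)\,g(n-k)+g(k)\cdot\mathrm{err}\quad\text{and}\quad g(n)\geq g(k)\,g(n-k)-g(k)\cdot\mathrm{err},
\end{equation}
where $\mathrm{err}=O(\bfC^2 K L^{-d}\cdot\text{(polynomial factors)})$ is small once $L$ is large. A subadditive/renormalisation argument on $h(n):=\log(g(n)\vee 1)$ then shows that either $g$ is bounded by an absolute constant (times $1/L$, in the $\psi$ normalisation), or $g$ grows at least geometrically; the latter is incompatible with the a priori bound \eqref{eq:H_beta perco} which gives $\psi_\beta(\mathbb H_n)<\bfC/L$ uniformly. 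So we are in the first alternative. Quantitatively: choosing $\bfC_1$ large enough and then $L_1=L_1(\bfC,d)$ large enough that all error terms are, say, $\leq 1/(100L)$, one iterates over dyadic scales to see that $g$ cannot escape the basin of the ``small'' fixed point of the map $t\mapsto t^2+\mathrm{err}$, which lies below $64e^2/L$. The constant $64e^2$ is presumably chosen to match the output of Lemma \ref{lem: intro lem about betastar}$(iii)$, so that the improved bound is consistent at $\beta=\beta_0$ and the bootstrap can start.

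The main obstacle I anticipate is bookkeeping the polynomial-in-$L$ and polynomial-in-$k$ prefactors that arise when summing the Simon--Lieb inequalities over $x$, especially the half-space geometry: the sum $\sum_z p_{yz,\beta}\mathbb{P}_\beta[z\connect{\mathbb H_n}x]$ must be controlled using \eqref{eq:H_beta-' perco} and the distance-$k$ estimates from Lemma \ref{lem: distance k from half-space}, and the error amplitude contributes an extra additive term that one needs to reabsorb without losing the factor $1/L$. One must be careful that the ``near-multiplicativity'' relation is robust enough to run the scale-induction without the error compounding — this is exactly where $d>6$ enters, via the finiteness of the relevant convolution sums underlying Proposition \ref{lem: error amplitude perco}. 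A secondary subtlety is the base case (small $n$, i.e. $n\lesssim L$), where the geometric-growth dichotomy is vacuous and one must instead invoke the a priori bound or the $\beta_0$ computation \eqref{eq: bound psi rw} directly.
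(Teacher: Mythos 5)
Your plan correctly identifies the ingredients the paper uses — the reversed Simon--Lieb inequality, the error-amplitude bound $E_\beta(B)\le K/L^d$ from Proposition~\ref{lem: error amplitude perco}, and the comparison $\psi_\beta(\mathbb H_n)\le\frac{32e^2}{L}\varphi_{\beta\vee\beta_0}(\mathbb H_n)$ from Lemma~\ref{lem: small values 1 perco} — but the central step, namely $\varphi_\beta(\mathbb H_n)\le 1+K/L^d$, is reached in the paper by a direct one-line argument that your renormalisation sketch misses and tries to replace with something that does not close. The observation you need (Proposition~\ref{prop:bound phi perco}) is: sum~\eqref{eq:reversed SL perco} with $S=B\in\mathcal B$, $\Lambda=\mathbb Z^d$, $o=0$, over \emph{all} $x\in\mathbb Z^d$, not just over vertices near the boundary. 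Since $x$ ranges over the whole lattice, the susceptibility $\chi(\beta)=\sum_x\mathbb P_\beta[0\connect{}x]$ factors out of every term on both sides; because $\chi(\beta)<\infty$ for $\beta<\beta_c$ it cancels, and one reads off $\varphi_\beta(B)\le 1+E_\beta(B)$ with no scale induction at all. With $\bfC$ large enough that $\beta^*\ge\beta_0$ (Lemma~\ref{lem: intro lem about betastar}$(iii)$) and $K/L^d\le 1$, the chain $\psi_\beta(\mathbb H_n)\le\frac{32e^2}{L}\varphi_{\beta^*}(\mathbb H_n)\le\frac{32e^2}{L}(1+K/L^d)\le\frac{64e^2}{L}$ finishes the proof.

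The scale-induction route you propose has a concrete gap. You leave ambiguous whether $g(n)$ is $\varphi_\beta(\mathbb H_n)$ (which is $O(1)$) or $L\psi_\beta(\mathbb H_n)$ (also $O(1)$), but either way the map $t\mapsto t^2+\mathrm{err}$ has its small fixed point near $\mathrm{err}$ and its unstable fixed point near~$1$; the estimate you need is that $\varphi_\beta(\mathbb H_n)$ sits just below the \emph{unstable} fixed point $1+O(\mathrm{err})$, not that it drops to the small one. Your dichotomy (``bounded by a small absolute constant, or geometric growth'') therefore does not produce the sharp constant $1+K/L^d$ on its own, and in any case it requires a single-scale seed asserting $\varphi_\beta$ is already below the unstable fixed point at some reference scale — which is essentially the content of Proposition~\ref{prop:bound phi perco} that you were trying to establish. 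The global summation trick bypasses this entirely and is the key point to internalise.
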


Note that for $\bfC>64 e^2$, the previous bound is an improvement on the a priori bound \eqref{eq:H_beta perco} provided by the assumption $\beta<\beta^*$.
To prove the proposition, we start by relating $\psi_\beta(\mathbb H_n)$ to $\varphi_\beta(\mathbb H_n)$. 
%
%
\begin{Lem}[Bounding $\psi_\beta(\mathbb H_n)$ in terms of $\varphi_\beta(\mathbb H_n)$]\label{lem: small values 1 perco} Let $d>6$, $\bfC>1$, and $L\geq 1$. For all $\beta<\beta^*(\bfC,L)$,
\begin{align}
 \label{eq: improve psi small values}
\psi_\beta(\mathbb H_n)&\le\frac{32e^2}{L}\varphi_{\beta\vee \beta_0}(\mathbb H_n)\qquad\forall n\ge0,
\end{align}
where we recall that $\beta_0$ was defined in \eqref{eq: def beta_0}.
\end{Lem}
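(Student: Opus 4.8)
The plan is to bound $\psi_\beta(\mathbb H_n)$ --- an $\ell^1$ sum of half-space two-point functions over the boundary $\partial\mathbb H_n$ --- in terms of $\varphi_\beta(\mathbb H_n)$, which is the sum of $\mathbb P_\beta[0\connect{\mathbb H_n\:}y]p_{yz,\beta}$ over $y\in\mathbb H_n$, $z\notin\mathbb H_n$. The key point is that $\varphi_\beta$ ``sees'' every vertex $y$ within $\ell^\infty$-distance $L$ of the boundary hyperplane $\partial\mathbb H_n$ (since from such $y$ there is an edge leaving $\mathbb H_n$ with positive probability), whereas $\psi_\beta$ only sees vertices exactly on $\partial\mathbb H_n$. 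So the strategy is: for each $x\in(\partial\mathbb H_n)^*$, relate $\mathbb P_\beta[0\connect{\mathbb H_n\:}x]$ to a sum over nearby vertices $y$ in a slab of width $\asymp L$ adjacent to the boundary, picking up the factor $1/e^2$ from the definition of $L_\beta$ being irrelevant here --- rather, the gain of order $1/L$ comes from the fact that $|\Lambda_L^*|p_\beta \geq 1$ for $\beta\geq\beta_0$ (Lemma~\ref{lem: intro lem about betastar}(i)), so each term $p_{yz,\beta}$ summed over the $\asymp L^{d-1}$ exit edges from such a $y$ contributes a constant, while $\psi$ only counts one term per boundary vertex.

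Concretely, I would first reduce to $\beta\geq\beta_0$: for $\beta<\beta_0$ the two-point function is dominated (by monotonicity in $\beta$) by its value at $\beta_0$, and $\varphi$ is monotone too, which is why the statement features $\beta\vee\beta_0$ on the right-hand side. So assume $\beta_0\le\beta<\beta^*$. Next, for $x\in(\partial\mathbb H_n)^*$, I want to show $\mathbb P_\beta[0\connect{\mathbb H_n\:}x]\le \frac{32e^2}{L}\sum_{z\notin\mathbb H_n}\big(\text{something involving }x\big)$, summed so that after summing over $x$ one recovers $\varphi_\beta(\mathbb H_n)$ up to the constant. The natural mechanism: if $0\connect{\mathbb H_n\:}x$ and $x\in\partial\mathbb H_n$, then with probability bounded below (uniformly, using $|\Lambda_L^*|p_\beta\ge 1$ and a fixed fraction of the $L$-ball edges at $x$ pointing out of $\mathbb H_n$) there is an open edge $xz$ with $z\notin\mathbb H_n$; more robustly, one uses the BK-type / FKG-type argument: $\mathbb P_\beta[0\connect{\mathbb H_n\:}x]\cdot c \le \sum_{z\notin\mathbb H_n}\mathbb P_\beta[0\connect{\mathbb H_n\:}x]p_{xz,\beta}$ where $c=\sum_{z\notin\mathbb H_n}p_{xz,\beta}\ge c'/L$ for some universal $c'$ because $x$ sits on the boundary hyperplane and a positive proportion of its $L$-neighbourhood lies outside $\mathbb H_n$. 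Wait --- since $\sum_{z\notin\mathbb H_n}p_{xz,\beta}$ for $x$ exactly on $\partial\mathbb H_n$ is roughly half of $|\Lambda_L^*|p_\beta\ge 1$ divided appropriately; in $\ell^\infty$ geometry a vertex on the hyperplane $\{x_1 = 0\}$ has about half its $L$-ball in $\{x_1<0\}$, giving $\sum_{z\notin\mathbb H_n}p_{xz,\beta}\gtrsim 1/L$. Thus $\mathbb P_\beta[0\connect{\mathbb H_n\:}x]\le \frac{CL}{1}\sum_{z\notin\mathbb H_n}\mathbb P_\beta[0\connect{\mathbb H_n\:}x]p_{xz,\beta}$, and summing over $x\in\partial\mathbb H_n$ gives $\psi_\beta(\mathbb H_n)\le CL\cdot\varphi_\beta(\mathbb H_n)$ --- which is the \emph{wrong direction}, off by $L^2$. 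So the real argument must instead bound $\varphi$ from below by $\psi$, i.e.\ show $\varphi_\beta(\mathbb H_n)\ge \frac{L}{32e^2}\psi_\beta(\mathbb H_n)$: each boundary vertex $x$ contributes $\sum_z \mathbb P[0\connect{}x]p_{xz}$ to $\varphi$, but also \emph{interior} vertices $y$ near the boundary contribute, and since $\mathbb P_\beta[0\connect{\mathbb H_n\:}y]$ is comparable to $\mathbb P_\beta[0\connect{\mathbb H_n\:}x]$ for $y$ within distance $O(L)$ of $x$ (this is where a regularity / Harnack-type comparison, or the a priori $\ell^\infty$ bound \eqref{eq:H_beta-' perco}, enters), we get $\varphi_\beta(\mathbb H_n)\gtrsim \sum_{y\text{ near }\partial\mathbb H_n}\mathbb P[0\connect{}y]\cdot(\text{exit prob})\gtrsim \frac{1}{32e^2}\cdot L\cdot \psi_\beta(\mathbb H_n)$, the factor $L$ coming from summing the roughly $L$ layers $\partial^k\mathbb H_n$, $0\le k< L$, each contributing comparably and each with outgoing edge-probability summing to a constant.

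So the concrete steps are: (1) reduce to $\beta\ge\beta_0$ by monotonicity; (2) for $x\in(\partial^k\mathbb H_n)^*$ with $0\le k<L$, use the a priori bounds \eqref{eq:H_beta perco}--\eqref{eq:H_beta-' perco} valid for $\beta<\beta^*$ (and translation invariance) to compare $\mathbb P_\beta[0\connect{\mathbb H_n\:}x]$ to the two-point function at a corresponding boundary vertex, or more simply, lower-bound $\varphi_\beta(\mathbb H_n) = \sum_{y\in\mathbb H_n,\,z\notin\mathbb H_n}\mathbb P_\beta[0\connect{\mathbb H_n\:}y]p_{yz,\beta}$ by restricting $y$ to the slab $\bigcup_{k=0}^{L-1}\partial^k\mathbb H_n$ and $z$ to a constant fraction of the edges out of $\mathbb H_n$, obtaining $\varphi_\beta(\mathbb H_n)\ge \frac{c}{L}\sum_{k=0}^{L-1}\sum_{y\in\partial^k\mathbb H_n}\mathbb P_\beta[0\connect{\mathbb H_n\:}y]$; (3) show each layer sum $\sum_{y\in\partial^k\mathbb H_n}\mathbb P_\beta[0\connect{\mathbb H_n\:}y]$ is at least $c'\,\psi_\beta(\mathbb H_n)$ for $0\le k<L$ --- this needs that moving the boundary out by $k\le L$ layers does not decrease the total mass by more than a constant, which follows from a last-exit / decomposition argument (an open path from $0$ to a vertex on $\partial^k\mathbb H_n$, followed with constant probability by a further step to the actual boundary $\partial\mathbb H_n$) combined with the $\ell^1$ a priori bound to control the error; (4) combine to get $\varphi_\beta(\mathbb H_n)\ge \frac{c c'}{L}\cdot L\cdot \psi_\beta(\mathbb H_n) = \frac{cc'}{32e^2}\psi_\beta(\mathbb H_n)$ after tracking constants, rearranging to \eqref{eq: improve psi small values}. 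The main obstacle is step (3): controlling the comparison between the different boundary layers $\partial^k\mathbb H_n$ uniformly in $n$ and $k<L$, for which one must use the reversed Simon--Lieb inequality (Lemma~\ref{Lem: SL lower bound}) together with the error-amplitude bound (Proposition~\ref{lem: error amplitude perco}) to avoid circular dependence on the very bound being improved; the factor $32e^2$ is engineered to match the constant $\bfC_{\rm RW}=C_{\rm RW}\vee 64e^2$ appearing in Lemma~\ref{lem: intro lem about betastar}(iii).
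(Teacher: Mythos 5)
Your reduction to $\beta \geq \beta_0$ is correct and matches the paper, and you correctly diagnose the core phenomenology (a factor $L$ comes from $\varphi$ seeing roughly $L$ boundary layers while $\psi$ sees one). However, the plan as written does not close, for three concrete reasons.

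First, your step (2) and step (4) lose a factor of $L$. For $y \in \partial^k \mathbb H_n$ with $k < L$, the exit mass $\sum_{z \notin \mathbb H_n} p_{yz,\beta}$ is of order $\tfrac{L-k}{L} p|\Lambda_L|$, which is $\Theta(1)$ for $k \lesssim L/2$ but only $\Theta(1/L)$ near $k = L-1$. Writing a uniform $c/L$ for all $k$ throws away the near-boundary layers where the exit probability is $\Theta(1)$. With that weak step (2), even if step (3) gave $\sum_{y\in\partial^k\mathbb H_n}\mathbb P_\beta[0\connect{\mathbb H_n\:}y]\geq c'\psi_\beta(\mathbb H_n)$ per layer, you would obtain $\varphi \geq \frac{c}{L}\cdot Lc'\psi = cc'\psi$, i.e.\ $\psi \leq C\varphi$ rather than $\psi \leq \frac{32e^2}{L}\varphi$; indeed your algebra in step (4), $\frac{cc'}{L}\cdot L = \frac{cc'}{32e^2}$, is inconsistent. (Incidentally, in your preliminary ``naive'' attempt you computed $\sum_{z\notin\mathbb H_n}p_{xz,\beta}\gtrsim 1/L$ for $x\in\partial\mathbb H_n$; it is actually $\gtrsim 1$, since roughly half of $\Lambda_L(x)$ lies outside $\mathbb H_n$ and $p_\beta|\Lambda_L^*|\geq 1$. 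The naive route gives $\psi\leq C\varphi$, off by $L$, not $L^2$.)

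Second, your step (3) — a per-layer comparison $\sum_{y\in\partial^k\mathbb H_n}\mathbb P_\beta[0\connect{\mathbb H_n\:}y]\geq c'\psi_\beta(\mathbb H_n)$ uniformly in $k<L$ — is the hard part and is only gestured at. Your proposed mechanism (last-exit decomposition plus ``constant probability of a further step to $\partial\mathbb H_n$'') runs in the wrong direction: it would bound $\psi$ above by the layer mass up to the cost of one edge, not the layer mass below by $\psi$. The ``comparability'' claim between $\mathbb P_\beta[0\connect{\mathbb H_n\:}y]$ and $\mathbb P_\beta[0\connect{\mathbb H_n\:}x]$ for $|y-x|=O(L)$, if proved via the a~priori $\ell^\infty$ bound \eqref{eq:H_beta-' perco} or a Harnack estimate, would reintroduce the constant $\bfC$ — and this is fatal: the entire purpose of Lemma~\ref{lem: small values 1 perco} is to produce a $\bfC$-independent constant ($32e^2$) so that Proposition~\ref{prop: improved l^1 perco} can improve $\psi\leq\bfC/L$ to $\psi\leq 64e^2/L$ for $\bfC>64e^2$. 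A $\bfC$-dependent constant would make the bootstrap circular, which is precisely the ``circular dependence'' you flag at the end but do not resolve.

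Third, the paper's actual mechanism, which avoids all of the above, is different from yours. It applies the Simon--Lieb inequality once with a singleton $S=\{x\}$ to get $\psi_\beta(\mathbb H_n)\leq\frac{p_\beta|\Lambda_L|}{2L+1}\sum_{y\in S_L}\mathbb P_\beta[0\connect{\mathbb H_n\:}y]$, where $S_L$ is the width-$(L+1)$ slab next to $\partial\mathbb H_n$; this already extracts the $1/L$. It then transfers the $S_L$ mass to the closer half-slab $S_{L/2}$ not by a regularity estimate but by an edge-resampling argument: if $y\in S_L$ is connected to $0$ in $\mathbb H_n$, then either a constant fraction of $S_{L/2}\cap\Lambda_L(y)$ is already in the cluster of $y$, or one can open an edge $yz$ to some disconnected $z\in S_{L/2}$ while closing all other edges at $z$, at a cost involving only $p_\beta$, $|\Lambda_L|$, $e^\beta$ (all controlled by $p_\beta|\Lambda_L|\in[1,4]$ and $\beta\leq 2$) and not $\bfC$. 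Finally $\sum_{z\in S_{L/2}}\mathbb P_\beta[0\connect{\mathbb H_n\:}z]$ is compared to $\varphi_\beta(\mathbb H_n)$ directly using that at least $\frac14|\Lambda_L|$ edges from each $z\in S_{L/2}$ exit $\mathbb H_n$. All constants are absolute, and the reversed Simon--Lieb inequality plays no role in this lemma's proof.
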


Before proving this lemma, let us explain how it implies the proposition.
Lemma \ref{lem: small values 1 perco} suggests to find a bound on $\varphi_\beta(\mathbb H_n)$ for $n\geq 0$. The following proposition is the crucial step of our strategy: from Proposition~\ref{lem: error amplitude perco}, we obtain a bound on $\varphi_\beta(B)$ for $B\in\mathcal B$ that involves the range $L$. For large $L$, this bound will provide an improvement on \eqref{eq:H_beta perco} thanks to Lemma \ref{lem: small values 1 perco}.
\begin{Prop}\label{prop:bound phi perco}
Fix $d>6$, $\bfC>1$, and $L\geq 1$.  There exists $K=K(\bfC,d)>0$ such that for every $\beta\leq\beta^*(\bfC,L)$ and every $B\in\mathcal B$,
\begin{equation}
\varphi_\beta(B)\leq 1+\frac{K}{L^{d}}.
\end{equation}
\end{Prop}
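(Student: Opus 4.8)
The plan is to combine the reversed Simon--Lieb inequality (Lemma~\ref{Lem: SL lower bound}) with the control of the error amplitude (Proposition~\ref{lem: error amplitude perco}), by choosing the right domains in the reversed inequality. Fix $\beta\le\beta^*(\bfC,L)$ and $B\in\mathcal B$. We want an upper bound on $\varphi_\beta(B)=\sum_{y\in B,\,z\notin B}\mathbb P_\beta[0\connect{B\:}y]p_{yz,\beta}$.

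First I would apply the reversed Simon--Lieb inequality \eqref{eq:reversed SL perco} with $o=0$, $S=B$, and $\Lambda=\mathbb Z^d$, and with $x=0$. Since $\mathbb P_\beta[0\connect{\mathbb Z^d\:}0]=\mathbb P_\beta[0\connect{B\:}0]=1$, the left-hand side minus the $\mathbb P_\beta[o\connect{S\:}x]$ term vanishes, and \eqref{eq:reversed SL perco} rearranges to
\begin{equation}
\sum_{\substack{y\in B\\ z\notin B}}\mathbb P_\beta[0\connect{B\:}y]\,p_{yz,\beta}\,\mathbb P_\beta[z\connect{}0]\ \le\ E_\beta(B,\mathbb Z^d,0,0).
\end{equation}
Comparing the definitions \eqref{eq:error first term}--\eqref{eq:error second term} (with $S=B$, $\Lambda=\mathbb Z^d$, $o=x=0$) against the error amplitude \eqref{eq: definition E(o;u,v)}, one checks that $E_\beta(B,\mathbb Z^d,0,0)$ is exactly $E_\beta(B)$ up to the extra factors $\mathbb P_\beta[v\connect{}0]\le 1$ in \eqref{eq:error first term} and (after summing the $v$-variable against $\mathbb P_\beta[\,\cdot\,\connect{}0]\le$ a finite full-space quantity) the second line; in any case $E_\beta(B,\mathbb Z^d,0,0)\le E_\beta(B)\le K/L^d$ by Proposition~\ref{lem: error amplitude perco}. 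So the error term is $O(L^{-d})$.

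The remaining, and essentially only substantive, point is to remove the weight $\mathbb P_\beta[z\connect{}0]$ from the left-hand side. For $z\notin B$ with $p_{yz,\beta}\neq 0$ we have $y\in B$ and $|y-z|\le L$, so $z\in\Lambda_L(y)$; but the nontrivial bound is a \emph{lower} bound on $\mathbb P_\beta[z\connect{}0]$, uniform over the relevant $y,z$. I would handle this by splitting according to whether the outgoing edge $yz$ is ``short-range near the origin'' or not. If $|y|\le 2L$ (equivalently, the exit point is within $O(L)$ of $0$), then $|z|\le 3L$, and one needs $\mathbb P_\beta[z\connect{}0]\ge c$ for $|z|\le 3L$ and $\beta\ge\beta_0$ — this is a genuine lower bound of order $1$ that follows from a one-step argument (the edge $0z$ is open with probability $p_\beta=1-e^{-\beta c_L}\asymp 1/|\Lambda_L|$, but summed over the $\asymp L^d$ possible intermediate neighbours this gives $\Theta(1)$, using $|\Lambda_L^*|p_\beta\ge1$ from Lemma~\ref{lem: intro lem about betastar}(i)), so the contribution of such terms to $\varphi_\beta(B)$ is $\le C\cdot E_\beta(B)\le CK/L^d$. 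For the remaining terms, where the exit happens far from $0$, one simply bounds $\mathbb P_\beta[z\connect{}0]$ from below by restricting to exits through $\partial^0 B\cap\Lambda_{2L}$ — no: more cleanly, one notes that $\varphi_\beta(B)\le\varphi_\beta(B\cap\Lambda_M)+(\text{tail})$ and the tail of $\varphi_\beta$ over far boundary pieces is controlled by \eqref{eq: sum over half plane at distance k from half plane perco}-type bounds, but that only gives $O(1)$, not $1+O(L^{-d})$.

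\textbf{The main obstacle} is precisely getting the constant $1$ (not just $O(1)$) in front: one must show that $\varphi_\beta(B)$ cannot exceed its random-walk counterpart $\varphi^{\rm RW}(B)\le 1$ by more than $O(L^{-d})$. The right way to organise this is: by the iterated Simon--Lieb \emph{upper} bound (as in \eqref{eq: iterated sl for beta_0}) together with $p_{yz,\beta}\le\beta J_{yz}\le(\beta^*\vee\beta_0) J_{yz}\le 2J_{yz}$ one gets $\varphi_\beta(B)\le 2\varphi^{\rm RW}(B)\le 2$, which is $O(1)$; to upgrade to $1+O(L^{-d})$ one inserts the reversed inequality at the level of the \emph{full} two-point function rather than crudely. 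Concretely, rewrite $1\ge\mathbb P_\beta[0\connect{}0]-\mathbb P_\beta[0\connect{B\:}0]$... which is $0$; instead use that for \emph{any} $z\notin B$, $\mathbb P_\beta[z\connect{}0]\ge\mathbb P_\beta[z\connect{B^c\cup\{0\}\:}0]$ and run the reversed Simon--Lieb argument keeping track of the exact first-moment term, so that the ``gain'' $\mathbb E[\mathcal N]$ in \eqref{eq:proofSL1} reproduces $\varphi_\beta(B)$ up to $(1-o(1))$ multiplicative factors coming from $\mathbb P_\beta[z\connect{}0]=1-o(1)$ and the error $\mathbb E[\mathcal N^2]-\mathbb E[\mathcal N]\le E_\beta(B)=O(L^{-d})$. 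Pushing this bookkeeping through — i.e. showing $\varphi_\beta(B)\bigl(1-O(L^{-d})\bigr)\le 1+O(L^{-d})$ — gives $\varphi_\beta(B)\le1+K'/L^d$ after absorbing constants, and relabelling $K$ finishes the proof. I expect the delicate part to be verifying uniformly in $B\in\mathcal B$ that the ``short-range'' corrections (exits near $0$, and the discrepancy between $\mathbb P_\beta[z\connect{}0]$ and $1$) really are $O(L^{-d})$ and not merely $o(1)$, which is where $d>6$ and the estimates of Lemma~\ref{lem: full plane 2pt function out of halfplane one perco} and Lemma~\ref{lem: distance k from half-space} get used once more.
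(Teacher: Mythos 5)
There is a genuine gap: the choice $x=0$ in the reversed Simon--Lieb inequality is the wrong one, and the fixes you sketch do not close it. With $o=x=0$, $S=B$, $\Lambda=\mathbb Z^d$, the reversed inequality indeed gives
\begin{equation}
\sum_{\substack{y\in B\\ z\notin B}}\mathbb P_\beta[0\connect{B\:}y]\,p_{yz,\beta}\,\mathbb P_\beta[z\connect{}0]\ \le\ E_\beta(B,\mathbb Z^d,0,0),
\end{equation}
but the weight $\mathbb P_\beta[z\connect{}0]$ is not bounded away from zero uniformly over $z$ near $\partial B$: for $B\in\mathcal B$ with $\partial B$ far from the origin, $\mathbb P_\beta[z\connect{}0]$ decays like $L^{-2}|z|^{-(d-2)}$, so the displayed inequality is consistent with $\varphi_\beta(B)$ being arbitrarily large and yields no information. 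You recognise this and propose to rescue it via the claim $\mathbb P_\beta[z\connect{}0]=1-o(1)$, which is false at or near criticality (the two-point function is polynomially small, not close to $1$), and your alternative patches (splitting by $|y|\le 2L$, restricting to exits in $\Lambda_{2L}$) stall before producing a bound of the form $1+O(L^{-d})$, as you yourself observe mid-argument.

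The paper's device is to \emph{not} fix $x$ at all but to sum \eqref{eq:reversed SL perco} (with $S=B$, $\Lambda=\mathbb Z^d$) over every $x\in\mathbb Z^d$. Translation invariance gives $\sum_x\mathbb P_\beta[z\connect{}x]=\chi(\beta)$ for every $z$, and likewise $\sum_x E_\beta(B,\mathbb Z^d,0,x)=\chi(\beta)\,E_\beta(B)$, so the inequality becomes
$\chi(\beta)\ge\varphi_\beta(B)\chi(\beta)-E_\beta(B)\chi(\beta)$
after discarding the non-negative term $\sum_x\mathbb P_\beta[0\connect{B\:}x]$. Dividing by $\chi(\beta)<\infty$ (finite since $\beta<\beta_c$) gives $\varphi_\beta(B)\le 1+E_\beta(B)$, and Proposition~\ref{lem: error amplitude perco} finishes it. The point you were missing is that the $\ell^1$ sum over $x$ makes the two-point-function weight identical on both sides and cancel exactly; no uniform pointwise lower bound on $\mathbb P_\beta[z\connect{}0]$ is needed or available.
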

\begin{proof} Thanks to the monotone convergence theorem, it suffices to prove the result for $\beta<\beta^*(\bfC,L)$. Fix $\beta<\beta^*(\bfC,L)$ and $B\in\mathcal B$ . Observe that
\begin{equation}
	\sum_{x\in \mathbb Z^d}E_\beta(B,\mathbb Z^d,0,x)=\chi(\beta)E_\beta(B),
\end{equation}
where $\chi(\beta):=\sum_{x\in \mathbb Z^d}\mathbb P_\beta[0\connect{\:}x]<\infty$ because $\beta<\beta_c$ \cite{Mensikov1986coincidence,AizenmanBarsky1987sharpnessPerco} (see also \cite{DuminilTassionNewProofSharpness2016}).
Hence, summing \eqref{eq:reversed SL perco} (with $S=B$ and $\Lambda=\mathbb Z^d$) over every $x\in \mathbb Z^d$, and using that $\mathbb P_\beta[0\connect{B\:}x]\geq 0$, gives
\begin{equation}
\chi(\beta)\geq \varphi_\beta(B)\chi(\beta)-E_\beta(B)\chi(\beta).
\end{equation}
The result follows by dividing by $\chi(\beta)$ and using Proposition~\ref{lem: error amplitude perco}.
\end{proof}

\begin{Rem}\label{rem: generalise the bound on phi} It would be interesting to prove that such a bound holds for every finite set $B$ containing $0$ and not only for $B\in \mathcal B$. 
%
\end{Rem}

We are now in a position to prove the main result of this section.

\begin{proof}[Proof of Proposition~\textup{\ref{prop: improved l^1 perco}}] Let $\bfC>\bfC_{\rm RW}$ (where $\bfC_{\rm RW}$ is given by Lemma \ref{lem: intro lem about betastar}) so that for every $L\geq 1$, one has $\beta^*(\bfC,L)\geq \beta_0$. Fix $L_1$ such that $\tfrac{K}{L_1^d}\leq 1$, where $K$ is provided by Proposition \ref{prop:bound phi perco}. Fix $L\ge L_1$ and $n\geq 0$. Combining Lemma \ref{lem: small values 1 perco} and Proposition \ref{prop:bound phi perco} gives
\begin{equation}
	\psi_\beta(\mathbb H_n)\leq \frac{32e^2}{L}\varphi_{\beta\vee \beta_0}(\mathbb H_n)\leq \frac{32e^2}{L}\varphi_{\beta^*(\bfC,L)}(\mathbb H_n)\leq \frac{32e^2}{L}\Big(1+\frac{K}{L^d}\Big)\leq \frac{64e^2}{L}.
\end{equation}
\end{proof}

To conclude the proof of Proposition~\ref{prop: improved l^1 perco}, it remains to prove Lemma~\ref{lem: small values 1 perco}.
\begin{proof}[Proof of Lemma~\textup{\ref{lem: small values 1 perco}}] By monotonicity, it suffices to prove the result for $\beta_0\leq \beta<\beta^*$. Fix $n\geq 0$ and $\beta_0\leq\beta<\beta^*$. Drop $\beta$ from the notations. Let $S_k:=\mathbb H_n\setminus\mathbb H_{n-k-1}$. If $x\in (\partial \mathbb H_n)^*$, Lemma \ref{Lem: SL upper bound} with $S=\{x\}$ and $\Lambda=\mathbb H_n$ gives
\begin{equation}\label{eq:hh0}
	\mathbb P[0\connect{\mathbb H_n\:}x]\leq \sum_{y\in S_L}p_{xy}\mathbb P[0\connect{\mathbb H_n\:}y].
\end{equation}
Summing \eqref{eq:hh0} over $x\in (\partial \mathbb H_n)^*$ implies
\begin{align}\label{eq:hha}
\psi(\mathbb H_n)&\le \sum_{\substack{x\in(\partial\mathbb H_n)^*\\ y\in S_L}}p_{xy}\mathbb P[0\connect{\mathbb H_n\:}y]\le \frac{p|\Lambda_L|}{2L+1}\sum_{y\in S_L}\mathbb P[0\connect{\mathbb H_n\:}y].
\end{align}

Our aim is now to compare $p|\Lambda_L|\sum_{y\in S_L}\mathbb P[0\connect{\mathbb H_n\:}y]$ to $\varphi_\beta(\mathbb H_n)$. Such a comparison is not completely straightforward since $|\{z\notin \mathbb H_n: |z-y|\leq L\}|$ can be of much smaller order than $|\Lambda_L|$ for $y$ in $S_L\setminus S_{L/2}$. The idea will be to argue that for $y\in \mathcal S_L$, if the event $\{0\connect{\mathbb H_n\:}y\}$ occurs, then $y$ typically connects to a point $z$ in $S_{L/2}$. This will allow to bound $\sum_{y\in S_L}\mathbb P[0\connect{\mathbb H_n\:}y]$ in terms of $\sum_{z\in S_{L/2}}\mathbb P[0\connect{\mathbb H_n\:}z]$ and to conclude.

For $y\in S_L$, let
\begin{equation}
A_y:=S_{L/2}\cap \Lambda_L(y)\setminus\{z\in S_{L/2}:z\connect{\mathbb H_n\:}y\}
\end{equation}
be the set of points in the slab $S_{L/2}$ which are within a distance $L$ of $y$, but not connected to $y$ in $\mathbb H_n$, see Figure \ref{fig:psiphi}. Observe that
\begin{equation}\label{eq:proof comp psi phi1}
	\tfrac18|\Lambda_L|\mathbb P[0\connect{\mathbb H_n\:}y,|A_y|\ge\tfrac18|\Lambda_L|]\le \mathbb E[\mathds{1}\{0\connect{\mathbb H_n\:}y\}|A_y|]=\sum_{z\in S_{L/2}\cap \Lambda_L(y)}\mathbb P[0\connect{\mathbb H_n\:}y,z\in A_y].
\end{equation}
Moreover, by choosing $z\in A_y$ (note that $z$ is not in the cluster of $y$ in $\mathbb H_n$) and opening the edge $yz$ while closing all other edges emanating from $z$, we get that for $z\in S_{L/2}\cap \Lambda_L(y)$,
\begin{align}\label{eq:proof comp psi phi2}
p&\mathbb P[0\connect{\mathbb H_n\:}y,z\in A_y]
\le e^{\beta} \mathbb P[0\connect{\mathbb H_n\:}y, \: yz \text{ open},\: E_z],
\end{align}
where $E_z$ be the event that there is {\em exactly one} open edge emanating from $z$ (we used that $\mathbb P[zu \text{ closed}, \forall u\neq y]\geq e^{-\beta c_L(|\Lambda_L|-2)}\ge e^{-\beta}$). As a consequence,
\begin{align}
\sum_{y\in S_L}\mathbb P[0\connect{\mathbb H_n\:}y]&= \sum_{y\in S_L}\mathbb P[0\connect{\mathbb H_n\:}y,|A_y|< \tfrac18|\Lambda_L|]+\mathbb P[0\connect{\mathbb H_n\:}y,|A_y|\ge \tfrac18|\Lambda_L|]\\
&\le \sum_{y\in S_L}\sum_{z\in S_{L/2}\cap\Lambda_L(y)}\frac{8}{|\Lambda_L|}\mathbb P[0\connect{\mathbb H_n\:}y,z]+ \frac{8e^\beta}{p|\Lambda_L|}\mathbb P[0\connect{\mathbb H_n\:}y,yz\text{ open},E_z],\label{eq:proof comp psi phi3}
\end{align}
where on the second line we used \eqref{eq:proof comp psi phi1}--\eqref{eq:proof comp psi phi2}, and the fact that $|\mathcal S_{L/2}\cap \Lambda_L(y)|\geq \tfrac{1}{4}|\Lambda_L|$ (recall that $y\in \mathcal S_L$) to obtain
\begin{equation}
\mathds{1}\{|A_y|<\tfrac{1}{8}|\Lambda_L|\}\leq\mathds{1}\{|S_{L/2}\cap \Lambda_L(y)\setminus A_y|\geq\tfrac{1}{8}|\Lambda_L|\}\leq \frac{8}{|\Lambda_L|}\sum_{z\in S_{L/2}\cap \Lambda_L(y)}\mathds{1}\{z\connect{\mathbb H_n\:}y\}.
\end{equation}
Thus, \eqref{eq:proof comp psi phi3} yields
\begin{align}\notag
\sum_{y\in S_L}\mathbb P[0\connect{\mathbb H_n\:}y]&\le \sum_{z\in S_{L/2}}\sum_{y\in S_L\cap \Lambda_L(z)}\frac{8}{|\Lambda_L|}\mathbb P[0\connect{\mathbb H_n\:}z]+ \frac{8e^\beta}{p|\Lambda_L|}\mathbb P[0\connect{\mathbb H_n\:}z,yz\text{ open},E_z]\\
&\le 8\Big(1+\frac{e^\beta}{p|\Lambda_L|}\Big)\sum_{z\in S_{L/2}}\mathbb P[0\connect{\mathbb H_n\:}z].\label{eq:hhb}
\end{align}
To conclude, we notice that if $z\in \mathcal S_{L/2}$ then $|\{t\notin \mathbb H_n: |z-t|\leq L\}|\geq \tfrac{1}{4}|\Lambda_L|$. As a result,
\begin{align}\label{eq:hhbb}
p\frac14|\Lambda_L|\sum_{z\in S_{L/2}}\mathbb P[0\connect{\mathbb H_n\:}z]&\le \sum_{\substack{z\in S_{L/2}\\t\notin \mathbb H_n}}p_{zt}\mathbb P[0\connect{\mathbb H_n\:}z]\le\varphi_\beta(\mathbb H_n).
\end{align}
Moreover, recall from Lemma \ref{lem: intro lem about betastar} that
\begin{equation}\label{eq:hhbbb}
1\leq|\Lambda_L|p\leq 4
\end{equation}
when $\beta_0\leq \beta\le \beta^*$. Combining \eqref{eq:hha}, \eqref{eq:hhb}, \eqref{eq:hhbb}, and \eqref{eq:hhbbb} imply that 
\begin{equation}
	\psi(\mathbb H_n)\leq \frac{8(p |\Lambda_L|+e^2)}{2L+1}\sum_{z\in \mathcal S_{L/2}}\mathbb P[0\connect{\mathbb H_n\:}z]\leq \frac{8(4+e^2)}{2L+1}\frac{4}{p |\Lambda_L|}\varphi(\mathbb H_n)\leq \frac{32e^2}{L}\varphi(\mathbb H_n).
\end{equation}
%
\end{proof}
\begin{figure}
	\begin{center}\includegraphics[scale=1.2]{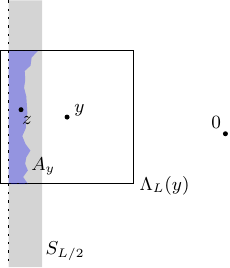}
	    \put(-152,8){$\partial\mathbb H_n$}
		\caption{An illustration of the proof of Lemma \ref{lem: small values 1 perco}. The grey region represents $S_{L/2}$. The blue region is the (random) set $A_y$. It is disconnected from $y$ in $\mathbb H_n$.}
		\label{fig:psiphi}
	\end{center}
\end{figure}

\paragraph{A refined version of Proposition~\ref{prop:bound phi perco}.}
As we conclude this section, we would like to highlight a supplementary result--- building on Proposition~\ref{prop:bound phi perco}--- which will be useful in the next section. We begin with some notations.
Let $\Lambda_n^{+}:=\{x\in \Lambda_n:x_1>0\}$. Also, define the slabs $H=H(L):=\{v \in \mathbb Z^d: \: |v_1|\le L\}$ and $H^-=H^{-}(L):=\{v\in H: \: v_1\leq 0\}$. \begin{Lem}\label{lem:estimate half-space perco} Fix $d>6$ and $\bfC>1$. There exists $L_2=L_2(\bfC,d)>0$ such that for every $L\geq L_2$, every $\beta<\beta^*(\bfC,L)$, every $n\geq 12L$, and every $v\in \Lambda_k^+$ with $6L\le k\le n/2$,
\begin{equation}
\sum_{\substack{y \in \Lambda_n^+\\z\notin \Lambda_n^+\cup H}}\mathbb P_\beta[v\connect{\Lambda_n^+\:}y]p_{yz,\beta} \le 4\Big(\frac {2k}n\Big)^c,
\end{equation}
where $c=c(d):=\frac{|\log(1-\tfrac{1}{4d^2})|}{2\log 2}$.
\end{Lem}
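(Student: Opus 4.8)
The plan is to iterate the Simon--Lieb inequality \eqref{eq:SLperco} across a nested family of half-spaces, gaining a fixed multiplicative factor strictly less than $1$ at each step, so that after roughly $\log_2(n/k)$ steps one reaches the claimed power-law bound in $k/n$. Concretely, for $v\in\Lambda_k^+$ with $6L\le k\le n/2$, set $r_0:=k$ and $r_{i+1}:=2r_i$, and write $\mathbb H_{n}':=\{x: x_1\le n\}$ (the half-space to the \emph{right} of the hyperplane $\{x_1=n\}$ is the relevant exterior). The quantity to control, call it $\Phi_n(v):=\sum_{y\in\Lambda_n^+,\,z\notin\Lambda_n^+\cup H}\mathbb P_\beta[v\connect{\Lambda_n^+}y]p_{yz,\beta}$, is an ``escape through the far face'' term: $z$ lies outside $\Lambda_n^+$ but also outside the slab $H$, so the connection must genuinely travel a distance of order $n$ in the $e_1$-direction (this is why the contributions near the lateral faces, which are collected into $H$, are excluded). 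First I would apply \eqref{eq:SLperco} with $S$ the portion of $\Lambda_n^+$ lying in $\{x_1\le r_1\}$ and $\Lambda=\Lambda_n^+$, splitting $\Phi_n(v)$ into a ``direct'' term and a term in which the path first crosses the hyperplane $\{x_1=r_1\}$ at some point $w$, then continues; the direct term is negligible because it forces a step of length $\ge n-k\ge n/2$ while staying in the slab, which costs a factor $|\Lambda_L|^{-1}$ times a volume that is too small. For the crossing term I would use the $\ell^1$ bound \eqref{eq: sum over half plane at distance k from half plane perco} (valid since $\beta<\beta^*$) to sum $\mathbb P_\beta[v\connect{}w]$ over the crossing hyperplane, obtaining a bound of the form $\frac{C\bfC^2 r_1}{L^2}\sup_{w}\Phi_n(w)$ — but this is the wrong direction.

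Instead, the efficient route is to run the recursion the other way, peeling off from the \emph{far} side. I would show $\Phi_n(v)\le (1-\tfrac1{4d^2})\,\Phi_{n/2}(v)$-type estimates are not quite it either; rather the clean statement is: for each dyadic scale, the probability that a cluster started in $\Lambda_k^+$ reaches $\partial\Lambda_{2k}^+$ \emph{without} having used a vertex in the slab $H$ to ``restart'' is at most $1-\tfrac1{4d^2}$ of the probability of reaching $\partial\Lambda_k^+$, by a standard one-step gluing/BK argument: conditionally on the cluster, with probability bounded below by $\tfrac1{4d^2}$ one of the $\asymp|\Lambda_L|$ boundary edges of the cluster lands in the slab $H$ (this is where the geometry of the spread-out range enters, and where the constant $\tfrac1{4d^2}$ comes from: the fraction of a box $\Lambda_L(y)$ that lies within distance $L$ of the hyperplane $x_1=0$ is at least $\tfrac1{4d}$, and we lose another $\tfrac1{d}$ accounting for which coordinate direction). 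Iterating this $m:=\lfloor\log_2(n/2k)\rfloor$ times gives a factor $(1-\tfrac1{4d^2})^{m}\le (2k/n)^{c}$ with $c=\frac{|\log(1-1/(4d^2))|}{2\log 2}$, and the leading constant $4$ absorbs the boundary/rounding terms.

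The precise bookkeeping is: define $F_i:=\sum_{y\in\partial\Lambda_{r_i}^+}\mathbb P_\beta[v\connect{\Lambda_n^+}y]\cdot(\text{local escape weight out of the far faces})$; show $F_{i+1}\le(1-\tfrac1{4d^2})F_i+(\text{error from }H)$ where the error term vanishes because we have \emph{removed} the $H$-contributions from $\Phi_n$ by hypothesis; and finally $\Phi_n(v)\le F_m\le (1-\tfrac1{4d^2})^m F_0\le (1-\tfrac1{4d^2})^m$, since $F_0\le\varphi_\beta(\Lambda_k^+)\le 1+K/L^d\le 2$ by Proposition~\ref{prop:bound phi perco} (applicable as $\Lambda_k^+$ is a generalised block after translation, once $L\ge L_2$), and then $2(1-\tfrac1{4d^2})^m\le 4(2k/n)^c$. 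I would use Lemma~\ref{lem: small values 1 perco} and \eqref{eq: sum over half plane at distance k from half plane perco} to handle the fact that the ``escape weight'' is an $\ell^1$-type quantity rather than a pure connection probability, converting between $\psi$ and $\varphi$ as needed.

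The main obstacle I anticipate is proving the one-step contraction $F_{i+1}\le(1-\tfrac1{4d^2})F_i$ cleanly: it requires a conditioning-on-the-cluster argument in which one must verify that, given the cluster of $v$ restricted to $\Lambda_{r_i}^+$, there is a uniformly positive probability (at least $\tfrac1{4d^2}$) that the cluster has an open boundary edge into the slab $H$ rather than ``continuing to the right'', and crucially that when this happens the corresponding mass is \emph{not} counted in $\Phi_n$ (because $z\in H$ is excluded). This is a union-bound / second-moment estimate on the number of cluster-boundary edges pointing toward $H$, of the same flavour as the $|A_y|$ argument in the proof of Lemma~\ref{lem: small values 1 perco}; the bookkeeping of exactly which edges and vertices are allowed (to respect the disjoint-occurrence structure needed for \eqref{eq:BK ineq}) is the delicate point. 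The secondary difficulty is the careful treatment of boundary layers of width $O(L)$ at each dyadic scale, which is handled by the constraints $k\ge 6L$, $n\ge 12L$ and absorbed into the constant $4$.
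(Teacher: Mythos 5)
You have the right skeleton — iterate dyadically and gain a factor strictly less than $1$ at each step — but the heart of the argument, the one-step contraction, is left as an unresolved obstacle in your plan, and the route you propose to fill it (conditioning on the cluster, second-moment/union-bound on boundary edges pointing into $H$, in the spirit of the $|A_y|$ argument from Lemma~\ref{lem: small values 1 perco}) is not how the paper does it and would be both harder and unlikely to yield the precise constant $1-\tfrac1{4d^2}$. Your stated source for the constant (``the fraction of a box $\Lambda_L(y)$ that lies within distance $L$ of the hyperplane \ldots is at least $1/(4d)$, and we lose another $1/d$'') is not what is actually happening.

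The paper's contraction step is a single, clean application of the Simon--Lieb inequality \eqref{eq:SLperco}, no cluster conditioning at all. Given $v\in\Lambda_{n_{\ell+1}}^+$, one takes the intermediate set $S$ to be the box $B:=\Lambda_{v_1-1}(v)$ centred at $v$. The choice of radius $v_1-1$ is the crucial geometric insight you are missing: it makes the left face of $B$ sit at $\{x_1=1\}$, so that every vertex $s\notin B$ within range $L$ of $B$ and with $s_1\le 0$ lands in $H^-$. Since $B$ is centred at $v$ and the step weights $J$ are reflection- and rotation-invariant, the escape weight of $B$ splits by symmetry among the $2d$ coordinate half-spaces, so the half-space $\{s_1\le 0\}$ carries at least a $\tfrac1{2d}$ fraction of $\varphi_\beta(B)$. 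Removing that face (i.e.\ restricting to $s\notin B\cup H^-$) therefore keeps at most $(1-\tfrac1{2d})\varphi_\beta(B)$, and Proposition~\ref{prop:bound phi perco} gives $\varphi_\beta(B)\le 1+K/L^d$, so for $L$ large enough the product is at most $1-\tfrac1{4d^2}$ — that is where the $4d^2$ comes from, half of the symmetry gain being eaten by the $\varphi$ slack. Plugging this into \eqref{eq:SLperco} and invoking the induction hypothesis at scale $n_\ell$ (the scales are halved \emph{inwards} towards $v$, $n_{\ell+1}=\lfloor(n_\ell-L)/2\rfloor$, not doubled outwards) yields the recursion. Note also that the quantity being contracted is the \emph{whole} escape sum starting from $v$, not a cross-sectional $F_i$ living on $\partial\Lambda_{r_i}^+$, and the set $H$ is excluded at every stage by the $s\notin H^-$ constraint, so there is no separate error term to control. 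In short: your plan is missing the choice of the centred box $B=\Lambda_{v_1-1}(v)$ and the symmetry-across-faces observation, which together trivialise the contraction step you flagged as the main difficulty.
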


Roughly speaking, Lemma \ref{lem:estimate half-space perco} formalises the fact that when a generalised block $B$ is not centered and 0 is close to $\partial B$, most of the sum in $\varphi_\beta(B)$ comes from edges $yz$ on the side of $B$ that is the closest to $0$. The proof is very similar to that of \cite[Lemma~2.7]{DumPan24WSAW}, but the presence of jumps of range $L$ requires some extra care.
\begin{proof} Fix $d>6$ and $\bfC>1$, and let $K$ be given by Proposition \ref{prop:bound phi perco}. Let $L_2$ be such that $\tfrac{K}{L_2^d}\leq \tfrac{1}{2d}$. Fix $L\geq L_2$.
Define $(n_\ell)_{\ell\geq 0}$ by $n_0=n$ and then $n_{\ell+1}=\lfloor (n_\ell-L)/2\rfloor\vee 0$. We proceed by induction by proving that for every $\ell\ge0$ and $v\in \Lambda_{n_\ell}^+$, 
\begin{equation}\label{eq: induction intermediary step}
\sum_{\substack{y \in \Lambda_n^+\\z\notin \Lambda_n^+\cup H}}\mathbb P_\beta[v\connect{\Lambda_n^+\:}y]p_{yz,\beta} \le \Big(1-\frac{1}{4d^2}\Big)^{\ell}\Big(1+\frac{1}{2d}\Big).
\end{equation}
The case $\ell=0$ follows from Proposition \ref{prop:bound phi perco} and the assumption on $L$. We now transfer the estimate from $\ell$ to $\ell+1$. Fix $v\in \Lambda_{n_{\ell+1}}^+$. Let $B:=\Lambda_{v_1-1}(v)$, which is included in $\Lambda_{n_\ell}^+$ and has one of its faces included in $H$, see Figure \ref{fig:improvePhi}. By symmetry and Proposition \ref{prop:bound phi perco},
\begin{equation}\label{eq: proof closer side contributes the most}
\sum_{\substack{r \in B\\s\notin B\cup H^-}}\mathbb P_\beta[v\connect{B\:}r]p_{rs,\beta} \le \frac{2d-1}{2d}\varphi_\beta(B)\le \Big(1-\frac{1}{2d}\Big)\Big(1+\frac{K}{L^d}\Big)\leq 1-\frac{1}{4d^2}.
\end{equation}
We deduce from Lemma~\ref{Lem: SL upper bound} and the induction hypothesis that 
\begin{align*}
\sum_{\substack{y \in \Lambda_n^+\\z\notin \Lambda_n^+\cup H}}\mathbb P_\beta[v\connect{\Lambda_n^+\:}y]p_{yz,\beta}
&\stackrel{\phantom{\eqref{eq: proof closer side contributes the most}}}\le 
\sum_{\substack{y \in \Lambda_n^+\\z\notin \Lambda_n^+\cup H}}\Big(\sum_{\substack{r \in B\\s\notin B\cup H^-}}\mathbb P_\beta[v\connect{B\:}r]p_{rs,\beta}\mathbb P_\beta[s\connect{\Lambda_n^+\:}y]\Big)p_{yz,\beta} 
\\
&\stackrel{\phantom{\eqref{eq: proof closer side contributes the most}}}=\sum_{\substack{r \in B\\s\notin B\cup H^-}}\mathbb P_\beta[v\connect{B\:}r]\Big(\sum_{\substack{y \in \Lambda_n^+\\z\notin \Lambda_n^+\cup H}}\mathbb P_\beta[s\connect{\Lambda_n^+\:}y]p_{yz,\beta}\Big)p_{rs,\beta}
\\
&\stackrel{\phantom{\eqref{eq: proof closer side contributes the most}}}\le \Big(1-\frac{1}{4d^2}\Big)^{\ell}\Big(1+\frac{1}{2d}\Big)\sum_{\substack{r \in B\\s\notin B\cup H^-}}\mathbb P_\beta[v\connect{B\:}r]p_{rs,\beta}
\\&\stackrel{\eqref{eq: proof closer side contributes the most}}\le \Big(1-\frac{1}{4d^2}\Big)^{\ell+1}\Big(1+\frac{1}{2d}\Big).
\end{align*}
This concludes the induction.
Now, let $k\leq n/2$. Then\footnote{We notice that for $\ell\geq 0$, one has $n_\ell\geq \tfrac{n}{2^\ell}-\sum_{k=0}^{\ell-1}\tfrac{L+2}{2^k}\geq \tfrac{n}{2^\ell}-6L$. Hence, for $\ell=\lfloor m/2\rfloor$ with $m=\log_2(\tfrac{n}{2k})$, we get
\begin{equation*}
	n_\ell\geq \frac{n}{2^{m/2}}-6L=\sqrt{2k}\sqrt{n}-6L\geq 2k-6L\geq k,
\end{equation*}
where we used that $n\geq 2k$ and $k\geq 6L$.}, $k\leq n_{\lfloor m/2\rfloor}$ where $m:=\log_2(\tfrac{n}{2k})$. As a consequence, by \eqref{eq: induction intermediary step}, if $v\in \Lambda_k^+$,
\begin{equation}
	\sum_{\substack{y \in \Lambda_n^+\\z\notin \Lambda_n^+\cup H}}\mathbb P_\beta[v\connect{\Lambda_n^+\:}y]p_{yz,\beta}\leq \Big(1-\frac{1}{4d^2}\Big)^{(m/2)-1}\Big(1+\frac{1}{2d}\Big)\leq 4\Big(\frac{2k}{n}\Big)^c,
\end{equation}
where $c=\frac{|\log(1-\tfrac{1}{4d^2})|}{2\log 2}$, and where we used that for $d\geq 1$, $(1-\tfrac{1}{4d^2})^{-1}\leq 1+\tfrac{1}{2d}\leq 2$.
\end{proof}

\begin{figure}
	\begin{center}
		\includegraphics[scale=1.2]{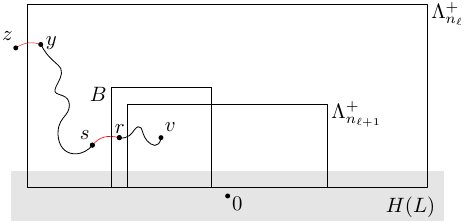}
		\caption{An illustration of the proof of Lemma \ref{lem:estimate half-space perco}. For convenience, we have rotated the picture by $\pi/2$. The light grey region corresponds to $H$. The bold black line corresponds to an open self-avoiding path connecting $v$ to $y$. The red lines correspond to long open edges. The sequence $(n_{\ell})_{\ell\geq 0}$ has been constructed in such a way that $s$ remains in $\Lambda_{n_\ell}$.}
		\label{fig:improvePhi}
	\end{center}
\end{figure}

\subsection{Improving the $\ell^\infty$ bound}
Proposition \ref{prop: improved l^1 perco} implies an improved $\ell^1$-type bound on the half-space two-point function (when $\bfC,L$ are large). We will use this improvement to deduce an improved $\ell^\infty$ bound on the half-space two-point function. 
The following estimate is a key step of the proof.
\begin{Prop}[Control of the gradient]\label{prop:regularity perco}

Fix $d>6$ and $\eta>0$. There exist $\delta=\delta(\eta,d)\in(0,1/2)$ and $\bfC_3=\bfC_3(\eta,d)>0$ such that the following holds true: for every ${\bf C}>\bfC_3$, there exists $L_3=L_3(\eta,\bfC,d)$ such that 
\begin{align}\label{eq:gradient estimate}
\Big|\mathbb P_\beta[u\connect{\mathbb H_n\:}x]-\mathbb P_\beta[v\connect{\mathbb H_n\:}x]\Big|&\le \eta\frac{\bfC}{L^d}\left(\frac{L}{L\vee n}\right)^{d-1}
\end{align}
for every $L\geq L_3$, $\beta<\beta^*(\bfC,L)$, $n\geq 0$,  $x\in \partial\mathbb H_n$, and  $u,v\in \Lambda_{\lfloor \delta n\rfloor\vee L}\setminus\{x\}$.
\end{Prop}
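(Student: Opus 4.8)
The plan is to prove the gradient bound by a \emph{coupling/regularity} argument that compares the clusters of two nearby points $u$ and $v$ using a \emph{two-step difference} representation, and to feed into it the improved $\ell^1$ bound from Proposition \ref{prop: improved l^1 perco} together with the full-space bound of Lemma \ref{lem: full plane 2pt function out of halfplane one perco}. First I would reduce to the case $n\ge L$: when $n\le L$ one has $\Lambda_{\lfloor\delta n\rfloor\vee L}=\Lambda_L$, and the difference $|\mathbb P_\beta[u\connect{\mathbb H_n\:}x]-\mathbb P_\beta[v\connect{\mathbb H_n\:}x]|$ is at most the sum of the two probabilities, each bounded by $\bfC/L^d$ via \eqref{eq:H_beta-' perco}; so to get a factor $\eta$ one instead argues that $u,v\in\Lambda_L$ are at bounded lattice distance and uses a finite-range surgery (open/close $O(1)$ edges between $u$ and $v$) which changes the connection probability by a multiplicative constant, hence the \emph{difference} is $\le C\,\bfC/L^d$; the genuinely small factor $\eta$ will come from choosing $\delta$ small in the main case. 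The heart is therefore $n>L$ with $u,v\in\Lambda_{\lfloor\delta n\rfloor}$.

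For the main case, the key step is to write $\{u\connect{\mathbb H_n\:}x\}$ in terms of a last regeneration out of a block centered at the origin. Concretely, let $m:=\lfloor\delta n\rfloor$ and consider the block $B_m:=\Lambda_m$. Decomposing an open self-avoiding path from $u$ to $x$ according to the last vertex $y$ it visits in $B_m$ and the first edge $yz$ leaving $B_m$, the BK inequality (Lemma \ref{Lem: SL upper bound} applied with $S=B_m$, and its reverse Lemma \ref{Lem: SL lower bound}) gives
\[
\mathbb P_\beta[u\connect{\mathbb H_n\:}x]=\sum_{\substack{y\in B_m\\ z\in\mathbb H_n\setminus B_m}}\mathbb P_\beta[u\connect{B_m\:}y]\,p_{yz,\beta}\,\mathbb P_\beta[z\connect{\mathbb H_n\:}x]+\varepsilon_u,
\]
where $\varepsilon_u\ge0$ and, by Lemma \ref{lem: estimate E(x)} (with $n>24L$, which holds for $n$ large once $\delta$ is small), $\varepsilon_u\le \tfrac{D}{L^d}\cdot\tfrac{\bfC}{Ln^{d-1}}$, which is already $\le \tfrac\eta3\cdot\tfrac{\bfC}{L^d}(L/(L\vee n))^{d-1}$ for $L$ large. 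Subtracting the analogous identity for $v$, the difference becomes
\[
\Big|\sum_{\substack{y\in B_m\\ z\notin B_m}}\big(\mathbb P_\beta[u\connect{B_m\:}y]-\mathbb P_\beta[v\connect{B_m\:}y]\big)p_{yz,\beta}\,\mathbb P_\beta[z\connect{\mathbb H_n\:}x]\Big|+\tfrac{2\eta}3\cdot\tfrac{\bfC}{L^d}\Big(\tfrac{L}{L\vee n}\Big)^{d-1}.
\]
Now for $z\notin B_m$ one has $|z|\ge m$, hence by \eqref{eq:H_beta-' perco} (or Lemma \ref{lem: distance k from half-space}) $\mathbb P_\beta[z\connect{\mathbb H_n\:}x]\le \tfrac{\bfC}{L^d}(L/(L\vee n))^{d-1}\cdot(\text{const})$ uniformly, so the whole sum is bounded by
\[
\mathrm{const}\cdot\tfrac{\bfC}{L^d}\Big(\tfrac{L}{L\vee n}\Big)^{d-1}\sum_{\substack{y\in B_m\\ z\notin B_m}}\big|\mathbb P_\beta[u\connect{B_m\:}y]-\mathbb P_\beta[v\connect{B_m\:}y]\big|p_{yz,\beta},
\]
so it suffices to make the sum $\sum_{y\in B_m,z\notin B_m}|\mathbb P_\beta[u\connect{B_m\:}y]-\mathbb P_\beta[v\connect{B_m\:}y]|\,p_{yz,\beta}$ smaller than any prescribed $\eta'$ by choosing $\delta$ small.

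To control that last sum I would use the improved $\ell^1$ bound and a \emph{translation-and-comparison} argument: both $\varphi_\beta^{u}(B_m):=\sum_{y\in B_m,z\notin B_m}\mathbb P_\beta[u\connect{B_m\:}y]p_{yz,\beta}$ and $\varphi_\beta^{v}(B_m)$ are at most $1+K/L^d$ by (the shifted version of) Proposition \ref{prop:bound phi perco}, while the contribution to either sum coming from edges $yz$ crossing the side of $\partial B_m$ \emph{nearest} to neither $u$ nor $v$ is \emph{comparable} for $u$ and $v$: indeed, since $u,v\in\Lambda_m\setminus\Lambda_{m/2}$ of the box $B_m$ is far (distance $\ge m/2\gg L$) from $\partial B_m$, by the finite-range Markov property one can re-root and use Lemma \ref{lem:estimate half-space perco}: the portion of $\varphi_\beta^{u}(B_m)$ carried by boundary pieces of $\partial B_m$ at distance $\gtrsim m$ from the segment $[u,v]$ is $\le 4(2\cdot\mathrm{const}\,L/m)^c=4(C'/\delta n \cdot L)^c$, hence $\to0$; this is the mechanism by which the bulk of $\varphi^u$ and $\varphi^v$ localises on the \emph{same} face, where the two contributions can be matched term by term after noticing that, in $B_m$, the clusters of $u$ and of $v$ reaching a boundary edge on that common face look the same up to a translation by $u-v$ and the error incurred by moving the block by $|u-v|\le 2\delta n$ is again governed by Lemma \ref{lem:estimate half-space perco}. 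Optimising, one takes $\delta=\delta(\eta,d)$ so small that all these $(\,\cdot\,)^c$ terms and the $K/L^d$ slack sum to at most $\eta/3$, which also fixes $\bfC_3$ (needed so that $\beta^*(\bfC,L)\ge\beta_0$ and Lemmas \ref{lem: estimate E(x)}, \ref{lem:estimate half-space perco}, \ref{prop: improved l^1 perco} apply) and then $L_3=L_3(\eta,\bfC,d)$ (so that $n\ge\delta^{-1}\ge24L$ etc.).

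The main obstacle, I expect, is the matching step: making precise that $\sum_{y,z}|\mathbb P_\beta[u\connect{B_m\:}y]-\mathbb P_\beta[v\connect{B_m\:}y]|\,p_{yz,\beta}$ is small, i.e.\ that the two (non-normalised) harmonic-like measures on $\partial B_m$ induced by $u$ and $v$ are close in total variation. Cancellation is essential here — bounding each term by the sum of two positives only gives $2(1+K/L^d)$, not something small — so one must genuinely exploit that $u$ and $v$ are deep inside $B_m$ and that, conditionally on the first long edge $yz$ leaving $B_m$, the law of the cluster inside $B_m$ depends on the starting point only through a region of size $O(L)\ll m$; the $\delta$-smallness then comes from iterating Lemma \ref{lem:estimate half-space perco}'s dyadic decay exactly as in \cite[Proposition~2.8]{DumPan24WSAW}, with the extra bookkeeping caused by edges of range $L$ straddling $\partial B_m$ handled as in the proof of Lemma \ref{lem:estimate half-space perco}. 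Everything else — the reduction to $n>L$, the use of Lemma \ref{lem: estimate E(x)} to kill the Simon–Lieb error, and the uniform bound on $\mathbb P_\beta[z\connect{\mathbb H_n\:}x]$ for $z\notin B_m$ — is routine given the results already established in Sections \ref{sec:2}--\ref{sec:3}.
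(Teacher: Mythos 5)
The proposal has two genuine gaps, one in the small-$n$ case and one (which you acknowledge) at the heart of the main case.

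For $n\lesssim L$ you propose a finite-range surgery between $u$ and $v$, but you already note this yields $|\mathbb P_\beta[u\connect{\mathbb H_n}x]-\mathbb P_\beta[v\connect{\mathbb H_n}x]|\le C\bfC/L^d$ for a \emph{constant} $C$, which is the same order as the target and cannot be made $\eta$-small by shrinking $\delta$ (since $\delta$ does not even enter this regime). The paper handles $n\le\delta^{-1}L$ quite differently: it bounds $\mathbb P_\beta[u\connect{\mathbb H_n}x]$ and $\mathbb P_\beta[v\connect{\mathbb H_n}x]$ \emph{individually} by $\tfrac12\eta\bfC L^{-d}(L/(L\vee n))^{d-1}$, via an iterated Simon--Lieb step that converts each to a random-walk Green-function quantity (Proposition \ref{prop: rw estimates 2}), and then chooses $\bfC$ large enough that the random-walk constant $C_{\rm RW}'$ is dominated by $\bfC\eta/4$. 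This is precisely where the threshold $\bfC_3$ really comes from, which your argument misses.

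For the main case, you correctly identify that the crux is making $\sum_{y,z}|\mathbb P_\beta[u\connect{B\:}y]-\mathbb P_\beta[v\connect{B\:}y]|p_{yz,\beta}$ small (a total-variation control on harmonic-like measures), and you explicitly leave this as a heuristic. The claim that ``the law of the cluster inside $B_m$ depends on the starting point only through a region of size $O(L)$'' does not hold for percolation clusters and is not a workable substitute. The paper closes this gap in two stages: for $|u-v|\le 2L$ it iterates both Lemmata~\ref{Lem: SL upper bound} and~\ref{Lem: SL lower bound} around singletons (Lemma~\ref{lem: iteration of SL}) to reduce to $\varphi^T\big(\mathbb E^{\rm RW}_u[\mathbb P_\beta[X_T\connect{\mathbb H_n}x]]-\mathbb E^{\rm RW}_v[\mathbb P_\beta[X_T\connect{\mathbb H_n}x]]\big)$, and then couples $X_T^u$ and $X_T^v$ via Proposition~\ref{prop:coupling appendix} (this is the cancellation you are looking for); for general $u,v\in\Lambda_{\delta n}$ it uses a reflection pairing of boundary terms across the hyperplane between $u$ and $v$, together with Lemma~\ref{lem:estimate half-space perco} to show uncancelled boundary pieces contribute $O(\delta^c)$. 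Also, a small technical point: you take $B_m=\Lambda_{\lfloor\delta n\rfloor}$, but then the hypothesis $u,v\in\Lambda_{\lfloor\delta n\rfloor}$ allows them to sit on $\partial B_m$, contradicting the ``deep inside'' assumption you later rely on; the paper correctly decomposes at a macroscopic block of radius $n/3$. So the overall scaffolding is sensible, but the two steps that actually produce the $\eta$-smallness are missing.
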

With this regularity estimate, one may improve \eqref{eq:H_beta-' perco}. \begin{Cor}[Improvement of the $\ell^\infty$ bound]\label{prop: improve l inf perco} Fix $d>6$. There exists $\bfC_4=\bfC_4(d)>0$ such that the following holds. For every $\bfC>\bfC_4$, there exists $L_4=L_4(\bfC,d)$ such that for every $L\geq L_4$ and every $\beta<\beta^*(\bfC,L)$,
\begin{align}
\mathbb P_\beta[0\connect{\mathbb H_n\:}x]&\le \frac{\bfC}{2L^d}\left(\frac{L}{L\vee n}\right)^{d-1}&\forall n\geq 0,\:\forall x\in (\partial\mathbb H_n)^*.
\end{align}
\end{Cor}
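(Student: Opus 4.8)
The goal is to turn the improved $\ell^1$ bound of Proposition~\ref{prop: improved l^1 perco} into an improved pointwise ($\ell^\infty$) bound, using the regularity estimate of Proposition~\ref{prop:regularity perco} as the bridge. The overall strategy is: fix $\eta>0$ small (to be chosen), let $\delta=\delta(\eta,d)$ and $\bfC_3=\bfC_3(\eta,d)$ be as in Proposition~\ref{prop:regularity perco}, set $\bfC_4:=\bfC_3\vee \bfC_1\vee (128e^2)$ (where $\bfC_1$ is from Proposition~\ref{prop: improved l^1 perco}), and take $L\geq L_4:=L_1\vee L_3$. The point is that for $x\in(\partial\mathbb H_n)^*$, writing $N$ for the number of points $y\in\partial\mathbb H_n$ lying at distance $\leq \delta n$ from $0$ (roughly $(\delta n)^{d-1}$ such points, up to constants), one has by the triangle/regularity inequality
\begin{equation*}
\mathbb P_\beta[0\connect{\mathbb H_n\:}x]\leq \frac{1}{N}\sum_{\substack{y\in \partial\mathbb H_n\\ |y|\leq \delta n}}\mathbb P_\beta[y\connect{\mathbb H_n\:}x]+\eta\,\frac{\bfC}{L^d}\left(\frac{L}{L\vee n}\right)^{d-1}.
\end{equation*}
Here we used Proposition~\ref{prop:regularity perco} with $u=0$, $v=y$ (both lie in $\Lambda_{\lfloor\delta n\rfloor\vee L}$ provided $|y|\leq \delta n$, and we must also treat $y=x$ or $y=0$ separately, which is harmless). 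The averaged sum is now an $\ell^1$-type quantity: by translation invariance each $\mathbb P_\beta[y\connect{\mathbb H_n\:}x]$ is a two-point function inside a half-space of width $n$, and $\sum_{y\in\partial\mathbb H_n}\mathbb P_\beta[y\connect{\mathbb H_n\:}x]=\psi_\beta(\mathbb H_n)\leq 64e^2/L$ by Proposition~\ref{prop: improved l^1 perco}. Dividing by $N\gtrsim (\delta n)^{d-1}$ when $n\geq L$, and noting that for $n\leq L$ we simply invoke $N\gtrsim L^{d-1}$ with the same bound, yields a contribution of order $\frac{1}{L}\cdot\frac{1}{(\delta n\vee L)^{d-1}}$. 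Multiplying and dividing by $L^{d-1}$, this is $\lesssim \frac{C(\delta,d)}{L^d}\left(\frac{L}{L\vee n}\right)^{d-1}$; crucially the constant in front does not involve $\bfC$.

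**Choosing the constants.** Once the two contributions are in hand, the first (the averaged $\ell^1$ term) is bounded by $\frac{C_5(\delta,d)}{L^d}\left(\frac{L}{L\vee n}\right)^{d-1}$ with $C_5$ depending only on $\delta$ and $d$—hence, after fixing $\eta$, only on $\eta$ and $d$. We first pick $\eta$ so small that $\eta\,\bfC\leq \bfC/4$ is automatic (e.g. $\eta\leq 1/4$), so the regularity term is at most $\frac{\bfC}{4L^d}\left(\frac{L}{L\vee n}\right)^{d-1}$. With $\eta=1/4$ now fixed, $\delta$ and $C_5$ are determined, and we then set $\bfC_4:=\max\{\bfC_1,\bfC_3,4C_5,128e^2\}$ so that for $\bfC>\bfC_4$ the first term is at most $\frac{\bfC}{4L^d}\left(\frac{L}{L\vee n}\right)^{d-1}$ as well. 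Adding the two gives the claimed bound $\frac{\bfC}{2L^d}\left(\frac{L}{L\vee n}\right)^{d-1}$. The requirement $\bfC>\bfC_1$ (and $L\geq L_1$) ensures Proposition~\ref{prop: improved l^1 perco} applies; the requirement $\bfC>\bfC_3$ (and $L\geq L_3$) ensures Proposition~\ref{prop:regularity perco} applies with our chosen $\eta$.

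**Expected main obstacle.** The routine bookkeeping is the small-$n$ regime and the edge cases $y\in\{0,x\}$ in the averaging, together with verifying that the number of admissible $y$'s is indeed of order $(\delta n\vee L)^{d-1}$ uniformly—this is straightforward lattice counting but must be done carefully because $\partial\mathbb H_n$ is a $(d-1)$-dimensional slab and we need $|y|\leq\delta n$, i.e. $y$ ranges over a $(d-1)$-dimensional box of side $\sim \delta n$ (when $\delta n\geq L$) or side $\sim L$ (when $\delta n< L$). The genuinely delicate point—but it is packaged inside Proposition~\ref{prop:regularity perco}, which we are permitted to assume—is that the gradient estimate holds with the \emph{same} prefactor $\frac{\bfC}{L^d}(L/(L\vee n))^{d-1}$ and an arbitrarily small $\eta$; this is what makes the argument close, since the $\ell^1$ input only saves a factor $1/L$ relative to the a priori $\ell^\infty$ bound and we need the regularity error to be a \emph{fixed small fraction} of the target rather than comparable to the a priori bound. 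No further difficulty is anticipated: the proof is a short combination of Propositions~\ref{prop: improved l^1 perco} and~\ref{prop:regularity perco} with a counting estimate.
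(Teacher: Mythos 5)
Your proposal is correct and follows essentially the same route as the paper: use Proposition~\ref{prop:regularity perco} to transfer the two-point function from $0$ to an averaging set of $\sim(\lfloor\delta n\rfloor\vee L)^{d-1}$ points on $\partial\mathbb H_n$, bound the resulting average by $\psi_\beta(\mathbb H_n)/N$ using Proposition~\ref{prop: improved l^1 perco}, and then absorb both contributions into $\frac{\bfC}{2L^d}(L/(L\vee n))^{d-1}$ by choosing $\eta=1/4$ and $\bfC_4$ sufficiently large. The only slip is that the averaging set should adapt to scale as $\Lambda_{\lfloor\delta n\rfloor\vee L}\cap\partial\mathbb H_n$ (so that when $\delta n<L$ one uses a box of side $L$, not $\delta n$); you note this correctly in your final paragraph even though the earlier definition of $N$ and the case split ``$n\leq L$'' vs.\ ``$n\geq L$'' do not quite say it.
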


\begin{proof}
Fix $d>6$. Set $\eta=\tfrac14$. Let $\bfC_1$ be given by Proposition \ref{prop: improved l^1 perco} and $\delta=\delta(\eta,d),\bfC_3=\bfC_3(\eta,d)$ be given by Proposition~\ref{prop:regularity perco}. Let $\bfC >\bfC_1\vee \bfC_3$. It will be taken larger below. Let also $L\geq L_3$ where $L_3=L_3(\eta,\bfC,d)$ is given by Proposition \ref{prop:regularity perco}. Let $n\geq 0$ and $x\in (\partial\mathbb H_n)^*$. Set $
V_n:=\{y\in\Lambda_{ \lfloor \delta n\rfloor\vee L }\setminus\{x\}:y_1=0\}.$
Proposition~\ref{prop:regularity perco} gives that for every $\beta<\beta^*(\bfC,L)$ and every $y\in V_n$,
\begin{align}\notag
\mathbb P_\beta[0\connect{\mathbb H_n\:}x-y]&=\mathbb P_\beta[y\connect{\mathbb H_n\:}x]
\ge \mathbb P_\beta[0\connect{\mathbb H_n\:}x]-\eta\frac{\bfC}{L^d}\left(\frac{L}{L\vee n}\right)^{d-1}\label{eq:h1}.
\end{align}
Averaging over $y\in V_n$ gives
\begin{equation}
	\frac{\psi(\mathbb H_n)}{|V_n|}\geq \frac{1}{|V_n|}\sum_{y\in V_n}\mathbb P_\beta[y\connect{\mathbb H_n\:}x]\geq \mathbb P_\beta[0\connect{\mathbb H_n\:}x]-\eta\frac{\bfC}{L^d}\left(\frac{L}{L\vee n}\right)^{d-1}.
\end{equation}
Proposition~\ref{prop: improved l^1 perco} implies that for $L\ge L_1\vee L_3$ (where $L_1=L_1(\bfC,d)$ is provided by Proposition~\ref{prop: improved l^1 perco}),
\begin{equation}\label{eq:final improvement 2pt}
	\mathbb P_\beta[0\connect{\mathbb H_n\:}x]\leq \frac{1}{|V_n|}\frac{64e^2}{L}+\frac{1}{4}\frac{\bfC}{L^d}\left(\frac{L}{L\vee n}\right)^{d-1},
\end{equation}
where we also used that $\eta=\tfrac{1}{4}$.

Observe that $|V_n|\geq 2^{d-1}(\lfloor\delta n\rfloor\vee L)^{d-1}$. If $n\delta \leq L$, then $|V_n|\geq L^{d-1}$, and \eqref{eq:final improvement 2pt} gives
\begin{equation}
	\mathbb P_\beta[0\connect{\mathbb H_n\:}x]\leq \frac{64e^2}{L^d}+\frac{1}{4}\frac{\bfC}{L^d}\left(\frac{L}{L\vee n}\right)^{d-1}\leq \left(\frac{64e^2}{\delta^{d-1}}+\frac{\bfC}{4}\right)\frac{1}{L^d}\left(\frac{L}{L\vee n}\right)^{d-1}.
\end{equation}
If now $n\delta>L$, then $\lfloor \delta n\rfloor\geq L$ and $|V_n|\geq (2\lfloor \delta n\rfloor)^{d-1}\geq (\delta n)^{d-1}$. Again, \eqref{eq:final improvement 2pt} gives
\begin{equation}
	\mathbb P_\beta[0\connect{\mathbb H_n\:}x]\leq \left(\frac{64e^2}{\delta^{d-1}}+\frac{\bfC}{4}\right)\frac{1}{L^d}\left(\frac{L}{L\vee n}\right)^{d-1}.
\end{equation}
Using the two previously displayed equations, and choosing $\bfC>\bfC_4:=(4\frac{64e^2}{\delta^{d-1}})\vee \bfC_1\vee  \bfC_3$ and $L_4:=L_1\vee L_3$ gives
\begin{equation}
	\mathbb P_\beta[0\connect{\mathbb H_n\:}x]\leq\frac{\bfC}{2L^d}\left(\frac{L}{L\vee n}\right)^{d-1},
\end{equation} 
and concludes the proof.
\end{proof}

The remainder of this section is devoted to the proof of Proposition~\ref{prop:regularity perco}. The following lemma is an iterated version of Lemmata \ref{Lem: SL upper bound} and \ref{Lem: SL lower bound}.

\begin{Lem}\label{lem: iteration of SL} Fix $d>6$, $\bfC>1$, and $L\geq 1$. Then, for every $\beta<\beta^*(\bfC,L)$, every $n\geq 0$, every $x\in \partial \mathbb H_n$, every $u \in \mathbb H_n$, and every $T\geq 1$,
\begin{multline}
	0\leq \Big(\sum_{t=0}^{T}\varphi^t\mathbb P^{\rm RW}_{u}[\sigma^u_x=t, \tau_n^u>t]+\varphi^T\mathbb E^{\rm RW}_{u}\Big[\mathbb P_\beta[X_T^u\connect{\mathbb H_n\:}x]\mathds{1}_{\tau_n^u>T, \: \sigma^u_x>T}\Big]\Big)-\mathbb P_\beta[u\connect{\mathbb H_n\:}x]\\
	\leq \Big(\sum_{t=0}^{T-1}\varphi^t\Big)\max_{w\in \Lambda_{(T-1)L}(u)\cap \mathbb H_n}E_\beta(\{w\},\mathbb H_n,w,x),
\end{multline}
where $\tau_n^u:=\inf\{k\geq 1 : X_k^u \notin \mathbb H_n\}$, $\sigma_x^u:=\inf\{k\geq 0: X_k^u=x\}$, and $\varphi:=\varphi_\beta(\{0\})$.
\end{Lem}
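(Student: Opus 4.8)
The plan is to iterate the two Simon--Lieb inequalities (Lemmata~\ref{Lem: SL upper bound} and \ref{Lem: SL lower bound}) with singletons $S=\{w\}$ and $\Lambda=\mathbb H_n$, which will naturally produce the random walk $(X^u_k)_k$ of Definition~\ref{def: random walk J}: one step of the iteration replaces $\mathbb P_\beta[u\connect{\mathbb H_n\:}x]$ by a "diagonal" term $\varphi\,\mathds{1}_{u=x}$ plus $\sum_z J_{uz}\,\frac{p_{uz,\beta}}{\beta J_{uz}}\cdot\beta\,\mathbb P_\beta[z\connect{\mathbb H_n\:}x]$, up to error terms on the lower side. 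The key bookkeeping observation is that $\sum_{z}p_{uz,\beta}\,\mathbb P_\beta[z\connect{\mathbb H_n\:}x]$ is \emph{not} quite $\varphi\sum_z J_{uz}\mathbb P_\beta[z\connect{\mathbb H_n\:}x]$ since $p_{uz,\beta}=p_\beta\mathds{1}_{1\le|u-z|\le L}$ and $\varphi=\varphi_\beta(\{0\})=|\Lambda_L^*|p_\beta$; so $p_{uz,\beta}=\varphi J_{uz}$ exactly (using $J_{uz}=c_L\mathds{1}_{1\le|u-z|\le L}$ and $c_L=|\Lambda_L^*|^{-1}$). This is the clean identity that makes $\varphi$ appear as the multiplicative factor per step. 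I would first record this identity, then set up the one-step inequalities and iterate $T$ times.

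Concretely, for the \emph{upper} bound on $\mathbb P_\beta[u\connect{\mathbb H_n\:}x]$ (i.e. the lower bound on the displayed bracket), I would iterate Lemma~\ref{Lem: SL lower bound}: each application peels off the first step of the walk and picks up a loss of the form $E_\beta(\{w\},\mathbb H_n,w,x)$ where $w$ ranges over the currently-reached vertex. Because a single step moves by at most $L$ in $\ell^\infty$ norm, after $t$ steps we have $w\in\Lambda_{tL}(u)\cap\mathbb H_n$, so the accumulated error after $T$ steps is bounded by $\big(\sum_{t=0}^{T-1}\varphi^t\big)\max_{w\in\Lambda_{(T-1)L}(u)\cap\mathbb H_n}E_\beta(\{w\},\mathbb H_n,w,x)$, as in the statement. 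One must also track the walk's behaviour relative to the slab: the iteration is only valid as long as the walk stays in $\mathbb H_n$ and has not yet hit $x$, which is why the surviving term after $T$ steps is $\varphi^T\mathbb E^{\rm RW}_u[\mathbb P_\beta[X^u_T\connect{\mathbb H_n\:}x]\mathds{1}_{\tau_n^u>T,\ \sigma_x^u>T}]$; the contributions where the walk has already reached $x$ before time $T$ while staying in $\mathbb H_n$ assemble into $\sum_{t=0}^{T}\varphi^t\,\mathbb P^{\rm RW}_u[\sigma_x^u=t,\tau_n^u>t]$. (Here one uses that once the walk hits $x$, the factor $\mathbb P_\beta[x\connect{\mathbb H_n\:}x]=1$ is absorbed, and that hitting $\partial\mathbb H_n\setminus\mathbb H_n$ kills the term since those vertices are not in $\mathbb H_n$.) The nonnegativity of the left-hand side is immediate from $\mathbb P_\beta[u\connect{\mathbb H_n\:}x]\le\mathbb P_\beta[u\connect{}x]<\infty$ combined with iterating the \emph{upper} Simon--Lieb inequality (Lemma~\ref{Lem: SL upper bound}), which gives $\mathbb P_\beta[u\connect{\mathbb H_n\:}x]\le\sum_{t=0}^T\varphi^t\mathbb P^{\rm RW}_u[\sigma_x^u=t,\tau_n^u>t]+\varphi^T\mathbb E^{\rm RW}_u[\mathbb P_\beta[X^u_T\connect{\mathbb H_n\:}x]\mathds{1}_{\tau_n^u>T,\sigma_x^u>T}]$ with no error term.

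So the scheme is: (1) prove the exact identity $p_{yz,\beta}=\varphi\, J_{yz}$ and note $\varphi\le 1$ for $\beta<\beta^*\le\beta_c$ (since $\varphi=\varphi_\beta(\{0\})\le\varphi_{\beta_c}(\{0\})=|J|\cdot$ something; more simply $\varphi_\beta(\{0\})=|\Lambda_L^*|p_\beta\le|\Lambda_L^*|\beta c_L=\beta\le 2$, but monotonicity in $\beta$ and $\varphi_{\beta_c}(\{0\})\le 1$ is what one really wants — I'd use that $\beta\mapsto\varphi_\beta(\{0\})$ is nondecreasing and equals $1$ at $\beta_0$, hmm, actually $\varphi$ can exceed $1$; the formula still goes through with $\varphi^t$ as written since we only need convergence which holds because the walk survival probabilities decay); (2) iterate Lemma~\ref{Lem: SL upper bound} $T$ times to get the no-error upper bound on $\mathbb P_\beta[u\connect{\mathbb H_n\:}x]$, establishing the left inequality $0\le(\cdots)$; (3) iterate Lemma~\ref{Lem: SL lower bound} $T$ times, carefully identifying the per-step error $E_\beta(\{w\},\mathbb H_n,w,x)$ and using the range-$L$ bound on displacement to get the uniform $\max_{w\in\Lambda_{(T-1)L}(u)\cap\mathbb H_n}$; (4) sum the geometric-type error prefactors to $\sum_{t=0}^{T-1}\varphi^t$. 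The main obstacle is step (3): bookkeeping the error terms through the iteration. Each step of Lemma~\ref{Lem: SL lower bound} applied at the current vertex $w$ introduces $E_\beta(\{w\},\mathbb H_n,w,x)$, but one must check that these errors are "attached" to the walk position correctly — i.e. that after substituting the one-step lower bound into the previous expression, the residual negative contribution is exactly (a sum over $t$ of $\varphi^t$ times an expectation over $X^u_t$ of) $E_\beta(\{X^u_t\},\mathbb H_n,X^u_t,x)$, and then bound each such expectation by the stated maximum over $\Lambda_{(T-1)L}(u)\cap\mathbb H_n$ using $|X^u_t-u|\le tL\le(T-1)L$ and $X^u_t\in\mathbb H_n$ on the relevant event. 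One also has to be slightly careful that $E_\beta(\{w\},\mathbb H_n,w,x)$ as defined in \eqref{eq:error first term}--\eqref{eq:error second term} (with $S=\{w\}$) is exactly the error produced by Lemma~\ref{Lem: SL lower bound} at that step, and that the factor $\varphi^t$ — rather than a product of possibly-different quantities — is what multiplies it, which again comes from the identity $p_{yz,\beta}=\varphi J_{yz}$ so that each peeled step contributes precisely one factor $\varphi$.
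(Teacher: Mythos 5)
Your proposal is correct and follows essentially the same route as the paper: record that $p_{uz,\beta}=\varphi J_{uz}$, apply Lemmata~\ref{Lem: SL upper bound} and~\ref{Lem: SL lower bound} once with $S=\{u\}$ and $\Lambda=\mathbb H_n$, rewrite the middle term via the events $\{\sigma^u_x=t,\tau^u_n>t\}$, and iterate with $S=\{X_1^u\},\dots,\{X_{T-1}^u\}$, bounding each error term $\mathbb E^{\rm RW}_u[E_\beta(\{X^u_k\},\mathbb H_n,X^u_k,x)]$ by the maximum over $\Lambda_{(T-1)L}(u)\cap\mathbb H_n$ since the walk has range $L$. The digression about whether $\varphi\le 1$ is unnecessary (as you eventually note, the formula holds regardless), and otherwise the bookkeeping you describe is exactly the paper's.
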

\begin{proof} One application of Lemmata \ref{Lem: SL upper bound} and \ref{Lem: SL lower bound} with $S=\{u\}$ and $\Lambda=\mathbb H_n$ gives,
\begin{align}\label{eq:proofiterationofSL1}
	0\leq \Big(\mathds{1}_{u=x}+\varphi \mathbb E_u^{\rm RW}\Big[\mathbb P_\beta[X_1^u\connect{\mathbb H_n\:}x]\mathds{1}_{\tau_n^u>1}\Big]\Big)&-\mathbb P_\beta[u\connect{\mathbb H_n\:}x]\leq E_\beta(\{u\},\mathbb H_n,u,x),
\end{align}
where we used that the uniform step distribution in $\Lambda_L(u)^*$ is given by $(J_{uz})_{z\in \mathbb Z^d}$ which implies that for $z\in \mathbb Z^d$, $ \varphi^{-1} p_{uz,\beta}=J_{uz}$ and
\begin{equation}
	\sum_{\substack{z\notin S\\z\in \mathbb H_n}}p_{uz,\beta}\mathbb P_\beta[z\connect{\mathbb H_n\:}x]=\varphi \sum_{z\in (\Lambda_L(u)\setminus \{0\})} J_{uz}\mathds{1}_{z\in \mathbb H_n}\mathbb P_\beta[z\connect{\mathbb H_n\:}x]= \varphi \cdot \mathbb E^{\rm RW}_u\Big[\mathbb P_\beta[X_1^u\connect{\mathbb H_n\:}x]\mathds{1}_{\tau^u_n>1}\Big].
\end{equation}

The term in the middle of \eqref{eq:proofiterationofSL1} can be rewritten
\begin{equation}
\Big(\sum_{t=0}^1\varphi^t\mathbb P^{\rm RW}_{u}[\sigma^u_x=t, \: \tau_n^u>t]+\varphi \mathbb E_u^{\rm RW}\Big[\mathbb P_\beta[X_1^u\connect{\mathbb H_n\:}x]\mathds{1}_{\tau_n^u>1,\:\sigma^u_x>1}\Big]\Big)-\mathbb P_\beta[u\connect{\mathbb H_n\:}x].
\end{equation}
The proof follows by iterating the application of the two lemmata (successively with $S=\{X_1^u\},\{X_2^u\},\ldots,\{X_{T-1}^u\}$ and $\Lambda=\mathbb H_n$) and by noticing that,  for $k\leq T-1$,
\begin{align}\notag
	\mathbb E_u^{\rm RW}[E_\beta(\{X_k^u\},\mathbb H_n,X_k^u,x)]&\leq \max_{w\in \Lambda_{kL}(u)\cap \mathbb H_n}E_\beta(\{w\},\mathbb H_n,w,x)\\&\leq \max_{w\in \Lambda_{(T-1)L}(u)\cap \mathbb H_n}E_\beta(\{w\},\mathbb H_n,w,x).
\end{align}
\end{proof}
We are now in a position to prove Proposition \ref{prop:regularity perco}. The proof is divided into two cases: either $n\geq CL$ or $n<CL$, for some $C\gg 1$ to be fixed.

For the former case, the idea will be to reduce the problem to the case $|u-v|\leq 2L \asymp \big(\mathbb E_0^{\rm RW}[|X_1^0|^2]\big)^{1/2}$. This can be performed using a ``reflection argument'' illustrated in Figure \ref{fig: reguBIS}. The strategy of proof for $|u-v|\leq 2L$ is inspired by classical random walk arguments. First, Lemma \ref{lem: iteration of SL} allows us to express the difference between $\mathbb P_\beta[u\connect{\mathbb H_n\:}x]$ and $\mathbb P_\beta[v\connect{\mathbb H_n\:}x]$ in terms of 
\begin{equation} \varphi_\beta(\{0\})^T\Big(\mathbb E^{\rm RW}_{u}\Big[\mathbb P_\beta[X_T^u\connect{\mathbb H_n\:}x]\Big]-\mathbb E^{\rm RW}_{v}\Big[\mathbb P_\beta[X_T^v\connect{\mathbb H_n\:}x]\Big]\Big)
\end{equation}
 for $T\gg 1$ to be fixed. Then, in the spirit of \cite[Lemma~2.4.3]{LawlerLimicRandomWalks2010}, one may choose $T$ large enough so that $X_T^u$ and $X_T^v$ can be coupled to coincide with high probability. An additional complexity arises in our setting, as we require estimates (on $T$) that are uniform in $L$. This difficulty is solved in Proposition \ref{prop:coupling appendix}. Finally, the potentially large quantity $\varphi_\beta(\{0\})^T$ can be made smaller than $2$ using Proposition \ref{prop:bound phi perco} and choosing $L$ large enough. 
 
 The case $n<CL$ is easier and involves bounding each term on the left-hand side of \eqref{eq:gradient estimate} individually. The idea is to iterate Lemma \ref{Lem: SL upper bound} to compare $\mathbb P_\beta[u\connect{\mathbb H_n\:}x]$ (or $\mathbb P_\beta[v\connect{\mathbb H_n\:}x]$) to 
 \begin{equation} \sum_{k\geq 0}\mathbb P_u^{\rm RW}[X_k^u=x, \: \tau^u_n>k].\end{equation} This latter quantity is bounded by $C_{\rm RW}/L^d$ thanks to Proposition \ref{prop: rw estimates 2}, and the proof follows (essentially) from choosing $\bfC\gg C_{\rm RW}$.
\begin{proof}[Proof of Proposition~\textup{\ref{prop:regularity perco}}] Fix $\eta>0$. We let $\delta=\delta(\eta,d)>0$ to be chosen later (think of it as small). Also, let $\bfC>1$ and $L\geq 1$ to be taken large enough. Let $\beta<\beta^*(\bfC,L)$, $n\geq 0$, and $x\in \partial \mathbb H_n$. We distinguish two cases: $\lfloor \delta n\rfloor \vee L= \lfloor \delta n \rfloor$, or $\lfloor \delta n\rfloor \vee L=L$. The first case fixes how small $\delta$ has to be, while the second case fixes how large $\bfC$ and $L$ have to be. For simplicity, below, we omit integer rounding.
\paragraph{Case $n\geq \delta^{-1}L$.} We split the proof in two depending on whether the distance between $u$ and $v$ is smaller than $2L$ or not.

Assume first that $u,v \in \Lambda_{n/3+L}$ are such that $|u-v|\leq 2L$. Let $T\geq 1$ to be fixed and assume that $\delta\leq \tfrac{1}{6T}$, so that $n\geq 6TL$. We set $\varphi:= \varphi_\beta(\{0\})$. By Lemma \ref{lem: iteration of SL},
\begin{align}
	\mathbb P_\beta[u\connect{\mathbb H_n\:}x]&\leq \varphi^T \mathbb E^{\rm RW}_{u}\Big[\mathbb P_\beta[X_T^u\connect{\mathbb H_n\:}x]\Big], \label{eq:r1}
	\\\mathbb P_\beta[v\connect{\mathbb H_n\:}x]&\geq \varphi^T\mathbb E^{\rm RW}_{v}\Big[\mathbb P_\beta[X_T^v\connect{\mathbb H_n\:}x]\Big]-\Big(\sum_{t=0}^{T-1}\varphi^t\Big)\max_{w\in \Lambda_{n/3+TL}}E_\beta(\{w\},\mathbb H_n,w,x),\label{eq:r2}
\end{align}
where we used that since $n\geq 6TL$, one has (if $w\in \{u,v\}$) $\mathds{1}_{\tau_n^w>T}=1$ and also $\mathds{1}_{\sigma_x^w>k}=1$ for all $k\leq T$.
A random-walk estimate derived in Proposition \ref{prop:coupling appendix}
implies the existence of $T=T(\eta,d)$ large enough such that $X_T^u$ and $X_T^v$ can be coupled to coincide with probability larger than $1-\eta/2^{d+2}$. This implies
\begin{align}
\mathbb E^{\rm RW}_{u}\Big[\mathbb P_\beta[X_T^u\connect{\mathbb H_n\:}x]\Big]-\mathbb E^{\rm RW}_{v}\Big[\mathbb P_\beta[X_T^v\connect{\mathbb H_n\:}x]\Big]&\stackrel{\phantom{\eqref{eq:H_beta-' perco}}}\le \frac{\eta}{2^{d+2}}\max\Big\{\mathbb P_\beta[w\connect{\mathbb H_n\:}x]:w\in \Lambda_{n/3+TL}\Big\}\notag
\\&\stackrel{\phantom{\eqref{eq:H_beta-' perco}}}\leq \frac{\eta}{2^{d+2}}\max\Big\{\mathbb P_\beta[w\connect{\mathbb H_n\:}x]:w\in \Lambda_{n/2}\Big\}\notag
\\&\stackrel{\eqref{eq:H_beta-' perco}}\leq \frac{\eta}{8}\frac{\bfC}{Ln^{d-1}}\label{eq: coupling micro scales} 
.
\end{align}
Lemma~\ref{lem: estimate E(x)} gives
\begin{equation}\label{eq:r3}
	\max_{w\in \Lambda_{n/3+TL}}E_\beta(\{w\},\mathbb H_n,w,x)\leq \frac{D}{L^{d}}\frac{\bfC}{Ln^{d-1}}.
\end{equation}
Furthermore, by Proposition \ref{prop:bound phi perco}, $\varphi\le 1+\tfrac{K}{L^d}$. Hence, we may choose $L_3=L_3(\eta,T,\bfC,d)$ large enough such that for $L\geq L_3$, $\frac{D}{L^{d}}\sum_{t=0}^{T-1}\varphi^t\le \eta/4$ and $\varphi^T\leq 2$. Plugging \eqref{eq: coupling micro scales} and \eqref{eq:r3} in the difference of \eqref{eq:r1} and \eqref{eq:r2} gives
\begin{equation}\label{eq:proofregclose}
	\mathbb P_\beta[u\connect{\mathbb H_n\:}x]-\mathbb P_\beta[v\connect{\mathbb H_n\:}x]\leq \frac{\eta}{2}\frac{\bfC}{Ln^{d-1}}.
\end{equation}

We turn to the general case $u,v\in \Lambda_{\delta n}$. We prove the result when $u$ and $v$ differ by one coordinate only and assume that the difference of the coordinate is even as the latter assumption can be relaxed by using the previous estimate when $u$ and $v$ are at distance 1 (which is less than $2L$), and the former by summing increments over coordinates. By rotating and translating\footnote{Indeed, one can check that the proof below still works if we translate the boxes $B^+$ and $B^-$ by $w$ satisfying $|w|\leq \delta n$.}, we may therefore consider $u=k\mathbf{e_1}$ and $v=-k\mathbf{e}_1$ with $k\leq \delta n$. 

Recall that for $\ell\geq 1$, $\Lambda_\ell^+=\{x\in \Lambda_\ell : x_1>0\}$. Consider the sets $B^+:=\Lambda_{n/3}^+$ and $B^-:=-\Lambda_{n/3}^+$. By Lemmata \ref{Lem: SL upper bound} and \ref{Lem: SL lower bound},
\begin{align}
\mathbb P_\beta[u\connect{\mathbb H_n\:}x] &\le \sum_{\substack{y \in B^+\\ z\notin B^+}}\mathbb P_\beta[u\connect{B^+\:}y]p_{yz,\beta}  \mathbb P_\beta[z\connect{\mathbb H_n\:}x]\label{eq:u1},\\
\mathbb P_\beta[v\connect{\mathbb H_n\:}x]&\ge \sum_{\substack{y \in B^-\\ z\notin B^-}}\mathbb P_\beta[v\connect{B^-\:}y]p_{yz,\beta} \mathbb P_\beta[z\connect{\mathbb H_n\:}x] -E_\beta(B^-,\mathbb H_n,v,x).\label{eq:u2}
\end{align}
Now, when $z\in H(L)=\{v\in \mathbb Z^d: |v_1|\leq L\}$, we may associate every pair $(y,z)$ in the sum in~\eqref{eq:u1} with the pair $(y',z')$ symmetric with respect to the hyperplane $\{u\in \mathbb Z^d:u_1=0\}$ in the sum in~\eqref{eq:u2}, see Figure \ref{fig: reguBIS}. By doing so, we notice that $z$ and $z'$ are within a distance $2L$ of each other, and both lie in $\Lambda_{n/3+L}$. Hence, if $L\geq L_3$, for such pairs $(y,z)$ and $(y',z')$, by \eqref{eq:proofregclose},
\begin{equation}
\Big|\mathbb P_\beta[z\connect{\mathbb H_n\:}x] -\mathbb P_\beta[z'\connect{\mathbb H_n\:}x] \Big|
\le \frac{\eta}{2}\frac{\bfC}{Ln^{d-1}}
\end{equation}
Plugging this estimate in the difference of \eqref{eq:u1} and \eqref{eq:u2}, and then invoking Lemma~\ref{lem:estimate half-space perco} with $k=6\delta n$ and $n/3$ (to the cost of potentially increasing $L$ again and decreasing $\delta$ so that $6\delta n\leq n/6$), gives 
\begin{align}\notag
\mathbb P_\beta[&u\connect{\mathbb H_n\:}x]-\mathbb P_\beta[v\connect{\mathbb H_n\:}x]\\\notag
&\le \frac{\eta}{2}\frac{\bfC}{Ln^{d-1}}\varphi_\beta(B^+)+\frac{\bfC}{L(n/2)^{d-1}}\sum_{\substack{y \in B^+\\  z\notin B^+\cup{ H(L)}}}\mathbb P_\beta[u\connect{B^+\:}y]p_{yz,\beta}+E_\beta(B^-,\mathbb H_n,v,x) \\
&\le\frac{\bfC}{Ln^{d-1}}\Big[\frac{\eta}{2}\Big(1+\frac{K}{L^d}\Big)+2^{d-1}4(36\delta)^c+\frac{D}{L^{d}}\Big],
\end{align}
where we used \eqref{eq:H_beta-' perco} in the second line, Proposition \ref{prop:bound phi perco} and Lemma \ref{lem: estimate E(x)} in the third line,  and that if ${\rm d}(z,B^-\cup B^+)\leq L(\leq \tfrac{n}{6T}\leq \tfrac{n}{6})$, then
\begin{align}\notag
	\mathbb P_\beta[z\connect{\mathbb H_n\:}x]&\stackrel{\phantom{\eqref{eq:H_beta-' perco}}}\leq \max\{\mathbb P_\beta[w\connect{\mathbb H_n\:}x]: w\in \Lambda_{n/3+L}\}\leq \max\{\mathbb P_\beta[w\connect{\mathbb H_n\:}x]: w\in \Lambda_{n/2}\}\\&\stackrel{\eqref{eq:H_beta-' perco}}\leq \frac{\bfC}{L(n/2)^{d-1}}.
\end{align}
 
The proof follows by choosing $\delta$ small enough and $L_3$ large enough so that
\begin{equation}
	\frac{\eta}{2}\Big(1+\frac{K}{L_3^d}\Big)+2^{d-1}4(36\delta)^c+\frac{D}{L_3^{d}}\leq \eta.
\end{equation}

\begin{figure}[!htb]
	\begin{center}
		\includegraphics{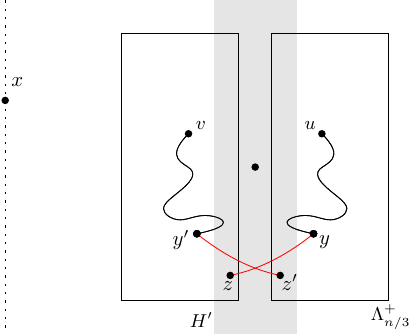}
		\put(-219,10){$\partial\mathbb H_n$}
		\caption{An illustration of the pairing used in the proof of Proposition \ref{prop:regularity perco}. The red path corresponds to a long open edge ``jumping'' outside $\Lambda_{n/3}^+$ (resp. $-\Lambda_{n/3}^+$). The grey region corresponds to $H=H(L)$. Since $u$ (resp.~$v$) is close to $\{u\in \mathbb Z^d : u_1=0\}$, a connection from $u$ (resp.~$v$) to $x$ will likely enter $H'$ if it exits $\Lambda_{n/3}^+$ (resp.~$-\Lambda_{n/3}^+$), as seen in Lemma~\ref{lem:estimate half-space perco}.}
		\label{fig: reguBIS}
	\end{center}
\end{figure}

\paragraph{Case $n\leq\delta^{-1}L$.} The value of $\delta$ is now fixed by the previous case. Here, we will prove that both $\mathbb P_\beta[u\connect{\mathbb H_n\:}x]$ and $\mathbb P_\beta[v\connect{\mathbb H_n\:}x]$ can be bounded efficiently by a random walk estimate (they will end up being both smaller than $\tfrac12 \eta\frac{\bfC}{L^d}\left(L/(L\vee n)\right)^{d-1}$). 

Consider $u\in \Lambda_L$ not equal to $x$. We will choose $\bfC$ large enough, which will force to take $L_3$ even larger. Let $S\geq 1$ to be fixed. Using Lemma \ref{lem: iteration of SL},
\begin{equation}\label{eq:proof improvement l inf perco 1}
	\mathbb P_\beta[u\connect{\mathbb H_n\:}x]\leq (\varphi\vee 1)^S\mathbb E^{\rm RW}_{u}[\mathcal M]+(\varphi\vee 1)^S\mathbb E^{\rm RW}_{u}\Big[\mathbb P_\beta[X_S\connect{\mathbb H_n\:}x]\mathds{1}_{\tau_n^u>S, \: \sigma^u_x>S}\Big],
\end{equation}
where $\mathcal M:=|\{1\leq t\leq S\wedge (\tau_n^u-1) : X_t^u=x\}|$. Proposition \ref{prop: rw estimates 2} applied to $C=\delta^{-1}+1$ gives the existence of $C_{\rm RW}'=C_{\rm RW}'(\delta,d)>0$ such that
\begin{align}
	\mathbb E^{\rm RW}_{u}[\mathcal M]&\leq \frac{C_{\rm RW}'}{L^d}\left(\frac{L}{L\vee n}\right)^{d-1},\label{eq:proof improvement l inf perco 2}\\
	\mathbb P^{\rm RW}_{u}[\tau^u_n>S]&\le \frac{C_{\rm RW}'}{\sqrt{S}}.\label{eq:proof improvement l inf perco 3}
\end{align}
Pick $S$ and then $L$ large enough so that $2C_{\rm RW}'S^{-1/2}\leq \delta^{d-1}\eta/4$ and $(1+K/L^d)^S\leq 2$. In particular, by Proposition \ref{prop:bound phi perco}, one has $(\varphi\vee 1)^S\leq 2$. Plugging \eqref{eq:proof improvement l inf perco 2} and \eqref{eq:proof improvement l inf perco 3} in \eqref{eq:proof improvement l inf perco 1} for this choice of $L$ gives
\begin{align}\notag
	\mathbb P_\beta[u\connect{\mathbb H_n\:}x]&\leq \frac{2C_{\rm RW}'}{L^d}\left(\frac{L}{L\vee n}\right)^{d-1}+\frac{\eta}{4}\frac{\bfC}{L(L/\delta)^{d-1}}
	\\&\leq \frac{2C_{\rm RW}'}{L^d}\left(\frac{L}{L\vee n}\right)^{d-1}+\frac{\eta}{4}\frac{\bfC}{L^d}\left(\frac{L}{L\vee n}\right)^{d-1} ,
\end{align}
where we used \eqref{eq:H_beta-' perco} to argue that $\mathbb P_\beta[X_S\connect{\mathbb H\:}x]\mathds{1}_{\sigma^u_x>S}\leq \tfrac{\bfC}{L^d}$.
Finally, choosing $\bfC$ large enough so that $2C_{\rm RW}'\leq \bfC \eta/4$ gives
\begin{equation}
	\mathbb P_\beta[u\connect{\mathbb H_n\:}x]\leq \frac12\eta\frac{\bfC}{L^d}\left(\frac{L}{L\vee n}\right)^{d-1}
\end{equation}
and concludes the proof.
\end{proof}
%

\subsection{Conclusion}\label{sec:2.3}

We are now in a position to prove the following proposition.

\begin{Prop}\label{prop: final prop proof mainperco thm}
Fix $d>6$. There exist $C,L_0\geq 1$ such that for every $L\geq L_0$,
\begin{align}
\beta_c&\leq 1+\frac{C}{L^d}, \label{eq:bound final beta_c}
\\
\varphi_{\beta_c}(B)&\le 1+\frac{C}{L^d} &\forall B\in\mathcal B,\label{eq:bound final phi block}
\\
E_{\beta_c}(B)&\leq \frac{C}{L^d} &\forall B\in \mathcal B,\label{eq:error amplitude general blocks}
\\
\psi_{\beta_c}(\mathbb H_n)&\le \frac{C}{L} &\forall n\ge 0, \label{eq:bound final psi}
\\
\mathbb P_{\beta_c}[0\connect{}x] &\le \frac{C}{L^d}\left(\frac{L}{L\vee |x|}\right)^{d-2}&\forall x\in (\mathbb Z^d)^*,\label{eq:upper bound below L perco}
\\
\mathbb P_{\beta_c}[0\connect{\mathbb H\:}x]&\le\frac{C}{L^d}\left(\frac{L}{L\vee |x_1|}\right)^{d-1}&\forall x\in \mathbb H^*\label{eq:upper half-spacebound below L perco}.
\end{align} 
\end{Prop}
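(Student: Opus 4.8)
The plan is to show that, for a choice of $\bfC$ and $L$ depending only on $d$, one has $\beta^*(\bfC,L)=\beta_c$; granting this, every displayed inequality is obtained by reading at $\beta=\beta_c$ a bound already established (for $\beta<\beta^*$) in Section~\ref{sec:2} or earlier in Section~\ref{sec:3}, together with a passage to the limit $\beta\uparrow\beta_c$. Concretely, I would fix $d>6$, take $\bfC:=1+\bfC_1\vee\bfC_4\vee\bfC_{\rm RW}\vee(64e^2)$ (the thresholds from Proposition~\ref{prop: improved l^1 perco}, Corollary~\ref{prop: improve l inf perco} and Lemma~\ref{lem: intro lem about betastar}$(iii)$), let $K=K(\bfC,d)$ be the constant of Proposition~\ref{prop:bound phi perco}, and set $L_0:=L_1(\bfC,d)\vee L_4(\bfC,d)\vee(2K)^{1/d}\vee 2$, so that $K/L^d<\tfrac12$ and $2c_L\le 2/L^d<\tfrac12$ for $L\ge L_0$. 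All constants below then depend only on $d$.

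The first and main step is the bootstrap closure: showing $\beta^*(\bfC,L)=2\wedge\beta_c$. Suppose for contradiction that $\beta^*<2\wedge\beta_c$. Since $\bfC$ exceeds all three thresholds and $L\ge L_0$, Proposition~\ref{prop: improved l^1 perco} gives $\psi_\beta(\mathbb H_n)\le 64e^2/L$ for all $n$ and all $\beta<\beta^*$, and Corollary~\ref{prop: improve l inf perco} gives $\mathbb P_\beta[0\connect{\mathbb H_n\:}x]\le\tfrac{\bfC}{2L^d}(L/(L\vee n))^{d-1}$ for all $n$ and $x\in(\partial\mathbb H_n)^*$; translating $\mathbb H_n$ by $-|x_1|\mathbf e_1$ and using symmetry of the two-point function, the latter yields $\mathbb P_\beta[0\connect{\mathbb H\:}x]\le\tfrac{\bfC}{2L^d}(L/(L\vee|x_1|))^{d-1}$ for $x\in\mathbb H^*$. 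Because $\beta^*<\beta_c$, each of these quantities is a monotone, left-continuous function of $\beta$ (a supremum of cylinder-event probabilities, with monotone convergence handling the sums), so the bounds persist at $\beta=\beta^*$, and since $\bfC>64e^2$ this means \eqref{eq:H_beta perco}--\eqref{eq:H_beta-' perco} hold at $\beta^*$ with strict inequality and a definite gap. I would then fix $\beta_1\in(\beta^*,\beta_c)$ and use uniform subcritical exponential decay on $[0,\beta_1]$ to note that \eqref{eq:H_beta perco} and \eqref{eq:H_beta-' perco} hold automatically, for all $\beta\le\beta_1$, once $|x|$ exceeds a threshold $R_0(\bfC,L,\beta_1,d)$, so that only finitely many pairs $(n,x)$ remain; for those, continuity in $\beta$ and the gap at $\beta^*$ provide some $\varepsilon>0$ with $\beta^*+\varepsilon<2\wedge\beta_c$ such that \eqref{eq:H_beta perco}--\eqref{eq:H_beta-' perco} still hold for all $\beta<\beta^*+\varepsilon$, contradicting the maximality of $\beta^*$.

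The second step upgrades this to $\beta_c<2$, $\beta^*=\beta_c$, and \eqref{eq:bound final beta_c}. Proposition~\ref{prop:bound phi perco} applies at $\beta=\beta^*\le\beta_c$ and gives $\varphi_{\beta^*}(\{0\})=|\Lambda_L^*|(1-e^{-\beta^*c_L})\le 1+K/L^d<\tfrac32$. If $\beta^*=2$, then $1-e^{-t}\ge t-\tfrac{t^2}{2}$ and $|\Lambda_L^*|c_L=1$ give $\varphi_2(\{0\})\ge 2-2c_L\ge 2-2/L^d>\tfrac32$, a contradiction; hence $\beta^*<2$, so $2\wedge\beta_c<2$, i.e. $\beta_c<2$ and $\beta^*=\beta_c$. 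Applying $1-e^{-t}\ge t-\tfrac{t^2}{2}$ once more to $\varphi_{\beta_c}(\{0\})\le 1+K/L^d$ gives $\beta_c-\tfrac12\beta_c^2c_L\le 1+K/L^d$, whence $\beta_c\le 1+(K+2)/L^d$ (using $\beta_c<2$ and $c_L\le L^{-d}$), which is \eqref{eq:bound final beta_c}.

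Finally, with $\beta^*(\bfC,L)=\beta_c$ in hand, the remaining bounds follow by taking $\beta\uparrow\beta_c$: \eqref{eq:bound final phi block} is Proposition~\ref{prop:bound phi perco} at $\beta=\beta_c$; \eqref{eq:error amplitude general blocks} comes from Proposition~\ref{lem: error amplitude perco}; \eqref{eq:bound final psi} from Proposition~\ref{prop: improved l^1 perco}; \eqref{eq:upper bound below L perco} from Lemma~\ref{lem: full plane 2pt function out of halfplane one perco}; and \eqref{eq:upper half-spacebound below L perco} from \eqref{eq:H_beta-' perco} — all of which hold for $\beta<\beta^*=\beta_c$ — using monotonicity and left-continuity in $\beta$ of each left-hand side (again a supremum of cylinder probabilities, with monotone convergence for the sums). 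Taking $C:=\max\{K+2,\ K_E,\ 64e^2,\ 3\bfC^2\}$, where $K_E=K_E(\bfC,d)$ is the constant of Proposition~\ref{lem: error amplitude perco}, yields all six inequalities. I expect the only genuinely delicate point to be the bootstrap closure of the first step — transferring the strict bounds from below $\beta^*$ to a right-neighbourhood of $\beta^*$ — which is precisely where subcritical exponential decay is needed to reduce the uniformity over $n$ and $x$ to a finite check; the rest is bookkeeping of constants and routine limiting arguments.
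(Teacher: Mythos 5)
Your proposal is correct and follows essentially the same strategy as the paper: you establish $\beta^*(\bfC,L)=2\wedge\beta_c$ by a bootstrap argument that uses the strict gap from Proposition~\ref{prop: improved l^1 perco} and Corollary~\ref{prop: improve l inf perco} at $\beta^*$, subcritical exponential decay to reduce to finitely many pairs $(n,x)$, and continuity to push past $\beta^*$; you then rule out $\beta^*=2$ via the one-site bound $\varphi_{\beta^*}(\{0\})\le 1+K/L^d$ and read off all six inequalities at $\beta_c$ by monotone convergence. The only cosmetic difference is that you first derive $\beta^*<2$ by contradiction and then the quantitative bound on $\beta_c$, whereas the paper derives $\beta^*\le 1+4K/L^d$ in one stroke; the content is identical.
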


\begin{proof} Let $\bfC>1$ and $L\geq 1$ to be fixed.
By Proposition~\ref{prop: improved l^1 perco} and Corollary~\ref{prop: improve l inf perco}, we find that if $\bfC>\bfC_1\vee \bfC_4$ and $L\geq L_1\vee L_4$, for every $\beta<\beta^*(\bfC,L)$,
\begin{align}
\psi_\beta(\mathbb H_n)&\le\frac{\bfC}{2L}&\forall n\ge0,\label{eq:finalpropperco1}\\
\mathbb P_\beta[0\connect{\mathbb H_n\:}x]&\le \frac{\bfC}{2L^d}\left(\frac{L}{L\vee n}\right)^{d-1}&\forall n\geq 0, \: \forall x\in (\partial\mathbb H_n)^*.\label{eq:finalpropperco2}
\end{align}
These bounds are still true at $\beta=\beta^*$ thanks to the monotone convergence theorem. We will use \eqref{eq:finalpropperco1}--\eqref{eq:finalpropperco2} to show by contradiction that, for this choice of $\bfC$ and $L$, one has $\beta^*(\bfC,L)=2\wedge \beta_c$.

Let us now assume by contradiction that $\beta^*<2\wedge\beta_c$. Consider $\beta^{**}\in (\beta^*,2\wedge \beta_c)$. Exponential decay of correlations when $\beta<\beta_c$ \cite{Mensikov1986coincidence,AizenmanBarsky1987sharpnessPerco} (see also \cite{DuminilTassionNewProofSharpness2016}) implies the existence of $c,C>0$ (which depend on $\beta^{**}$) such that, for every $n\geq 0$ and every  $x\in \mathbb \partial \mathbb H_n$,
\begin{equation}\label{eq:finalpropperco3}
	\mathbb P_{\beta^{**}}[0\connect{\mathbb H_n\:}x]\leq \mathbb P_{\beta^{**}}[0\connect{}x]\leq Ce^{-c|x|}.
\end{equation}
By monotonicity of the quantities of interest in $\beta$ and \eqref{eq:finalpropperco3}, there exists $N=N(\beta^{**},\bfC,L)$ such that, for all $\beta\leq \beta^{**}$, 
\begin{align}
\psi_{\beta}(\mathbb H_n)=\sum_{x\in (\partial\mathbb H_n)^*}\mathbb P_{\beta}[0\connect{\mathbb H_n\:}x]&<\frac{\bfC}{L}&\forall n\ge N,\label{eq:finalpropperco4}\\
\mathbb P_{\beta}[0\connect{\mathbb H_n\:}x]&< \frac{\bfC}{L^d}\left(\frac{L}{L\vee n}\right)^{d-1}&\forall n\geq 0,\forall x\in \partial\mathbb H_n\setminus \Lambda_N.\label{eq:finalpropperco5}
\end{align}
Now, the continuity (below $\beta_c$) of the maps $\beta\mapsto \max\{\varphi_\beta(\mathbb H_n): 0\leq n \leq N-1\}$ and $\beta\mapsto\max\{\mathbb P_\beta[0\connect{\mathbb H_n\:}x]: x\in \Lambda_N\cap \partial\mathbb H_n, \: n\geq 0\}$ together with \eqref{eq:finalpropperco1}--\eqref{eq:finalpropperco2} gives the existence of $\beta\in(\beta^*,\beta^{**})$ such that
\begin{align}
	\psi_{\beta}(\mathbb H_n)=\sum_{x\in (\partial\mathbb H_n)^*}\mathbb P_{\beta}[0\connect{\mathbb H_n\:}x]&<\frac{\bfC}{L}&\forall n\le N-1,\label{eq:finalpropperco6}\\
\mathbb P_{\beta}[0\connect{\mathbb H_n\:}x]&< \frac{\bfC}{L^d}\left(\frac{L}{L\vee n}\right)^{d-1}&\forall n\geq 0,\forall x\in \partial\mathbb H_n\cap \Lambda_N.\label{eq:finalpropperco7}
\end{align}
Combining \eqref{eq:finalpropperco4}--\eqref{eq:finalpropperco7}, we obtained the existence of $\beta\in (\beta^*,2\wedge \beta_c)$ such that \eqref{eq:H_beta perco} and \eqref{eq:H_beta-' perco} hold. This contradicts the definition of $\beta^*$. 

 From all of this, we obtain that $\beta^*=2\wedge\beta_c$ and that in addition the properties hold until $2 \wedge\beta_c$.  Proposition~\ref{prop:bound phi perco} implies the right bound on $\varphi_\beta(B)$ and Lemma \ref{lem: error amplitude perco} implies the right bound on $E_\beta(B)$, for every $B\in \mathcal B$, and every $\beta<\beta^*$. A new application of the monotone convergence theorem implies the bounds at $\beta^*$. 

It remains to show that $\beta_c<2$ or simply that $\beta^*<2$ for $L$ large enough. To do so, notice that 
\begin{equation}\varphi_{\beta^*}(\{0\})=|\Lambda_L^*|(1-e^{-\beta^*c_L})=c_L^{-1}(1-e^{-\beta^*c_L})\le 1+\frac{K}{L^d}
\end{equation} 
implies that,

\begin{equation}
	\beta^*(1-\frac{\beta^*c_L}{2}+o(c_L))\leq 1+\frac{K}{L^d},
\end{equation}
where $o(c_L)/c_L$ tends to $0$ as $L$ tends to infinity.  In particular, since $\beta^*\leq 2$, if $L$ is large enough, one has
\begin{equation}
	(1-2c_L)\beta^*\leq 1+\frac{K}{L^d}.
\end{equation}
Since $K\geq 1$, we find that for $L$ large enough, $\beta^*\leq 1+\tfrac{4K}{L^d}$. The proof follows by choosing $L$ so large that $\tfrac{4K}{L^d}<1$.
\end{proof}

\begin{Rem} Proposition \ref{prop: final prop proof mainperco thm} also gives the following result which is of independent interest: if $d>6$, there exist $c,C>0$ and $L_0\geq 1$ such that, for every $\beta\leq \beta_c$, and every $n\geq 1$,
\begin{equation}
	\varphi_\beta(\mathbb H_n)\leq Ce^{-cn/L_\beta}.
\end{equation}
We give a proof of this fact. Let $C,L_0$ be given by Proposition \ref{prop: final prop proof mainperco thm} and $\beta\leq \beta_c$. In view of \eqref{eq:bound final phi block}, it is sufficient to treat the case $n\geq 10L_\beta$. Let $k:=\lfloor n/L_\beta\rfloor$. Iterating $k$ times Lemma \eqref{Lem: SL upper bound} with $S$ being translates of $\Lambda_{L_\beta}$, $\Lambda=\mathbb H_n$, $x\in \mathbb H_n$ and $y\notin \mathbb H_n$ gives
\begin{multline}
	\mathbb P_\beta[0\connect{\mathbb H_n\:}x]p_{xy,\beta}\leq \sum_{\substack{u_1\in \Lambda_{L_\beta}\\v_1\notin \Lambda_{L_\beta}}}\mathbb P_\beta[0\connect{\Lambda_{L_\beta}\:}u_1]p_{u_1v_1,\beta}\\\ldots\sum_{\substack{u_k\in \Lambda_{L_\beta}(v_{k-1})\\v_k\notin \Lambda_{L_\beta}(v_{k-1})}}\mathbb P_\beta[v_{k-1}\connect{\Lambda_{L_\beta(v_{k-1})}\:}u_k]p_{u_kv_k,\beta} \mathbb P_\beta[v_k\connect{\mathbb H_n\:}x]p_{xy,\beta}.
	\end{multline}
	Summing the above display over $x\in \mathbb H_n$ and $y\notin \mathbb H_n$ yields
\begin{equation}
	\varphi_\beta(\mathbb H_n)\leq \varphi_\beta(\Lambda_{L_\beta})^k \sup_{\ell\geq 0}\varphi_\beta(\mathbb H_\ell),
\end{equation}
which yields the result using $\varphi_\beta(L_\beta)\leq e^{-2}$, the definition of $k$, and \eqref{eq:bound final phi block} to argue that $\sup_{\ell\geq 0}\varphi_\beta(\mathbb H_\ell)\leq 1+\frac{C}{L^d}\leq 2$ for $L$ large enough.
\end{Rem}

We conclude this section by proving Theorem~\ref{thm:mainperco}.

\begin{proof}[Proof of Theorem~\textup{\ref{thm:mainperco}}] Fix $d>6$. Let $C,L_0$ be given by Proposition \ref{prop: final prop proof mainperco thm} and let $L\geq L_0$.
Let $A=A(d)\geq 1$ be such that 
\begin{equation}\label{eq:def A perco}
t^{d-1}\leq Ae^{t}\qquad   \forall t\geq 1.
\end{equation}
\paragraph{Proof of \eqref{eq:bound full plane perco}.}
First, if $\beta\leq \beta_c$,
Proposition \ref{prop: final prop proof mainperco thm} implies that for $x\in \mathbb Z^d\setminus \{0\}$ with $|x|\le L_\beta$,
\begin{equation}\label{eq: proof nearcritical full space 0 perco}
	\mathbb P_\beta[0\connect{}x]\leq \frac{C}{L^d}\left(\frac{L}{L\vee |x|}\right)^{d-2}\leq \frac{e^2C}{L^d}\left(\frac{L}{L\vee |x|}\right)^{d-2}e^{-2|x|/L_\beta}.
\end{equation}
We now turn to the case of $\beta<\beta_c$ with $L_\beta\geq L$ and $|x|>L_\beta$ (recall that $L_{\beta_c}=\infty$). Iterating \eqref{eq:SLperco} $k:=\lfloor \tfrac{|x|}{L_\beta}\rfloor-2$ times with $S$ being translates of $\Lambda_{L_\beta}$ and $\Lambda=\mathbb Z^d$, we get (recall that by definition of $L_\beta$, one has $\varphi_\beta(\Lambda_{L_\beta})\leq e^{-2}$) 
\begin{align}\label{eq:iterateperco}
\mathbb P_\beta[0\connect{}x]&\stackrel{\phantom{\eqref{eq:upper bound below L perco}}}\le \varphi_\beta(\Lambda_{L_\beta})^k \max\big\{\mathbb P_\beta[y\connect{}x]:y\notin\Lambda_{L_\beta}(x)\big\}\\&\stackrel{\eqref{eq:upper bound below L perco}}\leq e^{-2k}\frac{C}{L^2L_\beta^{d-2}}\stackrel{\phantom{\eqref{eq:upper bound below L perco}}}\leq \frac{e^6C}{L^2L_\beta^{d-2}}e^{-2|x|/L_\beta}
\stackrel{\eqref{eq:def A perco}}\leq \frac{e^6AC}{L^2|x|^{d-2}}e^{-|x|/L_\beta}.\label{eq: proof nearcritical full space perco}
\end{align}
\paragraph{Proof of \eqref{eq:bound half plane perco}.} Again, if $\beta\leq \beta_c$, Proposition \ref{prop: final prop proof mainperco thm} implies that for $x\in \mathbb H^*$ with $x_1\leq L_\beta$,
\begin{equation}\label{eq: proof nearcritical half space 0 perco}
	\mathbb P_\beta[0\connect{\mathbb H\:}x]\leq \frac{C}{L^d}\left(\frac{L}{L\vee |x_1|}\right)^{d-1}\leq \frac{e^2C}{L^d}\left(\frac{L}{L\vee |x_1|}\right)^{d-1}e^{-2|x_1|/L_\beta}.
\end{equation}
Turning to the case of $\beta<\beta_c$ with $L_\beta\geq L$ and $|x_1|>L_\beta$, iterating \eqref{eq:SLperco} $\ell:=\lfloor |x_1|/L_\beta\rfloor-2$ times with $S$ being translates of $\Lambda_{L_\beta}$ (starting with $\Lambda_{L_\beta}(x)$) and $\Lambda=\mathbb H$, we obtain
\begin{align}
	\mathbb P_\beta[0\connect{\mathbb H\:}x]=\mathbb P_\beta[x\connect{\mathbb H\:}0]\leq \varphi_\beta(\Lambda_{L_\beta})^{\ell}\max\{\mathbb P_\beta[0\connect{\mathbb H\:}y: y_1> L_\beta\}&\stackrel{\eqref{eq:upper half-spacebound below L perco}}\leq \frac{e^6C}{LL_\beta^{d-1}}e^{-2|x_1|/L_\beta}
	\\&\stackrel{\eqref{eq:def A perco}}\leq \frac{e^6AC}{L |x_1|^{d-1}}e^{-|x_1|/L_\beta} \label{eq: proof nearcritical half space perco}
\end{align}
Gathering \eqref{eq: proof nearcritical full space 0 perco}--\eqref{eq: proof nearcritical half space 0 perco} and \eqref{eq: proof nearcritical half space perco}, we obtained: for all $\beta\leq \beta_c$ such that $L_\beta\geq L$, for all $x\in \mathbb Z^d\setminus \{0\}$,
\begin{align}
\mathbb P_\beta[0\connect{}x]&\le \frac{e^6AC}{L^d}\left(\frac{L}{L\vee |x|}\right)^{d-2}\exp(-|x|/L_\beta),\\
\mathbb P_\beta[0\connect{\mathbb H\:}x]&\le \frac{e^6AC}{L^d}\left(\frac{L}{L\vee |x_1|}\right)^{d-1}\exp(-|x_1|/L_\beta).
\end{align}
This concludes the proof.
\end{proof}

\section{Lower bounds}\label{sec:lowerboundsperco}
This section is devoted to the proof of Theorem \ref{thm:main2perco}. We follow the strategy of \cite{DumPan24WSAW}. By definition, when $n\leq L_\beta-1$, one has $\varphi_\beta(\Lambda_n)\geq e^{-2}$. The first step will be to turn this inequality into a bound of the form $\psi_\beta(\mathbb H_n)\geq c/L$, see Lemma \ref{lem: petit lemme}. This bound can be viewed as an averaged lower bound on the half-space two-point function. In order to turn it into a point-wise lower bound, we will use an improved regularity estimate (compared to Proposition \ref{prop:regularity perco})--- see Proposition \ref{prop:regularity perco 2}--- which can be viewed as a Harnack-type inequality. The proof of this result follows the exact same lines as in \cite{DumPan24WSAW}. The lower bound on the full-space two-point function will follow from the half-space lower bound together with an application of Lemma \ref{Lem: SL lower bound}.

 In this section, we fix $d>6$, and let $C$ and $L_0$ be given by Proposition \ref{prop: final prop proof mainperco thm}. 
 
\subsection{Preliminaries}
We begin the section by stating a minor generalisation of Proposition \ref{prop:regularity perco}, which will be useful in the proof of the Harnack-type inequality.
\begin{Prop}[Regularity estimate at mesoscopic scales]\label{prop:regularitygeneralperco} For every $\eta>0$, there exist $\delta=\delta(\eta,d)\in(0,1/2)$, and $L_5=L_5(\eta,d)\geq L_0$ such that for every $L\geq L_5$, every $\beta\leq \beta_c$, every $n\geq \delta^{-1}L$, every $\Lambda\supset \Lambda_{3n}$, every $X\subset \Lambda\setminus\Lambda_{3n}$, and every $u,v\in \Lambda_{\lfloor \delta n\rfloor}$,
\begin{multline}\label{eq:reg mesoscopic boosted}
\left|\sum_{x\in X}\Big(\mathbb P_\beta[u\connect{\Lambda\:}x]-\mathbb P_\beta[v\connect{\Lambda\:}x]\Big)\right|\le \eta\max_{w\in \Lambda_{3n}}
\sum_{x\in X}\mathbb P_\beta[w\connect{\Lambda\:}x]\\+\delta^{-1}\max_{w\in \Lambda_{3n}}\max_{\substack{S\in \mathcal B\\S+w\subset \Lambda_{3n}}}\sum_{x\in X}E_\beta(S+w,\Lambda,w,x).
\end{multline}
Moreover, if $u,v \in \Lambda_n$ are such that $|u-v|\leq 2L$, one has
\begin{equation}\label{eq:uv close in the statement}
	\left|\sum_{x\in X}\Big(\mathbb P_\beta[u\connect{\Lambda\:}x]-\mathbb P_\beta[v\connect{\Lambda\:}x]\Big)\right|\le \eta\max_{w\in \Lambda_{3n}}
\sum_{x\in X}\mathbb P_\beta[w\connect{\Lambda\:}x]\\+\delta^{-1}\max_{w\in \Lambda_{3n}}\sum_{x\in X}E_\beta(\{w\},\Lambda,w,x).
\end{equation}
\end{Prop}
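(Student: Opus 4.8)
The plan is to adapt the two-case scheme of Proposition~\ref{prop:regularity perco} to the setting of an arbitrary super-domain $\Lambda \supset \Lambda_{3n}$ and an arbitrary ``target set'' $X \subset \Lambda \setminus \Lambda_{3n}$, keeping track of the error terms rather than absorbing them into $\tfrac{\bfC}{L^d}(\tfrac L{L\vee n})^{d-1}$. The key observation is that Lemmata~\ref{Lem: SL upper bound} and \ref{Lem: SL lower bound}, and hence Lemma~\ref{lem: iteration of SL}, all have natural ``summed over $x \in X$'' versions: iterating the Simon--Lieb inequalities with singletons $S = \{X_0^u\}, \{X_1^u\}, \ldots$ and stopping before the walk can reach $\Lambda_{3n}$ (which takes at least $n/L$ steps since steps have range $L$) gives, for every $T \le n/(3L)$,
\begin{equation*}
\Bigl| \sum_{x\in X}\bigl(\mathbb P_\beta[u\connect{\Lambda\:}x] - \varphi^T \mathbb E^{\rm RW}_u\bigl[\textstyle\sum_{x\in X}\mathbb P_\beta[X_T^u\connect{\Lambda\:}x]\bigr]\bigr)\Bigr| \le \Bigl(\sum_{t=0}^{T-1}\varphi^t\Bigr)\max_{w\in\Lambda_{TL}(u)}\sum_{x\in X} E_\beta(\{w\},\Lambda,w,x),
\end{equation*}
where $\varphi = \varphi_\beta(\{0\}) \le 1 + K/L^d$ by Proposition~\ref{prop:bound phi perco} (valid at $\beta = \beta_c$ since $\beta^* = 2\wedge\beta_c$, by Proposition~\ref{prop: final prop proof mainperco thm}). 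Here I use that the walk started in $\Lambda_{\delta n}$ (or in $\Lambda_n$ within $2L$ of $v$) cannot leave $\Lambda_{3n}$ before time $T$ once $T$ is taken $\le n/(3L)$, so no boundary terms from $\tau$ appear, and the indicator $\sigma_x > t$ is automatic since $x \in X$ is outside $\Lambda_{3n}$.

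For the close case $|u-v|\le 2L$ (which gives \eqref{eq:uv close in the statement}), I would fix $T = T(\eta,d)$ from Proposition~\ref{prop:coupling appendix} large enough that $X_T^u$ and $X_T^v$ couple to coincide with probability at least $1 - \eta/2$ (uniformly in $L$), so that $\bigl|\mathbb E^{\rm RW}_u[\sum_X \mathbb P_\beta[X_T^u\connect{\Lambda\:}x]] - \mathbb E^{\rm RW}_v[\sum_X \mathbb P_\beta[X_T^v\connect{\Lambda\:}x]]\bigr| \le \tfrac\eta2 \max_{w\in\Lambda_{n/3+TL}}\sum_X\mathbb P_\beta[w\connect{\Lambda\:}x] \le \tfrac\eta2\max_{w\in\Lambda_{3n}}\sum_X\mathbb P_\beta[w\connect{\Lambda\:}x]$; then choose $L_5$ large so $\varphi^T \le 2$ and $\sum_{t\le T-1}\varphi^t \le \delta^{-1}$ (this is where $\delta^{-1}$ enters as the prefactor of the error term), giving \eqref{eq:uv close in the statement} after noting $\Lambda_{TL}(u) \subset \Lambda_{3n}$ for $L$ large. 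For the general mesoscopic case \eqref{eq:reg mesoscopic boosted}, I would mimic the $B^\pm$ argument of Proposition~\ref{prop:regularity perco}: reduce $u,v\in\Lambda_{\delta n}$ to the one-coordinate, even-difference case, write $u = k\mathbf e_1$, $v = -k\mathbf e_1$ with $k\le\delta n$, apply Lemmata~\ref{Lem: SL upper bound}/\ref{Lem: SL lower bound} with $S = B^+ := \Lambda_{n/3}^+$ resp. $B^- := -\Lambda_{n/3}^+$ and $\Lambda$ as given, pair the edges $(y,z)$ with their reflections $(y',z')$ when $z \in H(L)$ (so $z,z'$ are within $2L$ and in $\Lambda_{n/3+L}\subset\Lambda_n$, where \eqref{eq:uv close in the statement} applies with $S$-factor replaced by $\max_{w\in\Lambda_{3n}}\sum_X\mathbb P_\beta[w\connect{\Lambda\:}x]$), and bound the unpaired contribution with $z \notin B^\pm \cup H(L)$ using Lemma~\ref{lem:estimate half-space perco} with $k = 6\delta n$ and $n/3$ — this produces a term $\preceq (\delta)^c \max_X$ which is $\le \tfrac\eta2\max_{w\in\Lambda_{3n}}\sum_X\mathbb P_\beta[w\connect{\Lambda\:}x]$ once $\delta$ is small. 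The leftover error terms $E_\beta(B^\pm,\Lambda,v,x)$ summed over $x\in X$ are bounded by the general-block maximum $\max_{w}\max_{S\in\mathcal B, S+w\subset\Lambda_{3n}}\sum_X E_\beta(S+w,\Lambda,w,x)$ with prefactor absorbed into $\delta^{-1}$.

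The main obstacle I anticipate is verifying the summed-over-$X$ versions of the Simon--Lieb iteration and of the error bookkeeping carefully enough: unlike in Proposition~\ref{prop:regularity perco}, here $\Lambda$ and $X$ are generic, so one cannot invoke the explicit $\ell^\infty$ half-space estimate \eqref{eq:H_beta-' perco} to control $\max_w\sum_X\mathbb P_\beta[w\connect{\Lambda\:}x]$ — instead this quantity must be left on the right-hand side as the ``Harnack denominator,'' and every application of coupling or of Lemma~\ref{lem:estimate half-space perco} must be phrased relative to it. The second delicate point is ensuring the random walk never exits $\Lambda_{3n}$ before the coupling time $T$ (hence the hypothesis $n\ge\delta^{-1}L$ with $\delta$ also bounded by $1/(3T)$), so that Lemma~\ref{lem: iteration of SL}'s $\tau$-indicators can be dropped and the error maxima can be restricted to $\Lambda_{3n}$; this is a matter of chasing constants but must be done consistently. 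Everything else is a direct transcription of the proof of Proposition~\ref{prop:regularity perco}, with the single-block error amplitude $E_\beta(B)$ replaced throughout by its $x$-localised, $X$-summed counterpart.
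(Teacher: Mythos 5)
Your proposal is correct and follows essentially the same route as the paper's proof: prove the close case ($|u-v|\le 2L$) via Lemma~\ref{lem: iteration of SL} and the uniform coupling of Proposition~\ref{prop:coupling appendix}, keeping $\max_w\sum_X\mathbb P_\beta[w\connect{\Lambda\:}x]$ as the Harnack denominator and absorbing $\varphi^T$ and $\sum_{t<T}\varphi^t$ via Proposition~\ref{prop:bound phi perco} once $L$ is large and $\delta^{-1}\ge 6T$; then reduce the general mesoscopic case to the close case by the $B^\pm$ reflection pairing together with Lemma~\ref{lem:estimate half-space perco} to control the unpaired contribution. The paper proves the $X=\{x\}$ version and invokes linearity for general $X$, whereas you carry the $\sum_{x\in X}$ through from the start; these are the same argument in different packaging, and you have correctly identified the two points that require care (that the walk must not exit $\Lambda_{3n}$ before time $T$, and that the right-hand side must be phrased relative to the generic $\Lambda,X$ rather than the half-space $\ell^\infty$ bound).
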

The proof follows the same strategy as Proposition \ref{prop:regularity perco} except that we keep track of the form of the term on the right-hand side of \eqref{eq:reg mesoscopic boosted}. 

\begin{proof} Fix $\eta>0$. We let $\delta=\delta(\eta,d)>0$ to be chosen small enough later. Also, let $L\geq L_0$  and $n\geq 1$ to be taken large enough, and fix $\beta\leq\beta_c$. For simplicity, below, we omit integer rounding.

We prove the result for $X=\{x\}$, but the general case follows similarly. Set $\varphi:=\varphi_\beta(\{0\})$. Let $T\geq 1$ to be fixed, $n\geq 6TL$, and first assume that $u,v\in \Lambda_{n}$ with $|u-v|\leq 2L$. Using a minor generalisation of Lemma \ref{lem: iteration of SL} (with $\mathbb H_n$ replaced by $\Lambda$) gives
\begin{align}\label{eq:r1perco}
	\mathbb P_\beta[u\connect{\Lambda\:}x]&\leq \varphi^T\mathbb E_{u}^{\rm RW}\Big[\mathbb P_\beta[X_T\connect{\Lambda\:}x]\Big],\\	\mathbb P_\beta[v\connect{\Lambda\:}x]&\geq \varphi^T\mathbb E_{v}^{\rm RW}\Big[\mathbb P_\beta[X_T\connect{\Lambda\:}x]\Big]-\Big(\sum_{t=0}^{T-1}\varphi^t\Big)\max_{w\in \Lambda_{n+TL}}E_\beta(\{w\},\Lambda,w,x).\label{eq:r2perco}
\end{align}

As in the proof of Proposition \ref{prop:regularity perco}, Proposition \ref{prop:coupling appendix} implies that there exists $T=T(\eta/4,d)$ large enough such that the random walks $X_T^u$ and $X_T^v$ can be coupled to coincide with probability larger than $1-\eta/4$. This observation yields
\begin{equation}\label{eq: coupling micro scales perco}
\mathbb E_{u}^{\rm RW}\Big[\mathbb P_\beta[X_T\connect{\Lambda\:}x]\Big]-\mathbb E_{v}^{\rm RW}\Big[\mathbb P_\beta[X_T\connect{\Lambda\:}x]\Big]\le \frac{\eta}{4}\max\Big\{\mathbb P_\beta[w\connect{\Lambda\:}x]:w\in \Lambda_{n+TL}\Big\}.
\end{equation}
 Furthermore, since $\varphi\le 1+\tfrac{C}{L^d}$, we may choose $L_5=L_5(\eta,T,C,d)\geq L_0$ large enough such that for $L\geq L_5$, one has $\varphi^T\leq 2$. As a result, we obtained, for every $n\geq 6TL$, and every $u,v\in \Lambda_n$ with $|u-v|\leq 2L$,
 \begin{multline}\label{eq: uv close lower bounds}
 	\Big|\mathbb P_\beta[u\connect{\Lambda\:}x]-\mathbb P_\beta[v\connect{\Lambda\:}x]\Big|\leq \frac{\eta}{4}\max\Big\{\mathbb P_\beta[w\connect{\Lambda\:}x]:w\in \Lambda_{n+TL}\Big\}\\+2T\max_{w\in \Lambda_{n+TL}}E_\beta(\{w\},\Lambda,w,x).
 \end{multline}
 This gives \eqref{eq:uv close in the statement} as soon as $\delta^{-1}\geq 6T$.
 
 We now assume that $n\geq \delta^{-1}L$, and let $u,v\in \Lambda_{\delta n}$. We prove the result when $u$ and $v$ differ by one coordinate only and assume that the difference of the coordinate is even as the latter assumption can be relaxed by using the previous estimate when $u$ and $v$ are at distance 1 (which is less than $2L$), and the former by summing increments over coordinates. Again, by rotating and translating, we may therefore consider $u=k\mathbf{e_1}$ and $v=-k\mathbf{e}_1$ with $k\leq \delta n$. Recall that for $\ell\geq 1$, $\Lambda_\ell^+=\{x\in \Lambda_\ell : x_1>0\}$, and that $B^+:=\Lambda_{n/3}^+$ and $B^-:=-\Lambda_{n/3}^+$. By Lemmata \ref{Lem: SL upper bound} and \ref{Lem: SL lower bound},
\begin{align}
\mathbb P_\beta[u\connect{\Lambda\:}x] &\le \sum_{\substack{y \in B^+\\ z\notin B^+}}\mathbb P_\beta[u\connect{B^+\:}y]p_{yz,\beta}  \mathbb P_\beta[z\connect{\Lambda\:}x]\label{eq:u1 bis},\\
\mathbb P_\beta[v\connect{\Lambda\:}x]&\ge \sum_{\substack{y \in B^-\\ z\notin B^-}}\mathbb P_\beta[v\connect{B^-\:}y]p_{yz,\beta} \mathbb P_\beta[z\connect{\Lambda\:}x] -E_\beta(B^-,\Lambda,v,x).\label{eq:u2 bis}
\end{align}
Now, when $z\in H'(L)=\{v\in \mathbb Z^d: |v_2|\leq L\}$, we may associate every pair $(y,z)$ in the sum in~\eqref{eq:u1 bis} with the pair $(y',z')$ symmetric with respect to the hyperplane $\{u\in \mathbb Z^d:u_2=0\}$ in the sum in~\eqref{eq:u2 bis}. Then, $z$ and $z'$ are within a distance $2L$ of each other. Hence, if $L\geq L_5$, for such pairs $(y,z)$ and $(y',z')$, the difference $\mathbb P_\beta[z\connect{\Lambda\:}x]-\mathbb P_\beta[z'\connect{\Lambda\:}x]$ can be bounded thanks to \eqref{eq: uv close lower bounds}.

Plugging this estimate in the difference of \eqref{eq:u1 bis} and \eqref{eq:u2 bis}, and then invoking Lemma~\ref{lem:estimate half-space perco} with $k=6\delta n\:(\geq L)$ and $n/3$ (to the cost of potentially increasing $L$ again and decreasing $\delta$ so that $6\delta n\leq n/6$), gives 
\begin{multline}\label{eq: last equation uv lower bound}
\mathbb P_\beta[u\connect{\Lambda\:}x]-\mathbb P_\beta[v\connect{\Lambda\:}x]\le \left(\frac{\eta}{2}+4(36\delta)^c\right)\max\Big\{\mathbb P_\beta[w\connect{\Lambda\:}x]:w\in \Lambda_{n+TL}\Big\}\\+4T\max_{w\in \Lambda_{n+TL}}E_\beta(\{w\},\Lambda,w,x)+E_\beta(B^-,\Lambda,v,x) ,
\end{multline}
where we used Proposition \ref{prop: final prop proof mainperco thm} to argue that $\varphi(B^+)\leq 1+\frac{C}{L^d}\leq 2$. We now choose $\delta$ small enough so that $4(36\delta)^c\leq \eta/2$ and $\delta^{-1}\geq 6T$. Plugging this choice in \eqref{eq: last equation uv lower bound} gives, for every $u,v\in \Lambda_{\delta n}$,
\begin{multline}
	\mathbb P_\beta[u\connect{\Lambda\:}x]-\mathbb P_\beta[v\connect{\Lambda\:}x]\le \eta \max\Big\{\mathbb P_\beta[w\connect{\Lambda\:}x]:w\in \Lambda_{3n}\Big\}\\+ \delta^{-1}\max_{w\in \Lambda_{3n}}\max_{\substack{S\in \mathcal B\\S+w\subset \Lambda_{3n}}}E_\beta(S+w,\Lambda,w,x),
\end{multline}
and concludes the proof.
\end{proof}

We conclude these preliminaries with an easy estimate, whose proof is postponed to Appendix \ref{appendix:boundEaverage}. The first part of the statement can be viewed as an $\ell^1$ version of Lemma \ref{lem: estimate E(x)}.

\begin{Lem}\label{lem: bound non local error singleton perco} There exists $D_1>0$ such that, for every $L\geq L_0$, and every $\beta\leq \beta_c$, 
\begin{equation}\label{eq:non local term 1}
	\max_{w\in \mathbb H_n}\sum_{x\in \partial \mathbb H_n}{E}_\beta(\{w\},\mathbb H_n,w,x)\leq \frac{D_1}{L^d}, \qquad \forall n\geq 0,
\end{equation}
and 
\begin{equation}\label{eq:non local term 2}
	\max_{x\in \partial \mathbb H_n}\max_{w\in \mathbb H_n\setminus \{x\}}E_\beta(\{w\},\mathbb H_n,w,x)\leq \frac{D_1}{L^{2d}}.
\end{equation}
\end{Lem}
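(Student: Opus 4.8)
The plan is to estimate the two quantities defining $E_\beta(\{w\},\Lambda,o,x)$ by inserting the up-to-constant bounds from Proposition~\ref{prop: final prop proof mainperco thm} (valid since $\beta\le\beta_c$) and then integrating out the free vertices using the random-walk/convolution estimates of Appendix~\ref{appendix:rw}, exactly as in the proof of Proposition~\ref{lem: error amplitude perco} and Lemma~\ref{lem: estimate E(x)}. Recall that for a singleton $S=\{w\}$ the two pieces of the error simplify: the first term \eqref{eq:error first term} becomes $\sum_{z\notin S}p_{wz}\sum_{v}\mathbb P[w\connect{\Lambda\:}v]\mathbb P[z\connect{\Lambda\:}v]\mathbb P[v\connect{\Lambda\:}x]\,(\text{with a factor }\mathbb P[w\connect{S\:}w]^2=1)$, while the second term \eqref{eq:error second term} becomes $\sum_{v}\sum_{yz\ne st,\,y=s=w}p_{wz}p_{wt}\mathbb P[z\connect{\Lambda\:}v]\mathbb P[t\connect{\Lambda\:}v]\mathbb P[v\connect{\Lambda\:}x]$; here $z\ne t$ is forced. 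Since we only need an $\ell^1$ bound in $x$ (summing over $x\in\partial\mathbb H_n$), we may replace $\sum_{x\in\partial\mathbb H_n}\mathbb P_\beta[v\connect{\mathbb H_n\:}x]$ by $\psi_\beta(\mathbb H_{n-v_1})\le C/L$ uniformly in $v$, using \eqref{eq:bound final psi} and translation invariance; this is the source of the extra $1/L$ that improves \eqref{eq:non local term 1} to \eqref{eq:non local term 2}.

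For \eqref{eq:non local term 1}: after summing over $x$, the first term is bounded by $\frac{C}{L}|\Lambda_L^*|p\cdot\sup_{z\in\Lambda_L(w)}\sum_{v}\mathbb P[w\connect{}v]\mathbb P[z\connect{}v]$; using \eqref{eq:bound final beta_c} to get $|\Lambda_L^*|p\le 4$ and Proposition~\ref{prop: convolution estimate} together with \eqref{eq:upper bound below L perco} to get $\sum_v\mathbb P[w\connect{}v]\mathbb P[z\connect{}v]\le C'/L^4\cdot(L\vee|w-z|)^{-(d-4)}\le C''/L^d$ (since $|w-z|\le L$), we obtain a contribution $\le C'''/L^d$. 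The second term, with $z\ne t$ both within $\Lambda_L(w)$, is bounded by $\frac{C}{L}(|\Lambda_L^*|p)^2\sup_{z,t}\sum_v\mathbb P[z\connect{}v]\mathbb P[t\connect{}v]\le \frac{C}{L}\cdot 16\cdot C''/L^d$ as well, again because $|z-t|\le 2L$. Summing the two contributions gives \eqref{eq:non local term 1}.

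For \eqref{eq:non local term 2}, we drop the summation over $x$ and instead use the pointwise half-space bound \eqref{eq:upper half-spacebound below L perco} for $\mathbb P_\beta[v\connect{\mathbb H_n\:}x]\le\mathbb P_\beta[v\connect{\mathbb H\:}x]$; but the point is that $v$ must satisfy $|v-w|$ bounded by $2L$ in the ``diagonal'' contributions and, more importantly, that in both terms of $E_\beta$ the vertex $v$ is constrained to lie within distance $\asymp L$ of $w$ (because $z,t\in\Lambda_L(w)$ and the convolution $\sum$ over $v$ of $\mathbb P[z\connect{}v]\mathbb P[t\connect{}v]$ decays like $|z-t|^{-(d-4)}$, effectively localising $v$ near $z,t$, hence near $w$). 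We then bound $\sum_x$ over the relevant range by $\psi$-sums and keep one extra factor: schematically, after using \eqref{eq:upper half-spacebound below L perco} with $|x_1|\ge n/2$ say — wait, that is not uniform — so instead I would keep the $x$-sum but restricted, noting that the remaining two factors already carry a $1/L^{d}$ each from convolution and a $|\Lambda_L^*|p\le 4$; combining a $1/L^d$ from the convolution estimate, a $1/L^d$ from $\sum_x\mathbb P[v\connect{\mathbb H_n\:}x]$ being actually $\le C/L\cdot$ a further $1/L^{d-1}$ when $v$ is confined to $\Lambda_{\mathrm{cst}\,L}(w)$ and $x\in\partial\mathbb H_n$ — here the key observation is that $\psi_\beta(\mathbb H_{n-v_1})\le C/L$ is the $\ell^1$ bound, but if we want pointwise we use $\mathbb P_\beta[v\connect{\mathbb H\:}x]\le \frac{C}{L^d}(L/(L\vee|x_1-v_1|))^{d-1}$ and sum only the single coordinate, losing nothing. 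Thus the product of the pointwise half-space factor ($\le C/L^d$ once $|x_1-v_1|\le L$, which is forced here since $x\in\partial\mathbb H_n$ and $v$ near $w$ would need... no).

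Let me restate \eqref{eq:non local term 2} cleanly: I would bound $E_\beta(\{w\},\mathbb H_n,w,x)$ by $\big(\sup_{v}\mathbb P_\beta[v\connect{\mathbb H_n\:}x]\big)\cdot\big[\text{sum of the remaining factors over }y,z,s,t,v\big]$, where the bracketed sum is exactly of the form estimated in the proof of \eqref{eq:non local term 1} \emph{without} the $\sum_x$, hence $\le D_1'/L^d$ by the convolution estimate and $|\Lambda_L^*|p\le 4$; and $\sup_v\mathbb P_\beta[v\connect{\mathbb H_n\:}x]\le\sup_v\mathbb P_\beta[v\connect{\mathbb H\:}x]\le C/L^d$ by \eqref{eq:upper half-spacebound below L perco} applied with the worst case $|x_1-v_1|\ge0$, but actually to get $1/L^d$ we need $|x_1-v_1|\le L$; this holds because $v$ is forced (through $p_{yz},p_{st}$ and the convolution decay) to stay within $O(L)$ of $w$, and for $x\in\partial\mathbb H_n$ the bound $\mathbb P_\beta[v\connect{\mathbb H\:}x]\le C/L^d$ holds whenever... this is the delicate point. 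The cleanest route avoiding this: use $\sup_v\mathbb P_\beta[v\connect{\mathbb H\:}x]\le C/L^d$ \emph{unconditionally} since $\left(L/(L\vee|x_1-v_1|)\right)^{d-1}\le1$; this already gives $E_\beta(\{w\},\mathbb H_n,w,x)\le\frac{C}{L^d}\cdot\frac{D_1'}{L^d}=\frac{D_1}{L^{2d}}$. That is the whole argument for \eqref{eq:non local term 2}.

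The main obstacle is purely bookkeeping: correctly organising the four-fold sums for the singleton error (especially the $z\ne t$ constraint in the second term and the need to keep $v$ within $O(L)$ of $w$ so that one genuinely gains a full $L^{-d}$ from the convolution Proposition~\ref{prop: convolution estimate} rather than only $L^{-4}$), and checking that all invocations of \eqref{eq:bound final beta_c}–\eqref{eq:upper half-spacebound below L perco} are licit for all $\beta\le\beta_c$ (which they are, since Proposition~\ref{prop: final prop proof mainperco thm} holds up to $\beta_c$). Once this accounting is done, \eqref{eq:non local term 1} follows from the convolution estimate alone and \eqref{eq:non local term 2} follows by additionally factoring out the uniformly-$O(L^{-d})$ pointwise half-space two-point function $\mathbb P_\beta[v\connect{\mathbb H\:}x]$. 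I would carry out the details in Appendix~\ref{appendix:boundEaverage}, mirroring line by line the computation in the proof of Proposition~\ref{lem: error amplitude perco}.
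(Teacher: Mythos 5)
Your overall plan --- plugging in the uniform bounds from Proposition~\ref{prop: final prop proof mainperco thm} and then using the convolution estimate of Proposition~\ref{prop: convolution estimate} --- is the paper's route, and for \eqref{eq:non local term 1} your computation essentially matches what is done in Appendix~\ref{appendix:boundEaverage}. However, there are two inaccuracies and one genuine gap.

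\textbf{Misreading of the singleton first term.} For $S=\{w\}$ the first error piece \eqref{eq:error first term} has $u,v\in S$, so $v$ is pinned to $w$; it reduces to $\sum_{z\neq w}p_{wz,\beta}\,\mathbb P_\beta[z\connect{\Lambda\:}w]\,\mathbb P_\beta[w\connect{\Lambda\:}x]$, with no free $v$-sum. Your rewriting with $\sum_v\mathbb P[w\connect v]\mathbb P[z\connect v]\mathbb P[v\connect x]$ is an over-estimate (it retains $v=w$ among other terms), so nothing is lost, but this introduces an unnecessary detour through the convolution lemma; the paper simply uses $\mathbb P[z\connect w]\le C/L^d$ there.

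\textbf{Coincidence terms in the $\ell^1$ bound.} You claim $\sum_{x\in\partial\mathbb H_n}\mathbb P_\beta[v\connect{\mathbb H_n\:}x]\le\psi_\beta(\mathbb H_{n-v_1})\le C/L$ ``uniformly in $v$''. This is false when $v\in\partial\mathbb H_n$: the $x=v$ term contributes $1$, so the correct uniform bound is $1+\psi_\beta(\mathbb H_0)\le 1+C/L$, not $C/L$. The estimate survives because the remaining factors already carry a full $L^{-d}$, so a uniform bound $\le 2$ (rather than $\le C/L$) is enough for $D_1/L^d$; but the claim as stated is wrong, and the paper's Appendix~\ref{appendix:boundEaverage} splits the contributions of $v\in\mathbb H_{n-1}$ and $v\in\partial\mathbb H_n$ precisely to deal with this.

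\textbf{The genuine gap is in \eqref{eq:non local term 2}.} Your ``cleanest route'' writes $E_\beta(\{w\},\mathbb H_n,w,x)\le\big(\sup_v\mathbb P_\beta[v\connect{\mathbb H_n\:}x]\big)\cdot\big[\text{sum of remaining factors}\big]$ and claims $\sup_v\mathbb P_\beta[v\connect{\mathbb H_n\:}x]\le C/L^d$ ``unconditionally''. This fails at $v=x$, where $\mathbb P_\beta[x\connect{\mathbb H_n\:}x]=1$: the bound \eqref{eq:upper half-spacebound below L perco} only applies for $x\neq v$. Since $v$ ranges over all of $\mathbb H_n$ in the second error term, $v=x$ is in the sum, and your argument as written produces $D_1'/L^d$, not $D_1/L^{2d}$. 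The fix (which the paper implements) is to separate $v=x$ from $v\neq x$: for $v\neq x$ one uses $\mathbb P[v\connect x]\le C/L^d$ and the convolution; for $v=x$ one exploits $z\neq t$ to guarantee that at least one of $\mathbb P[z\connect x]$, $\mathbb P[t\connect x]$ is $\le C/L^d$ while $p_{wx}\le c p_\beta\le C/L^d$ supplies the missing power of $L^{-d}$. You should make this split explicit rather than appeal to a uniform pointwise bound that does not hold on the diagonal.
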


\subsection{A Harnack-type estimate}\label{sec: harnack perco}

As explained in the beginning of the section, we will need to turn averaged estimates into point-wise ones. We therefore show another regularity estimate which can be viewed as a Harnack-type estimate.

It will be convenient to replace $L_\beta$ by the quantity $L_\beta(\varepsilon)$ defined as follows: for $\varepsilon\in [0,1]$, \begin{equation}
L_\beta(\varepsilon):=\inf\{n\ge1:\varphi_\beta(\Lambda_n)\le 1-\varepsilon\}.
\end{equation}
In particular, one has $L_\beta=L_\beta(1-e^{-2})$. In fact, $L_\beta(\varepsilon)$ only differs from $L_\beta$ by a constant, as stated in the next lemma. This result already appeared in the context of the WSAW model in our companion paper, see \cite[Lemma~3.1]{DumPan24WSAW}.
\begin{Lem}\label{lem: l_beta differs by constant} Let $d>6$ and $\varepsilon\in (0,1-e^{-2})$. Recall that $L_0$ is given by Proposition \textup{\ref{prop: final prop proof mainperco thm}}. There exists $C_0=C_0(\varepsilon,L_0,d)>0$ such that, for every $L\geq L_0$, and every $\beta<\beta_c$,
\begin{equation}\label{eq: comparison diff l beta}
	L_\beta(\varepsilon)\leq L_\beta\leq C_0(L_\beta(\varepsilon)\vee L).
\end{equation}
\end{Lem}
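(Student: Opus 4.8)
The inequality $L_\beta(\varepsilon)\le L_\beta$ is immediate from the definitions: since $\varepsilon<1-e^{-2}$ we have $1-\varepsilon>e^{-2}$, so any $k$ with $\varphi_\beta(\Lambda_k)\le e^{-2}$ also satisfies $\varphi_\beta(\Lambda_k)\le 1-\varepsilon$, whence the infimum defining $L_\beta(\varepsilon)$ is no larger. The content of the lemma is the reverse bound $L_\beta\le C_0(L_\beta(\varepsilon)\vee L)$, i.e. once $\varphi_\beta$ has dropped below $1-\varepsilon$ at some scale $m:=L_\beta(\varepsilon)$, it must drop below $e^{-2}$ within a bounded multiple of $m\vee L$.

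The plan is to run a submultiplicativity argument for $\varphi_\beta(\Lambda_k)$ along scales that are multiples of $N:=m\vee L$, exactly as in the proof of the remark following Proposition~\ref{prop: final prop proof mainperco thm}. Concretely, iterating Lemma~\ref{Lem: SL upper bound} $j$ times with $S$ ranging over translates of $\Lambda_{N}$ and $\Lambda=\Lambda_{jN}$ (or $\mathbb Z^d$) gives, after summing over the endpoint and the outgoing edge,
\begin{equation}
\varphi_\beta(\Lambda_{jN})\le \varphi_\beta(\Lambda_{N})^{\,j-1}\sup_{\ell\ge 0}\varphi_\beta(\Lambda_\ell).
\end{equation}
Since $N\ge L$, Proposition~\ref{prop: final prop proof mainperco thm} (specifically \eqref{eq:bound final phi block}, valid for all $\beta\le\beta_c$ and $B\in\mathcal B$, in particular $B=\Lambda_\ell$) yields $\sup_{\ell\ge 0}\varphi_\beta(\Lambda_\ell)\le 1+\tfrac{C}{L^d}\le 2$ for $L\ge L_0$. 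Moreover $N\ge m=L_\beta(\varepsilon)$, so by monotonicity of $k\mapsto\varphi_\beta(\Lambda_k)$ we get $\varphi_\beta(\Lambda_{N})\le\varphi_\beta(\Lambda_m)\le 1-\varepsilon$. Hence
\begin{equation}
\varphi_\beta(\Lambda_{jN})\le 2(1-\varepsilon)^{\,j-1},
\end{equation}
and choosing $j=j(\varepsilon)$ to be the smallest integer with $2(1-\varepsilon)^{j-1}\le e^{-2}$ — which depends only on $\varepsilon$ — we conclude $\varphi_\beta(\Lambda_{jN})\le e^{-2}$, hence $L_\beta\le jN=j\,(L_\beta(\varepsilon)\vee L)$. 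Setting $C_0:=j(\varepsilon)$ (which one may harmlessly inflate to also absorb the constant from the case $m<L$, explaining the $L_0$-dependence in the statement) finishes the proof.

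The only mild subtlety — and the place to be slightly careful — is that Proposition~\ref{prop: final prop proof mainperco thm} gives the bound $\varphi_\beta(B)\le 1+C/L^d$ only for generalised blocks $B\in\mathcal B$; since each $\Lambda_\ell$ (centred at the origin) and each relevant translate $\Lambda_N(v)$ appearing in the iteration is a translate of a box, one should phrase the iteration so that only translates of origin-containing blocks are used, or equivalently note that by translation invariance $\varphi_\beta$ of any box equals $\varphi_\beta(\Lambda_\ell)$ for the corresponding $\ell$, which is in $\mathcal B$. With this observation the argument is routine and no genuine obstacle arises; this is why the statement merely records a folklore fact already used implicitly above.
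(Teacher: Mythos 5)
Your overall strategy is the right one and is essentially the paper's: iterate the Simon--Lieb bound \eqref{eq:SLperco} along translates of a box whose $\varphi_\beta$-value is controlled, use the uniform bound $\varphi_\beta(B)\le 1+C/L^d$ from Proposition~\ref{prop: final prop proof mainperco thm} for the final factor, and read off exponential decay at a scale that is a bounded multiple of the iteration step. The paper phrases it as a contradiction (assume $L_\beta>k(L_\beta(\varepsilon)+L)$, iterate inside $\Lambda_{L_\beta-1}$, and contradict $\varphi_\beta(\Lambda_{L_\beta-1})\ge e^{-2}$) whereas you iterate outward to an explicit scale $\Lambda_{jN}$; that structural difference is cosmetic.

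However, there is a genuine gap in one step. You set $N:=m\vee L$ with $m=L_\beta(\varepsilon)$ and then assert, ``by monotonicity of $k\mapsto\varphi_\beta(\Lambda_k)$,'' that $\varphi_\beta(\Lambda_N)\le\varphi_\beta(\Lambda_m)\le 1-\varepsilon$. The function $k\mapsto\varphi_\beta(\Lambda_k)$ is \emph{not} monotone decreasing in $k$ in any obvious sense, and this is precisely the step that would need justification when $m<L$, i.e.~when $N=L>m$. The paper circumvents this by never inflating the block: it iterates exclusively with translates of $\Lambda_{L_\beta(\varepsilon)}$ (whose $\varphi_\beta$-value is $\le 1-\varepsilon$ directly from the definition of $L_\beta(\varepsilon)$), and the factor of $L$ in the conclusion arises automatically because each iteration step can move the basepoint a distance of at most $L_\beta(\varepsilon)+L$ (the block half-width plus the jump range), so $k$ iterations require only $L_\beta>k(L_\beta(\varepsilon)+L)$. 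Your version can be repaired either by doing exactly that, or by replacing the false monotonicity with the \emph{quasi}-monotonicity that does hold: one application of Lemma~\ref{Lem: SL upper bound} with $S=\Lambda_k\subset\Lambda_{k'}$, summed over the exit edge of $\Lambda_{k'}$ and combined with Proposition~\ref{prop: final prop proof mainperco thm}, yields $\varphi_\beta(\Lambda_{k'})\le\big(1+\tfrac{C}{L^d}\big)\varphi_\beta(\Lambda_k)$ for $k\le k'$, so that $\varphi_\beta(\Lambda_N)\le(1-\varepsilon)\big(1+\tfrac{C}{L_0^d}\big)<1-\varepsilon/2$ for $L_0$ large, after which your iteration goes through with $1-\varepsilon/2$ in place of $1-\varepsilon$. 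Either fix is short, but as written the ``by monotonicity'' step does not follow.
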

\begin{proof} Let $L\geq L_0$ and $\beta<\beta_c$. Observe that the map $\varepsilon\mapsto L_\beta(\varepsilon)$ is increasing. Combined with the fact that $L_\beta=L_\beta(1-e^{-2})$, this gives the first inequality in \eqref{eq: comparison diff l beta}.

We turn to the second one. Let $k\geq 1$ to be chosen large enough. Assume by contradiction that $L_\beta>k(L_\beta(\varepsilon)+L)$. We iterate Lemma \ref{Lem: SL upper bound} $k$ times with $S$ being translates of $\Lambda_{L_\beta(\varepsilon)}$ and $\Lambda=\Lambda_{L_\beta-1}$ to obtain that: for every $x\in \Lambda$ and $y\notin \Lambda$,
\begin{multline}\label{eq:iteration SL}
	\mathbb P_\beta[0\connect{\Lambda\:}x]p_{xy,\beta}\leq \sum_{\substack{u_1\in \Lambda_{L_\beta(\varepsilon)}\\v_1\notin \Lambda_{L_\beta(\varepsilon)}}}\mathbb P_\beta[0\connect{\Lambda_{L_\beta(\varepsilon)}\:}u_1]p_{u_1v_1,\beta}\\\ldots\sum_{\substack{u_k\in \Lambda_{L_\beta(\varepsilon)}(v_{k-1})\\v_k\notin \Lambda_{L_\beta(\varepsilon)}(v_{k-1})}}\mathbb P_\beta[v_{k-1}\connect{\Lambda_{L_\beta(\varepsilon)(v_{k-1})}\:}u_k]p_{u_kv_k,\beta} \mathbb P_\beta[v_k\connect{\Lambda\:}x]p_{xy,\beta}.
	\end{multline}
Summing the above displayed equation over $x\in \Lambda$ and $y\notin \Lambda$ gives
\begin{equation}
\varphi_\beta(\Lambda_{L_\beta-1})\le \varphi_\beta(\Lambda_{L_\beta(\varepsilon)})^k\max\{\varphi_\beta(\Lambda_{L_\beta-1}(x)):x\in \Lambda_{L_\beta-1}\}\le (1-\varepsilon)^k\Big(1+\frac{C}{L_0^d}\Big),
\end{equation}
where $C$ is given by Proposition \ref{prop: final prop proof mainperco thm}. Now, choose $k=k(\varepsilon,L_0,d)$ large enough so that $(1-\varepsilon)^k(1+C/L_0^d)<e^{-2}$. Since $\varphi_\beta(\Lambda_{L_\beta-1})\geq e^{-2}$, this choice contradicts the above displayed equation. As a consequence,
\begin{equation}
	L_\beta\leq k(L_\beta(\varepsilon)+L)\leq 2k(L_\beta(\varepsilon)\vee L).
\end{equation}
Setting $C_0:=2k$ concludes the proof.
\end{proof}

We now state the main technical estimate of this section.

\begin{Prop}[Regularity estimate at macroscopic scales]\label{prop:regularity perco 2} Let $\alpha>0$. There exists $C_{\rm RW}=C_{\rm RW}(\alpha)>0$, and for every $\eta>0$, there exist $A=A(\alpha,\eta)>0$, $L_6\geq L_0$, and $\varepsilon_0=\varepsilon_0(\eta,\alpha)>0$ such that the following holds. For every $L\geq L_6$, every $\varepsilon<\varepsilon_0$, every $n\le 6L_\beta(\varepsilon)$  with $n\geq AL$, every $\beta\le \beta_c$, every $\Lambda\supset\Lambda_{(1+\alpha)n}$, and every $x\in \Lambda\setminus \Lambda_{(1+\alpha)n}$,
\begin{align}
\max_{w\in \Lambda_n}\mathbb P_\beta[w\connect{\Lambda\:}x]&\le C_{\rm RW}\min_{w\in \Lambda_n}\mathbb P_\beta[w\connect{\Lambda\:}x]+\eta \max_{w\in \Lambda_{(1+\alpha)n}}\mathbb P_\beta[w\connect{\Lambda\:}x]\notag
\\&\quad+A\max_{w\in \Lambda_{(1+\alpha)n}}\max_{\substack{S\in \mathcal B\\S+w\subset \Lambda_{(1+\alpha)n}}}{E}_\beta(S+w, \Lambda,w,x).
\end{align}
\end{Prop}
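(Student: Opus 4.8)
The plan is to adapt the proof of Proposition~\ref{prop:regularitygeneralperco} into a \emph{two-sided} (Harnack-type) statement, the key new ingredient being a coupling of two spread-out walks started at different points in $\Lambda_n$ so that after $T$ steps they either coincide or have both left a mesoscopic ball, combined with the reversed Simon--Lieb inequality of Lemma~\ref{Lem: SL lower bound}. First I would reduce, exactly as in Proposition~\ref{prop:regularity perco}, to the case where $w,w'\in\Lambda_n$ differ by $2L$ in a single (even) coordinate: the general case $w,w'\in\Lambda_n$ follows by chaining $O(n/L)$ such steps and by the reflection/symmetrisation argument across a coordinate hyperplane that splits $\Lambda_{(1+\alpha)n}$ into two generalised blocks $B^\pm$, using Lemma~\ref{lem:estimate half-space perco} to control the ``wrong side'' contributions (this is where the $\eta\max_w$ and $A\max_{S}E_\beta$ terms are generated, together with $\varphi_\beta(B^\pm)\le 1+C/L^d\le 2$ from Proposition~\ref{prop: final prop proof mainperco thm}). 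For the close case, iterate Lemma~\ref{lem: iteration of SL} (in its $\Lambda$-version) $T$ times to get
\begin{align}
\mathbb P_\beta[w\connect{\Lambda\:}x]&\le \varphi^T\mathbb E_w^{\rm RW}\big[\mathbb P_\beta[X_T\connect{\Lambda\:}x]\big],\\
\mathbb P_\beta[w'\connect{\Lambda\:}x]&\ge \varphi^T\mathbb E_{w'}^{\rm RW}\big[\mathbb P_\beta[X_T\connect{\Lambda\:}x]\big]-\Big(\sum_{t=0}^{T-1}\varphi^t\Big)\max_{v\in\Lambda_{n+TL}}E_\beta(\{v\},\Lambda,v,x),
\end{align}
with $\varphi=\varphi_\beta(\{0\})\le 1+C/L^d$, and for $L\ge L_6$ we have $\varphi^T\le 2$.

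The heart of the matter is the lower bound $\mathbb P_\beta[w\connect{\Lambda\:}x]\ge c_{\rm RW}\,\mathbb P_\beta[w'\connect{\Lambda\:}x]-(\text{errors})$, i.e.\ replacing the naive ``coupling to coincide with high probability'' (which only gives a one-sided estimate with small multiplicative loss) by a genuine comparison with a \emph{constant} $C_{\rm RW}$. For this I would use that the spread-out walk started at $w$ reaches any fixed $w'\in\Lambda_n$ with $|w-w'|\le 2L$ in $T=T(\alpha)$ steps with probability bounded below by a constant $c_0=c_0(\alpha,d)>0$ uniformly in $L$ (a Gaussian-type local CLT lower bound for the rescaled walk, of the flavour of the estimates in the random-walk appendix — Proposition~\ref{prop: rw estimates}, Proposition~\ref{prop:coupling appendix} — but now a matching \emph{lower} bound on the $T$-step transition kernel). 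Concretely, from the iterated Lemma~\ref{lem: iteration of SL} one gets $\mathbb P_\beta[w\connect{\Lambda\:}x]\ge \varphi^T\,\mathbb P^{\rm RW}_w[X_T=w']\,\mathbb E_{w'}^{\rm RW}\big[\mathbb P_\beta[X_\cdot\connect{\Lambda\:}x]\,|\,X_T=w'\big]$-type bound — more carefully, iterate the reversed inequality so that the walk from $w$ is conditioned to hit $w'$ at time $T$, staying inside $\Lambda_{(1+\alpha)n}$ (which costs only the error terms since $n\ge AL$ makes the constraint $\tau>T$ automatic for $T=T(\alpha)$), and then below time $T$ the walk from $w$ behaves as the walk from $w'$ does. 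Averaging over the target point $w'$ within a ball of radius $cL$ and invoking the already-established close-points regularity to compare $\mathbb P_\beta[\cdot\connect{\Lambda\:}x]$ on that ball yields the claimed two-sided comparison.

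The main obstacle I anticipate is precisely obtaining a \emph{constant} lower bound on the $T$-step kernel $\mathbb P^{\rm RW}_w[X_T=w']$ (equivalently, on $\mathbb P^{\rm RW}_w[X_T\in\Lambda_{cL}(w')]$ together with an equidistribution statement), uniformly in $L$: for a single step the walk is uniform on $\Lambda_L^*$, so one must iterate $T=O(1/\alpha^2)$ times and show the resulting convolution has mass $\gtrsim 1$ on a ball of radius $O(L)$ around any reachable point — a quantitative local limit theorem for spread-out walks with explicit $L$-uniform constants, which is the role of the appendix (Propositions~\ref{prop: rw estimates}, \ref{prop: rw estimates 2}, \ref{prop:coupling appendix}); the $n\le 6L_\beta(\varepsilon)$ hypothesis enters only through Lemma~\ref{lem: l_beta differs by constant} to know that at these scales $\varphi_\beta(\Lambda_n)$ is bounded below, hence the walk picture is not yet truncated by the sharp length. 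Assembling the error terms into the form $A\max_{w}\max_{S}E_\beta(S+w,\Lambda,w,x)$ is then routine bookkeeping as in Proposition~\ref{prop:regularitygeneralperco}: the singleton errors from the close-points step are absorbed into the block-error maximum since $\{w\}\in\mathcal B$, and the $B^\pm$-error is one more term of the same type, at the cost of enlarging the constant $A$.
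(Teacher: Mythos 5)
Your close-point Harnack comparison (for $|w-w'|\le 2L$, with a multiplicative constant $c_0(\alpha,d)$ independent of $L$) is plausible: the mass-on-a-ball local CLT lower bound you describe would indeed give $\mathbb P_\beta[w\connect{\Lambda\:}x]\ge c_0\,\mathbb P_\beta[w'\connect{\Lambda\:}x]-(\text{errors})$ for fixed small $T$. The gap is the reduction to this case. Chaining $N=O(n/L)$ close-point comparisons with multiplicative constant $c_0<1$ produces a factor $c_0^N\to 0$, so $C_{\rm RW}$ would have to depend on $n/L$, while the proposition demands a constant depending only on $\alpha,d$. The reflection/symmetrisation argument of Proposition~\ref{prop:regularitygeneralperco} cannot substitute for chaining here: it only compares points in $\Lambda_{\lfloor\delta n\rfloor}$ (the ``wrong-side'' term controlled by Lemma~\ref{lem:estimate half-space perco} is small only when the starting points are within a $\delta$-fraction of the box side), and it controls the \emph{difference} by an additive $\eta\max$ term rather than yielding a multiplicative comparison. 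The remaining option, pushing the fine walk $\mathbb P^{\rm RW}$ directly to $\partial\Lambda_{(1+\alpha)n}$, needs $T\asymp(n/L)^2$ steps, and then $\varphi^T=(1+O(L^{-d}))^{(n/L)^2}$ is uncontrolled since $n$ may be as large as $6L_\beta(\varepsilon)$.

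The paper's proof sidesteps this trade-off by introducing the \emph{coarse-grained} finite-range walk $\mathbb P^{{\rm RW},m}$ of~\eqref{eq: rescaled rw def}, whose step distribution is built from the two-point function restricted to a box $\Lambda_{m-1}$, with step size $m\asymp(\delta\alpha)^{10/9}n$ proportional to $n$ rather than to $L$. A fixed number $T=T(\alpha,\eta,d)$ of coarse steps then reaches $\partial\Lambda_{n+M}$, so the per-step factor $\varphi_\beta(\Lambda_{m-1})\le 1+C/L^d$ (Proposition~\ref{prop:bound phi perco}, via Proposition~\ref{prop: final prop proof mainperco thm}) is raised only to a bounded power $T$. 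The genuine multiplicative Harnack constant comes from coupling this coarse walk with Brownian motion (Zaitsev's theorem; Proposition~\ref{prop:uniform Harnack}) and invoking the continuum Harnack inequality for harmonic measure on $[-(1+\alpha),1+\alpha]^d$; Proposition~\ref{prop:regularitygeneralperco} is used only to absorb the mesoscopic mismatch between the coarse walk's exit position and the Brownian exit position. Your argument, working entirely with the $L$-range fine walk and reductions to close points, has no mechanism to produce a constant $C_{\rm RW}$ uniformly in $n/L$, and as written does not close.
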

The idea of the proof is to introduce a well-chosen rescaled random walk which we now define. Let $\beta>0$ and $L\geq 1$. If $m\geq 1$, consider the random walk distribution $(Y_k)_{k\geq 0}$ (started at $u\in \mathbb Z^d$) defined by the following step distribution $\mu=\mu_{m,L,\beta}$: if $v\in \mathbb Z^d$,
\begin{equation}\label{eq: rescaled rw def}
	\mathbb P^{{\rm RW},m}_{u}[Y_1=v]=\mu(v-u):=\frac{\mathds{1}_{v\notin \Lambda_{m-1}(u)}}{\varphi_\beta(\Lambda_{m-1})}\sum_{w\in \Lambda_{m-1}(u)}\mathbb P_\beta[u\connect{\Lambda_{m-1}(u)\:}w]p_{wv,\beta}.
\end{equation}
In particular, when $m=1$, $\mathbb P^{{\rm RW},m}_u=\mathbb P^{\rm RW}_u$. Below, we will restrict to the case $m\geq L$. This case corresponds to the situation where $\mu\in \mathcal P_m$, as described in Appendix \ref{appendix:uniform rw estimates}. It will be important for the argument that follows to use estimates involving the above random walk which are uniform in $m$ and $L$. These estimates follow from classical random walk analysis, and can be found in the Appendix \ref{appendix:uniform rw estimates}.

\begin{proof}[Proof of Proposition \textup{\ref{prop:regularity perco 2}}] Let $\alpha,\eta>0$. Set $M:=\lfloor \tfrac{\alpha}{6} n\rfloor$, and let $C_{\rm RW}$ and $N_1$ be given by Proposition \ref{prop:uniform Harnack} applied to $\tfrac{\alpha}{6}$ and $\tfrac{\eta}{4}$.
Let $\delta$ and $L_5$ be given by Proposition \ref{prop:regularitygeneralperco} with $\frac{\eta}{4C_{\rm RW}}$ and let $L\geq L_5$. To the cost of diminishing $\delta$, we additionally assume that
\begin{equation}\label{eq:take delta smaller}
	\delta\leq (N_1\alpha)^{-1}.
\end{equation}
Recall the random walk distribution $\mathbb P^{{\rm RW},m}_u$ introduced above. Let $\tau$ be the hitting time of $\mathbb Z^d\setminus\Lambda_{n+M}$. If $m=\lfloor (\delta\alpha/36)^{10/9}n\rfloor$, then $n/m\geq N_1$ by \eqref{eq:take delta smaller}. We further assume that $n\geq (\delta\alpha/36)^{-10/9}(L+1)$ so that $m\geq L$ and Proposition \ref{prop:uniform Harnack} applies. Using this latter result, we get, for every $u,v\in \Lambda_n$,
\begin{multline}\label{eq:proof propreg2 1}
	\mathbb E^{{\rm RW},m}_u\Big[\mathbb P_\beta[X_\tau \connect{\Lambda\:}x]\Big]\leq C_{\rm RW}\mathbb E^{{\rm RW},m}_v\Big[\mathbb P_\beta[X_\tau \connect{\Lambda\:}x]\Big]+\frac{\eta}{4}\max\{\mathbb P_\beta[w \connect{\Lambda\:}x]:w\in \Lambda_{(1+\alpha)n}\}\\+2C_{\rm RW}\max \Big\{|\mathbb P_\beta[w \connect{\Lambda\:}x]-\mathbb P_\beta[w'\connect{\Lambda\:}x]|: w,w'\in \Lambda_{\lfloor \delta M\rfloor}(z), \: z\in \partial \Lambda_{n+M}\Big\} 
\end{multline}
where we used that, for this choice of $M,m$ and for $n$ large enough, one has: 
\begin{align}
3m(n/m)^{1/10}&\leq \frac{\delta \alpha}{12}n\leq \frac{\delta\alpha}{6}n-\delta-1\leq \lfloor\delta M\rfloor,\\
n+M+2m&\leq (1+\alpha)n.\end{align} 
Combining \eqref{eq:proof propreg2 1} with Corollary \ref{cor: estimate srw} (applied to $\frac{\eta}{8C_{\rm RW}}$ and $A\approx (\alpha\delta)^{-10/9}$) provides $T=T(\eta/(8C_{\rm RW}),\alpha,d)>0$ and $\varepsilon_0=\varepsilon_0(T)$ such that $(1+\varepsilon_0)^T\leq 2$, and for every $\varphi\in [1-\varepsilon_0,1+\varepsilon_0]$,
\begin{multline}\label{eq:proof propreg2 1.5}
	\mathbb E_u^{{\rm RW},m}\Big[\varphi^{\tau\wedge T}\mathbb P_\beta[X_{\tau\wedge T}\connect{\Lambda\:}x]\Big]\leq C_{\rm RW}\mathbb E_v^{{\rm RW},m}\Big[\varphi^{\tau\wedge T}\mathbb P_\beta[X_{\tau\wedge T}\connect{\Lambda\:}x]\Big]\\+\frac{\eta}{2}\max\{\mathbb P_\beta[w\connect{\Lambda\:}x]:w\in \Lambda_{(1+\alpha)n}\}\\+2C_{\rm RW}\max \Big\{|\mathbb P_\beta[w\connect{\Lambda\:}x]-\mathbb P_\beta[w'\connect{\Lambda\:}x]|: w,w'\in \Lambda_{\lfloor \delta M\rfloor}(z), \: z\in \partial \Lambda_{n+M}\Big\}.
\end{multline}
Thanks to Proposition \ref{prop: exponential moment exit time}, for every $u\in \Lambda_n$,
\begin{equation}\label{eq:proof propreg2 2}
	\mathbb E_u^{{\rm RW},m}[\tau]\leq C_1(1+\alpha)^2\delta^{-20/9},
\end{equation}
for some $C_1=C_1(d)>0$.
%
We are now in a position to prove the desired result.

Let $L\geq L_5$ and $\varepsilon<\varepsilon_0$. By choosing $L$ large enough we may additionally assume that $C/L^d\leq \varepsilon_0$ where we recall that $C$ is given by Proposition \ref{prop: final prop proof mainperco thm}. Fix $u,v\in \Lambda_n$. By definition of $m$ and \eqref{eq:take delta smaller}, we observe that $6m<n$. Since we also have $n\le 6L_\beta(\varepsilon)$, Proposition~\ref{prop: final prop proof mainperco thm} gives
\begin{equation}
1-\varepsilon\le \varphi_\beta(\Lambda_{m-1})\le 1+\frac{C}{L^d}\le 1+\varepsilon_0.
\end{equation}
We now set $\varphi:=\varphi_\beta(\Lambda_{m-1})$. Iterating the two bounds of Lemmata \ref{Lem: SL upper bound} and \ref{Lem: SL lower bound} with $S$ being translates of $\Lambda_{m-1}$, and until time $\tau\wedge T$ gives \begin{align}
\mathbb P_\beta[u\connect{\Lambda\:}x]&\le  \mathbb E_{u}^{{\rm RW},m}\Big[\varphi^{\tau\wedge T} \mathbb P_\beta[Y_{\tau\wedge T}\connect{\Lambda\:}x]\Big],\label{eq:hea perco}\\
\mathbb P_\beta[v\connect{\Lambda\:}x]&\ge \mathbb E_{v}^{{\rm RW},m}\Big[\varphi^{\tau\wedge T} \mathbb P_\beta[Y_{\tau\wedge T}\connect{\Lambda\:}x]\Big]-\mathbb E_{v}^{{\rm RW},m}\left[\sum_{t=0}^{\tau\wedge T-1}\varphi^t{E}_\beta(\Lambda_{m-1}(Y_t),\Lambda,Y_t,x)\right].\label{eq:hda perco}
\end{align}
Using that $(1+\varepsilon_0)^T\leq 2$ and \eqref{eq:proof propreg2 2}, we get
\begin{multline}\label{eq:proof propref2 2.5}
	\mathbb E_{v}^{{\rm RW},m}\left[\sum_{t=0}^{\tau\wedge T-1}\varphi^t{E}_\beta(\Lambda_{m-1}(Y_t),\Lambda,Y_t,x)\right]\\\leq 2C_1(1+\alpha)^2\delta^{-20/9}\max_{w\in \Lambda_{(1+\alpha)n}}\max_{\substack{S\in \mathcal B\\S+w\subset \Lambda_{(1+\alpha)n}}}{E}_\beta(S+w, \Lambda,w,x).
\end{multline}
Combining \eqref{eq:hea perco} and \eqref{eq:proof propreg2 1.5} gives
\begin{multline}\label{eq:proof propreg2 3}
	\mathbb P_\beta[u\connect{\Lambda\:}x]\leq C_{\rm RW}\mathbb E_v^{{\rm RW},m}\Big[\varphi^{\tau\wedge T}\mathbb P_\beta[Y_{\tau \wedge T}\connect{\Lambda\:}x]\Big]+\frac{\eta}{2}\max\{\mathbb P_\beta[w\connect{\Lambda\:}x]:w\in \Lambda_{(1+\alpha)n}\}\\+2C_{\rm RW}\max \Big\{|\mathbb P_\beta[w\connect{\Lambda\:}x]-\mathbb P_\beta[w'\connect{\Lambda\:}x]|: w,w'\in \Lambda_{\lfloor \delta M\rfloor}(z), \: z\in \partial \Lambda_{n+M}\Big\}. 
\end{multline}
Assume that $M\geq \delta^{-1}L$, which occurs as soon as $n\geq 6\alpha^{-1}\delta^{-1}(L+1)$. By Proposition \ref{prop:regularitygeneralperco} (recall that it is applied to $\tfrac{\eta}{4C_{\rm RW}}$),
\begin{multline}\label{eq:proof propreg2 4}
	2C_{\rm RW}\max \Big\{|\mathbb P_\beta[w\connect{\Lambda\:}x]-\mathbb P_\beta[w'\connect{\Lambda\:}x]|: w,w'\in \Lambda_{\lfloor \delta M\rfloor}(z), \: z\in \partial \Lambda_{n+M}\Big\}\\\leq \frac{\eta}{2}\max\{\mathbb P_\beta[w\connect{\Lambda\:}x]: w \in \Lambda_{n+3M}\}+2C_{\rm RW}\delta^{-1}\max_{w\in \Lambda_{(1+\alpha)n}}\max_{\substack{S\in \mathcal B\\S+w\subset \Lambda_{(1+\alpha)n}}}{E}_\beta(S+w, \Lambda,w,x).
\end{multline}
Set $B:=2C_{\rm RW}(C_1(1+\alpha)^2\delta^{-20/9}+\delta^{-1})$. 
Plugging \eqref{eq:hda perco}, \eqref{eq:proof propref2 2.5}, and \eqref{eq:proof propreg2 4} in \eqref{eq:proof propreg2 3} gives that, for every $n\geq \Big((\delta\alpha/36)^{-10/9}\vee 6\alpha^{-1}\delta^{-1}\Big)(L+1)$,
\begin{multline}
	\mathbb P_\beta[u\connect{\Lambda\:}x]\leq C_{\rm RW}\mathbb P_\beta[v\connect{\Lambda\:}x]+\eta\max\{\mathbb P_\beta[w\connect{\Lambda\:}x]:w\in \Lambda_{(1+\alpha)n}\}
	\\+B\max_{w\in \Lambda_{(1+\alpha)n}}\max_{\substack{S\in \mathcal B\\S+w\subset \Lambda_{(1+\alpha)n}}}{E}_\beta(S+w, \Lambda,w,x),
\end{multline}
where we used that $n+3M\leq (1+\alpha)n$. Since $u$ and $v$ are arbitrary in $\Lambda_n$, the proof follows readily from setting $A:=B\vee (\delta\alpha/36)^{-10/9}\vee 6\alpha^{-1}\delta^{-1}$.
 \end{proof}

\subsection{Lower bound on $\psi_\beta(\mathbb H_n)$}
We now turn to our basic estimate for this section. It is a lower bound corresponding to the upper bound on $\psi_\beta(\mathbb H_n)$ obtained in the previous section. Recall that $\beta_0$ is such that $\varphi_{\beta_0}(\{0\})=1$. Introduce for $n,k\geq 1$,
\begin{equation}
	\psi^{[k]}_\beta(\mathbb H_n):=\sum_{\substack{x\in \partial \mathbb H_n\\|x|\leq k}}\mathbb P_\beta[0\connect{\mathbb H_n\:}x].
\end{equation}
\begin{Lem}\label{lem: petit lemme} There exist $c,A'>0$ and $L_7\geq L_0$ such that for every $L\geq L_7$, every $\beta\le\beta_c$, and every $1\le n\le L_\beta-1$ with $n\geq A'L$,
\begin{equation}
\psi_\beta(\mathbb H_n)\geq\psi_\beta^{[n]}(\mathbb H_n)\ge \frac{c}{L}.
\end{equation}
\end{Lem}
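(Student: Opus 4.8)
Before looking at the authors' argument, here is how I would approach it. The whole point is to convert the lower bound $\varphi_\beta(\Lambda_n)\ge e^{-2}$, which holds because $n\le L_\beta-1$, into a lower bound on $\psi_\beta^{[n]}(\mathbb H_n)=\sum_{x\in F_n}\mathbb P_\beta[0\connect{\mathbb H_n\:}x]$, where $F_n:=\Lambda_n\cap\{x:x_1=-n\}$ is the boundary face of $\mathbb H_n$ inside $\Lambda_n$. Since $\Lambda_n\subset\mathbb H_n$, it suffices to prove $\sum_{x\in F_n}\mathbb P_\beta[0\connect{\Lambda_n\:}x]\ge c/L$, so everything can be done inside $\Lambda_n$. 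The first move is a symmetry argument: writing $\varphi_\beta(\Lambda_n)=\sum_{y\in\Lambda_n,\,z\notin\Lambda_n}\mathbb P_\beta[0\connect{\Lambda_n\:}y]p_{yz,\beta}$ and using the invariance of $\Lambda_n$ under coordinate reflections and permutations (which fix $0$), the contribution of pairs $(y,z)$ with $z_1<-n$ is at least $\tfrac1{2d}\varphi_\beta(\Lambda_n)\ge\tfrac{e^{-2}}{2d}$. For such pairs, $y$ lies in the slab $W:=\Lambda_n\cap\{-n\le x_1\le -n+L-1\}$ and $\sum_{z:z_1<-n}p_{yz,\beta}\le|\Lambda_L^*|p_\beta=\varphi_\beta(\{0\})$, whence $\sum_{y\in W}\mathbb P_\beta[0\connect{\Lambda_n\:}y]\ge\tfrac{e^{-2}}{2d\,\varphi_\beta(\{0\})}$. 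Since $n\ge A'L$ forces $0\notin W$, and since $\sum_{y\in F_n}\mathbb P_\beta[0\connect{\Lambda_n\:}y]\le\psi_\beta^{[n]}(\mathbb H_n)\le C/L$ by Proposition \ref{prop: final prop proof mainperco thm}, for $L\ge L_7$ one gets $\sum_{y\in W\setminus F_n}\mathbb P_\beta[0\connect{\Lambda_n\:}y]\ge\tfrac{e^{-2}}{4d\,\varphi_\beta(\{0\})}$.

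Next I would "detach'' the face $F_n$. By \eqref{eq:BK ineq}, for every $y$,
\[
\mathbb P_\beta[0\connect{\Lambda_n\:}y]\le\mathbb P_\beta[0\connect{\Lambda_n\setminus F_n\:}y]+\sum_{v\in F_n}\mathbb P_\beta[0\connect{\Lambda_n\:}v]\,\mathbb P_\beta[v\connect{\Lambda_n\:}y].
\]
Summing over $y\in W\setminus F_n\subset W':=\Lambda_n\cap\{-n+1\le x_1\le -n+L\}$, using $\sum_{v\in F_n}\mathbb P_\beta[0\connect{\Lambda_n\:}v]\le C/L$ once more and the uniform bound $\sup_{v\in F_n}\sum_{y\in W'}\mathbb P_\beta[v\connect{\Lambda_n\:}y]\le C'$ (which follows from $\Lambda_n\subset\mathbb H_n$ and \eqref{eq: sum over half plane at distance k from half plane perco} applied in $\mathbb H_0$, summed over the $L$ layers composing $W'$), one obtains $\sum_{y\in W'}\mathbb P_\beta[0\connect{\Lambda_n\setminus F_n\:}y]\ge\tfrac{e^{-2}}{8d\,\varphi_\beta(\{0\})}$ for $L\ge L_7$. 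The final step is a spreading argument via a two‑round exposure: first reveal the edges with both endpoints in $\Lambda_n\setminus F_n$, which determines $\mathcal C_0:=\mathcal C_{\Lambda_n\setminus F_n}(0)$; the edges touching $F_n$ are untouched. Conditionally on the first round, for $x\in F_n$ one has $\mathbb P_\beta[0\connect{\Lambda_n\:}x]\ge 1-(1-p_\beta)^{|\mathcal C_0\cap\Lambda_L(x)|}\ge c_1 p_\beta|\mathcal C_0\cap\Lambda_L(x)|$, using $|\Lambda_L|p_\beta\le4$ (Lemma \ref{lem: intro lem about betastar}). Summing over $x\in F_n$, taking expectations, and using that for $y\in W'$ with $|y|\le n$ an elementary count gives $|\{x\in F_n:|x-y|\le L\}|\ge(L+1)^{d-1}$,
\[
\psi_\beta^{[n]}(\mathbb H_n)\ge\sum_{x\in F_n}\mathbb P_\beta[0\connect{\Lambda_n\:}x]\ge c_1 p_\beta\sum_{y}\mathbb P_\beta[0\connect{\Lambda_n\setminus F_n\:}y]\,|\{x\in F_n:|x-y|\le L\}|\ge c_1 p_\beta(L+1)^{d-1}\sum_{y\in W'}\mathbb P_\beta[0\connect{\Lambda_n\setminus F_n\:}y].
\]
Plugging in the lower bound from the previous step and $p_\beta=\varphi_\beta(\{0\})/|\Lambda_L^*|\ge\varphi_\beta(\{0\})/(3L)^d$, the two occurrences of $\varphi_\beta(\{0\})$ cancel and one is left with $\psi_\beta^{[n]}(\mathbb H_n)\ge c/L$ for some $c=c(d)>0$; since $\psi_\beta(\mathbb H_n)\ge\psi_\beta^{[n]}(\mathbb H_n)$ trivially, this is the claim.

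The bulk of the real work — and the place where the long finite range genuinely intervenes — is in the ``pushing'' step: the edges leaving $\Lambda_n$ land in the slab $W$ of thickness $L$ adjacent to $\partial\mathbb H_n$ rather than on $\partial\mathbb H_n$ itself, so one must move this mass onto $F_n$ and onto $\mathbb H_n$, absorb the discrepancy between connections in $\Lambda_n$ and in $\Lambda_n\setminus F_n$, and arrange the exposure so that the cluster is revealed without revealing the edges used for spreading. All of these are controlled by the a priori upper bounds already available at $\beta_c$ (Proposition \ref{prop: final prop proof mainperco thm}, Lemma \ref{lem: distance k from half-space}), together with the slab estimates for the half‑space two‑point function; the neat feature is that the factor $\varphi_\beta(\{0\})$ drops out, so no separate lower bound on $\varphi_\beta(\{0\})$ is needed. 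The geometric count $|\{x\in F_n:|x-y|\le L\}|\gtrsim L^{d-1}$ and the bookkeeping near the corners of $\Lambda_n$ are routine once $n\ge A'L$.
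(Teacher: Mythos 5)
Your argument is correct, and it is genuinely different from the paper's. The common starting point is the symmetry reduction $\varphi_\beta(\Lambda_n)\ge e^{-2}\Rightarrow$ the "left-face" contribution is $\ge e^{-2}/(2d)$, which locates $\Omega(1/\varphi_\beta(\{0\}))$ of mass in the slab $W$ of thickness $L$ adjacent to $\partial\mathbb H_n$. The two proofs then diverge in how they push that mass onto the single hyperplane $\partial\mathbb H_n$: the paper decomposes the path from $0$ to $y\in W$ by its left-most point to write the slab mass in terms of $\sum_{\ell=0}^{L-1}\psi^{[n]}_\beta(\mathbb H_{n-\ell})$, and then invokes the mesoscopic regularity estimate (Proposition \ref{prop:regularitygeneralperco}) together with the $\ell^1$ error bound (Lemma \ref{lem: bound non local error singleton perco}) to argue that each $\psi^{[n]}_\beta(\mathbb H_{n-\ell})$ is close to $\psi^{[n]}_\beta(\mathbb H_n)$. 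You instead detach the face $F_n$ by a Simon--Lieb/BK step and transfer the remaining slab mass onto $F_n$ through a two-round exposure plus the sprinkling bound $1-(1-p_\beta)^k\ge c_1 p_\beta k$ (valid since $|\Lambda_L|p_\beta\le4$) and the geometric count $|\{x\in F_n:|x-y|\le L\}|\ge(L+1)^{d-1}$ for $y\in W'\subset\Lambda_n$; the factor $\varphi_\beta(\{0\})$ then cancels exactly because $p_\beta(L+1)^{d-1}\asymp\varphi_\beta(\{0\})/L$. Your route only uses the a~priori upper bounds at $\beta_c$ (\eqref{eq:bound final psi}, \eqref{eq: sum over half plane at distance k from half plane perco}, \eqref{eq: bound volume times p}) together with BK, and entirely avoids Proposition \ref{prop:regularitygeneralperco} and the error-amplitude machinery, which is a real simplification of this particular lemma; the cost is that it is more specific to this statement and does not feed into the Harnack-type arguments the paper deploys later, whereas the paper's proof reuses tools it needs anyway. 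One minor point worth making explicit when writing this up: the subtraction $\sum_{y\in W\setminus F_n}\mathbb P_\beta[0\connect{\Lambda_n\:}y]\ge\tfrac{e^{-2}}{2d\varphi_\beta(\{0\})}-\tfrac{C}{L}$ uses $\varphi_\beta(\{0\})\le2$ from Proposition \ref{prop: final prop proof mainperco thm} to choose $L_7$, and you should spell out that the count $\ge(L+1)^{d-1}$ requires $n\ge L$, which is where $A'\ge1$ enters.
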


\begin{proof} The first inequality is clear, we therefore focus on the second one. Let $\eta>0$ to be fixed. Let $\delta=\delta(\eta)$ and $L_5=L_5(\eta)$ be given by Proposition \ref{prop:regularitygeneralperco}. Assume that $L\geq L_5$ and that $n> 3\delta^{-1}L$. 
We begin by noticing that, thanks to the BK inequality,
\begin{align}
	\varphi_\beta(\Lambda_n)
	&\leq 2d\sum_{\substack{y\in \Lambda_n\\
	y_1\in \{-n,\ldots,-n+L-1\}\\z\notin \mathbb H_n}}\mathbb P_\beta[0\connect{\Lambda_n\:}y]p_{yz,\beta}\notag
	\\&\leq 2d\sum_{k=0}^{L-1}\sum_{\substack{y\in \partial\mathbb H_{n-k}\\|y|\leq n\\ z\notin \mathbb H_n}}p_{yz,\beta}\sum_{\ell=0}^k\sum_{\substack{u\in \partial\mathbb H_{n-\ell}\\|u|\leq n}}\mathbb P_\beta[0\connect{\mathbb H_{n-\ell}\:}u]\mathbb P_\beta[u\connect{\mathbb H_{n-\ell}\:}y]\label{eq:lemma4.5bound}	\\&\leq 2d\sum_{\ell=0}^{L-1}\sum_{\substack{u\in \partial \mathbb H_{n-\ell}\\|u|\leq n}}\mathbb P_\beta[0\connect{\mathbb H_{n-\ell}\:}u]\sum_{k=\ell}^{L-1}\sum_{\substack{y\in \partial\mathbb H_{n-k}\\z\notin \mathbb H_n}}p_{yz,\beta}\mathbb P_\beta[u\connect{\mathbb H_{n-\ell}\:}y]\notag
	\\&\leq 2d\sum_{\ell=0}^{L-1}\psi_\beta^{[n]}(\mathbb H_{n-\ell})\sum_{k=\ell}^{L-1}\left(\mathds{1}_{k=\ell}+\frac{2dC^2(k+L)}{L^2}\right)\notag
	\\&\leq 2d(1+2dC^2)\sum_{\ell=0}^{L-1}\psi_\beta^{[n]}(\mathbb H_{n-\ell})\label{eq:proof lower bound psi1},
\end{align}
where in \eqref{eq:lemma4.5bound} we decomposed an open path from $0$ to $y$ in $\Lambda_n$ according to the left-most point $u$ it visits (see Figure \ref{fig:perco1}), and where we used \eqref{eq: sum over half plane at distance k from half plane perco} in the fourth inequality. Now, we apply Proposition \ref{prop:regularitygeneralperco} with $\delta^{-1}L$ to obtain that, if $u=0$ and $v=k\mathbf{e}_1$ with $0\leq k \leq L-1$, $\Lambda=\mathbb H_n$, and $X=\{x\in \partial \mathbb H_n: |x|\leq n\}\subset \mathbb H_n\setminus \Lambda_{3\delta^{-1}L}$,
\begin{multline}
	\Big|\sum_{x\in X}\mathbb P_\beta[u\connect{\mathbb H_n\:}x]-\sum_{x\in X}\mathbb P_\beta[v\connect{\mathbb H_n\:}x]\Big|=|\psi^{[n]}_\beta(\mathbb H_n)-\psi^{[n]}_\beta(\mathbb H_{n-k})|\\\leq \eta \max_{1\leq j \leq 3\delta^{-1}L}\psi_\beta(\mathbb H_{n-j})+\delta^{-1}\max_{w\in \Lambda_{3\delta^{-1}L}}\sum_{x\in \partial \mathbb H_n}E_\beta(\{w\},\mathbb H_n,w,x)
\end{multline}
We can apply Lemma \ref{lem: bound non local error singleton perco} to get
\begin{equation}
	\max_{w\in \Lambda_{3\delta^{-1}L}}\sum_{x\in \partial \mathbb H_n}E_\beta(\{w\},\mathbb H_n,w,x)\leq \frac{D_1}{L^d}.
\end{equation}
Combining the two previously displayed equations and using \eqref{eq:bound final psi}, we obtain
\begin{equation}\label{eq:proof lower bound psi2}
\sum_{\ell=0}^{L-1}\psi_\beta^{[n]}(\mathbb H_{n-\ell})\le L\psi_\beta^{[n]}(\mathbb H_n)+\eta C+\delta^{-1}\frac{D_1}{L^{d-1}}.
\end{equation}
Observe that since $n\le L_\beta-1$, one has $\varphi_\beta(\Lambda_n)\ge 1/e^2$. Plugging this and \eqref{eq:proof lower bound psi2} into \eqref{eq:proof lower bound psi1} yields,
\begin{equation}
	\frac{1}{e^2 2d(1+2dC^2)}\leq L\psi_\beta^{[n]}(\mathbb H_n)+\eta C+\delta^{-1}\frac{D_1}{L^{d-1}}.
\end{equation}
We now choose $\eta$ small enough so that $\eta C\leq \tfrac{1}{4}[e^2 2d(1+2dC^2)]^{-1}$ (this only influences how small $\delta$ is, and how large $L_5$ is), and $L_7\geq L_5(\eta)$ large enough so that $\delta^{-1}\frac{D_1}{L_7^{d-1}}\leq \tfrac{1}{4}[e^2 2d(1+2dC^2)]^{-1}$. This concludes the proof by setting $A':=6\delta^{-1}$ and $c:=\tfrac{1}{2}[e^2 2d(1+2dC^2)]^{-1}$.
\end{proof}

 \subsection{Proof of Theorem \ref{thm:main2perco}}

We start by proving a lower bound on the half-space two-point function at scale below $6L_\beta(\varepsilon)$ for a sufficiently small $\varepsilon$. Let 
\begin{equation}
A_n:=\{x\in \mathbb Z^d:x_1=|x|=n\}.
\end{equation}

\begin{Lem}\label{lem:lower below half-space perco} There exist $c,\varepsilon_0>0$ and $L_8\geq L_0$ such that for every $L\geq L_8$, every $\varepsilon<\varepsilon_0$, every $\beta_0\le \beta\le \beta_c$, and every $x\in \mathbb H$ with $x_1=|x|\le 6L_\beta(\varepsilon)$,
\begin{equation}\label{eq:opperco}
\mathbb P_\beta[0\connect{\mathbb H\:}x]\ge \frac{c}{L^d}\left(\frac{L}{L\vee |x|}\right)^{d-1}.
\end{equation}
\end{Lem}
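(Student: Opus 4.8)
The plan is to upgrade the averaged lower bound of Lemma~\ref{lem: petit lemme} into a pointwise one via the macroscopic Harnack estimate, Proposition~\ref{prop:regularity perco 2}. The starting point is a translation identity: for $x\in\mathbb H$ with $x_1=|x|=n$, translating by $-x$ maps $\mathbb H$ onto $\mathbb H_n$ and $x$ onto $0$, so $\mathbb P_\beta[0\connect{\mathbb H\:}x]=\mathbb P_\beta[0\connect{\mathbb H_n\:}{-x}]$; applying the same translation to each term of $\psi_\beta^{[n]}(\mathbb H_n)$ gives
\[
\psi_\beta^{[n]}(\mathbb H_n)=\sum_{\substack{y\in(\partial\mathbb H_n)^*\\ |y|\le n}}\mathbb P_\beta[0\connect{\mathbb H_n\:}y]=\sum_{z\in A_n}\mathbb P_\beta[0\connect{\mathbb H\:}z].
\]
For $n\le C_1L$ (with $C_1=C_1(d)$ a large constant fixed at the end) I would argue directly: $\{0\connect{\mathbb H\:}x\}$ contains $\{N\ge1\}$, where $N$ counts open paths of length $\lceil n/L\rceil+1$ from $0$ to $x$ staying inside $\mathbb H$; since $p_\beta\ge\tfrac12c_L\gtrsim L^{-d}$ for $\beta\ge\beta_0$, one gets $\mathbb E[N]\ge c(C_1,d)L^{-d}$ and $\mathbb E[N^2]\le\mathbb E[N]+\mathbb E[N]^2$, whence $\mathbb P_\beta[0\connect{\mathbb H\:}x]\ge\mathbb E[N]^2/\mathbb E[N^2]\gtrsim L^{-d}$, which dominates $\tfrac{c}{L^d}(L/(L\vee n))^{d-1}$ since $(L/n)^{d-1}\ge C_1^{-(d-1)}$.

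Now suppose $C_1L<n\le 6L_\beta(\varepsilon)$. I would first extract a single far point that is well connected to $0$. Put $m:=n$ if $n\le L_\beta-1$, and $m:=\lfloor(L_\beta-1)/2\rfloor$ otherwise; in either case $A'L\le m\le L_\beta-1$ and $m\asymp n$ (in the second case because $L_\beta-1<n\le 6L_\beta(\varepsilon)\le 6L_\beta$ forces $n\asymp L_\beta\asymp m$). Lemma~\ref{lem: petit lemme} gives $\sum_{z\in A_m}\mathbb P_\beta[0\connect{\mathbb H\:}z]\ge c/L$; since $|A_m|\le(2m+1)^{d-1}$ and each summand is $\le\tfrac{C}{Lm^{d-1}}$ by \eqref{eq:upper half-spacebound below L perco} and monotonicity in $\beta$, a pigeonhole argument produces some $z^\ast\in A_m$ with $\mathbb P_\beta[0\connect{\mathbb H\:}z^\ast]\ge\tfrac{c'}{Lm^{d-1}}\ge\tfrac{c''}{Ln^{d-1}}$.

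It remains to transport this bound from $z^\ast$ to the prescribed $x\in A_n$. Both points lie at heights in $[m,n]$ (all of order $n$) above $\partial\mathbb H$ and $|z^\ast-x|\le 3n$, so I would fix a chain $z^\ast=z_0,z_1,\dots,z_K=x$ with $K=O_d(1)$, consecutive steps of size $\asymp n$, each $z_j$ at height $h_j\asymp n$. For a single step, let $o_j$ be the midpoint of $z_j,z_{j+1}$, of height $h$, and translate by $-o_j$: this turns $\mathbb H$ into $\mathbb H_h$, turns the common target $0$ into the boundary point $-o_j\in\partial\mathbb H_h$, and places $z_j,z_{j+1}$ inside $\Lambda_{n'}$ with $n'\asymp n$ chosen small enough that $\Lambda_{(1+\alpha)n'}\subset\mathbb H_h$, that $\Lambda_{(1+\alpha)n'}$ stays at distance $\gtrsim n$ from $\partial\mathbb H$, and that $|{-o_j}|\ge h>(1+\alpha)n'$. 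Proposition~\ref{prop:regularity perco 2} (with $\alpha$ small and fixed, $\Lambda=\mathbb H_h$, target $-o_j$, scale $n'$, legitimate since $AL\le n'\le 6L_\beta(\varepsilon)$) then yields, after translating back,
\[
\mathbb P_\beta[0\connect{\mathbb H\:}z_{j+1}]\ge\frac1{C_{\rm RW}}\Big(\mathbb P_\beta[0\connect{\mathbb H\:}z_j]-\eta\max_{u\in\Lambda_{(1+\alpha)n'}(o_j)}\mathbb P_\beta[0\connect{\mathbb H\:}u]-A\max_{w,\,S\in\mathcal B}E_\beta(S+w,\mathbb H_h,w,-o_j)\Big).
\]
By \eqref{eq:upper half-spacebound below L perco} every $u$ in the second term has height $\gtrsim n$, so that maximum is $\le\tfrac{C'}{Ln^{d-1}}$, while Lemma~\ref{lem: estimate E(x)} bounds the error term by $\tfrac{AD\bfC}{L^{d+1}n^{d-1}}$. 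Iterating over the $O_d(1)$ steps, then choosing $\eta$ small and $L$ large so that $K\big(C'\eta+\tfrac{AD\bfC}{L^d}\big)\le\tfrac{c''}{2C_{\rm RW}^{K}}$, and recalling $\mathbb P_\beta[0\connect{\mathbb H\:}z^\ast]\ge\tfrac{c''}{Ln^{d-1}}$, gives $\mathbb P_\beta[0\connect{\mathbb H\:}x]\ge\tfrac{c}{Ln^{d-1}}=\tfrac{c}{L^d}(L/(L\vee n))^{d-1}$.

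The main obstacle is making this Harnack transfer robust, and three issues need care. First, since $0\in\partial\mathbb H$ one cannot feed $\Lambda=\mathbb H$ to Proposition~\ref{prop:regularity perco 2} directly; one must recenter into a translated half-space $\mathbb H_h$ in which $0$ becomes a far boundary point. Second, the number of iterations must remain bounded, which is why the source scale $m$ of the $\psi$-bound must be comparable to $n$: this forces the choice $m\asymp\min(n,L_\beta)$ and, via Lemma~\ref{lem: l_beta differs by constant}, the need to reconcile the range $n\le 6L_\beta(\varepsilon)$ with the range $n\le L_\beta-1$ in which Lemma~\ref{lem: petit lemme} applies. Third, at every step the auxiliary maximum and the error amplitude must be shown to be of strictly smaller order than the target $\tfrac1{Ln^{d-1}}$ — which is exactly what the half-space upper bound \eqref{eq:upper half-spacebound below L perco} and Lemma~\ref{lem: estimate E(x)} provide, provided $\eta$ is small and $L$ large.
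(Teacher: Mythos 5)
Your overall strategy is the same as the paper's --- translate $\psi_\beta^{[n]}(\mathbb H_n)$ into an $\ell^1$ lower bound on $\sum_{z\in A_n}\mathbb P_\beta[0\connect{\mathbb H\:}z]$ via Lemma~\ref{lem: petit lemme}, then upgrade to a pointwise bound using the macroscopic Harnack estimate of Proposition~\ref{prop:regularity perco 2}, with error control from Lemma~\ref{lem: estimate E(x)} --- but each regime is treated by a genuinely different argument, and the small-$|x|$ regime contains a step that would fail as written.

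For $|x|\gtrsim L$ the paper takes the \emph{source} scale $n:=\lfloor N/6\rfloor-1$ with $N:=|x|$, so that $A_n$ and $A_N$ both fit inside the single box $\Lambda_N(\tfrac{7N}{6}\mathbf{e}_1)$ while $\Lambda_{13N/12}(\tfrac{7N}{6}\mathbf{e}_1)\subset\mathbb H\setminus\{0\}$; one application of Proposition~\ref{prop:regularity perco 2}, averaged over $A_n$ and combined with the $\ell^1$ lower bound and the $\ell^\infty$ upper bound \eqref{eq:upper half-spacebound below L perco}, then closes the argument. You instead take the source scale $m\asymp n$ of the \emph{same} order as the target, extract a single good point $z^\ast\in A_m$ by pigeonhole (the upper bound you invoke is actually unnecessary: the average alone gives this), and then chain $O_d(1)$ Harnack steps in recentred half-spaces. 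This can be made to work, but note that when $n>L_\beta-1$ your $m\asymp L_\beta$ can be as small as $n/12$, so the early boxes in your chain are confined to radius $\lesssim m$ and cannot take ``consecutive steps of size $\asymp n$''; the chain must first ascend and only then move laterally, and that bookkeeping is exactly what the paper's factor-of-six rescaling renders unnecessary.

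For $|x|\lesssim L$ there is a real gap. The claimed inequality $\mathbb E[N^2]\le\mathbb E[N]+\mathbb E[N]^2$ is false once $\ell:=\lceil n/L\rceil+1\ge 3$: two distinct length-$\ell$ open paths from $0$ to $x$ can share some but not all edges, and for such a pair $\mathbb P[\gamma_1,\gamma_2\text{ open}]=p_\beta^{|\gamma_1\cup\gamma_2|}$ is strictly larger than $\mathbb P[\gamma_1]\,\mathbb P[\gamma_2]$. For $|x|\le L$ one may take $\ell=2$, for which distinct paths automatically are edge-disjoint and your inequality is exact; but for $|x|$ up to $C_1L$ with $C_1$ comparable to the Harnack thresholds $A\vee A'$ one needs $\ell\approx C_1$, and then the partially-overlapping contribution to $\mathbb E[N^2]$ requires a genuine overlap-counting bound (which is plausible for $d>6$ and $L$ large, but is not free). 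The paper avoids the second moment altogether: for small $n$ it iterates the two Simon--Lieb inequalities $T$ times in the spirit of Lemma~\ref{lem: iteration of SL} to reduce to a random-walk hitting probability, with the accumulated error bounded via $\max_w E_\beta(\{w\},\mathbb H_n,w,x)\lesssim L^{-2d}$ from Lemma~\ref{lem: bound non local error singleton perco}, a bound that is uniform in $T$ and dispenses with any path-counting.
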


\begin{proof} Assume that $\varepsilon \in [0,1-e^{-2}]$, and $L\geq L_7$ where $L_7$ is given by Lemma \ref{lem: petit lemme}. Let also $A'$ be given by Lemma \ref{lem: petit lemme}.  First, by translation invariance, for every $A'L\leq n\leq L_\beta(\varepsilon)-1$,
 \begin{align}\label{eq:lower average perco}
\frac1{|A_n|}\sum_{y\in A_n}\mathbb P_\beta[0\connect{\mathbb H\:}y]\geq\frac{1}{|A_n|}\psi_\beta^{[n]}(\mathbb H_n)&\ge \frac{c}{Ln^{d-1}},
\end{align}
where $c>0$ is given by Lemma~\ref{lem: petit lemme}, and where we used that $L_\beta\geq L_\beta(\varepsilon)$. We want to turn this averaged estimate into a point-wise one. 

Fix $x\in A_N$ with $N\leq 6L_\beta(\varepsilon)$ and set $n:=\lfloor N/6\rfloor-1\leq L_\beta(\varepsilon)-1$. Let $\eta>0$ to be fixed. Let $C_{\rm RW}, A, L_6, \varepsilon_0>0$ be given by Proposition \ref{prop:regularity perco 2} with $\alpha=\tfrac{1}{12}$, $\eta$, $N$, and $\Lambda=\mathbb H$. We additionally assume $L\geq L_6$ and $\varepsilon<\varepsilon_0$. We consider two cases according to how large $n$ is.
\paragraph{Case $n\geq (A\vee A')L$.} In this case, we can apply Proposition \ref{prop:regularity perco 2} to get, for every $y\in A_n$,
\begin{align}\notag
	\mathbb P_\beta[0\connect{\mathbb H\:}y]&\leq C_{\rm RW} \mathbb P_\beta[0\connect{\mathbb H\:}x]+\eta \max\Big\{\mathbb P_\beta[0\connect{\mathbb H\:}w]: w_1\geq n/2\Big\}
	\\&+A\max\Big\{{E}_\beta(S, \mathbb H,w,0): w_1\geq n/2, \: S\in \mathcal B,\: (S+w)\subset \Lambda_{13N/12}(\tfrac{7N}{6}\mathbf{e}_1)\Big\},\label{eq:how reg 2 is applied}
\end{align}
see Figure \ref{fig:only use alpha} for an illustration.
\begin{figure}
	\begin{center}
		\includegraphics{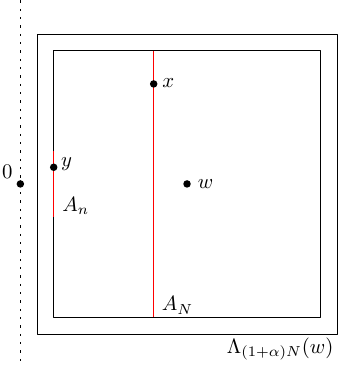}
		\caption{An illustration of how Proposition \ref{prop:regularity perco 2} was applied to obtain \eqref{eq:how reg 2 is applied}. In this picture $w=\frac{7N}{6}\mathbf{e}_1$. The inner box is $\Lambda_N(w)$. The parameter $\alpha$ is chosen in such a way that $\Lambda_{(1+\alpha)N}(w)\subset \{z\in \mathbb H: z_1\geq n/2\}$.}
		\label{fig:only use alpha}
	\end{center}
\end{figure}
Combining the above display with \eqref{eq:lower average perco}, the upper bound from \eqref{eq:upper half-spacebound below L perco}, and Lemma \ref{lem: estimate E(x)} yields
\begin{equation}
	C_{\rm RW}\mathbb P_\beta[0\connect{\mathbb H\:}x]\geq \frac{c}{Ln^{d-1}}-\eta\frac{2^{d-1}C}{Ln^{d-1}}-\frac{AD}{L^{d}}\frac{C}{Ln^{d-1}}.
\end{equation}
Choosing $\eta$ small enough, and then $L$ large enough concludes the proof in that case.
\paragraph{Case $n<(A\vee A')L$.} We now handle the small values of $n$. Fix $x\in \partial\mathbb H_n$ with $|x|\leq n\leq (A\vee A')L$. Set $\varphi:=\varphi_\beta(\{0\})\geq 1$. Let $T\geq 1$ to be fixed. In the spirit of Lemma \ref{lem: iteration of SL}, we iterate $\sigma_x \wedge T$ times \eqref{eq:reversed SL perco} with $S$ being a singleton and $\Lambda=\mathbb H_n$ to obtain,
\begin{equation}\label{eq:lb half 1 small}
	\mathbb P_\beta[0\connect{\mathbb H_n\:}x]\geq \mathbb E^{\rm RW}_{0}[\varphi^{\sigma_x}\mathds{1}_{\sigma_x\leq T,\sigma_x<\tau}]-\mathbb E^{\rm RW}_0\left[\sum_{t=0}^{\sigma_x\wedge T-1}\varphi^t{E}_\beta(\{X_t\},\mathbb H_n,X_t,x)\right],
\end{equation}
where  $\sigma_x$ and $\tau$ are respectively the hitting times of $x$ and the complement of $\mathbb H_n$. Observe that
\begin{align}\notag
	\mathbb E^{\rm RW}_0\left[\sum_{t=0}^{\sigma_x\wedge T-1}\varphi^t{E}_\beta(\{X_t\},\mathbb H_n,X_t,x)\right]&\leq \Big(\sum_{t=0}^{T-1}\varphi^t\Big)\max_{w\in \mathbb H_n\setminus \{x\}}E_\beta(\{w\},\mathbb H_n,w,x)\\&\leq \frac{D_1}{L^{2d}}\Big(\sum_{t=0}^{T-1}\varphi^t\Big),
\end{align}
where we used \eqref{eq:non local term 2} in the second inequality. Since $\beta\geq \beta_0$, we have $\varphi\geq 1$. Now, observe that, since the step distribution of $X$ is uniform over $\Lambda_L^*$, there exists $c_1=c_1(d)>0$ such that
\begin{equation}
	\mathbb P_0^{\rm RW}[\sigma_x\leq T,\sigma_x<\tau \: | \: \{ \exists \ell\leq T: X_{\ell-1}\in \Lambda_{L}(x)\setminus\{x\}, \text{ and } X_k\in \mathbb H_n, \: \forall k\leq \ell-1\}]\geq \frac{c_1}{L^d}.
\end{equation}
As a result,
\begin{equation}
	\mathbb P_0^{\rm RW}[\sigma_x\leq T,\sigma_x<\tau]\geq \frac{c_1}{L^d}\mathbb P_0^{\rm RW}[\exists \ell\leq T: X_{\ell-1}\in \Lambda_{L}(x)\setminus\{x\}, \text{ and } X_k\in \mathbb H_n, \: \forall k\leq \ell-1].
\end{equation}
It remains to show that the probability in the right-hand side of the above display is uniformly (in $L$) bounded from below by a constant $c_2=c_2(A,A',d)>0$ for $T$ large enough. We claim that this follows by a standard random walk computation: since we work with the uniform distribution on $\Lambda^*_L$, it is easy to make the random walk ``drift'' towards $\Lambda_L(x)$ in such a way that each step gets the random walker closer (in infinite norm) to $x$ by a (random) quantity between $L/4$ and $L/2$. Since we assumed that $n\leq (A\vee A')L$ and that $|x|\leq n$, we may find $T=T(A,A',d)>0$ large enough, and $c_2=c_2(A,A',d)>0$ such that 
\begin{equation}
	\mathbb P_0^{\rm RW}[\exists \ell\leq T: X_{\ell-1}\in \Lambda_{L}(x)\setminus\{x\}, \text{ and } X_k\in \mathbb H_n, \: \forall k\leq \ell-1]\geq c_2.
\end{equation}
Plugging the above displayed equations in \eqref{eq:lb half 1 small} yields
\begin{equation}
	\mathbb P_\beta[0\connect{\mathbb H_n\:}x]\geq \frac{c_1c_2}{L^d}-\frac{D_1 T\varphi^T}{L^d}\frac{1}{L^d}.
\end{equation}
Thanks to Proposition \ref{prop: final prop proof mainperco thm}, one has $\varphi\leq 1+\frac{C}{L^d}$. We now choose $L$ large enough so that $\frac{D_1 T \varphi^T}{L^d}\leq \frac{c_1c_2}{2}$ to conclude the proof.
\end{proof}

We now conclude this section with the (quite technical) proof of the lower bound on the full-space two-point function.

\begin{Lem}\label{lem:lower below full-space perco}  There exist $c,\varepsilon_0>0$ and $L_9\geq L_0$ such that for every $L\geq L_9$, every $\varepsilon<\varepsilon_0$, every $\beta_0\le \beta\le \beta_c$, and every $x\in \mathbb Z^d$ with $|x|\leq 5L_\beta(\varepsilon)$,
\begin{equation}\label{eq:lower bound small values full space perco}
\mathbb P_\beta[0\connect{}x]\ge \frac{c}{L^d}\left(\frac{L}{L\vee |x|}\right)^{d-2}.
\end{equation}
\end{Lem}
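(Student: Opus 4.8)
The plan is to deduce the full-space lower bound from the half-space one (Lemma~\ref{lem:lower below half-space perco}) by ``unfolding'' the half-space connection into a full-space connection, at the cost of a bounded multiplicative factor. First I would reduce to the regime $|x|\geq A_0L$ for a large constant $A_0$; the complementary small-scale regime $|x|<A_0L$ can be handled by a direct random walk argument as in the proof of Lemma~\ref{lem:lower below half-space perco} (drift the spread-out walk from $0$ to $x$ in a bounded number of steps, staying in a bounded box, and use the reversed Simon--Lieb inequality of Lemma~\ref{Lem: SL lower bound} iterated $\sigma_x\wedge T$ times together with the non-local error bound \eqref{eq:non local term 2} to absorb the error). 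So the heart of the matter is the large-scale regime $A_0L\leq |x|\leq 5L_\beta(\varepsilon)$.

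For the large-scale regime, by symmetry of the lattice and monotonicity we may rotate so that $x_1=|x|$, and it suffices to produce the bound for a point $y\in A_{|x|}$ (i.e.\ with $y_1=|y|=|x|$) and then transfer it to $x$ via the macroscopic regularity/Harnack estimate of Proposition~\ref{prop:regularity perco 2}. The key step is then: starting from the half-space lower bound $\mathbb P_\beta[0\connect{\mathbb H\:}y]\geq \tfrac{c}{L^d}(L/|y|)^{d-1}$, I want to gain back a factor $(L/|y|)^{-1}=|y|/L$, arriving at $(L/|y|)^{d-2}$ in the full space. The natural way is to note $\mathbb P_\beta[0\connect{}y]\geq \mathbb P_\beta[0\connect{\mathbb H\:}y]$ is too lossy by one power; instead I would decompose a full-space connection from $0$ to $x$ according to, say, the first point $z$ where it reaches distance $|x|/2$ of $x$ in the $\mathbf e_1$ direction, writing via Lemma~\ref{Lem: SL lower bound} (the reversed inequality)
\begin{equation*}
	\mathbb P_\beta[0\connect{}x]\geq \sum_{\substack{y\in \Lambda\\ z\notin\Lambda}}\mathbb P_\beta[0\connect{\Lambda\:}y]p_{yz,\beta}\mathbb P_\beta[z\connect{}x]-E_\beta(\Lambda,\mathbb Z^d,0,x),
\end{equation*}
with $\Lambda$ a suitable half-space-like slab so that the first factor is controlled from below by the averaged half-space estimate over a $(d-1)$-dimensional sphere of radius $\asymp|x|$ (which contributes $\asymp|x|^{d-1}$ points, each of weight $\asymp \tfrac{c}{L^d}(L/|x|)^{d-1}$, i.e.\ total $\asymp \tfrac{c}{L}$), while the second factor $\mathbb P_\beta[z\connect{}x]$ is bounded below — for $z$ at distance $\asymp|x|$ from $x$ — by the already-established estimate \eqref{eq:opperco} applied at scale $|x|$, giving $\asymp \tfrac{c}{L^d}(L/|x|)^{d-1}$ again; summing gives the right order $\tfrac{c}{L^2}|x|^{-(d-2)}\cdot\tfrac{1}{|x|^{?}}$ — here I must be careful with the bookkeeping of the number of relevant $(y,z)$ pairs so that the two factors combine to exactly $(L/|x|)^{d-2}$ rather than $(L/|x|)^{2(d-1)-1}$. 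The cleanest bookkeeping: choose $\Lambda$ so that $0$ is near one face, use Lemma~\ref{lem: petit lemme}-type lower bounds $\psi^{[n]}_\beta(\mathbb H_n)\geq c/L$ at the single scale $n\asymp|x|$ for the first factor (this already packages the $|x|^{d-1}$ summation), note $p_{yz,\beta}\asymp L^{-d}$ and the relevant $z$'s number $\asymp L$, and for each such $z$ use \eqref{eq:opperco} to lower bound $\mathbb P_\beta[z\connect{}x]$ by $\tfrac{c}{L^2}|x|^{-(d-2)}$ via the full-space version — but \eqref{eq:opperco} is only half-space, so instead I'd iterate: first prove a full-space analogue $\mathbb P_\beta[0\connect{}x]\geq \tfrac{c}{L^2}|x|^{-(d-2)}$ for $|x|\asymp L$ (base case from the small-scale argument), then bootstrap up dyadically using the reversed Simon--Lieb inequality between scale $|x|/2$ and $|x|$, at each step the error $E_\beta$ being controlled by Proposition~\ref{lem: error amplitude perco}/Lemma~\ref{lem: estimate E(x)} to be a small fraction ($O(L^{-d})$ relative) of the main term.

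The main obstacle I anticipate is controlling the error term $E_\beta(\Lambda,\mathbb Z^d,0,x)$ (or its summed/averaged variants) so that it is genuinely negligible compared to the main term $\asymp \tfrac{c}{L^2}|x|^{-(d-2)}$ at \emph{every} scale in the dyadic bootstrap; the bounds available (Proposition~\ref{lem: error amplitude perco}, Lemma~\ref{lem: estimate E(x)}, Lemma~\ref{lem: bound non local error singleton perco}) give error amplitudes of order $L^{-d}$ times the relevant two-point scale, so one needs the main term to dominate by a factor $\gg L^{-d}$, which forces $L$ large (fine) but requires care in how the error is distributed over the $(y,z)$ and $v$ summations — in particular one must avoid losing a factor $|x|^{d}$ or similar from the volume of summation. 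A secondary technical point is ensuring the regularity transfer via Proposition~\ref{prop:regularity perco 2} is one-sided in the right direction (it gives $\max_w \leq C_{\rm RW}\min_w + \text{errors}$, so applied with the roles arranged so that the averaged lower bound over $A_n$ forces a pointwise lower bound at $x$), and that the hypothesis $n\le 6L_\beta(\varepsilon)$ there is met since $|x|\le 5L_\beta(\varepsilon)$ and we only ever use scales $\asymp|x|$. Once these are in place the lemma follows by combining the small-scale base case, the dyadic bootstrap, and a final application of the half-space lower bound plus regularity to cover all $|x|\le 5L_\beta(\varepsilon)$.
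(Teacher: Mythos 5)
Your plan identifies the right ingredients (the half-space lower bound of Lemma~\ref{lem:lower below half-space perco} and the reversed Simon--Lieb inequality of Lemma~\ref{Lem: SL lower bound}), and you correctly diagnose that a single application of the reversed inequality at one scale is short by a power of $|x|/L$. But the fix you propose --- a dyadic bootstrap from scale $m$ to $2m$ --- has a genuine gap. In a bootstrap step with $S=\Lambda_m$ (or any generalised block $B\ni 0$ of radius $\asymp m$) and $\Lambda=\mathbb Z^d$, the exit points $z$ sit on the whole shell around $\partial B$ and therefore at distances $\in[|x|-m,\,|x|+m]$ from $x$; restricting to the face of $B$ closest to $x$ costs a factor $\asymp 1/(2d)$ per step, and the surviving $z$ still lie at distance $\approx |x|-m$, which is comparable to $|x|$, not to $m$, so the induction hypothesis at the smaller scale does not apply to $\mathbb P_\beta[z\connect{}x]$. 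Taking $S=\mathbb H_m$ instead avoids the face dichotomy, but then one application at $m\asymp|x|$ only produces the half-space exponent $(L/|x|)^{d-1}$ times $\varphi_\beta(\mathbb H_m)\approx 1$ --- exactly the power you identified as missing. And even if a dyadic iteration could be arranged geometrically, the $\varphi_\beta(B)\leq 1$ factors and the positional constants compound over $\log(|x|/L)$ scales, which destroys the constant $c$ uniformly.

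The paper avoids iteration entirely through a single telescoping identity: it writes $\mathbb P_\beta[0\connect{}x]=\mathbb P_\beta[0\connect{\mathbb H\:}x]+\sum_{k\geq 0}\big(\mathbb P_\beta[0\connect{\mathbb H_{k+1}\:}x]-\mathbb P_\beta[0\connect{\mathbb H_k\:}x]\big)$ and applies Lemma~\ref{Lem: SL lower bound} to \emph{each} increment with $S=\mathbb H_k$, $\Lambda=\mathbb H_{k+1}$. For every $k\leq n\wedge(L_\beta(\varepsilon)-1)$, the main term is bounded below using $\varphi_\beta(\Lambda_k)\geq 1-\varepsilon$ (which holds \emph{uniformly in $k$} on this range, by the definition of $L_\beta(\varepsilon)$ --- no Lemma~\ref{lem: petit lemme} needed) together with the half-space lower bound $\mathbb P_\beta[z\connect{\mathbb H_{k+1}\:}x]\gtrsim\frac{1}{L(n+k)^{d-1}}$ from Lemma~\ref{lem:lower below half-space perco}. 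Summing these $\asymp n$ contributions of size $\asymp\frac{1}{L^2(n+k)^{d-1}}$ yields $\asymp\frac{1}{L^2 n^{d-2}}$, gaining the missing power in one pass with no scale recursion and no constant accumulation. The error terms $\sum_{k\geq 0}E_\beta(\mathbb H_k,\mathbb H_{k+1},0,x)$ are then estimated directly (this is the long but mechanical part of the proof), each contributing a relative $O(L^{-d})$; Proposition~\ref{prop:regularity perco 2} is not used here --- it is needed only inside the proof of the half-space Lemma~\ref{lem:lower below half-space perco}. The missing idea in your sketch is precisely this simultaneous summation over all half-spaces $\mathbb H_k$, which is what converts the half-space exponent $d-1$ into the full-space exponent $d-2$.
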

\begin{proof} We fix $\beta\in [\beta_0,\beta_c]$ and drop it from the notation. The constants $C_i>0$ only depend on $d$. Let $L_8, \varepsilon_0$ be given by Lemma \ref{lem:lower below half-space perco}. We will choose $L_9\geq L_8$ even larger below. Let $L\geq L_8$ and $\varepsilon<\varepsilon_0$. Let $|x|\leq 5L_\beta(\varepsilon)$. By symmetry, we may consider $x\in A_n$, where $n=|x|\geq 1$. 

We begin by assuming that $n\geq 4L$. By \eqref{eq:reversed SL perco} applied to $S=\mathbb H_k$ and $\Lambda=\mathbb H_{k+1}$,
\begin{multline}\label{eq: lower bound typical full space perco}
\mathbb P[0\connect{}x]=\mathbb P[0\connect{\mathbb H\:}x]+\sum_{k\geq 0}\Big(\mathbb P[0\connect{\mathbb H_{k+1}\:}x]-\mathbb P[0\connect{\mathbb H_k\:}x]\Big)
\\\ge \sum_{k\ge 0}\sum_{\substack{y\in \mathbb H_k,\\ z\in \mathbb H_{k+1}\setminus\mathbb H_k}}\mathbb P[0\connect{\mathbb H_k\:}y]p_{yz} \mathbb P[z\connect{\mathbb H_{k+1}\:}x]-\sum_{k\geq 0}E(\mathbb H_k,\mathbb H_{k+1},0,x).
\end{multline}
Looking at the first sum, we see that
\begin{align}
 	\sum_{k\ge 0}\sum_{\substack{y\in \mathbb H_k,\\ z\in \mathbb H_{k+1}\setminus\mathbb H_k}}\mathbb P&[0\connect{\mathbb H_k\:}y]p_{yz} \mathbb P[z\connect{\mathbb H_{k+1}\:}x]\notag
 	\\&\stackrel{\phantom{\eqref{eq:opperco}}}\geq \sum_{k\ge 0}\sum_{\substack{y\in \mathbb H_k,\\ z\in \mathbb H_{k+1}\setminus\mathbb H_k}}\mathbb P[0\connect{\mathbb H_k\:}y]p_{yz}\min\{\mathbb P[z\connect{\mathbb H_{k+1}\:}x]:-z_1=|z|=k+1\}\notag
 	\\&\stackrel{\eqref{eq:opperco}}\geq \sum_{k=0}^{n\wedge (L_\beta(\varepsilon)-1)}\sum_{\substack{y\in \mathbb H_k,\\ z\in \mathbb H_{k+1}\setminus\mathbb H_k}}\mathbb P[0\connect{\mathbb H_k\:}y]p_{yz}\frac{c}{L}\frac{1}{(n+k+1)^{d-1}}\notag
 	\\&\stackrel{\phantom{\eqref{eq:opperco}}}\geq \sum_{k=0}^{n\wedge (L_\beta(\varepsilon)-1)}\frac1{2dL}\varphi(\Lambda_k)\frac{c}{L(n+k+1)^{d-1}}\notag
 	\\&\stackrel{\phantom{\eqref{eq:opperco}}}\geq \frac{c_1}{L^2n^{d-2}},\label{eq: lower bound full space good perco}
\end{align}
where on the third inequality, we used that by symmetry,\begin{align}\notag
	\varphi_\beta(\Lambda_k)\leq 2d\sum_{\substack{y\in \Lambda_k\\z\notin \mathbb H_k}}\mathbb P[0\connect{\Lambda_k\:}y]p_{yz}&\leq 2dL p|[-L,L]^{d-1}|\sum_{\substack{y\in \Lambda_k\\\mathrm{d}(y,\mathbb H_k^c)\leq L}}\mathbb P[0\connect{\Lambda_k\:}y]\\&=2dL \sum_{\substack{y \in \mathbb H_k\\z\in \mathbb H_{k+1}\setminus \mathbb H_k}}\mathbb P[0\connect{\mathbb H_k\:}y]p_{yz},
\end{align}
and where $c_1=c_1(d)>0$ and where we used the bound $\varphi(\Lambda_k)\geq 1-\varepsilon$ provided by the condition $k\leq n\wedge(L_\beta(\varepsilon)-1)$.
It remains to control the ``error'' term in \eqref{eq: lower bound typical full space perco}. We proceed as in \cite[Lemma~3.6]{DumPan24WSAW}. Notice that
\begin{equation}
	\sum_{k\geq 0}E(\mathbb H_k,\mathbb H_{k+1},0,x)=(I)+(II),
\end{equation}
where 
\begin{equation}
	(I):= \sum_{k\geq 0}\sum_{u,v\in \mathbb H_k}\sum_{\substack{y\in \mathbb H_k\\z\in \mathbb H_{k+1}\setminus \mathbb H_k}}\mathbb P[0\connect{\mathbb H_k\:}u]\mathbb P[u\connect{\mathbb H_k\:}y]p_{yz}\mathbb P[z\connect{\mathbb H_{k+1}\:}v]\mathbb P[u\connect{\mathbb H_{k+1}\:}v]\mathbb P[v\connect{\mathbb H_{k+1}\:}x],
\end{equation}
\begin{multline}
	(II):=\sum_{k\geq 0}\sum_{\substack{u\in \mathbb H_k\\v\in \mathbb H_{k+1}}}\sum_{\substack{y,s\in \mathbb H_k\\z,t\in \mathbb H_{k+1}\setminus \mathbb H_k}}\mathbb P[0\connect{\mathbb H_k\:}u]\mathbb P[u\connect{\mathbb H_k\:}y]\mathbb P[u\connect{\mathbb H_k\:}s]p_{yz}p_{st}\\\times\mathbb P[z\connect{\mathbb H_{k+1}\:}v]\mathbb P[t\connect{\mathbb H_{k+1}\:}v]\mathbb P[v\connect{\mathbb H_{k+1}\:}x].
\end{multline}
See Figure \ref{fig:percolbfullspace} for a diagrammatic representation of $(I)$ and $(II)$.
\begin{figure}
	\begin{center}
		\includegraphics{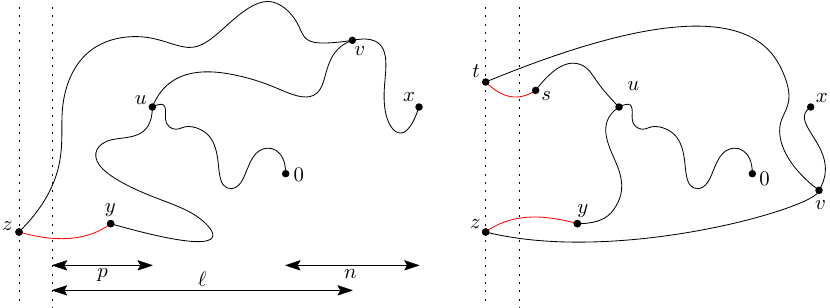}
				\put(-145,10){$\partial\mathbb H_k$}
		\caption{On the left, a diagrammatic representation of $(I)$. On the right, a diagrammatic representation of $(II)$. The red lines corresponds to long edges jumping out of $\mathbb H_k$.}
		\label{fig:percolbfullspace}
	\end{center}
\end{figure}
\paragraph{Bound on $(I)$.} We begin by controlling the contribution in $(I)$ coming from $v\in \partial\mathbb H_{k-\ell}$ with $\ell\geq (n+k)/2$. We write
\begin{align}
	(I)_1&:=\sum_{\substack{k\ge0\\ \ell\geq (n+k)/2}}\sum_{\substack{u\in \mathbb H_{k}\\v\in \partial\mathbb H_{k-\ell}\\y\in \mathbb H_k\\z\in \partial\mathbb H_{k+1}}}\mathbb P[0\connect{\mathbb H_k\:}u]\mathbb P[u\connect{\mathbb H_k\:}y]p_{yz}\mathbb P[z\connect{\mathbb H_{k+1}\:}v]\mathbb P[u\connect{\mathbb H_{k+1}\:}v]\mathbb P[v\connect{\mathbb H_{k+1}\:}x]\notag
	\\&\leq \frac{C_1}{L}\sum_{\substack{k\ge0\\ \ell\geq (n+k)/2}}\frac{C}{L^d}\Big(\frac{L}{L\vee (\ell+1)}\Big)^{d-1}\sum_{\substack{u\in \mathbb H_k\\v\in \partial\mathbb H_{k-\ell}}}\mathbb P[0\connect{\mathbb H_k\:}u]\mathbb P[u\connect{\mathbb H_{k+1}\:}v]\mathbb P[v\connect{\mathbb H_{k+1}\:}x],\label{eq:bound error lower bound full plane perco 1}
\end{align}
where we used Proposition \ref{prop: final prop proof mainperco thm} to bound $\mathbb P[z\connect{\mathbb H_{k+1}\:}v]$, together with \eqref{eq: bound volume times p} and \eqref{eq: sum over half plane at distance k from half plane perco} to argue that
\begin{equation}\label{eq:bound error lower bound full plane perco 1.5}
	\sum_{\substack{y\in \mathbb H_k\\z\in \partial\mathbb H_{k+1}}}\mathbb P [u\connect{\mathbb H_k\:}y]p_{yz}\leq \frac{|\Lambda_L|p_\beta}{2L+1}\sum_{m=0}^L\sum_{y\in \partial\mathbb H_{k-m}}\mathbb P[u\connect{\mathbb H_k\:}y]\leq \frac{C_1}{L}.
\end{equation}
Now, observe that
\begin{align}\notag
	\sum_{\ell\geq (n+k)/2}\frac{C}{L^d}\Big(\frac{L}{L\vee (\ell+1)}\Big)^{d-1}&\sum_{\substack{u\in \mathbb H_k\\v\in \partial\mathbb H_{k-\ell}}}\mathbb P[0\connect{\mathbb H_k\:}u]\mathbb P[u\connect{\mathbb H_{k+1}\:}v]\mathbb P[v\connect{\mathbb H_{k+1}\:}x]
	\\&\leq \frac{C_2}{L(n+k)^{d-1}}\sum_{u,v\in \mathbb Z^d}\mathbb P[0\connect{}u]\mathbb P[u\connect{}v]\mathbb P[v\connect{}x]\notag
	\\&\leq \frac{C_3}{L(n+k)^{d-1}}\frac{1}{L^d},\label{eq:bound error lower bound full plane perco 2}
\end{align}
where we used Proposition \ref{prop: final prop proof mainperco thm} to argue that, since $x\neq 0$, one has
\begin{equation}
	C_2\sum_{u,v\in \mathbb Z^d}\mathbb P[0\connect{}u]\mathbb P[u\connect{}v]\mathbb P[v\connect{}x]\leq  C_3 L^{-d}.
\end{equation}
Combining \eqref{eq:bound error lower bound full plane perco 1} and \eqref{eq:bound error lower bound full plane perco 2}, we obtain that 
\begin{equation}
	(I)_1\leq \frac{C_1C_3}{L^2}\frac{1}{L^d}\sum_{k\geq 0} \frac{1}{(n+k)^{d-1}}\leq \frac{C_4}{L^d}\frac{1}{L^2n^{d-2}}.\label{eq: I 1}
\end{equation}
We now bound
\begin{equation}
	(I)_2:=\sum_{\substack{k\ge0\\ \ell<(n+k)/2}}\sum_{\substack{u\in \partial\mathbb H_{k}\\v\in \partial \mathbb H_{k-\ell}\\y\in \mathbb H_k\\z\in \partial\mathbb H_{k+1}}}\mathbb P[0\connect{\mathbb H_k\:}u]\mathbb P[u\connect{\mathbb H_k\:}y]p_{yz}\mathbb P[z\connect{\mathbb H_{k+1}\:}v]\mathbb P[u\connect{\mathbb H_{k+1}\:}v]\mathbb P[v\connect{\mathbb H_{k+1}\:}x].
\end{equation}
For this, we also decompose the sum according to the position of $u$ and introduce $p\geq 0$ such that $u\in \partial \mathbb H_{k-p}$.
If $\ell<(n+k)/2$, Lemma \ref{lem: distance k from half-space} implies that
\begin{equation}\label{eq:bound error lower bound full plane perco 3}
	\max_{v\in \partial \mathbb H_{k-\ell}}\mathbb P[v\connect{\mathbb H_{k+1}\:}x]\leq \frac{C_5(\ell+L)}{L^2(n+k)^{d-1}}.
\end{equation}
 Hence,
 \begin{multline}\label{eq:bound error lower bound full plane perco 4}
 	(I)_2\leq \frac{C_5}{L^2}\sum_{k\geq 0}\frac{1}{(n+k)^{d-1}}\sum_{p \geq 0}\sum_{u\in \partial \mathbb H_{k-p}}\mathbb P[0\connect{\mathbb H_k\:}u]\sum_{\substack{y\in \mathbb H_k\\z \in \partial \mathbb H_{k+1}}}\mathbb P[u\connect{\mathbb H_k\:}y]p_{yz}\\\times\sum_{\ell\geq 0}(\ell+L)\sum_{v\in \partial \mathbb H_{k-\ell}}\mathbb P[z\connect{\mathbb H_{k+1}\:}v]\mathbb P[u\connect{\mathbb H_{k+1}\:}v].
 \end{multline}
 We now bound the sum over $\ell$ in this right-hand side of \eqref{eq:bound error lower bound full plane perco 4}. We split the sum into two contributions: either $\ell \geq p/2$ or $\ell <p/2$. 
 
On the one hand, if $\ell \geq p/2$, we use again Proposition \ref{prop: final prop proof mainperco thm} to bound $\mathbb P[z\connect{\mathbb H_{k+1}\:}v]$, and obtain
  \begin{equation}
 	\sum_{v\in \partial\mathbb H_{k-\ell}}\mathbb P[z\connect{\mathbb H_{k+1}\:}v]\mathbb P[u\connect{\mathbb H_{k+1}\:}v]\leq \frac{C}{L^d}\left(\frac{L}{L\vee (\ell+1)}\right)^{d-1}\Big(\mathds{1}_{p=\ell}+\frac{2dC^2}{L^2}(p+1+L)\Big),
 \end{equation}
 where we additionally used \eqref{eq: sum over half plane at distance k from half plane perco} and translation invariance to get
 \begin{align}\notag
 	\sum_{v\in \partial\mathbb H_{k-\ell}}\mathbb P[u\connect{\mathbb H_{k+1}\:}v]&=\sum_{u'\in \partial\mathbb H_{k-p}}\mathbb P[u'\connect{\mathbb H_{k+1}\:}(-k+\ell)\mathbf{e}_1]\\&=\sum_{u''\in \partial \mathbb H_{\ell+1-(p+1)}}\mathbb P[u''\connect{\mathbb H_{\ell+1}\:}0]\notag
 	\\&\leq \mathds{1}_{p=\ell}+\frac{2dC^2}{L^2}(p+1+L).
 \end{align}
 Then, we write
 \begin{equation}
 	\sum_{\ell\geq p/2}(\ell +L)\frac{C}{L^d}\left(\frac{L}{L\vee (\ell+1)}\right)^{d-1}\left(\mathds{1}_{p=\ell}+\frac{2dC^2}{L^2}(p+1+L)\right)\leq \frac{C_7}{L^3}\left(\frac{1}{L\vee (p+1)}\right)^{d-4},\label{eq:bound error lower bound full plane perco 5}
 \end{equation}
 where we used that for all $m\geq 0$, one has $(m+L)\leq 2(L\vee (m+1))$, to get that the left-hand side in the above display is bounded by
 \begin{equation}
 	\frac{2C}{L}\left(\frac{1}{L\vee (p+1)}\right)^{d-2}+\frac{8dC^3}{L^3}(L\vee (p+1))\sum_{\ell \geq p/2}\left(\frac{1}{L\vee (\ell+1)}\right)^{d-2}.
 \end{equation}
On the other hand, using Proposition \ref{prop: final prop proof mainperco thm} twice, one time to get $\sum_{v\in \partial \mathbb H_{k-\ell}}\mathbb P[z\connect{\mathbb H_k\:}v]\leq \frac{C}{L}$, and a second time to get $\mathbb P[u\connect{\mathbb H_{k+1}\:}v]\leq \mathbb P[u\connect{}v]\leq \frac{C}{L^d}\left(\frac{L}{L\vee (p/2)}\right)^{d-2}$, we obtain
\begin{align}
\sum_{0\leq \ell<p/2}(\ell+L)\sum_{v\in \partial \mathbb H_{k-\ell}}\mathbb P[z\connect{\mathbb H_{k+1}\:}v]\mathbb P[u\connect{\mathbb H_{k+1}\:}v]&\leq \frac{C}{L}(p/2)(p/2+L)\frac{C}{L^d}\left(\frac{L}{L\vee (p/2)}\right)^{d-2}\notag
\\&\leq \frac{C_8}{L^3}\left(\frac{1}{L\vee (p+1)}\right)^{d-4}.\label{eq:bound error lower bound full plane perco 6}
\end{align}
Combining \eqref{eq:bound error lower bound full plane perco 5} and \eqref{eq:bound error lower bound full plane perco 6}, we get
\begin{equation}
	\sum_{\ell\geq 0}(\ell+L)\sum_{v\in \partial \mathbb H_{k-\ell}}\mathbb P[z\connect{\mathbb H_{k+1}\:}v]\mathbb P[u\connect{\mathbb H_{k+1}\:}v]\leq \frac{C_9}{L^3}\left(\frac{1}{L\vee (p+1)}\right)^{d-4}.
\end{equation}
Plugging the above displayed equation in \eqref{eq:bound error lower bound full plane perco 4}, and using \eqref{eq:bound error lower bound full plane perco 1.5} yields
\begin{align}
	(I)_2&\leq \frac{C_{5}}{L^2}\frac{C_9}{L^3}\frac{C_1}{L}\sum_{k\geq 0}\frac{1}{(n+k)^{d-1}}\sum_{p\geq 0}\left(\frac{1}{L\vee (p+1)}\right)^{d-4}\left(\mathds{1}_{p=k}+\frac{2dC^2}{L^2}(p+L)\right)\notag
	\\&\leq \frac{C_{10}}{L^d}\frac{1}{L^2n^{d-2}},\label{eq: I 2}
\end{align}
where we used that $d>6$.
\paragraph{Bound on $(II)$.} Again, we decompose on the values of $\ell,p$ such that $v\in \mathbb H_{k+1-\ell}$ and $u\in \mathbb H_{k-p}$. 

As above, we begin by considering the contribution $(II)_1$ coming from $\ell \geq (n+k)/2$ and $p\leq \ell$. One has,
\begin{multline}\label{eq:bound error lower bound full plane perco 6}
	(II)_1\leq \frac{C_1^2}{L^2}\sum_{k\geq 0}\frac{C2^{d-1}}{L(n+k)^{d-1}}\sum_{\ell\geq (n+k)/2}\frac{C}{L(\ell+1)^{d-1}}\\\times\sum_{p\leq \ell}\sum_{u\in \partial \mathbb H_{k-p}}\mathbb P[0\connect{\mathbb H_k\:}u]\sum_{v\in \partial\mathbb H_{k+1-\ell}}\mathbb P[v\connect{\mathbb H_{k+1}\:}x],
\end{multline}
where we used Proposition \ref{prop: final prop proof mainperco thm} to argue that (recall that $n\geq 4L$)
\begin{align}
	\mathbb P[z\connect{\mathbb H_{k+1}\:}v]&\leq \frac{C2^{d-1}}{L(n+k)^{d-1}}, \\
	\mathbb P[t\connect{\mathbb H_{k+1}\:}v]&\leq \frac{C}{L^d}\left(\frac{L}{L\vee (\ell +1)}\right)^{d-1}=\frac{C}{L(\ell+1)^{d-1}},
\end{align}
 together with \eqref{eq:bound error lower bound full plane perco 1.5} twice to argue that 
 \begin{equation}\label{eq:bound error lower bound full bonus 1}
 	\sum_{\substack{y\in \mathbb H_k\\z\in \partial\mathbb H_{k+1}}}\mathbb P[u\connect{\mathbb H_k\:}y]p_{yz}\leq \frac{C_1}{L}, \quad \sum_{\substack{s\in \mathbb H_k\\t\in \partial\mathbb H_{k+1}}}\mathbb P[u\connect{\mathbb H_k\:}s]p_{st}\leq \frac{C_1}{L}.
 \end{equation}
Now, observe that by \eqref{eq: sum over half plane at distance k from half plane perco}
\begin{equation}\label{eq:bound error lower bound full plane perco 7}
	\sum_{v\in \partial \mathbb H_{k+1-\ell}}\mathbb P[v\connect{\mathbb H_{k+1}\:}x]\leq \mathds{1}_{\ell=n+k+1}+\frac{2dC^2}{L^2}(\ell+L),
\end{equation}
and 
\begin{equation}\label{eq:bound error lower bound full plane perco 8}
	\sum_{u\in \partial \mathbb H_{k-p}}\mathbb P[0\connect{\mathbb H_k\:}u]\leq \mathds{1}_{p=k}+\frac{2dC^2}{L^2}(p+L).
\end{equation}
Then, observe that $\sum_{p\leq \ell}\left(\mathds{1}_{p=k}+\frac{2dC^2}{L^2}(p+L)\right)\leq 1+\frac{4dC^2}{L^2}(\ell+1)^2$ (where we used that $\ell\geq (n+k)/2\geq L$). Moreover,
\begin{equation}
	\sum_{\ell\geq (n+k)/2}\frac{C}{L(\ell+1)^{d-1}}\left(\mathds{1}_{\ell=n+k}+\frac{2dC^2}{L^2}(\ell+L)\right)\left(1+\frac{4dC^2}{L^2}(\ell+1)^2\right)\leq \frac{C_{11}}{L^d}.
\end{equation}
Plugging the previously displayed equation in \eqref{eq:bound error lower bound full plane perco 6} gives
\begin{equation}\label{eq: II 1}
	(II)_1\leq \frac{C_{12}}{L^d}\frac{1}{L^2n^{d-2}}.
\end{equation}
We now bound the contribution $(II)_2$ coming from $\ell\geq (n+k)/2$ and $p>\ell$. This time, observe that
\begin{multline}
	(II)_2\leq 4\frac{C_1}{L}\frac{C}{L}\sum_{k\geq 0} \frac{C2^{d-1}}{L(n+k)^{d-1}}\\\times\sum_{\ell \geq (n+k)/2}\left(\mathds{1}_{\ell=n+k}+\frac{2dC^2}{L^2}(\ell+L)\right)\sum_{p>\ell}\frac{C2^{d-1}}{L p^{d-1}}\left(\mathds{1}_{p=k}+\frac{2dC^2}{L^2}(p+L)\right),
\end{multline}
where this time we used \eqref{eq: half plane at distance $k$ from half space perco} to get that (since $p>\ell\geq 2L$)
\begin{equation}
	\mathbb P[u\connect{\mathbb H_k\:}y]\leq\frac{C^2}{L(p-L)^{d-1}}\leq \frac{C^22^{d-1}}{Lp^{d-1}},
\end{equation}
 then we used \eqref{eq: bound volume times p} to get $\sum_{y\in \mathbb Z^d}p_{yz}\leq 4$, \eqref{eq:bound error lower bound full plane perco 1.5} again to get $\sum_{\substack{s\in \mathbb H_k\\t\in \partial\mathbb H_{k+1}}}\mathbb P[u\connect{\mathbb H_k\:}s]p_{st}\leq \frac{C_1}{L}$, Proposition \ref{prop: final prop proof mainperco thm} to get 
 \begin{equation}
 	\sum_{z\in \partial\mathbb H_{k+1}}\mathbb P[z\connect{\mathbb H_{k+1}\:}v]\leq \frac{C}{L},\quad \mathbb P[t\connect{\mathbb H_{k+1}\:}v]\leq \frac{C2^{d-1}}{L(n+k)^{d-1}},
 \end{equation}
and we where we used \eqref{eq:bound error lower bound full plane perco 7} to bound $\sum_{v\in \partial \mathbb H_{k+1-\ell}}\mathbb P[v\connect{\mathbb H_{k+1}\:}x]$ and \eqref{eq:bound error lower bound full plane perco 8} to bound $\sum_{u\in \partial\mathbb H_{k-p}}\mathbb P[0\connect{\mathbb H_k\:}u]$.
 Again, one may check that
 \begin{equation}\label{eq: II 2}
 	(II)_2\leq \frac{C_{13}}{L^d}\frac{1}{L^2n^{d-2}}.
 \end{equation}

We now turn to the contribution $(II)_3$ in $(II)$ coming from $\ell <(n+k)/2$ and $p\leq \ell$. Observe that,
\begin{multline}\label{eq:bound error lower bound full plane perco 9}
	(II)_3\leq \frac{C_1^2}{L^2}\left(1+\frac{C}{L}\right)\sum_{k\geq 0}\frac{C^2 2^{d-1}}{L^2(n+k)^{d-1}}\\\times\sum_{\ell <(n+k)/2}(\ell+L)\frac{C}{L^d}\left(\frac{L}{L\vee (\ell+1)}\right)^{d-1}\left(1+\frac{4dC^2}{L^2}(\ell+1)^2\right),
\end{multline}
where we used \eqref{eq: half plane at distance $k$ from half space perco} to argue that
\begin{equation}\label{eq:bound error lower bound full bonus 2}
	\mathbb P[v\connect{\mathbb H_{k+1}\:}x]\leq \frac{C^2(\ell+L)}{L^{d+1}}\left(\frac{L}{L\vee (n+k-\ell)}\right)^{d-1}\leq \frac{C^2(\ell+L)2^{d-1}}{L^2(n+k)^{d-1}},
\end{equation}
then we used Proposition \ref{prop: final prop proof mainperco thm} to get that 
\begin{equation}
	\sum_{v\in \partial \mathbb H_{k+1-\ell}}\mathbb P[z\connect{\mathbb H_{k+1}\:}v]\leq \mathds{1}_{\ell=0}+\frac{C}{L}, \quad \mathbb P[t\connect{\mathbb H_{k+1}\:}v]\leq \frac{C}{L^d}\left(\frac{L}{L\vee (\ell+1)}\right)^{d-1},
\end{equation}
then \eqref{eq:bound error lower bound full bonus 1}, and the inequality below \eqref{eq:bound error lower bound full plane perco 8} to get $\sum_{p\leq \ell}\sum_{u\in \partial\mathbb H_{k-p}}\mathbb P[0\connect{\mathbb H_k\:}u]\leq 1+\frac{4dC^2}{L^2}(\ell+1)^2$. From \eqref{eq:bound error lower bound full plane perco 9} one finds that
\begin{equation}\label{eq: II 3}
	(II)_3\leq \frac{C_{14}}{L^d}\frac{1}{L^2n^{d-2}}.
\end{equation}

We are almost done. It remains to control the contribution $(II)_4$ coming from $\ell <(n+k)/2$ and $p>\ell$. We write,
\begin{multline}\label{eq:bound error lower bound full plane perco 10}
	(II)_4\leq 4\frac{C_1}{L}\sum_{k\geq 0}\frac{C^22^{d-1}}{L^2(n+k)^{d-1}}\sum_{\ell <(n+k)/2}(\ell+L)\frac{C}{L^d}\left(\frac{L}{L\vee (\ell+1)}\right)^{d-1}\left(\mathds{1}_{\ell=0}+\frac{C}{L}\right)\\\times\sum_{p>\ell}\frac{C^2}{L^d}\left(\frac{L}{L\vee (p-L)}\right)^{d-1}\left(\mathds{1}_{k=p}+\frac{2dC^2}{L^2}(p+L)\right),
\end{multline}
where we used \eqref{eq:bound error lower bound full bonus 2}, \eqref{eq: half plane at distance $k$ from half space perco} to argue that $\mathbb P[u\connect{\mathbb H_k\:}y]\leq\frac{C^2}{L^d}\left(\frac{L}{L\vee (p-L)}\right)^{d-1}$, \eqref{eq: bound volume times p} to get $\sum_{y}p_{yz}\leq 4$, Proposition \ref{prop: final prop proof mainperco thm} to get 
\begin{equation}
	\sum_{z\in \partial\mathbb H_{k+1}}\mathbb P[z\connect{\mathbb H_{k+1}\:}v]\leq \mathds{1}_{\ell=0}+\frac{C}{L},\quad \mathbb P[t\connect{\mathbb H_{k+1}\:}v]\leq \frac{C}{L^d}\left(\frac{L}{L\vee (\ell+1)}\right)^{d-1},
\end{equation}
then \eqref{eq:bound error lower bound full plane perco 1.5} to get $\sum_{\substack{s\in \mathbb H_k\\t\in \partial\mathbb H_{k+1}}}\mathbb P[u\connect{\mathbb H_k\:}s]p_{st}\leq \frac{C_1}{L}$, and finally \eqref{eq:bound error lower bound full plane perco 8} to get
\begin{equation}
\sum_{u\in \partial\mathbb H_{k-p}}\mathbb P[0\connect{\mathbb H_k\:}u]\leq \mathds{1}_{k=p}+\frac{2dC^2}{L^2}(p+L).\end{equation} Using \eqref{eq:bound error lower bound full plane perco 10} we may show that
\begin{equation}\label{eq: II 4}
 (II)_4\leq \frac{C_{15}}{L^d}\frac{1}{L^2n^{d-2}}.
\end{equation}
Plugging \eqref{eq: lower bound full space good perco}, \eqref{eq: I 1}, \eqref{eq: I 2}, \eqref{eq: II 1}, \eqref{eq: II 2}, \eqref{eq: II 3}, and \eqref{eq: II 4} in \eqref{eq: lower bound typical full space perco} gives
\begin{equation}
	\mathbb P[0\connect{}x]\geq \frac{c_1}{L^2n^{d-2}}-\frac{C_4+C_{10}+C_{12}+C_{13}+C_{14}+C_{15}}{L^d}\frac{1}{L^2n^{d-2}}.
\end{equation}
The proof for $n\geq 4L$ follows from choosing $L_9$ large enough so that 
\begin{equation}
\frac{C_4+C_{10}+C_{12}+C_{13}+C_{14}+C_{15}}{L_9^d}\leq \frac{c_1}{2},
\end{equation}
and setting $c=c_1/2$.

If $n<4L$, we observe that $\mathbb P[0\connect{}x]\geq \mathbb P[0\connect{\mathbb H\:}x]$ and use Lemma \ref{lem:lower below half-space perco}. This concludes the proof (to the cost of potentially decreasing $c$)
\end{proof}

The two preceding lemmata are enough to conclude the proof of Theorem \ref{thm:main2perco}.

\begin{proof}[Proof of Theorem~\textup{\ref{thm:main2perco}}] Theorem \ref{thm:main2perco} immediately follows from the two preceding lemmata and \eqref{eq: comparison diff l beta}.	
\end{proof}

\appendix
\section{Appendix: random walk estimates}\label{appendix:rw}
\subsection{Estimates on $\mathbb P^{\rm RW}$}
Let $u \in \mathbb Z^d$. Recall from \eqref{eq:def random walk J} the definition of $\mathbb P^{\rm RW}_u$, and let $\tau^u:=\inf \{\ell\geq 1: X_\ell^u\notin \mathbb H\}$.
\begin{Prop}\label{prop: rw estimates} Let $d>2$ and $L\geq 1$. There exists $C_{\rm RW}=C_{\rm RW}(d)>0$ such that, for every $x\in \mathbb H^*$,
\begin{equation}\label{eq:first rw estimates}
	\mathbb E_0^{\rm RW}\Big[\sum_{\ell<\tau^0}\mathds{1}_{X_\ell^0=x}\Big]\leq \frac{C_{\rm RW}}{L^d}\left(\frac{L}{L\vee |x_1|}\right)^{d-1}
\end{equation}
\end{Prop}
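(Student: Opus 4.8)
The plan is to reduce the estimate for the spread-out walk $X$ (step law uniform on $x+\Lambda_L^*$) to the same estimate for the ``lazy'' walk $\bar X$ whose step law $\bar\mu$ is uniform on $\Lambda_L=\Lambda_L^*\cup\{0\}$, the point being that $\bar\mu$ is a \emph{product} measure across the $d$ coordinates. Writing $\epsilon:=|\Lambda_L|^{-1}=(2L+1)^{-d}$, the spread-out step law is $\mu=\bar\mu(\,\cdot\mid\text{nonzero})$, so $X$ has the law of $\bar X$ observed at its successive non-lazy times $0=T_0<T_1<T_2<\cdots$. Since lazy steps do not change membership in $\mathbb H$, the exit time $\bar\tau$ of $\mathbb H$ for $\bar X$ equals $T_{\tau^0}$, and each visit of $X$ to $x$ before $\tau^0$ corresponds to $\bar X$ sitting at $x$ over a block $[T_\ell,T_{\ell+1})$ of expected length $(1-\epsilon)^{-1}$, all before $\bar\tau$; the block lengths depend only on the laziness indicators, hence are independent of the embedded walk. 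Therefore $\mathbb E_0^{\rm RW}[\sum_{\ell<\tau^0}\mathds 1_{X_\ell=x}]=(1-\epsilon)\,\bar g(x)\le\bar g(x)$, where $\bar g(x)$ denotes the analogous killed half-space Green's function of $\bar X$, and it suffices to bound $\bar g(x)$.

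Writing $\bar X_j=(\bar S_j,\bar Y_j)$ with $\bar S$ the first coordinate (a one-dimensional lazy mean-zero walk with step uniform on $\{-L,\dots,L\}$, variance $\asymp L^2$) and $\bar Y\in\mathbb Z^{d-1}$ the remaining coordinates, these two processes are \emph{independent} because $\bar\mu$ is a product. Since $\{j<\bar\tau\}=\{\bar S_i\ge0\ \forall i\le j\}$ and $x=(x_1,x')$,
\begin{equation*}
\bar g(x)=\sum_{j\ge0}\mathbb P\big[\bar S_j=x_1,\ \bar S_i\ge0\ \forall i\le j\big]\ \mathbb P\big[\bar Y_j=x'\big].
\end{equation*}
I would then insert two classical local estimates, made uniform in $L$ by Brownian rescaling (passing to $\bar S_{\lfloor L^2t\rfloor}/L$ and $\bar Y_{\lfloor L^2t\rfloor}/L$): a one-sided local limit theorem for the one-dimensional walk conditioned to stay non-negative,
\begin{equation*}
\mathbb P\big[\bar S_j=k,\ \bar S_i\ge0\ \forall i\le j\big]\le\frac{C(k+L)}{L^2(j\vee1)^{3/2}}\,e^{-ck^2/(L^2(j\vee1))}\qquad(k\ge0),
\end{equation*}
in which $(k+L)/L^2$ is the rescaled renewal function, and a local central limit theorem with Gaussian large-deviation tail for the $(d-1)$-dimensional walk,
\begin{equation*}
\mathbb P\big[\bar Y_j=y\big]\le\frac{C}{L^{d-1}(j\vee1)^{(d-1)/2}}\,e^{-c|y|^2/(L^2(j\vee1))}.
\end{equation*}

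Multiplying these and summing over $j$, the two Gaussian factors combine into $e^{-c'|x|^2/(L^2j)}$, which restricts the effective range of the sum to $j\gtrsim|x|^2/L^2$; there $\sum_j j^{-(d+2)/2}\asymp(L/|x|)^d$ when $|x|\ge L$, and the whole sum is $O(1)$ when $|x|<L$. This gives $\bar g(x)\le C(x_1+L)L^{-(d+1)}\min\big(1,(L/|x|)^d\big)$. A short case analysis then produces the claim: if $|x|\le L$ this is $\le2C/L^d$; if $|x|>L$ and $x_1>L$ it is $\le2Cx_1/(L|x|^d)\le2C/(Lx_1^{d-1})$ using $|x|\ge x_1$; and if $|x|>L$ with $x_1\le L$ it is $\le2C/|x|^d<2C/L^d$. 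In every case $\bar g(x)\le C_{\rm RW}L^{-d}(L/(L\vee|x_1|))^{d-1}$, hence so is $\mathbb E_0^{\rm RW}[\sum_{\ell<\tau^0}\mathds 1_{X_\ell=x}]$.

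The conceptual steps — the reduction to the product walk and the split of the first coordinate from the others — are short; the real work is in the two uniform-in-$L$ local limit estimates above, and in particular in the one-sided estimate for $\bar S$, where the renewal factor $(k+L)/L^2$ and the exit probabilities $\asymp(j\vee1)^{-1/2}$ must be tracked carefully through the scaling limit. These are standard (Sparre-Andersen fluctuation theory together with local limit theorems for random walks conditioned to stay positive, rescaled), of the type collected among the uniform random walk estimates in the appendix; once they are available, the summation and the case-checking are routine.
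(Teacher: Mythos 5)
Your proof is correct, and it takes a genuinely different route from the paper's. The paper proceeds by (i) quoting Uchiyama's full-space Green's function bound $\sum_\ell \mathbb P_0^{\rm RW}[X_\ell=x]\le C_1 L^{-d}(L/(L\vee|x|))^{d-2}$, which already disposes of $|x_1|\le 10L$; and (ii) for $|x_1|>10L$, applying the strong Markov property at the first time the $d$-dimensional walk's first coordinate reaches $\lfloor|x_1|/2\rfloor$, paying a Gambler's-ruin cost $\lesssim L/|x_1|$, and then plugging the full-space Green's function bound at distance $\gtrsim|x_1|$, which multiplies to the claimed $(d-1)$-power. Your argument instead reduces to the lazy walk (the laziness reduction is a nice observation, since $\bar\mu$ being a product over coordinates is exactly what makes the factorisation $\bar g(x)=\sum_j\mathbb P[\bar S_j=x_1,\,\bar S_i\ge0]\,\mathbb P[\bar Y_j=x']$ available), and then uses a one-sided local CLT with the renewal factor $V(k)\asymp(L\vee k)/L$ for the one-dimensional constrained walk, together with a Gaussian-tail LCLT for the transverse part; the summation over $j$ and the case analysis are carried out correctly. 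The trade-off is roughly this: the paper's proof is shorter because the hard multi-dimensional input (Uchiyama) is cited once as a black box, and the only bespoke estimate is a standard martingale Gambler's ruin; your route is more self-contained in spirit but the one-sided LCLT, uniform in the variance $\sigma^2\asymp L^2$, is a heavier lemma than anything the paper actually needs here --- it requires tracking constants through a Vatutin--Wachtel / Caravenna-type argument rather than a one-line optional stopping. On the other hand your intermediate bound carries a Gaussian factor $e^{-c|x|^2/(L^2j)}$ and so actually encodes more information about the off-axis decay than the paper extracts. One small point worth flagging: the one-sided bound as you wrote it, $\mathbb P[\bar S_j=k,\ \bar S_i\ge0]\le C(k+L)L^{-2}(j\vee1)^{-3/2}e^{-ck^2/(L^2(j\vee1))}$, fails at $j=0$ unless $C\ge L$, but since the $j=0$ summand of $\bar g(x)$ vanishes for $x\ne 0$ this is harmless; you should state the estimate for $j\ge1$ to avoid a vacuous constant.
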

\begin{proof} Using \cite{UchiyamaGreenFunctionEstimates1998}, we obtain the existence of $C_1=C_1(d)>0$ such that for every $L\geq 1$ and every $x\in (\mathbb Z^d)^*$,
\begin{equation}\label{eq:interm proof halfspace green function 1}
	\sum_{\ell\geq 0}\mathbb P_0^{\rm RW}[X_\ell^0=x]\leq \frac{C_1}{L^d}\left(\frac{L}{L\vee |x|}\right)^{d-2}.
\end{equation}
As a consequence, if $|x_1|\leq 10L$,
\begin{equation}\label{eq:interm proof halfspace green function 1.5}
	\sum_{\ell\geq 0}\mathbb P_0^{\rm RW}[X_\ell^0=x]\leq \frac{C_1}{L^d}\leq \frac{C_1 10^{d-1}}{L^d}\left(\frac{L}{L\vee |x_1|}\right)^{d-1}.
\end{equation}
This gives \eqref{eq:first rw estimates} when $x\in \mathbb H^*$ satisfies $|x_1|\leq 10L$.

We now assume that $|x_1|>10L$. A classical Gambler's ruin estimate (see for instance\cite[Chapter~5.1.1]{LawlerLimicRandomWalks2010}) yields the existence of $C_2=C_2(d)>0$ such that for every $L\geq 1$ and every $k>0$,
\begin{equation}\label{eq:eq:interm proof halfspace green function 2}
	\mathbb P_0^{\rm RW}[\tilde{\tau}_k^0<\tau^0]\leq \frac{C_2 L}{k},
\end{equation}
where $\tilde{\tau}_k^0:=\inf\{\ell\geq 1: (X_\ell^0)_1\geq k\}$. 
Finally, we observe that for $k=\lfloor |x_1|/2\rfloor$,
\begin{align}\notag
	\mathbb E_0^{\rm RW}\Big[\sum_{\ell<\tau}&\mathds{1}_{X_\ell^0=x}\Big]\\&\leq\sum_{\ell \geq 0}\sum_{m=0}^\ell\sum_{y\in \mathbb H}\mathbb P_0^{\rm RW}[X_\ell^0=x, \: X_m^0=y, \: \tilde{\tau}^0_k=m, \: \tau^0>m]\notag
	\\&=\sum_{m\geq 0}\sum_{y\in \mathbb H: \:k\leq |y_1|<k+L}\mathbb P_0^{\rm RW}[X_m^0=y, \: \tilde{\tau}_k^0=m, \: \tau^0>m]\sum_{\ell\geq m}\mathbb P_0^{\rm RW}[X_{\ell-m}^0=x-y]\notag
	\\&\leq \frac{C_1}{L^d}\left(\frac{L}{L\vee (|x_1|/2-L)}\right)^{d-2}\mathbb P_0^{\rm RW}[\tilde{\tau}_k^0<\tau]\notag
	\\&\leq \frac{C_1}{L^2}\left(\frac{4}{|x_1|}\right)^{d-2}\frac{C_2L}{|x_1|/3}\leq \frac{C_4}{L}\left(\frac{1}{|x_1|}\right)^{d-1},
\end{align}
where on the second line we used Markov's property, on the third line we used \eqref{eq:interm proof halfspace green function 1}, in the last line we used \eqref{eq:eq:interm proof halfspace green function 2} together with the fact that $|x_1|-2-L\geq |x_1|/4$ and $\lfloor |x_1|/2\rfloor\geq |x_1|/3$, and where $C_4=C_4(d)>0$. This concludes the proof.
\end{proof}
\begin{Prop}\label{prop: rw estimates 2} Let $d>2$, $L\geq 1$, and $C>0$. There exists $C_{\rm RW}'=C_{\rm RW}'(C,d)>0$ such that for every $u\in \mathbb H$ with $|u_1|\leq CL$ the following properties hold. For every $x\in \mathbb H\setminus \{u\}$ with $|x_1|\leq CL$, 
\begin{equation}\label{eq:rw estimate uniform in u 1}
	\mathbb E_u^{\rm RW}\Big[\sum_{\ell<\tau^u}\mathds{1}_{X_\ell^u=x}\Big]\leq \frac{C_{\rm RW}'}{L^d}\left(\frac{L}{L\vee |x_1|}\right)^{d-1},
\end{equation}
and for every $T\geq 1$,
\begin{equation}\label{eq:rw estimate uniform in u 2}
	\mathbb P_u^{\rm RW}[\tau^u>T]\leq \frac{C_{\rm RW}'}{\sqrt{T}}.
\end{equation}
\end{Prop}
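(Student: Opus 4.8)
The plan is to prove the two estimates separately: \eqref{eq:rw estimate uniform in u 1} is just a reformulation of the full-space Green's function bound, while \eqref{eq:rw estimate uniform in u 2} reduces to a one-dimensional fluctuation estimate.

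\textbf{Proof of \eqref{eq:rw estimate uniform in u 1}.} Since $x\ne u$, translation invariance and \eqref{eq:interm proof halfspace green function 1} applied to $x-u\in(\mathbb Z^d)^*$ (discarding the factor $(\tfrac{L}{L\vee|x-u|})^{d-2}\le1$) give
\[
\mathbb E_u^{\rm RW}\Big[\sum_{\ell<\tau^u}\mathds{1}_{X_\ell^u=x}\Big]\le\sum_{\ell\ge0}\mathbb P_u^{\rm RW}[X_\ell^u=x]=\sum_{\ell\ge0}\mathbb P_0^{\rm RW}[X_\ell^0=x-u]\le\frac{C_1}{L^d}.
\]
Assuming without loss of generality $C\ge1$, the hypothesis $|x_1|\le CL$ gives $L\vee|x_1|\le CL$, hence $\big(\tfrac{L}{L\vee|x_1|}\big)^{d-1}\ge C^{-(d-1)}$, so \eqref{eq:rw estimate uniform in u 1} holds once $C_{\rm RW}'\ge C_1C^{d-1}$ (no constraint on $u$ is even needed).

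\textbf{Proof of \eqref{eq:rw estimate uniform in u 2}.} The event $\{\tau^u>T\}$ depends only on the first-coordinate process $S_\ell:=(X_\ell^u)_1$, a one-dimensional random walk started at $a:=u_1\in\{0,\dots,\lfloor CL\rfloor\}$ with i.i.d.\ increments distributed as the first coordinate $Z$ of a uniform point of $\Lambda_L^*$, and $\tau^u=\inf\{\ell\ge1:S_\ell<0\}$. One checks directly that $Z$ is symmetric with $\mathbb E[Z]=0$, $|Z|\le L$, $L^2/3\le v:=\mathbb E[Z^2]\le L^2$, $\mathbb E|Z|^3\le L^3$ (all uniformly in $L$). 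Writing $\mathbb P_a$ for the law of $S$ started at $a$, it suffices to prove, with $C(d)$ \emph{independent of $L$},
\[
\mathbb P_a[S_\ell\ge0\ \text{for}\ \ell=1,\dots,T]\le\frac{C(d)(a+L)}{L\sqrt T},\qquad a\ge0,\ T\ge1,
\]
since applying this with $a\le CL$ gives $\mathbb P_u[\tau^u>T]\le C(d)(C+1)/\sqrt T$, and the proposition follows with $C_{\rm RW}':=\max(C_1C^{d-1},C(d)(C+1))$. To establish this persistence bound I would argue in two steps. First, for $a=0$: by the Sparre--Andersen identity $\mathbb P_0[S_\ell\ge0,\ \ell\le T]$ and $\mathbb P_0[S_\ell>0,\ \ell\le T]$ are the $z^T$-coefficients of $\exp(\sum_{k\ge1}\tfrac{z^k}{k}\mathbb P[S_k\ge0])$ and $\exp(\sum_{k\ge1}\tfrac{z^k}{k}\mathbb P[S_k>0])$; by symmetry $\mathbb P[S_k>0]\le\tfrac12$ and $\mathbb P[S_k\ge0]\le\tfrac12+\tfrac12\mathbb P[S_k=0]$, and the local-CLT bound $\mathbb P[S_k=0]\le\sup_j\mathbb P[S_k=j]\le C(d)/\sqrt k$ holds because $\widehat Z(\theta)=1-\tfrac{(2L+1)^{d-1}}{(2L+1)^d-1}\sum_{|j|\le L}(1-\cos\theta j)$ satisfies $\widehat Z(\theta)=1-\tfrac v2\theta^2+O((L\theta)^4)$ for $|\theta|\lesssim1/L$ and $|\widehat Z(\theta)|\le1-\delta(d)$ for $1/L\lesssim|\theta|\le\pi$, uniformly in $L$; this yields $\mathbb P_0[S_\ell\ge0,\ \ell\le T]\le C(d)/\sqrt T$. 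Second, to pass to $a\le CL$, use $\mathbb P_a[S_\ell\ge0,\ \ell\le T]=\mathbb P_0[\max_{\ell\le T}S_\ell\le a]=\mathbb P[\tilde\tau_a>T]$ with $\tilde\tau_a:=\inf\{\ell\ge1:S_\ell>a\}$, and decompose the path along its strict ascending ladder points $(\tilde T_i,\tilde H_i)_{i\ge1}$ (i.i.d.\ pairs; $\mathbb P[\tilde T_1>t]\le C(d)/\sqrt t$ by the first step, and $\tilde H_i$ is integer-valued in $[1,L]$, with $\mathbb E[\tilde H_1]\ge c(d)L$ by a standard symmetry input): then $\tilde\tau_a=\sum_{i=1}^{\nu+1}\tilde T_i$ where $\nu:=\#\{m\ge1:\tilde H_1+\dots+\tilde H_m\le a\}$, and $\mathbb P[\nu\ge m]=\mathbb P[\sum_{i\le m}\tilde H_i\le a]\le e^{a/L}\,\mathbb E[e^{-\tilde H_1/L}]^m\le e^{C}q^m$ for some absolute $q=q(d)<1$. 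Bounding $\{\tilde\tau_a>T\}$ through $\{\sum_{i\le k+1}\tilde T_i>T\}\subset\bigcup_{i\le k+1}\{\tilde T_i>T/(k+1)\}$, conditioning on a single ladder height to insert the factor $e^{C}q^{k}$, and summing the resulting convergent series over $k=\nu$, gives $\mathbb P[\tilde\tau_a>T]\le C(C,d)/\sqrt T$ with no logarithmic loss.

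\textbf{Main obstacle.} Everything apart from the one-dimensional persistence bound is routine; the one point genuinely requiring care is its \emph{uniformity in $L$}. I would extract this from the scale invariance of $S_\bullet/L$: after rescaling, $Z/L$ ranges over a family of mean-zero laws whose variance and support are bounded above and below by absolute constants, and this is precisely what makes both the local-CLT bound $\mathbb P[S_k=0]\lesssim1/\sqrt k$ and the mean-ladder-height lower bound $\mathbb E[\tilde H_1]\gtrsim L$ hold with $L$-independent constants.
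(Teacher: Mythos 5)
Your proof of \eqref{eq:rw estimate uniform in u 1} is essentially identical to the paper's: bound by the free Green function estimate \eqref{eq:interm proof halfspace green function 1}, discard the $(\tfrac{L}{L\vee|x-u|})^{d-2}$ factor, then reinsert $(\tfrac{L}{L\vee|x_1|})^{d-1}$ at a cost of $C^{d-1}$ using $|x_1|\le CL$. For \eqref{eq:rw estimate uniform in u 2} you diverge: both you and the paper project onto the first coordinate, but the paper simply rescales that one-dimensional walk by $(tL)^{-1}$ with $t$ small to bring it into a ``standardized'' family (steps bounded by $1/t$, variance of order one, starting point in $[0,C/t]$) and then cites the uniform Gambler's ruin estimate \cite[Theorem~5.1.7]{LawlerLimicRandomWalks2010} as a black box. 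You instead reconstruct that estimate from scratch: a uniform local CLT (via the characteristic function $\widehat Z$) feeding into the Sparre--Andersen/Spitzer identity for the $a=0$ persistence bound, and then a strict-ascending-ladder decomposition of $\tilde\tau_a$ with an exponential tail on the number $\nu$ of ladder epochs for $a\le CL$. Your route is correct in outline and, pleasingly, keeps the argument self-contained; the price is that several of its ingredients (the non-degeneracy $|\widehat Z(\theta)|\le 1-\delta$ uniformly for $1/L\lesssim|\theta|\le\pi$, the ladder-height moment lower bound $\mathbb E[\tilde H_1]\gtrsim L$, and the care needed in the union bound over $\{\nu=k\}\cap\{\tilde T_i>T/(k+1)\}$ since the pair $(\tilde T_i,\tilde H_i)$ is not coordinate-wise independent) are each roughly the content of a lemma proved in Lawler--Limic, so you are in effect re-deriving the cited theorem rather than shortcutting it. The paper's rescale-and-cite is the cleaner way to obtain the bound, and the uniformity in $L$ — the point you correctly flag as the crux — is handled there by the simple observation that the normalized walk's parameters are bounded independently of $L$ once $t$ is fixed.
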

\begin{proof} Fix $u\in \mathbb H$ with $|u_1|\leq CL$. We begin by showing \eqref{eq:rw estimate uniform in u 1}. Fix $x\in \mathbb H\setminus \{u\}$ with $|x_1|\leq CL$. Using \eqref{eq:interm proof halfspace green function 1}, we obtain
\begin{equation}
	\mathbb E^{\rm RW}_u\Big[\sum_{\ell <\tau^u}\mathds{1}_{X_\ell^u=x}\Big]\leq \frac{C_1}{L^d}\leq \frac{C_1C^{d-1}}{L^d}\left(\frac{L}{L\vee |x_1|}\right)^{d-1},
\end{equation}
from which \eqref{eq:rw estimate uniform in u 1} follows.

We now turn to \eqref{eq:rw estimate uniform in u 2}. Let $t>0$ to be chosen small enough. Observe that the process $Y$ of the first coordinate of $X^u/(tL)$ is a one-dimensional random walk started at $u_1/(tL)$. We denote its law by $\mathbb P_{u_1}$ and let $\tau':=\inf\{k\geq 1: Y_k<0\}$. Observe that
\begin{equation}
	\mathbb P^{\rm RW}_u[\tau^u>T]\leq \mathbb P_{u_1}[\tau'>T].
\end{equation}
Now, for $t$ sufficiently small (independently of $L$), the process $Y$ satisfies the assumptions of \cite[Theorem~5.1.7]{LawlerLimicRandomWalks2010}. In particular, there exists $C_2=C_2(d,t)>0$ such that,
\begin{equation}
	\mathbb P_{u_1}[\tau'>T]\leq \frac{C_2 \cdot(u_1/tL)}{\sqrt{T}}\leq \frac{C_2 C t^{-1}}{\sqrt{T}}.
\end{equation}
The proof follows from setting $C_{\rm RW}':=C_1C^{d-1}\vee C_2 C t^{-1}$.
\end{proof}

\begin{Prop}[A coupling estimate]\label{prop:coupling appendix} Let $d\geq 1$ and $\varepsilon>0$. There exists $T=T(\varepsilon,d)>0$ such that the following holds. For every $L\geq 1$ and every $u,v\in \Lambda_{2L}$, there exists a pair of random variables $(Y^u,Y^v)$ of law $\mathbf P$ such that $Y^u$ (resp. $Y^v$) has the same law as $X^u_T$ (resp. $X^v_T$) and
\begin{equation}
	\mathbf P[Y^u\neq Y^v]\leq \varepsilon.
\end{equation}
\end{Prop}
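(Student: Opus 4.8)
\textbf{Proof plan for the coupling estimate (Proposition~\ref{prop:coupling appendix}).}
The statement is a purely random-walk fact about the spread-out step distribution $J$, and the point to be careful about is that the bound on $T$ must be \emph{uniform in the range $L$}. The natural approach is a rescaling argument: set $Z^u_k:=X^u_k/L$, which is a random walk started at $u/L\in\Lambda_2$ (as a subset of $\tfrac1L\mathbb Z^d$) whose increment is the uniform law on $\tfrac1L\Lambda_L^*$. As $L\to\infty$ this increment law converges (in a suitable sense — moments, characteristic function) to the uniform law on $[-1,1]^d$, which has a bounded density; for fixed finite $L$ the increment already has, after two steps, a density-like lower bound on a ball of radius of order $1$ around its mean. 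The key uniform input one wants is a local central limit theorem / overlap lower bound for $Z^u_T$ that does not deteriorate as $L$ grows; since the increments have uniformly bounded support ($[-1,1]^d$) and uniformly non-degenerate covariance (of order $1$), the standard LCLT machinery (e.g.\ along the lines of \cite[Chapter~2]{LawlerLimicRandomWalks2010}) applies with constants depending only on $d$. I would phrase this as: there exist $c=c(d)>0$ and $T_0=T_0(d)$ such that for all $L\ge 1$, all $T\ge T_0$, and all $u\in\Lambda_{2L}$, the law of $X^u_T$ satisfies $\mathbb P^{\rm RW}_u[X^u_T=w]\ge c\,(TL^2)^{-d/2}$ on a ball of radius $\sqrt{T}\,L$ around $u$ (and is comparable to the Gaussian density there).

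Given such a uniform local estimate, the coupling is the classical maximal-coupling/overlap argument, exactly in the spirit of \cite[Lemma~2.4.3]{LawlerLimicRandomWalks2010}. Writing $\nu^u_T$ and $\nu^v_T$ for the laws of $X^u_T$ and $X^v_T$, one has $\mathbf P[Y^u\ne Y^v]=\|\nu^u_T-\nu^v_T\|_{\rm TV}$ for the maximal coupling, and one bounds the total variation distance by comparing both to the Gaussian: since $|u-v|\le 4L$, the two Gaussian profiles of covariance of order $TL^2$ and centres within distance $4L$ of each other have total variation distance of order $|u-v|/(\sqrt{T}\,L)\le 4/\sqrt{T}$. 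Adding the two LCLT error terms (each $o(1)$ as $T\to\infty$, uniformly in $L$) gives $\|\nu^u_T-\nu^v_T\|_{\rm TV}\le C(d)/\sqrt{T}+o_T(1)$, and choosing $T=T(\varepsilon,d)$ large makes this $\le\varepsilon$. The construction of the pair $(Y^u,Y^v)$ itself is then the standard maximal coupling: with probability $1-\|\nu^u_T-\nu^v_T\|_{\rm TV}$ sample a common value from the minimum measure $\nu^u_T\wedge\nu^v_T$ (normalised), and otherwise sample the two coordinates independently from the normalised positive parts $(\nu^u_T-\nu^v_T)_+$ and $(\nu^v_T-\nu^u_T)_+$.

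An alternative, slightly softer route that avoids a full LCLT is a direct overlap bound after one ``regeneration-like'' step: using that the increment is uniform on $\Lambda_L^*$, there is a constant $c(d)>0$ such that for any two points $u',v'$ with $|u'-v'|\le 4L$ one has $\sum_{w}\big(J_{u'w}\wedge J_{v'w}\big)\ge 1-c(d)\,|u'-v'|/L$; iterating the Mabinogion-style ``couple the walks step by step, and whenever they have not yet met, the gap performs a bounded, mean-zero, uniformly elliptic random walk'' argument, one shows the gap is absorbed at $0$ within $T$ steps with probability $\ge 1-C(d)/\sqrt{T}$ by a Gambler's-ruin / return-probability estimate for the difference chain (which is again a uniformly elliptic, bounded-support random walk on $\mathbb Z^d$, so the relevant hitting estimate has $d$-dependent constants only). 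Either way, the only genuine obstacle is verifying that the quantitative mixing/overlap constants can be taken \emph{independent of $L$}; this is exactly where the normalisation $|J|=1$ and the fact that the rescaled increments live in $[-1,1]^d$ with covariance of order $1$ are used, and it is the step I would write out most carefully. The rest is bookkeeping in the standard coupling lemma.
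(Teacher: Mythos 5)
Your proposal offers two routes. The LCLT route (your Route~1) is a legitimate alternative to the paper's argument: replacing comparison to each other by comparison of both laws to a Gaussian, and bounding the total variation by the sum of two LCLT errors plus the TV distance between the two Gaussian profiles (which is indeed $O(|u-v|/(L\sqrt{T}))\le O(1/\sqrt T)$). The catch, which you correctly flag as the step to write out most carefully, is that you need a local CLT for the spread-out walk with an error term that is small \emph{uniformly in $L$ after the natural rescaling by $L$}. This is plausible because the rescaled increment has covariance of order $1$ and uniformly controlled moments, and $|\hat\mu(\theta)|$ is uniformly bounded away from $1$ off a neighbourhood of $0$; but it is a genuine analytic estimate rather than a citation, and it is not what the paper does.

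Your Route~2 as stated has a gap. You want the difference chain — a bounded, mean-zero, $d$-dimensional random walk — to be ``absorbed at $0$ within $T$ steps with probability $\ge 1-C(d)/\sqrt T$.'' For $d\ge3$ this walk is transient and the probability of ever hitting $0$ is a constant $<1$, so no choice of $T$ will make it small. Even restricted to a single coordinate ($d=1$), if you ask the one-dimensional gap to hit \emph{exactly} $0$, the gap has step size of order $L$, so the hitting time of the single site $0$ from position $\sim L$ is of order $L$ (you overshoot and have to wander back, with only a $\sim1/L$ chance of landing exactly on $0$ at each passage); so you cannot get a $T$ independent of $L$. The one-step overlap bound $\sum_w (J_{u'w}\wedge J_{v'w})\ge 1-c(d)|u'-v'|/L$ that you write down is correct and is in fact the paper's starting observation — but it is only useful once the gap is $\ll L$, and the step-by-step absorption picture does not get you there.

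What the paper does is precisely the fix your Route~2 needs: a two-stage coupling. Stage one couples the $d$ coordinates independently via the Ornstein coupling, so that per coordinate the difference $\tilde S$ is a \emph{one-dimensional} bounded-step random walk which is frozen as soon as it reaches $[-\kappa L,\kappa L]$ (not $\{0\}$). The exit problem is then a genuine one-dimensional Gambler's ruin, and the rescaled walk $\tilde S/(tL)$ satisfies the hypotheses of \cite[Theorem~5.1.7]{LawlerLimicRandomWalks2010} with constants depending only on $d$, so $\mathsf P[\tilde\tau>T]\le C/\sqrt T$ with $C$ independent of $L$. After $T-1$ such steps, all coordinates of the gap are $\le\kappa L$, and stage two applies your one-step TV bound (which now gives $\le C_1\kappa^d$, made $\le\varepsilon/2$ by choice of $\kappa$) at the last step. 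Both the one-dimensional reduction and the relaxation from ``hit $0$'' to ``hit an interval of length $\kappa L$, then patch with one TV step'' are essential; neither appears in your sketch.
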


\begin{proof} Let $\kappa>0$ to be chosen small enough in terms of $\varepsilon$. 

We first treat the case $|u-v|\leq 1\vee \kappa L$. Recall that $J_{st}=c_L\mathds{1}_{1\leq |s-t|\leq L}$ with $c_L=|\Lambda_L^*|^{-1}$. The total variation distance $\mathrm{d}_{\rm TV}$ between $X_1^u$ and $X_1^v$ satisfies
\begin{equation}
	\mathrm{d}_{\rm TV}(X_1^u,X_1^v)=\frac{1}{2}\sum_{y\in \mathbb Z^d}|J_{uy}-J_{vy}|\leq \frac{c_L}{2}|\Lambda_L(u)\Delta\Lambda_L(v)|\leq C_1 \kappa^d,
\end{equation} 
where $\Lambda_L(u)\Delta\Lambda_L(v)$ denotes the symmetric difference of sets, and where $C_1=C_1(d)>0$. As a consequence (see for instance \cite[Appendix A.4.2]{LawlerLimicRandomWalks2010}), there exists a coupling $(\xi^u,\xi^v)\sim\mathbf P_{u,v}^1$ of $(X_1^u,X_1^v)$ such that
\begin{equation}\label{eq:coupling1}
	\mathbf P_{u,v}^1[\xi^u=\xi^v]\geq 1-C_1\kappa^d.
\end{equation}
We now fix $\kappa$ small enough so that $C_1\kappa^d\leq \varepsilon/2$. This settles the first case.

We now treat the case $\kappa L \leq |u-v|\leq 2L$. We claim that it is sufficient to show that there exists $T=T(\varepsilon,d)>0$ and $(Z^u,Z^v)\sim\mathbf P^2$ a coupling of $(X^u_{T-1},X_{T-1}^v)$ which satisfies
\begin{equation}\label{eq: intermediate step coupling}
	\mathbf P^2[|Z^u-Z^v|\leq\kappa L]\geq 1-\frac{\varepsilon}{2}.
\end{equation}
Indeed, let $(Z^u,Z^v)\sim \mathbf P^2$. Let also $U\sim X_1^0$ be independent of $(Z^u,Z^v)$. We construct $(Y^u,Y^v)$ as follows: if $\{|Z^u-Z^v|\leq \kappa L\}$ occurs, then sample $(\xi^{Z^u},\xi^{Z^v})\sim \mathbf P_{Z^u,Z^v}^1$ and set $(Y^u,Y^v):=(\xi^{Z^u},\xi^{Z^v})$; otherwise set $(Y^u,Y^v):=(Z^u+U,Z^v+U)$. It is easy to check that $Y^u$ (resp. $Y^v$) has the same law as $X_T^u$ (resp. $X_T^v$). Letting $\mathbf P$ be the law of $(Y^u,Y^v)$, we find that
\begin{equation}
	\mathbf P[Y^u\neq Y^v]\leq \mathbf P^2[|Z^u-Z^v|>\kappa L]+\mathbf E^2[\mathbf P^1_{Z^u,Z^v}[\xi^{Z^u} \neq \xi^{Z^v}]\mathds{1}_{|Z^u-Z^v|\leq \kappa L}]\leq \varepsilon,
\end{equation}
where we used \eqref{eq:coupling1} and \eqref{eq: intermediate step coupling}.
 
It remains to prove \eqref{eq: intermediate step coupling}. Observe that the projection of $X^u$ on its $i$-th coordinate is a one-dimensional random walk $(S_k^{u_i})_{k\geq 0}$ of step distribution 
\begin{equation}\label{eq: law increments one dim walk}
	\mathsf P_{u_i}[S_{1}^{u_i}=x+u_i]=c_L (2L+1)^{d-1}\mathds{1}_{1\leq |x|\leq L}+c_L\Big((2L+1)^{d-1}-1\Big)\mathds{1}_{x=0}, \qquad (x\in \mathbb Z).
\end{equation}
Thus, it suffices to find a coupling $(V^{u_1},V^{v_1})\sim\mathsf{P}$ of $(S^{u_1},S^{v_1})$ for which, for every $T$ large enough $|V^{u}_T-V^{v}_T|\leq \kappa L$ with high probability. The general case then follows by constructing the coupling coordinate by coordinate. Such a coupling can be obtained through the following procedure. 

Below, we assume that $L$ is large enough so that $\kappa L\geq 2$. The case of the small values of $L$ can be treated using \cite[Lemma~2.4.3]{LawlerLimicRandomWalks2010}. Without loss of generality, we also assume that $v_1=0$ and $2L\geq x:=u_1>\kappa L$.

 The idea is to find a coupling $(V^x,V^0)$ of $(S^{x},S^{0})$ such that the difference process $V^{x}-V^{0}$ is a random walk that makes jumps at a distance at most $\kappa L$, and stops evolving as soon as it enters $[-\kappa L, \kappa L]$. This classical step is done using the \emph{Ornstein coupling} (see for instance \cite[(3.3)]{den2012probability}). We describe it fully for sake of completeness. We let $(\omega_i)_{i\geq 0}$ and $(\omega_i')_{i\geq 0}$ be two independent sequences of i.i.d. random variables of law $S_1^0$. We construct the pair $(V^x,V^0)$ inductively as follows:
 \begin{enumerate}
 	\item Set $(V^x_0,V^0_0)=(x,0)$.
 	\item Let $k\geq 0$ and assume that $(V^x_i,V^0_i)_{i\leq k}$ is constructed and that $(V^x_i)_{i\leq k}$ (resp. $(V^0_i)_{i\leq k}$) has the same law as $(S^x_i)_{i\leq k}$ (resp. $(S^0_i)_{i\leq k}$). 
 	\begin{enumerate}
 	\item If $|V^x_k-V^0_k|\leq \kappa L$, set $V^x_{k+1}-V^x_k=V^0_{k+1}-V^0_k=\omega_{k+1}$.
 	\item If $|V^x_k-V^0_k|>\kappa L$, set $V^x_{k+1}-V^x_k=\omega_{k+1}$ and $V^0_{k+1}-V^0_k=\omega''_{k+1}$, where
 	\begin{equation}
 		\omega''_{k+1}= \left\{
    \begin{array}{ll}
        \omega_{k+1}' & \mbox{if } |\omega_{k+1}-\omega_{k+1}'|\leq \kappa L, \\
        \omega_{k+1} & \mbox{otherwise.}
    \end{array}
\right.
 	\end{equation}
 	\end{enumerate}
 \end{enumerate}
  We denote by $\mathsf P$ the law of the pair $(V^x,V^0)$ then obtained. One can check (see \cite{den2012probability}) that $(V^x_i)_{i\geq 0}$ (resp. $(V^0_i)_{i\geq 0}$) has the same law as $(S^x_i)_{i\geq 0}$ (resp. $(S^0_i)_{i\geq 0}$). The difference process $\tilde S:=V^x-V^0$ is a random walk started at $x$ and stopped at $\tilde \tau:=\inf \{k\geq 1: \tilde S_k<\kappa L\}$ with increments $\tilde \xi$ of law $(\omega_1-\omega_1')\mathds{1}_{|\omega-\omega'|\leq \kappa L}$. By definition, one has, for each $T\geq 1$,
  \begin{equation}
  	\mathsf P[|V^x_T-V^0_T|>\kappa L]\leq \mathsf P[\tilde \tau >T].
  \end{equation}
  Using \eqref{eq: law increments one dim walk}, one may check that the renormalised walk $\tilde S/(tL)$ satisfies the assumptions of \cite[Theorem~5.1.7]{LawlerLimicRandomWalks2010} for a well chosen $t>0$. It then follows that there exists $C_2=C_2(d,t)>0$ such that for every $T\geq 1$,
  \begin{equation}
  	\mathsf P[\tilde \tau >T]\leq \frac{C_2\cdot (x/(tL))}{\sqrt{T}}\leq \frac{2C_2t^{-1}}{\sqrt{T}}.
  \end{equation}
The result follows from combining the two previously displayed equations and choosing $T$ large enough.
\end{proof}

\subsection{Uniform estimates on finite-range random walks}\label{appendix:uniform rw estimates}

We now turn to a series of estimates holding uniformly among a class of finite-range random walks on $\mathbb Z^d$. These estimates are meant to be applied to the random walk distribution $\mathbb P^{{\rm RW},m}$ introduced in \eqref{eq: rescaled rw def} but actually hold in a wider generality which we now describe.

Let us introduce some notations. For every $m\geq 1$, let $\mathcal P_m$ denote the set of measures on $\mathbb Z^d$ that satisfy the following assumptions: for every $\mu \in \mathcal P_m$, one has
\begin{enumerate}
	\item[$(i)$] $\mu$ is invariant under reflections and rotations of $\mathbb Z^d$;
	\item[$(ii)$] $\mu$ is supported on $\Lambda_{2m}\setminus \Lambda_{m-1}$.
\end{enumerate}
For $\mu \in \mathcal P_m$ and $u \in \mathbb Z^d$, we let $\mathbb P_{\mu,m}$ be the law of the random walk $(X_k)_{k\geq 0}$ on $\mathbb Z^d$  started at $u$ with step distribution $\mu$. Observe that for $\mu \in \mathcal P_m$,
\begin{equation}\label{eq: estimate sigma}
	m^2\leq \sigma(\mu,m)^2:=\mathbb E_{\mu,0}[|X_1|_2^2]\leq 4dm^2,
\end{equation}
where we recall that $| \cdot |_2$ denotes the standard Euclidean norm on $\mathbb R^d$. When clear from context, we omit $\mu$ from the notation.
\begin{Rem} If $m\geq L$, we observe that the step distribution of \eqref{eq: rescaled rw def} belongs to $\mathcal P_m$. Moreover, for every $m\geq 1$, the step distribution of $\mathbb P'$ considered in the proof of \cite[Proposition~3.2]{DumPan24WSAW} belongs to $\mathcal P_m$.
\end{Rem}
%
Our first estimate concerns the expected exit time of a large box $\Lambda_n$. For $n\geq 1$, set $\tau_n:=\inf\{k\geq 0: X_k\notin \Lambda_n\}$.

\begin{Prop}\label{prop: exponential moment exit time} Let $d\geq 1$. For every $n,m\geq 1$ with $n\geq m$, every $\mu \in \mathcal P_m$, and every $u\in \Lambda_n$,
\begin{equation}
	\mathbb E_{\mu,u}[\tau_n]\leq 9d\left(\frac{n}{m}\right)^2.
\end{equation}

\end{Prop}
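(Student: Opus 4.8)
The statement is a uniform bound on the expected exit time from $\Lambda_n$ for any finite-range random walk whose step distribution lies in $\mathcal P_m$. The natural approach is the classical one: find a function $f$ on $\mathbb Z^d$ that behaves like $-|x|_2^2$ inside $\Lambda_n$ and is superharmonic (or has a controlled Laplacian) for the walk, and then apply the optional stopping theorem to the martingale $M_k := f(X_{k\wedge\tau_n}) + c\,(k\wedge\tau_n)$ for a suitable constant $c$. I would take $f(x) = \|x\|_2^2 = \sum_{i=1}^d x_i^2$ and compute, for $u\in\Lambda_n$, the one-step drift
\begin{equation}
\mathbb E_{\mu,u}[f(X_1)] - f(u) = \mathbb E_{\mu,0}\big[\|u+X_1\|_2^2\big] - \|u\|_2^2 = 2\,u\cdot\mathbb E_{\mu,0}[X_1] + \mathbb E_{\mu,0}[\|X_1\|_2^2] = \sigma(\mu,m)^2,
\end{equation}
where the cross term vanishes because $\mu$ is invariant under reflections (assumption $(i)$), so $\mathbb E_{\mu,0}[X_1]=0$, and the last equality is just the definition of $\sigma(\mu,m)^2$ in \eqref{eq: estimate sigma}. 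Thus $f(X_k) - k\,\sigma(\mu,m)^2$ is a martingale, regardless of the starting point and independently of whether we have exited $\Lambda_n$.

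**Optional stopping.** Since the walk has bounded steps (support in $\Lambda_{2m}$), $\tau_n$ has finite expectation a priori — for instance because the projection onto a coordinate axis is a mean-zero bounded-increment walk with positive variance, so it exits $[-n,n]$ in finite expected time, dominating $\tau_n$; alternatively one can apply optional stopping to $\tau_n\wedge N$ and let $N\to\infty$, using the bounded-increments control to justify the limit. Applying optional stopping to the martingale at time $\tau_n$ gives
\begin{equation}
\sigma(\mu,m)^2\,\mathbb E_{\mu,u}[\tau_n] = \mathbb E_{\mu,u}\big[f(X_{\tau_n})\big] - f(u) \le \mathbb E_{\mu,u}\big[\|X_{\tau_n}\|_2^2\big].
\end{equation}
Now on the event that we are at $X_{\tau_n}$, the walk has just left $\Lambda_n$ from a point inside $\Lambda_n$ with a step of $\ell^\infty$-length at most $2m$, so $|X_{\tau_n}| \le n + 2m \le 3n$ (using $m\le n$), hence $\|X_{\tau_n}\|_2^2 \le d\,|X_{\tau_n}|^2 \le 9dn^2$. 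Combining with the lower bound $\sigma(\mu,m)^2 \ge m^2$ from \eqref{eq: estimate sigma} yields
\begin{equation}
\mathbb E_{\mu,u}[\tau_n] \le \frac{9dn^2}{\sigma(\mu,m)^2} \le 9d\left(\frac{n}{m}\right)^2,
\end{equation}
which is exactly the claimed bound.

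**Main obstacle.** There is no serious obstacle here; the only point requiring a touch of care is the a priori integrability of $\tau_n$ needed to license optional stopping, which is handled by the bounded-increments / coordinate-projection argument sketched above (or equally well by the truncation-and-limit argument, invoking dominated convergence via $|f(X_{k\wedge\tau_n})| \le 9dn^2$ uniformly). Everything else is a one-line second-moment computation exploiting the reflection symmetry $(i)$ to kill the drift and assumption $(ii)$ (through \eqref{eq: estimate sigma}) to bound $\sigma(\mu,m)^2$ from below. I would present it in exactly this order: define $f$, compute the drift using $(i)$, identify the resulting martingale, justify $\mathbb E_{\mu,u}[\tau_n]<\infty$, apply optional stopping, and bound the boundary term by $9dn^2$.
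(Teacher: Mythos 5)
Your proof is correct and yields the same constant $9d$ by essentially the same idea: apply optional stopping to a quadratic-variation martingale and bound the boundary contribution via the $\ell^\infty$-step restriction and $m\le n$. The only difference is cosmetic: the paper first projects onto a single coordinate and runs the one-dimensional martingale $S_k^2-\mathbf E_0[|S_1|^2]\,k$ (using that $\tau_n$ is dominated by the exit time of any coordinate from $[-n,n]$), whereas you work directly with the $d$-dimensional martingale $|X_k|_2^2-k\,\sigma(\mu,m)^2$, which avoids the projection step and is marginally more compact.
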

\begin{proof} 
Let $n,m\geq 1$ with $n\geq m$. Fix $\mu \in \mathcal P_m$ and drop it from the notation. Also, let $u=(u_1,\ldots,u_d)\in \Lambda_n$. Let $X^{(i)}$ be the projection of $X$ onto the $i$-th coordinate. Note that if $X\sim \mathbb P_{0}$, the process $X^{(i)}$ is a random walk $S$ on $\mathbb Z$, whose law does not depend on $i$ by symmetries of $\mu$. We denote by $\mathbf P_x$ the law of $S$ started at $x\in \mathbb Z$ and observe that, by symmetry of $\mathbb P_0$, one has $\mathbf E_0[|S_1|^2]\geq \tfrac{1}{d}m^2$.  Introduce $\boldsymbol{\tau}_{n}:=\inf\{k\geq 1: |S_k|\geq n+1\}$. Noticing that $\tau_n=\inf_{1\leq i \leq d}\inf\{k\geq 1: |X_k^{(i)}|\geq n+1\}$, we get
\begin{equation}\label{eq:estimate srw 1}
	\mathbb E_u[\tau_n]\leq \max_{1\leq i\leq d}\mathbf E_{u_i}[\boldsymbol{\tau}_{n}].
\end{equation}
Following a classical argument, we observe that $(S_k^2-\mathbf E_0[|S_1|^2]k)_{k\geq 0}$ is a martingale, and use the stopping time theorem to argue that $\mathbf E_{u_i}[S^2_{\boldsymbol{\tau}_n}-\mathbf E_0[|S_1|^2]\boldsymbol{\tau}_n]=u_i^2$, which gives
\begin{equation}\label{eq:estimate srw2}
	\mathbf E_{u_i}[\boldsymbol{\tau}_n]\leq (n+2m)^2\mathbf E_0[|S_1|^2]^{-1}\leq 9d\left(\frac{n}{m}\right)^2.
\end{equation}
Plugging \eqref{eq:estimate srw2} in \eqref{eq:estimate srw 1} yields the result.
%
\end{proof}

We will also need the following result.
\begin{Cor}\label{cor: estimate srw} Let $d\geq 1$ and $A,\eta>0$. Let $n,m\geq 1$ with $(n/m)\leq A$. There exist $T,\varepsilon>0$ which depend on $(A, \eta,d)$ such that the following holds. For every $\mu \in \mathcal P_m$, every $\varphi\in [1-\varepsilon,1+\varepsilon]$, every $f:\mathbb Z^d\rightarrow \mathbb R^+$, and every $u \in \Lambda_n$,
\begin{equation}
	\Big|\mathbb E_{\mu,u}[\varphi^{\tau_n\wedge T}f(X_{\tau_n\wedge T})]-\mathbb E_{\mu,u}[f(X_{\tau_n})]\Big|\leq \eta \max\{f(w):w\in \Lambda_{n+2m}\}.
\end{equation}
\end{Cor}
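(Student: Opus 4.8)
\textbf{Proof plan for Corollary~\ref{cor: estimate srw}.} The plan is to decompose the difference into two contributions: one coming from replacing $\varphi^{\tau_n\wedge T}$ by $1$, and one coming from truncating the exit time at $T$. First I would write
\begin{equation}
\mathbb E_{\mu,u}[\varphi^{\tau_n\wedge T}f(X_{\tau_n\wedge T})]-\mathbb E_{\mu,u}[f(X_{\tau_n})]=\mathbb E_{\mu,u}[(\varphi^{\tau_n\wedge T}-1)f(X_{\tau_n\wedge T})]+\mathbb E_{\mu,u}[f(X_{\tau_n\wedge T})-f(X_{\tau_n})],
\end{equation}
and bound each term in absolute value by $\tfrac{\eta}{2}\max\{f(w):w\in\Lambda_{n+2m}\}$. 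Note that on the event $\{\tau_n\le T\}$ both $X_{\tau_n\wedge T}$ and $X_{\tau_n}$ equal $X_{\tau_n}$, which lies in $\Lambda_{n+2m}$ since a single $\mu$-step has length at most $2m$; and on the event $\{\tau_n>T\}$, $X_{\tau_n\wedge T}=X_T\in\Lambda_n\subset\Lambda_{n+2m}$. So every point evaluated lies in $\Lambda_{n+2m}$, and it suffices to control the probabilities and the factor $\varphi^{\tau_n\wedge T}-1$.

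For the truncation term, on $\{\tau_n\le T\}$ the integrand vanishes, so $|\mathbb E_{\mu,u}[f(X_{\tau_n\wedge T})-f(X_{\tau_n})]|\le 2\max\{f(w):w\in\Lambda_{n+2m}\}\,\mathbb P_{\mu,u}[\tau_n>T]$. By Markov's inequality and Proposition~\ref{prop: exponential moment exit time}, $\mathbb P_{\mu,u}[\tau_n>T]\le \tfrac{1}{T}\mathbb E_{\mu,u}[\tau_n]\le \tfrac{9d}{T}(n/m)^2\le \tfrac{9dA^2}{T}$, using the hypothesis $n/m\le A$. For the first term, since $\tau_n\wedge T\le T$ and $\varphi\in[1-\varepsilon,1+\varepsilon]$ with $\varepsilon\le 1/2$, we have $|\varphi^{\tau_n\wedge T}-1|\le (1+\varepsilon)^T-1\le e^{\varepsilon T}-1$ (and the same bound from below since $1-(1-\varepsilon)^T\le 1-(1-\varepsilon T)=\varepsilon T\le e^{\varepsilon T}-1$), so $|\mathbb E_{\mu,u}[(\varphi^{\tau_n\wedge T}-1)f(X_{\tau_n\wedge T})]|\le (e^{\varepsilon T}-1)\max\{f(w):w\in\Lambda_{n+2m}\}$.

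Finally I would choose the constants in the right order: first pick $T=T(A,\eta,d)$ large enough that $\tfrac{2\cdot 9dA^2}{T}\le\tfrac{\eta}{2}$; then, with $T$ fixed, pick $\varepsilon=\varepsilon(T,\eta)=\varepsilon(A,\eta,d)$ small enough that $e^{\varepsilon T}-1\le\tfrac{\eta}{2}$. Adding the two bounds gives the claimed estimate for every $\mu\in\mathcal P_m$, every $\varphi\in[1-\varepsilon,1+\varepsilon]$, every nonnegative $f$, and every $u\in\Lambda_n$. There is essentially no obstacle here — the only point requiring a little care is the bookkeeping that every evaluation point of $f$ genuinely lies in $\Lambda_{n+2m}$ (which uses the support assumption $(ii)$ on $\mu\in\mathcal P_m$) and that the two constants are chosen in the correct dependency order so that $\varepsilon$ is allowed to depend on $T$.
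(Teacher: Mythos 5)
Your proof is correct and follows essentially the same strategy as the paper's: split the error into the $\varphi$-factor contribution and the truncation contribution, control the former via the deterministic bound on $\varphi^{\tau_n\wedge T}-1$, control the latter via Markov's inequality together with Proposition~\ref{prop: exponential moment exit time}, and choose $T$ first and then $\varepsilon=\varepsilon(T)$. The paper packages the two bounds into a single display (its inequality \textup{(A.8)}) and uses the budget split $\eta/4+\eta/4$ rather than your $\eta/2+\eta/2$, but this is only cosmetic.
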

\begin{proof} 
Let $n,m\geq 1$ with $(n/m)\leq A$. Let $\mu \in \mathcal P_m$ and drop it from the notation.
Let $T\geq 1$ to be fixed and set $\varepsilon=\varepsilon(T)>0$ such that $(1+\varepsilon)^T\leq 1+\eta/4$ and $(1-\varepsilon)^T\geq 1-\eta/4$. Let $\varphi\in [1-\varepsilon,1+\varepsilon]$. Observe that, for $f:\mathbb Z^d\rightarrow \mathbb R^+$, and $u\in \Lambda_n$,
\begin{multline}\label{eq:coro srw2}
	\Big|\mathbb E_u[\varphi^{\tau_n\wedge T}f(X_{\tau_n\wedge T})]-\mathbb E_u[f(X_{\tau_n})]\Big|\leq 2\max\{f(w):w\in \Lambda_{n+2m}\} \cdot\Big(\frac{\eta}{4}+\mathbb P_u[\tau_n>T]\Big).
\end{multline}
Using Markov's inequality and Proposition \ref{prop: exponential moment exit time}, we find
\begin{equation}\label{eq:coro srw3}
	\mathbb P_u[\tau_n>T]\leq \frac{9dA^2}{T}.
\end{equation}
Now, we choose $T=T(A,\eta,d)>0$ large enough so that $9dA^2/T\leq \eta/4$. Combining \eqref{eq:coro srw2} and \eqref{eq:coro srw3} with such $T,\varepsilon$ gives
\begin{equation}\label{eq:coro srw4}
	\Big|\mathbb E_u[\varphi^{\tau_n\wedge T}f(X_{\tau_n\wedge T})]-\mathbb E_u[f(X_{\tau_n})]\Big|\leq \eta\max\{f(w):w\in \Lambda_{n+2m}\}.
\end{equation}	
This concludes the proof.
\end{proof}

We now prove the Harnack-type estimate used in the proof of Proposition \ref{prop:regularity perco 2}. Let us start with some additional notations. For $x\in \mathbb R^d$, we let $\mathsf{P}_x$ denote the law of the standard Brownian motion $B$ on $\mathbb R^d$ started at $x$. We also let $\mathsf P_t^{(1)}$ be the law of the one-dimensional standard Brownian motion $B^{(1)}$ started at $t\in \mathbb R$. For simplicity, we omit integer rounding from our notation.

\begin{Prop}[Uniform Harnack-type estimate]\label{prop:uniform Harnack} Let $d\geq 3$, $\alpha>0$, and $\eta>0$. There exists $C_{\rm RW}=C_{\rm RW}(\alpha,d)>0$ and $N_1=N_1(\eta,\alpha,d)>0$ such that the following holds. For every $n,m\geq 1$ satisfying $\tfrac{n}{m}\geq N_1$, every $\mu\in \mathcal P_m$, every $f:\mathbb Z^d\rightarrow \mathbb R^+$, and every $u,v\in \Lambda_n$,
\begin{multline}
	\mathbb E_{\mu,u}[f(X_\tau)]\leq C_{\rm RW}\mathbb E_{\mu,v}[f(X_\tau)] +\eta\max\{f(w):w\in \Lambda_{(1+\alpha)n+2m}\}\\+2C_{\rm RW}\max\Big\{|f(w)-f(w')|: w,w'\in \Lambda_{3m(n/m)^{1/10}}(z), \: z\in \partial \Lambda_{(1+\alpha)n}\Big\},
\end{multline}
where $\tau:=\inf \{k\geq 0: X_k\notin \Lambda_{(1+\alpha)n}\}$.
\end{Prop}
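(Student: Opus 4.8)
The plan is to prove the uniform Harnack estimate by coupling the rescaled random walk $X$ (under $\mathbb P_{\mu,m}$) with Brownian motion at the scale $m$, transferring the classical Harnack inequality for Brownian motion to the discrete setting, and carefully controlling the error incurred when the walk overshoots the boundary $\partial\Lambda_{(1+\alpha)n}$ by a jump of size at most $2m$. The key point is that since $\mu\in\mathcal P_m$ with $\sigma(\mu,m)^2\asymp m^2$ by \eqref{eq: estimate sigma}, after $O((n/m)^2)$ steps the walk started at $u$ and the walk started at $v$ (both in $\Lambda_n$) should, at the scale $m$, behave like Brownian motions started at macroscopically comparable points inside a ball of radius $\asymp n/m$, and hence their exit distributions from $\Lambda_{(1+\alpha)n}$ are comparable up to a constant $C_{\rm RW}(\alpha,d)$ — this is exactly the continuum Harnack principle applied to the positive harmonic function $z\mapsto \mathsf E_z[\bar f(B_{\bar\tau})]$ on the ball of radius $(1+\alpha)n/m$ (harmonic away from the boundary), where $\bar f$ is a continuum proxy for $f$.

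\textbf{Main steps.} First I would set $r:=n/m$ and rescale space by $1/m$; the walk $X/m$ has increments of size in $[1,2]$ (in rescaled units) with variance of order $1$, isotropic by assumption $(i)$. Second, invoke a strong coupling / invariance-principle estimate (e.g.\ the KMT-type or a direct martingale coupling as in \cite{LawlerLimicRandomWalks2010}) to couple the rescaled walk run until its exit time $\tau$ from the ball $\Lambda_{(1+\alpha)r}$ (in rescaled units) with a Brownian motion run until its corresponding exit time, with the coupling error (in sup norm over the relevant time horizon $O(r^2)$) being at most $C r^{1/2}\log r$ — in particular $\ll 3 r^{1/10}$ provided $r\ge N_1(\eta,\alpha,d)$; this is where the threshold $N_1$ and the width $3m(n/m)^{1/10}$ of the ``boundary layer'' in the statement come from. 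Third, on the good event the walk's exit point $X_\tau$ lies within $3m r^{1/10}$ of the Brownian exit point $B_{\bar\tau}$, so up to the ``oscillation of $f$ over boxes of radius $3m(n/m)^{1/10}$ around $\partial\Lambda_{(1+\alpha)n}$'' error term (the third term in the statement), one may replace $\mathbb E_{\mu,u}[f(X_\tau)]$ by $\mathsf E_{u/m}[g(B_{\bar\tau})]$ for a suitable extension $g$ of $f$ to the continuum sphere; the failure probability of the coupling, times $\max\{f(w):w\in\Lambda_{(1+\alpha)n+2m}\}$, is absorbed into the $\eta\max f$ term by taking $N_1$ large. Fourth, apply the classical boundary Harnack / interior Harnack inequality for Brownian motion on the ball (the function $z\mapsto\mathsf E_z[g(B_{\bar\tau})]$ is harmonic and nonnegative on the interior, and $u/m,v/m$ both lie in the ball of radius $r$, which is a fixed fraction $\tfrac{1}{1+\alpha}$ of the radius of the domain) to get $\mathsf E_{u/m}[g(B_{\bar\tau})]\le C_{\rm RW}(\alpha,d)\,\mathsf E_{v/m}[g(B_{\bar\tau})]$. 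Fifth, transfer back from the Brownian quantity to $\mathbb E_{\mu,v}[f(X_\tau)]$, again at the cost of the coupling error and the boundary-oscillation term, and collect all error contributions into the stated form (using $2C_{\rm RW}$ as the multiplicative constant on the oscillation term, since it appears on both the $u$- and $v$-side).

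\textbf{Main obstacle.} The delicate part is making the coupling \emph{uniform in} $m$ and in $\mu\in\mathcal P_m$: one cannot simply quote a fixed invariance principle because the step distribution changes with $m$, so one needs an invariance-principle bound whose constants depend only on moment ratios (here controlled by $\mathcal P_m$ via $(i)$--$(ii)$ and \eqref{eq: estimate sigma}), not on the law itself. A clean way is to run the coupling coordinate-by-coordinate using the one-dimensional projections $S$ as in the proof of Proposition~\ref{prop:coupling appendix} and Proposition~\ref{prop: exponential moment exit time}: each projected walk has variance between $m^2/d$ and $m^2$, bounded increments, and one can apply a uniform one-dimensional strong-approximation estimate; the exit time from $\Lambda_{(1+\alpha)n}$ is $O(r^2)$ by Proposition~\ref{prop: exponential moment exit time}, so the accumulated approximation error over that horizon is $O(r\log r)^{1/2}$ uniformly. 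One must also handle the overshoot: $X_\tau\in\Lambda_{(1+\alpha)n+2m}$ rather than exactly on $\partial\Lambda_{(1+\alpha)n}$, which is precisely why the oscillation term is phrased over boxes $\Lambda_{3m(n/m)^{1/10}}(z)$ with $z\in\partial\Lambda_{(1+\alpha)n}$ — the box is large enough to contain both the true exit point and its Brownian counterpart whenever $r\ge N_1$. Once uniformity is secured, everything else is a routine application of continuum potential theory, so I expect roughly $90\%$ of the work to be in setting up the uniform coupling correctly.
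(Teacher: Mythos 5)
Your high-level strategy coincides with the paper's (rescale to unit jumps, strong-couple the walk with Brownian motion à la Zaitsev/KMT, invoke the continuum Harnack inequality for $z\mapsto \mathsf E_z[g(B_{\bar\tau})]$, and pay an oscillation penalty in a boundary layer), but there is a genuine gap in your treatment of the coupling/boundary-layer step, where in fact most of the paper's work lies.

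First, your stated coupling error rate is wrong and, as written, fatal. You assert that over a horizon of $O(r^2)$ steps ($r=n/m$) the sup-norm discrepancy between the rescaled walk and the Brownian motion is at most $Cr^{1/2}\log r$, and then claim this is $\ll r^{1/10}$. That inequality is false: $r^{1/2}\log r$ grows much faster than $r^{1/10}$, so the statement you derive from it does not follow. A KMT/Zaitsev strong coupling (which is what the paper invokes) gives sup-norm error $O(\log K)=O(\log r)$ over $K$ steps, not a polynomial in $r$; if you instead had in mind a Skorokhod-embedding / martingale coupling with a genuine $r^{1/2}\log r$ rate, the whole argument would collapse at this line.

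Second, even with the correct $O(\log r)$ trajectory-coupling error, the $r^{1/10}$ boundary layer in the statement does \emph{not} come from the trajectory error nor from the jump overshoot $2m$ — those are both $O(m\log r)\ll mr^{1/10}$. It comes from a separate and harder fact: the walk and the Brownian motion can have close trajectories but still exit $\Lambda_{(1+\alpha)n}$ at quite different \emph{times} and hence at different \emph{places}. The paper's Claim~\ref{claim2} is devoted precisely to this: on the event where one process has exited and the other has not, it runs a gambler's-ruin argument showing the laggard exits within time $O((\log r)^2)$ with high probability, and in that time its displacement is bounded, so the two exit points are within $O(mr^{1/10})$ of each other except with small probability. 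Your proposal never identifies this step; it implicitly conflates ``trajectories are close'' with ``exit points are close,'' which is exactly where the naive argument fails. Without this additional exit-time-discrepancy estimate (or an equivalent continuity-of-exit-distribution argument), the replacement of $\mathbb E_{\mu,u}[f(X_\tau)]$ by the Brownian exit functional cannot be justified at the claimed accuracy, and the $3m(n/m)^{1/10}$ radius in the oscillation term would be unmotivated.
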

\begin{proof} We fix $m\geq 1$ and let $n\geq m$ to be chosen large enough. We fix $\mu \in \mathcal P_m$ and drop it from the notation. The idea is to couple the trajectory of the random walk $X$ with a Brownian motion, and transfer the Harnack inequality from the continuum to the discrete. Below, we view $f$ as a function defined on  $[-(1+\alpha)n,(1+\alpha)n]^d$ by setting, for $x\in [-(1+\alpha)n,(1+\alpha)n]^d$, $f(x):=f([x])$ where $[x]:=(\lfloor x_1\rfloor,\ldots,\lfloor x_d\rfloor)$. Fix $\eta>0$ and let $\eta'$ to be fixed in terms of $\eta, \alpha$ and $d$.

We will use a result of Zaitsev \cite{zaitsev1998multidimensional} that can be stated as follows: there exist $c_0,t_0>0$ such that, for every $m\geq 1$ and every $u\in \mathbb Z^d$, there exists a coupling $(X,B)\sim \mathbf P_{u,m}$ where $X\sim \mathbb P_u$ and $B\sim \mathsf{P}_{u/\sigma(m)}$, such that for every $t\geq t_0$, and every $K\geq 1$,
\begin{equation}\label{eq:BM 0}
	\mathbf P_{u,m}\left[\max_{0\leq k \leq K}|X_k-\sigma(m)B_k|\geq t\sigma(m)\log K\right]\leq e^{-c_0 t \log K}.
\end{equation}
Below, we let $K:=\Big(\tfrac{(1+\alpha)n}{m}\Big)^3$ and fix $t=t_0$. For this choice, we write
\begin{equation}
	\mathcal G_1:=\Big\{\max_{0\leq k \leq K}|X_k-\sigma(m)B_k|\leq t\sigma(m)\log K\Big\},
\end{equation}
and observe that, from \eqref{eq:BM 0},
\begin{equation}\label{eq:BM 1}
	\mathbf P_{u,m}[\mathcal G_1^c]\leq \left(\frac{m}{(1+\alpha)n}\right)^{3c_0t}\leq \eta',
\end{equation}
for $n/m$ large enough (in terms of $\eta',\alpha$ and $d$).
We also introduce the following stopping times:
\begin{align}
	\tau_1&:=\inf\{k\geq 0: X_k\notin \Lambda_{(1+\alpha)n}\},
	\\\tau_2&:=\inf\{t\geq 0: \sigma(m)B_t\notin [-(1+\alpha)n,(1+\alpha)n]^d\},
\end{align}
and let
\begin{equation}
	\mathcal G_2:=\{\tau_1\leq K\}\cap \{\tau_2\leq K\}.
\end{equation}
\begin{Claim}\label{claim1} For $n/m$ large enough (in terms of $\eta',\alpha$ and $d$), and for every $u \in \Lambda_n$, \begin{equation}\label{eq:BM 2}
	\mathbf P_{u,m}[\mathcal G_2^c]\leq \eta'.
\end{equation}
\end{Claim}
\begin{proof}[Proof of Claim \textup{\ref{claim1}}] We simply use Markov's inequality and Proposition \ref{prop: exponential moment exit time} to argue that
\begin{equation}
	\mathbf P_{u,m}[\tau_1>K]\leq \frac{\mathbf E_u[\tau_1]}{K}\leq 9d\left(\frac{(1+\alpha)n}{m}\right)^2\left(\frac{(1+\alpha)n}{m}\right)^{-3}\leq \frac{\eta'}{2},
\end{equation}
when $n/m$ is large enough. A similar argument holds to bound\footnote{For full disclosure, the martingale argument used in the proof of Proposition \ref{prop: exponential moment exit time} to bound the expected exit time can be adapted to the Brownian motion by noticing that $(B_t^2-t)_{t\geq 0}$ is a continuous martingale (see for instance \cite[Chapter~3.3]{le2016brownian}).} $\mathbf P_{u,m}[\tau_2>K]$.
\end{proof}
As a consequence of the above, we find that, the trajectories of $X$ and $B$ remain ``close to each other'' until $\tau_1\vee \tau_2$ with high probability (under $\mathbf P_{u,m}$). Our next claim is that $X_{\tau_1}$ and $\sigma(m)B_{\tau_2}$ are close to each other with high probability. We let
\begin{equation}
	\mathcal G_3:=\Big\{|X_{\tau_1}-\sigma(m)B_{\tau_2}|\leq 2m(n/m)^{1/10}\Big\}.
\end{equation}
\begin{Claim}\label{claim2} For $n/m$ large enough (in terms of $\eta',\alpha$ and $d$), and for every $u \in \Lambda_n$,
\begin{equation}\label{eq:BM 2.5}
	\mathbf P_{u,m}\Big[\mathcal G_1 \cap \mathcal G_2 \cap \mathcal G_3^c\Big]\leq\eta'.
\end{equation}
\end{Claim}
\begin{proof}[Proof of Claim \textup{\ref{claim2}}] We fix $u\in \Lambda_n$, and assume that $\mathcal G_1\cap \mathcal G_2\cap \mathcal G_3^c$ occurs. We write $\mathbf P=\mathbf P_{u,m}$ and consider either two cases: $\tau_1<\tau_2$ or $\tau_2\geq \tau_1$. 

\paragraph{Case $\tau_1<\tau_2$.} We begin with the former case. Define the Brownian motion $B':=(B_{t+\tau_1})_{t\geq 0}$. If $\mathcal G_3^c$ occurs, $\sigma(m)B'$ has to travel a long distance--- and thus for a long time--- close to the boundary of $[-(1+\alpha)n,(1+\alpha)n]^d$, without exiting it. This last event is very unlikely thanks to (classical) Gambler's ruin type estimates.

 More precisely, let $1\leq i \leq d$ be such that the $i$-the coordinate of $X$ satisfies $|X_{\tau_1}^{(i)}|\geq (1+\alpha)n$. Let $\tilde B:=\tfrac{(1+\alpha)n}{\sigma(m)}-(B')^{(i)}$. The process $\tilde B$ is has law $\mathsf P^{(1)}_{\tilde B_0}$ with $0<\tilde B_0\leq t\log K$. A classical Brownian motion computation (see for instance \cite[Chapter~2.8]{karatzas1991brownian}) gives $c_1,C_1>0$ (which only depend on $d$) such that, for every $T\geq 1$,
 \begin{align}\notag
 	\mathbf P_{u,m}\Big[\max_{0\leq t \leq T} |B'_{t}-B'_0|\geq (n/m)^{1/10}\Big]&\leq 2d\cdot\mathsf P_{0}^{(1)}\Big[\max_{0\leq t \leq 1} |B_{t}^{(1)}|
 	\geq \frac{(n/m)^{1/10}}{\sqrt{T}}\Big]\\&\leq C_1\exp\left(-c_1\frac{(n/m)^{1/5}}{T}\right).
 \end{align}
 Hence, choosing $T=(\log (n/m))^2$ and $(n/m)$ large enough, gives
 \begin{equation}\label{eq:BM 3}
 	\mathbf P_{u,m}\Big[\max_{0\leq t \leq T} |B'_{t}-B'_0|\geq (n/m)^{1/10}\Big]\leq \frac{\eta'}{4}.
 \end{equation}
 Now, if $\{\tau_1<\tau_2\}\cap\mathcal G_1\cap \mathcal G_2\cap \mathcal G_3^c$ occurs, then (recall that $m\geq \tfrac{1}{2\sqrt{d}}\sigma(m)$) 
 \begin{multline}\label{eq:BM 4}
 |\sigma(m)B_{\tau_1}-\sigma(m)B_{\tau_2}|\geq |X_{\tau_1}-\sigma(m)B_{\tau_2}|-|X_{\tau_1}-\sigma(m)B_{\tau_1}|\\\geq 2m(n/m)^{1/10}- t\sigma(m)\log K\geq \sigma(m)(n/m)^{1/10},
 \end{multline}
 provided $(n/m)$ is large enough.
As a consequence of \eqref{eq:BM 4}, we get 
 \begin{equation}\label{eq:BM 5}
 	\{0\leq \tau_2-\tau_1\leq (\log (n/m))^2\}\cap \mathcal G_1\cap \mathcal G_2\cap \mathcal G_3^c\subset \Big\{\max_{0\leq t \leq (\log(n/m))^2} |B'_{t}-B'_0|\geq (n/m)^{1/10}\Big\}.
 \end{equation}
 Introduce $\tau_{\tilde B}:=\inf\{t\geq 0: \tilde B_t\leq 0\}$. Collecting the above work, we obtain that \begin{align}\notag
 	\mathbf P_{u,m}[&\{\tau_1<\tau_2\}\cap \mathcal G_1\cap \mathcal G_2\cap \mathcal G_3^c]\\&\leq \mathbf P_{u,m}[\{0<\tau_2-\tau_1\leq (\log (n/m))^2\}\cap \mathcal G_1\cap \mathcal G_2\cap \mathcal G_3^c]+\mathbf P_{u,m}[\{\tau_2-\tau_1\geq (\log (n/m))^2\}\cap \mathcal G_1]\notag\\&\leq \frac{\eta'}{4}+\mathbf P_{u,m}[ \{\tau_{\tilde B} \geq (\log (n/m))^2\}\cap \{0<\tilde B_0\leq t\log K\}]\label{eq:BM 6},
 \end{align}
 where we used \eqref{eq:BM 3} and \eqref{eq:BM 5} in the second inequality. Then, thanks to the reflection principle (see for instance \cite[Chapter~2.6]{karatzas1991brownian}), setting $\tau^{(1)}:=\inf \{t\geq 0: B_t^{(1)}\leq 0\}$,
 \begin{align}\notag
 \mathbf P_{u,m}[ \{\tau_{\tilde B} \geq (\log (n/m))^2\}\cap &\{0<\tilde B_0\leq t\log K\}]\\&\leq \sup_{0<v\leq t\log K}\mathsf P_v^{(1)}[\tau^{(1)} \geq (\log(n/m))^2]\notag\\&= \sup_{0<v\leq t\log K}
 \int_{(\log(n/m))^2}^\infty \frac{v}{\sqrt{2\pi s^3}}\exp\left(-\frac{v^2}{2s}\right)\mathrm{d}s\notag
 \\&\leq \frac{\eta'}{4} \label{eq:BM 7},
 \end{align}
 where in the last line, we chose $(n/m)$ large enough. Thus, combining \eqref{eq:BM 6} and \eqref{eq:BM 7}, we have obtained,
 \begin{equation}\label{eq:BM 7.5}
 	\mathbf P_{u,m}[\{\tau_1<\tau_2\}\cap \mathcal G_1\cap \mathcal G_2\cap 
 	\mathcal G_3^c]\leq \frac{\eta'}{2}.
 \end{equation} 
 \paragraph{Case $\tau_1\geq \tau_2$.} We now turn to the case $\tau_1\geq\tau_2$. This case is essentially symmetric to the first one. Indeed, as in \eqref{eq:BM 4}, if $\{\tau_2\geq \tau_1\}\cap \mathcal G_1\cap \mathcal G_2\cap \mathcal G_3^c$ occurs, then
 \begin{equation}
 	|X_{\tau_2}-X_{\tau_1}|\geq \sigma(m)(n/m)^{1/10}.
 \end{equation}
 However, under the occurrence of the same events, this also implies that
 \begin{equation}
 	|\sigma(m)B_{\tau_2}-\sigma(m)B_{\tau_1}|\geq |X_{\tau_1}-X_{\tau_2}|-|X_{\tau_1}-\sigma(m)B_{\tau_1}|-|X_{\tau_2}-\sigma(m)B_{\tau_2}|\geq \frac{1}{2}\sigma(m)(n/m)^{1/10},
 \end{equation}
 for $(n/m)$ large enough. As a result, one may argue as \eqref{eq:BM 5} in to obtain that
 \begin{equation}\label{eq:BM 8}
 	\mathbf P_{u,m}[\{0\leq \tau_1-\tau_2\leq (\log (n/m))^2\}\cap \mathcal G_1\cap \mathcal G_2\cap \mathcal G_3^c]\leq \frac{\eta'}{4}
 \end{equation}
 for $(n/m)$ large enough. Now, let again $1\leq i \leq d$ be such that $|B_{\tau_2}^{(i)}|=(1+\alpha)n$, and set $\tilde X:= ((1+\alpha)n-X^{(i)}_{k+\lfloor \tau_2\rfloor})_{k\geq 0}$. Thanks to a similar computation as in \eqref{eq:BM 3}, one has that $|B_{\tau_2}-B_{\lfloor \tau_2\rfloor}|\leq t\log K$ with probability at least $1-\eta'/8$ for $(n/m)$ large enough. Under this last event and $\{\tau_1\geq \tau_2\}\cap \mathcal G_1\cap \mathcal G_2\cap \mathcal G_3^c$, the process $\tilde X$ is a one-dimensional random walk started at $v\in \mathbb Z$ with $0\leq v \leq 2\sigma(m)t\log K$. We denote by $\tilde{\mathbb P}_v$ its law and let $\tau_{\tilde X}:=\inf\{k\geq 1: \tilde X_k\leq 0\}$. A Gambler's ruin estimate applied to the process $\tilde X/\sigma(m)$ (see for instance \cite[Theorem~5.1.7]{LawlerLimicRandomWalks2010}), gives the existence of $C_2>0$ such that
 \begin{multline}
 	\mathbf P_{u,m}[\{\tau_1-\tau_2\geq (\log (n/m))^2\}\cap \mathcal G_1\cap \mathcal G_2\cap \mathcal G_3^c\cap \{|B_{\tau_2}-B_{\lfloor \tau _2\rfloor}|\leq t\log K\}]\\\leq \sup_{0\leq v\leq 2t\log K}\tilde{\mathbb P}_v[\tau_{\tilde X} \geq (\log (n/m))^2]\leq \frac{C_2(2t\log K)}{(\log(n/m))^2}\leq \frac{\eta'}{8},
 \end{multline}
 if $(n/m)$ is large enough. Combining the previously displayed equation and \eqref{eq:BM 8} gives that
 \begin{equation}\label{eq:BM 9}
 	\mathbf P_{u,m}[\{\tau_1\geq \tau_2\}\cap \mathcal G_1\cap \mathcal G_2\cap\mathcal G_3^c]\leq \frac{\eta'}{2}.
 \end{equation}
 Combining \eqref{eq:BM 7.5} and \eqref{eq:BM 9} yields the result, when $(n/m)$ is chosen large enough (in terms of $\eta',\alpha$ and $d$).
\end{proof}

With Claims \ref{claim1} and \ref{claim2} in hands, we are in a position to conclude. Set $\mathcal G:=\mathcal G_1\cap \mathcal G_2\cap \mathcal G_3$. Then, for every $u \in \Lambda_n$,
\begin{equation}
	\mathbb E_u[f(X_\tau)]=\mathbf E_{u,m}[f(X_{\tau_1})]\leq \mathbf E_{u,m}[f(X_{\tau_1})\mathds{1}_{\mathcal G}]+3\eta'\max\{f(w):w\in \Lambda_{(1+\alpha)n+2m}\},
\end{equation}
where we used \eqref{eq:BM 1}, \eqref{eq:BM 2}, and \eqref{eq:BM 2.5} to argue that $\mathbf P_{u,m}[\mathcal G^c]\leq 3\eta'$, and the fact that the step distribution $\mu$ has range $2m$. Now, observe that
\begin{multline}\label{eq:BM 10}
	\mathbf E_{u,m}[f(X_{\tau_1})\mathds{1}_{\mathcal G}]\leq \mathbf E_{u,m}[f(\sigma(m)B_{\tau_2})]\\+\max\Big\{|f(w)-f(w')|: w,w'\in \Lambda_{3m(n/m)^{1/10}}(z), \: z\in \partial \Lambda_{(1+\alpha)n}\Big\}.
\end{multline}
Recall that under $\mathbf E_{u,m}$, $B$ is a Brownian motion started at $u/\sigma(m)$. Note that by the scaling properties of $B\sim \mathsf P_u$, the process $\overline{B}:=((n/m)^{-1/2}B_{t(n/m)})_{t\geq 0}$ is a Brownian motion started at $u(n/m)^{-1/2}$. Moreover, $(n/m)\tau_2$ has the same distribution as $\overline{\tau}:=\inf\{t\geq 0: |\overline{B}_t|=(1+\alpha)\}$. The function $\varphi:x\in (-(1+\alpha),(1+\alpha))^d\mapsto \mathsf{E}_x[f(\sigma(m)(n/m)^{1/2}B_{\overline{\tau}})]\in\mathbb R^+$ is harmonic (see for instance \cite[Chapter~4.2]{karatzas1991brownian}). Thanks to the Harnack inequality, there exists $C_{\rm RW}=C_{\rm RW}(\alpha,d)\geq 1$ such that, for every $x,y\in [-1,1]^d$,
\begin{equation}
	\varphi(x)\leq C_{\rm RW} \varphi(y).
\end{equation}
Hence, if $v\in \Lambda_n$, one has,
\begin{equation}\label{eq:BM 11}
	\mathbf E_{u,m}[f(\sigma(m)B_{\tau_2})]\leq C_{\rm RW}\mathbf E_{v,m}[f(\sigma(m)B_{\tau_2})].
\end{equation}
Now, observe, that
\begin{align}
	\mathbf E_{v,m}[f(&\sigma(m)B_{\tau_2})]\notag\\&\leq \mathbf E_{v,m}[f(\sigma(m)B_{\tau_2})\mathds{1}_{\mathcal G}]+3\eta'\max\{f(w):w\in \Lambda_{(1+\alpha)n+2m}\}\notag
	\\&\leq \mathbf E_{v,m}[f(X_{\tau_1})]+3\eta'\max\{f(w):w\in \Lambda_{(1+\alpha)n+2m}\}\notag\\&\qquad+\max\Big\{|f(w)-f(w')|: w,w'\in \Lambda_{3m(n/m)^{1/10}}(z), \: z\in \partial \Lambda_{(1+\alpha)n}\Big\}\label{eq:BM 12},
\end{align}
where we used that $\mathbf P_{v,m}[\mathcal G^c]\leq 3\eta'$ in the first inequality.
Plugging \eqref{eq:BM 11} and \eqref{eq:BM 12} in \eqref{eq:BM 10}, and choosing $\eta'=\frac{\eta}{6C_{\rm RW}}$ (which requires to choose $n/m$ large enough in terms of $\eta,\alpha$ and $d$) concludes the proof.
\end{proof}

\section{Appendix: a convolution estimate}\label{appendix:convolution}
\begin{Prop}\label{prop: convolution estimate} Let $d>4$ and $L\geq 1$. Let $f:\mathbb Z^d\rightarrow \mathbb R^+$. Assume that there exists $\bfC>0$ such that for all $x\in \mathbb Z^d$,
\begin{equation}\label{eq:assumption convolution estimate}
	f(x)\leq \mathds{1}_{x=0}+\frac{\bfC}{L^d}\left(\frac{L}{L\vee |x|}\right)^{d-2}.
\end{equation}
Then, there exists $A=A(\bfC,d)>0$ (independent of $L$) such that, for all $x\in \mathbb Z^d$,
\begin{equation}
	(f*f)(x):=\sum_{y\in \mathbb Z^d}f(y)f(x-y)\leq A\Big(\mathds{1}_{x=0}+\mathds{1}_{x\neq 0}\frac{1}{L^4}{\left(\frac{1}{L\vee |x|}\right)^{d-4}}\Big).
\end{equation}
\end{Prop}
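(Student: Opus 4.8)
The plan is to split the convolution into regions according to the relative sizes of $|y|$, $|x-y|$, and $|x|$, since the bound \eqref{eq:assumption convolution estimate} on $f$ behaves differently at the origin, on the ``flat'' scale $|y|\le L$, and on the ``polynomial tail'' scale $|y|>L$. First I would dispose of the $\mathds 1_{x=0}$ and the contributions of the indicator term in $f$: the two terms $f(0)f(x)$ and $f(y)f(0)=f(y)f(x-y)$ (when $y=0$ or $y=x$) are each bounded by $f(x)\le \mathds 1_{x=0}+\tfrac{\bfC}{L^d}(L/(L\vee|x|))^{d-2}$, and since $d>4$ one checks $\tfrac1{L^d}(L/(L\vee|x|))^{d-2}\le \tfrac1{L^4}(1/(L\vee|x|))^{d-4}$, so these are absorbed into the claimed right-hand side. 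It thus remains to bound $g*g$ where $g(x):=\tfrac{\bfC}{L^d}(L/(L\vee|x|))^{d-2}$, i.e. a genuine sum over $y\in\mathbb Z^d$ of a product of two radial, monotone decreasing profiles.

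Next, for the main estimate on $(g*g)(x)$ I would use the standard ``one of the two factors is large'' decomposition: write $\mathbb Z^d = \{|y|\le |x|/2\}\cup\{|x-y|\le |x|/2\}\cup\{|y|>|x|/2,\ |x-y|>|x|/2\}$. On the first region $|x-y|\ge |x|/2$, so $g(x-y)\le C g(x)$ (using that $g$ is, up to a constant, decreasing and $(L\vee|x|)\asymp (L\vee|x|/2)$), and the remaining sum $\sum_{|y|\le|x|/2} g(y)$ is bounded by a constant times $\tfrac1{L^d}\big(L^d + \sum_{L<|y|\le|x|/2}(L/|y|)^{d-2}\big)$; since $d>4$ the tail sum is $\lesssim L^{d-2}(|x|\vee L)^{2}\cdot L^{-d}\cdot$(appropriate power), and combining with $g(x)\asymp \tfrac{1}{L^2}(L\vee|x|)^{-(d-2)}$ gives a bound of the right order $\tfrac1{L^4}(L\vee|x|)^{-(d-4)}$. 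The second region is symmetric. On the third region both $|y|$ and $|x-y|$ are comparable to $|x|$ (when $|x|\ge L$), so $g(y)g(x-y)\le C\tfrac{1}{L^4}(L\vee|x|)^{-2(d-2)}$ and there are at most $O((L\vee|x|)^d)$ such $y$, giving $\lesssim \tfrac1{L^4}(L\vee|x|)^{d-4}\cdot (L\vee|x|)^{-(d-4)-\,?}$; one has to check the exponent $d-2(d-2)+d = 4-d<0$, so this term is actually smaller and fine. Finally the case $|x|\le L$ is handled separately and more crudely: there $g(x-y)\le \tfrac{\bfC}{L^d}$ for all $y$ with $|y|\le 2L$ and $\sum_{y} g(y)\lesssim 1$ (since $d>4$, $\sum_{|y|>L}(L/|y|)^{d-2}L^{-d}\lesssim L^{-2}$ and $\sum_{|y|\le L}L^{-d}\lesssim 1$), giving $(g*g)(x)\lesssim L^{-4}$, matching $\tfrac1{L^4}(1/(L\vee|x|))^{d-4}$ when $|x|\le L$.

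The one genuinely delicate point, and the step I expect to be the main obstacle, is getting the constants \emph{uniform in $L$}: every sum over $\mathbb Z^d$ must be compared to the corresponding integral $\int_{\mathbb R^d}$ with an error that does not blow up with $L$, and one must be careful that $\sum_{L<|y|\le R}|y|^{-(d-2)}\asymp R^2 - L^2 \asymp R^2$ (for $R\ge 2L$) with a constant depending only on $d$, using $d-2>2$ nowhere but $d>4$ crucially for the tail $\sum_{|y|>R}|y|^{-2(d-2)}\asymp R^{-(2d-4-d)} = R^{-(d-4)}$. I would organise this by proving a single auxiliary lemma: for $\alpha>0$ and $R\ge 1$, $\sum_{|y|\le R}(1\vee|y|)^{-\alpha}\le C(\alpha,d)\,(R^{d-\alpha}\vee 1\vee \log R)$ with the $\log$ only at $\alpha=d$, and $\sum_{|y|>R}(1\vee|y|)^{-\alpha}\le C(\alpha,d)R^{d-\alpha}$ for $\alpha>d$. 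Plugging these in with $\alpha=d-2$ (needs $d-2<d$, always true) and $\alpha=2(d-2)$ (needs $2(d-2)>d$, i.e. $d>4$) closes all the cases, and the hypothesis $d>4$ is exactly what makes the second tail summable.
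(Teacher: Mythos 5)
Your decomposition into ``near $0$'', ``near $x$'', and the complement is exactly the paper's decomposition (the paper groups the first two regions via symmetry, writing $y\in\Lambda_{|x|/2}\cup\Lambda_{|x|/2}(x)$ and its complement, and uses the threshold $|x|\le 2L$ rather than $|x|\le L$, an immaterial difference), and the auxiliary lemma you propose for the $\ell^1$-tails is the standard ingredient the paper uses silently. So the route is the same.

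The one step that would fail as written is your treatment of the third region. The set $\{|y|>|x|/2,\ |x-y|>|x|/2\}$ is infinite, and the claim ``both $|y|$ and $|x-y|$ comparable to $|x|$'' is false as soon as $|y|\gg|x|$, so ``there are at most $O((L\vee|x|)^d)$ such $y$'' is not correct. The fix is the same tail-summability you already invoke: for $y$ in this region let $u$ be whichever of $y$ and $y-x$ has smaller $\ell^\infty$-norm, so $|u|\ge|x|/2$ and the other has norm $\le |u|+|x|\le 3|u|$, whence $g(y)g(x-y)\lesssim L^{-4}|u|^{-(2d-4)}$, and since each $u$ corresponds to at most two $y$,
\[
\sum_{y\notin\Lambda_{|x|/2}\cup\Lambda_{|x|/2}(x)}g(y)g(x-y)\ \lesssim\ \frac{1}{L^4}\sum_{|u|\ge|x|/2}\frac{1}{|u|^{2d-4}}\ \lesssim\ \frac{1}{L^4\,|x|^{d-4}},
\]
using $2(d-2)>d$, i.e.\ $d>4$. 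This is precisely the paper's last display in the case $|x|>2L$; with that replacement the argument closes.
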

\begin{proof} Below, the constants $C_i$ depend on $\bfC$ and $d$ only. The case $x=0$ follows from the hypothesis that $d>4$. We turn to the case $x\neq 0$, which we split into two sub-cases $|x|\leq 2L$ and $|x|>2L$. Note that for $|x|\leq 2L$, it is sufficient to obtain a bound by $C_0/L^d$ since one has
\begin{equation}
	\frac{1}{L^d}\leq \frac{2^{d-4}}{L^4}\left(\frac{1}{L\vee |x|}\right)^{d-4}.
\end{equation}

\paragraph{Case $|x|\leq 2L$.} We write
\begin{equation}
	\sum_{y\in \Lambda_{4L}\setminus\{0,x\}}f(y)f(x-y)\stackrel{\eqref{eq:assumption convolution estimate}}\leq |\Lambda_{4L}|\frac{\bfC^2}{L^{2d}}\leq \frac{C_1}{L^d}.
\end{equation}
Moreover,
\begin{equation}
	\sum_{y\notin\Lambda_{4L}}f(y)f(x-y)\stackrel{\eqref{eq:assumption convolution estimate}}\leq \frac{C_2}{L^4}\sum_{|u|\geq 2L}\frac{1}{|u|^{2d-4}}\leq \frac{C_3}{L^d}.
\end{equation}
The proof in this case follows from the two last displayed equations and the fact that $f(0)f(x)\leq \tfrac{C_4}{L^d}$.
\paragraph{Case $|x|>2L$.} We start by noticing that
\begin{align}\notag
	\sum_{y\in \Lambda_{|x|/2}\cup \Lambda_{|x|/2}(x)}f(y)f(x-y)&\stackrel{\phantom{\eqref{eq:assumption convolution estimate}}}\leq {2\Big(\max_{u\in \Lambda_{|x|/2}}f(x+u)\Big)\sum_{y\in \Lambda_{|x|/2}}f(y)}
	\\&\stackrel{\eqref{eq:assumption convolution estimate}}\leq \frac{C_5}{L^2|x|^{d-2}}\sum_{y\in \Lambda_{|x|/2}}f(y)\notag
	\\&\stackrel{\eqref{eq:assumption convolution estimate}}\leq \frac{C_6}{L^2|x|^{d-2}}\frac{|x|^2}{L^2}=\frac{C_6}{L^4|x|^{d-4}}.
\end{align}
Then,
\begin{equation}
	\sum_{y\notin \Lambda_{|x|/2}\cup \Lambda_{|x|/2}(x)}f(y)f(x-y)\stackrel{\eqref{eq:assumption convolution estimate}}\leq \frac{C_7}{L^4}\sum_{|u|\geq|x|/2}\frac{1}{|u|^{2d-4}}\leq \frac{C_8}{L^4|x|^{d-4}}.
\end{equation}
The proof follows readily.
\end{proof}
\section{Appendix: proof of Lemma \ref{lem: estimate E(x)}}\label{appendix:boundE}
Fix $d>6$, $\bfC>1$, and $L\geq 1$. Also, fix $\beta<\beta^*(\bfC,L)$ and drop it from the notations. Let $n>24L$, $x\in \partial \mathbb H_n$, $w \in \Lambda_{n/2}$ and $B\in \mathcal B$ such that $B+w\subset\Lambda_{n/2}$. Note that by translation invariance, $E(B+w,\mathbb H_n,w,x)=E(B,\mathbb H_{n-w_1},0,x)$. Since $w\in \Lambda_{n/2}$, we can assume that $w=0$ and $n>12L$. By definition,
\begin{align}
	&E(B,\mathbb H_n,0,x)=\notag
	\\&\sum_{\substack{u\in B\\v\in B}}\sum_{\substack{y\in B\\z\in \mathbb H_n\setminus B}}\mathbb P[0\connect{B\:}u]\mathbb P[u\connect{B\:}y]p_{yz}\mathbb P[z\connect{\mathbb H_n\:}v]\mathbb P[u\connect{\mathbb H_n\:}v]\mathbb P_\beta[v\connect{\mathbb H_n\:}x]\notag
	\\&+ \sum_{\substack{u\in B\\v\in \mathbb H_n}}\sum_{\substack{y,s\in B\\ z,t\in \mathbb H_n \setminus B\\ yz\neq st}}\mathbb P[0\connect{B\:}u]\mathbb P[u\connect{B\:}y]\mathbb P_\beta[u\connect{B\:}s]p_{yz}p_{st}\mathbb P[z\connect{\mathbb H_n\:}v]\mathbb P[t\connect{\mathbb H_n\:}v]\mathbb P[v\connect{\mathbb H_n\:}x]\label{eq:proofE(x)1}.
\end{align}
We split $E(B,\mathbb H_n,0,x)$ into a contribution coming from $v\in \mathbb H_{3n/4}$ that we denote $(I)$, and a contribution coming from $v\in \mathbb H_{n}\setminus\mathbb H_{3n/4}$ that we denote $(II)$.
\paragraph{Bound on $(I)$.}
Notice that
\begin{equation}
	\max_{v\in \mathbb H_{3n/4}}\mathbb P[v\connect{\mathbb H_n\:}x]\stackrel{\eqref{eq:H_beta-' perco}}\leq \frac{\bfC}{L}\frac{1}{(n/4)^{d-1}}. 
\end{equation}
Recall the definition of the error amplitude $E(B)$. By monotonicity and Proposition~\ref{lem: error amplitude perco}, 
\begin{equation}
	(I)\leq E(B)\cdot 4^{d-1}\frac{\bfC}{Ln^{d-1}}\leq \frac{4^{d-1}K}{L^d}\frac{\bfC}{Ln^{d-1}}.
\end{equation}
\paragraph{Bound on $(II)$.} We turn to the contribution for $v\in \mathbb H_n\setminus \mathbb H_{3n/4}$. This contribution is only coming from \eqref{eq:proofE(x)1}. Notice that $z,t$ contribute if they are at a distance at most $L$ from $B$, that is $z,t\in \Lambda_{n/2+L}\subset \Lambda_{n/2+n/12}$. If $p\in \{0,\ldots,n/4-1\}$, $v\in \partial \mathbb H_{n-p}$, and $z,t$ are as above, then $|z-v|,|t-v|\geq n/6$ and
\begin{align}\label{eq:appendixC1}
	\mathbb P[z\connect{\mathbb H_n\:}v]\mathbb P[t\connect{\mathbb H_n\:}v]&\stackrel{\eqref{eq: full plane estimate from half plane perco}}\leq \frac{9\bfC^4}{L^4}\frac{6^{2d-4}}{n^{2d-4}}.
\end{align}
Moreover, using translation invariance,
\begin{equation}\label{eq:appendixC2}
	\sum_{v\in \partial\mathbb H_{n-p}}\mathbb P[v\connect{\mathbb H_n\:}x]= {\mathds{1}_{p=0}}+\psi_\beta(\mathbb H_{p})\stackrel{\eqref{eq:H_beta perco}}\leq {\mathds{1}_{p=0}}+\frac{\bfC}{L}.
\end{equation}
For a fixed $u\in B$, Proposition \ref{prop:bound phi perco} gives
\begin{equation}\label{eq:appendixC3}
	\sum_{\substack{y, s\in B\\ z,t\in \mathbb H_n \setminus B\\ yz\neq st}}\mathbb P[u \connect{B\:}y]\mathbb P[u\connect{B\:}s]p_{yz}p_{st}\leq \varphi(B-u)^2\leq \Big( 1+\frac{K}{L^d}\Big)^2{\leq (1+K)^2}.
\end{equation}
Finally, we use \eqref{eq: full plane estimate from half plane perco} to get $C_1=C_1(\bfC,d)>0$ such that
\begin{equation}\label{eq:appendixC4}
	\sum_{u\in B}\mathbb P[0\connect{B\:}u] 
	\leq
	\sum_{u\in \Lambda_{n/2}}\mathbb P[0\connect{}u]
	\leq 1+C_1\left(\frac{n}{L}\right)^2.
\end{equation}

Putting all the previous displayed equations together we obtain $C_2,C_3,C_4>0$ which only depend on $\bfC$ and $d$ such that
\begin{align*}
	(II)&\stackrel{\eqref{eq:appendixC1}}\leq \frac{C_2}{L^4n^{2d-4}}\sum_{p=0}^{n/4-1}\sum_{\substack{v\in \partial \mathbb H_{n-p}\\u\in B}}\sum_{\substack{y,s\in B\\ z,t\in \mathbb H_n \setminus B\\ yz\neq st}}\mathbb P[0\connect{B\:}u]\mathbb P[u\connect{B\:}y]\mathbb P_\beta[u\connect{B\:}s]p_{yz}p_{st}\mathbb P[v\connect{\mathbb H_n\:}x]
	\\&\stackrel{\eqref{eq:appendixC2}}\leq \frac{C_2}{L^4n^{2d-4}}\sum_{p=0}^{n}\Big(\mathds{1}_{p=0}+\frac{\bfC}{L}\Big)\sum_{u\in B}\mathbb P[0\connect{B\:}u] \sum_{\substack{y,s\in B\\ z,t\in \mathbb H_n \setminus B\\ yz\neq st}}\mathbb P[u\connect{B\:}y]\mathbb P_\beta[u\connect{B\:}s]p_{yz}p_{st}
	\\&\stackrel{\eqref{eq:appendixC3}}\leq \frac{C_3}{L^4n^{2d-4}}\sum_{p=0}^{n-1}\Big(1+\frac{\bfC n}{L}\Big)\sum_{u\in B}\mathbb P[0\connect{B\:}u]
	\\&\stackrel{\eqref{eq:appendixC4}}\leq \frac{C_3}{L^4n^{2d-4}}\Big(1+\frac{ \bfC n}{L}\Big)\Big(1+C_1\left(\frac{n}{L}\right)^2\Big) \leq \frac{C_4}{L^3n^{d-3}}\left(\frac{n}{L}\right)^3\frac{\bfC}{Ln^{d-1}}\stackrel{n>12L}\leq \frac{C_4}{L^d}\frac{\bfC}{Ln^{d-1}},
	\end{align*}
where we used $d>6$ in the last inequality.
The result follows from setting $D:=4^{d-1}K+C_4$.

\section{Appendix: proof of Lemma \ref{lem: bound non local error singleton perco}}\label{appendix:boundEaverage}

Fix $d>6$, and let $C,L_0\geq 1$ be given by Proposition \ref{prop: final prop proof mainperco thm}. Finally, let $L\geq L_0$, and $n\geq 0$. Recall \eqref{eq:error first term}--\eqref{eq:error second term}. We begin by proving \eqref{eq:non local term 1}. If $w\in \mathbb H_n$,
\begin{align*}
	\sum_{x\in \partial \mathbb H_n}{E}_\beta&(\{w\},\mathbb H_n,w,x)\\
	&=\sum_{x\in \partial \mathbb H_n}\sum_{z\in \mathbb H_n\setminus \{ w\}}p_{wz,\beta}\mathbb P_\beta[z\connect{\mathbb H_n\:}w]\mathbb P_\beta[w\connect{\mathbb H_n\:}x]
	\\&+\sum_{x\in \partial \mathbb H_n}\sum_{v\in \mathbb H_n}\sum_{\substack{z,t\in \mathbb H_n\setminus \{w\}\\z\neq t}}p_{wz,\beta}p_{wt,\beta}\mathbb P_\beta[z\connect{\mathbb H_n\:}v]\mathbb P_\beta[t\connect{\mathbb H_n\:}v]\mathbb P_\beta[v\connect{\mathbb H_n\:}x]
	\\&=:(I)+(II).
\end{align*}
\paragraph{Bound on $(I)$.} By \eqref{eq: bound volume times p} and Proposition \ref{prop: final prop proof mainperco thm}, one has
\begin{equation}
	(I)\leq \frac{C}{L}\sum_{z\in \mathbb H_n\setminus \{w\}}p_{wz,\beta}\mathbb P_\beta[z\connect{\mathbb H_n\:}w]\leq \frac{C}{L}\frac{C}{L^d}|\Lambda_L|p_\beta\leq \frac{4C}{L}\frac{C}{L^d}.
\end{equation}
\paragraph{Bound on $(II)$.} On split $(II)$ into two contributions: $v\in \mathbb H_{n-1}$ and $v\in \partial \mathbb H_n$. The former contribution is bounded by 
\begin{multline}
	\sum_{v\in \mathbb H_{n-1}}\sum_{\substack{z,t\in \mathbb H_n\setminus \{w\}\\z\neq t}}p_{wz,\beta}p_{wt,\beta}\mathbb P_\beta[z\connect{\:}v]\mathbb P_\beta[t\connect{\:}v]\Big(\sum_{x\in \partial \mathbb H_n}\mathbb P_\beta[v\connect{\mathbb H_n\:}x]\Big)\\\leq \frac{C}{L} E_\beta(\{w\})\leq \frac{C}{L}\frac{C}{L^d},
\end{multline}
where we used Proposition \ref{prop: final prop proof mainperco thm} twice.
Then, one more use of Proposition \ref{prop: final prop proof mainperco thm} gives
\begin{align*}
	\sum_{v\in \partial \mathbb H_n}\sum_{\substack{z,t\in \mathbb H_n\setminus \{w\}\\z\neq t}}p_{wz,\beta}p_{wt,\beta}&\mathbb P_\beta[z\connect{\mathbb H_n\:}v]\mathbb P_\beta[t\connect{\mathbb H_n\:}v]\Big(\sum_{x\in \partial\mathbb H_n}\mathbb P_\beta[v\connect{\mathbb H_n\:}x]\Big)
	\\ &\leq \Big(1+\frac{C}{L}\Big)
\sum_{v\in \partial \mathbb H_n}\sum_{\substack{z,t\in \mathbb H_n\setminus \{w\}\\z\neq t}}p_{wz,\beta}p_{wt,\beta}\mathbb P_\beta[z\connect{\mathbb H_n\:}v]\mathbb P_\beta[t\connect{\mathbb H_n\:}v]
	\\&\leq 2\Big(1+\frac{C}{L}\Big)\frac{C}{L^d}\sum_{v\in \partial \mathbb H_n}\sum_{z,t\in \mathbb H_n}p_{wz,\beta}p_{wt,\beta}\mathbb P_\beta[t\connect{\mathbb H_n\:}v]
	\\&\leq 2\Big(1+\frac{C}{L}\Big)^2\frac{C}{L^d}\Big(\sum_{z\in \mathbb Z^d}p_{0z,\beta}\Big)^2 
	\\&\leq\frac{32C}{L^{d}}\Big(1+\frac{C}{L}\Big)^2\leq \frac{128C^3}{L^d},
\end{align*}
where in the second inequality we used the fact that $z\neq t$ to ensure that one of them is different from $v$, and in the fourth inequality we used \eqref{eq: bound volume times p}.

We now turn to the proof of \eqref{eq:non local term 2}. Let $x\in \partial \mathbb H_n$ and $w\in \mathbb H_n\setminus \{x\}$. Write
\begin{multline}
	E_\beta(\{w\},\mathbb H_n,w,x)=\sum_{z\in \mathbb H_n\setminus \{ w\}}p_{wz,\beta}\mathbb P_\beta[z\connect{\mathbb H_n\:}w]\mathbb P_\beta[w\connect{\mathbb H_n\:}x]\\+\sum_{v\in \mathbb H_n}\sum_{\substack{z,t\in \mathbb H_n\setminus \{w\}\\z\neq t}}p_{wz,\beta}p_{wt,\beta}\mathbb P_\beta[z\connect{\mathbb H_n\:}v]\mathbb P_\beta[t\connect{\mathbb H_n\:}v]\mathbb P_\beta[v\connect{\mathbb H_n\:}x]=:(III)+(IV).
\end{multline}
\paragraph{Bound on $(III)$.} We use the computation done in the bound of $(I)$, the hypothesis that $w\neq x$, and Proposition \ref{prop: final prop proof mainperco thm} to get that
\begin{equation}
	(III)\leq \left(\frac{C}{L^d}\right)^2|\Lambda_L|p_\beta\leq 4\left(\frac{C}{L^d}\right)^2.
\end{equation}
\paragraph{Bound on $(IV)$.} As in $(II)$, the contribution coming from $v\in \mathbb H_n\setminus\{x\}$ in $(IV)$ can be bounded by
\begin{equation}
	\frac{C}{L^d}E_\beta(\{w\})\leq\left(\frac{C}{L^d}\right)^2.
\end{equation}
It remains to observe that
\begin{align}\notag
	\sum_{\substack{z,t\in \mathbb H_n\setminus\{w\}\\z\neq t}}&p_{wz,\beta}p_{wt,\beta}\mathbb P_\beta[z\connect{\mathbb H_n\:}x]\mathbb P_\beta[t\connect{\mathbb H_n\:}x]\\
	&\leq 16\left(\frac{C}{L^d}\right)^2+\sum_{\substack{z,t\in \mathbb H_n\setminus\{w\}\\z\neq t\\ z=x \text{ or }t=x}}p_{wz,\beta}p_{wt,\beta}\mathbb P_\beta[z\connect{\mathbb H_n\:}x]\mathbb P_\beta[t\connect{\mathbb H_n\:}x]\notag\\
	&\leq 16\left(\frac{C}{L^d}\right)^2+2p_\beta |\Lambda_L|p_\beta  \frac{C}{L^d}\leq \frac{C_1}{L^{2d}},
\end{align}
where we used \eqref{eq: bound volume times p}, and where $C_1=C_1(d)>0$. The proof follows readily from setting $D_1:=128C^3\vee C_1$.

\bibliographystyle{alpha}
\bibliography{biblio.bib}

\end{document}